\newcounter{counter}
\newcounter{theorems}
\theoremstyle{plain}
 \newtheorem{theorem}[theorems]{Theorem}
 \newtheorem{proposition}[counter]{Proposition}
 \newtheorem{lemma}[counter]{Lemma}
\theoremstyle{definition}
 \newtheorem{construction}[counter]{Construction} 
 \newtheorem{example}[counter]{Example}
\DeclareMathOperator{\codim}{codim}
\DeclareMathOperator{\End}{End}
\DeclareMathOperator{\Ext}{\mathrm{Ext}}
\DeclareMathOperator{\Hom}{Hom}
\DeclareMathOperator{\Pic}{Pic}
\DeclareMathOperator{\Tors}{Tors}
\newcommand{\del}{\partial}
\newcommand{\delbar}{\overline{\partial}}
\newcommand{\id}{\mathrm{id}}
\newcommand{\IZ}{\mathbb{Z}}
\newcommand{\IQ}{\mathbb{Q}}
\newcommand{\IQbar}{\overline{\mathbb{Q}}}
\newcommand{\IR}{\mathbb{R}}
\newcommand{\IC}{\mathbb{C}}
\newcommand{\Gm}{\mathbb{G}_m}
\newcommand{\IP}{\mathbb{P}}
\newcommand{\im}{\mathrm{im}}
\newcommand{\IA}{\mathbb{A}}
\newcommand{\hhat}{\widehat{h}}
\newcommand{\supp}{\mathrm{supp}}
\newcommand{\Div}{\mathrm{div}}
\newcommand{\pr}{\mathrm{pr}}
\newcommand{\dprime}{{\prime\prime}}
\begin{document}

\title[BHC for Semiabelian Varieties]{The Bounded Height Conjecture for Semiabelian Varieties}

\author{Lars K\"uhne}
\thanks{This work was supported by an Ambizione Grant of the Swiss National Science Foundation.}
\email{lars.kuehne@unibas.ch}
\address{Departement Mathematik und Informatik \\
	Spiegelgasse 1 \c
	4051 Basel \\
	Switzerland}

\subjclass[2010]{11G50 (primary), and 14K15, 14G40 (secondary)} 

\maketitle
\begin{abstract}
The Bounded Height Conjecture of Bombieri, Masser, and Zannier states that for any sufficiently generic algebraic subvariety of a semiabelian $\IQbar$-variety $G$ there is an upper bound on the Weil height of the points contained in its intersection with the union of all algebraic subgroups having (at most) complementary dimension in $G$. This conjecture has been shown by Habegger in the case where $G$ is either a multiplicative torus or an abelian variety. However, there are new obstructions to his approach if $G$ is a general semiabelian variety. In particular, the lack of Poincaré reducibility means that quotients of a given semiabelian variety are intricate to describe. To overcome this, we study directly certain families of line bundles on $G$. This allows us to demonstrate the conjecture for general semiabelian varieties.
\end{abstract}

A generalization of the classical Manin-Mumford conjecture is the following theorem, which was proven by Raynaud \cite{Raynaud1983,Raynaud1983a} for abelian varieties, by Laurent \cite{Laurent1984} for algebraic tori, and by Hindry \cite{Hindry1988} in general. We recall that a semiabelian variety $G$ over a field $k$ is a connected smooth algebraic $k$-group that is the extension of an abelian variety by a torus.

\begin{theorem} Let $G$ be a semiabelian variety over $\overline{\mathbb{Q}}$ with torsion points $\mathrm{Tor}(G) \subseteq G(\overline{\mathbb{Q}})$. For any algebraic subvariety $X$ of $G$, there are finitely many connected algebraic subgroups $G_i$ of $G$ and finitely many torsion points $x_i \in \mathrm{Tor}(G)$ such that $\bigcup_{i=1}^{n}(G_i + x_i)$ is the Zariski closure of $X \cap \mathrm{Tor}(G)$.
\end{theorem}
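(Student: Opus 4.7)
The plan is to proceed by Noetherian induction on $\dim X$, reducing to the case where $X$ is irreducible and $X \cap \mathrm{Tor}(G)$ is Zariski dense in $X$, and then to show that $X$ itself is a torsion coset. The base case $\dim X = 0$ is trivial. For the reduction, let $Y$ be the Zariski closure of $X \cap \mathrm{Tor}(G)$: either $Y = X$, or $Y$ is a proper closed subvariety of $X$ to whose irreducible components the inductive hypothesis applies. In the first case one may, after translating by a torsion point lying in $X$, assume additionally that $0 \in X$; the goal becomes to show that such an $X$ is a connected algebraic subgroup of $G$.

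The engine is a Galois-theoretic input. Fix a number field $\mathbb{K}$ over which both $G$ and $X$ are defined. For every prime $\ell$ outside a finite bad set, the image of $\mathrm{Gal}(\overline{\mathbb{Q}}/\mathbb{K})$ acting on the Tate module $T_\ell G$ contains, up to uniformly bounded index, the group of homotheties $\mathbb{Z}_\ell^\times \cdot \mathrm{id}$. For the abelian quotient $A$ of $G$ this is Serre--Bogomolov; for the toric part $T$ it follows from the cyclotomic character; for $G$ itself one needs a compatibility argument along the defining extension $1 \to T \to G \to A \to 1$. Consequently, for any torsion point $x \in X \cap \mathrm{Tor}(G)$ of order $N$ coprime to the bad primes, a positive and $N$-independent proportion of the scalar multiples $[a]x$ with $a \in (\mathbb{Z}/N\mathbb{Z})^\times$ lies in the Galois orbit of $x$, and hence in $X$ itself.

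From these multiplicative symmetries of $X$ one deduces a geometric symmetry. Replacing $X$ by its image in $G/\mathrm{Stab}(X)^0$, we reduce to the situation where $\mathrm{Stab}(X)$ is finite; the desired conclusion is then that $X$ is a single torsion point. Consider, for $m$ slightly larger than $\dim X$, the morphism $\sigma_m \colon X^m \to G$ sending $(y_1, \ldots, y_m) \mapsto y_1 + \cdots + y_m$; its image is an irreducible subvariety of controlled degree. Applied to a common torsion point $x \in X$ of sufficiently large order $N$, the Galois input produces at least $(\varphi(N)/c)^m$ tuples in $X^m$ whose $\sigma_m$-images are multiples $[b]x$ for many distinct $b$. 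A Bezout-style comparison between this count and the degree of $\sigma_m(X^m)$ forces, for $N$ large, that $X$ be translation-invariant under a positive-dimensional subgroup, contradicting the finiteness of $\mathrm{Stab}(X)$.

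The principal obstacle specific to the semiabelian setting is the open-image theorem itself: it cannot be obtained by juxtaposing the results for $T$ and $A$, because the class of the extension $G$ in $\mathrm{Ext}^1(A, T)$ interacts with the Galois action on $T[\ell^\infty]$ in a non-trivial way, and naive scalars can be lost. The core technical work is to analyze the Galois action on this extension, isolate a subgroup of the global image that still acts by scalars on $T_\ell G$ for almost all $\ell$, and combine this with the rather different height behaviour of torsion points in the toric and abelian directions so as to make the Bezout estimate quantitative enough to conclude. I expect this mixed Galois-plus-height analysis along the extension to be the main obstacle.
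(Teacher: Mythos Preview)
The paper does not prove this theorem. It is stated in the introduction as a known result of Hindry \cite{Hindry1988} and serves purely as background and motivation for the paper's actual contribution, the Bounded Height Conjecture (Theorem~\ref{theorem::main}). There is therefore no proof in the paper to compare your proposal against.

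That said, your outline is recognisably the Lang--Hindry strategy: reduce by Noetherian induction to an irreducible $X$ with Zariski-dense torsion, exploit a homothety subgroup in the image of Galois on torsion (Bogomolov--Serre for the abelian part, cyclotomic for the toric part, plus a compatibility argument across the extension), and convert the resulting multiplicative symmetries of $X$ into a contradiction via degree estimates once the stabiliser is trivialised. You have correctly located the genuine difficulty in the semiabelian case, namely that the Galois representations on $T_\ell T$ and $T_\ell A$ do not combine naively on $T_\ell G$ because of the extension class. One point to sharpen: the passage from ``many Galois conjugates $[a]x$ lie in $X$'' to a geometric invariance statement is cleaner if phrased as $[a]X = X$ for a set of integers $a$ of positive density (this is how Hindry proceeds), rather than via the sum map $\sigma_m$ and a raw tuple count; your formulation as written does not quite yield the degree contradiction without further argument, since many tuples can have the same $\sigma_m$-image.
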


More recently, another type of intersections in semiabelian varieties has been widely studied. These intersections are with algebraic subgroups instead of torsion points. Of course, investigating the intersection of $X$ with a single such subgroup is a dreary task. However, very interesting phenomena appear when intersecting $X \subseteq G$ with the countable union $G^{[s]}$ of \textit{all} algebraic subgroups having codimension $\geq s$ for some fixed integer $s$.

Since the pioneering work of Bombieri, Masser, and Zannier \cite{Bombieri1999} in this direction, two choices of $s$ are of paramount importance. If $s=\dim(X)+1$, an algebraic subgroup $H \subset G$ of codimension $\geq s$ usually does not meet $X$ at all. The intersection $X \cap G^{[s]}$ may nevertheless be dense in the Zariski topology -- even in generic cases. If $X$ is not contained in a proper algebraic subgroup of $G$, conjectures of Pink \cite{Pink2005a} and Zilber \cite{Zilber2002} imply that this never happens. Such statements about ``unlikely intersections'' are still unsettled problems, on which the reader finds a comprehensive overview in \cite{Zannier2012}. This article treats the other important and related case where $s=\dim(X)$. In this case, a generic subgroup $H \subset G$ of codimension $\geq s$ intersects $X$ already in finitely many points, and $X \cap G^{[\dim(X)]}$ can be dense with respect to the Zariski topology of $X$. The gist of the Bounded Height Conjecture (BHC) stated below is that the Weil height of the $\IQbar$-points in $X \cap G^{[\dim(X)]}$ should be nevertheless bounded from above.

In order to state this conjecture, we have to introduce some additional notions to tackle also non-generic cases. A closed irreducible subvariety $Y \subseteq G$ is called $s$-anomalous if there exists a connected algebraic subgroup $H \subseteq G$ satisfying
\begin{equation} \label{equality::codim2}
\max \{0,s - \codim_G(H)\}<\dim(Y)
\end{equation}
and a point $y \in Y(\IQbar)$ such that $Y \subseteq H + y$ (i.e., $Y$ is contained in a translate of $H$). In this situation, we say that $Y$ is associated with $H$. By $X^{(s)}$ we mean the union of all positive dimensional closed irreducible $s$-anomalous subvarieties contained in $X$. It is a corollary of Kirby's work \cite{Kirby2009} that $X^{(s)}$ (resp.\ $X \setminus X^{(s)}$) is a Zariski closed (resp.\ Zariski open) subset of $X$ (cf.\ \cite[Proposition 2.6]{Chambert-Loir2012a}). In addition, a proof allowing to determine $X^{(s)}$ effectively was given by Bombieri, Masser, and Zannier \cite{Bombieri2007} for tori and carried over to abelian varieties by R\'emond \cite{Remond2009}. 

Let now $G$ be a semiabelian variety over $\IQbar$ and $X$ a closed irreducible subvariety of $G$. To be able to work with heights, we choose a compactification $\overline{G}$ of $G$ (i.e., an open immersion $G \hookrightarrow \overline{G}$ such that $\overline{G}$ is proper). Let $L$ be a line bundle on $\overline{G}$ of $G$. Finally, let $h_L\! : \overline{G}(\IQbar)\rightarrow \IR$ be a Weil height associated with $L$. We can now state the

\vspace{0.2cm}

\textbf{Bounded Height Conjecture (BHC).} \textit{The height $h_L$ is bounded from above on the set $(X \setminus X^{(\dim(X))})(\overline{\IQ}) \cap G^{[\dim(X)]}(\overline{\IQ})$.}

\vspace{0.2cm}

This conjecture was first proposed by Bombieri, Masser, and Zannier \cite{Bombieri2007} in the case where $G$ is an algebraic torus. Even before this, they had provided a proof if $G$ is an algebraic torus and $X$ is a curve \cite{Bombieri1999}. The extension of their conjecture from tori to semiabelian varieties is merely formal and can be found in Habegger's article \cite{Habegger2009a}, where a proof of the BHC for abelian varieties is given. It is also envisaged in \cite[Théorème 1.4]{Chambert-Loir2012a}. This extension is in fact natural as semiabelian varieties have proven to be the right object for many standard conjectures in diophantine geometry (e.g.\ Manin-Mumford, Mordell-Lang, Bogomolov). In addition, they appear naturally as Jacobians of semistable curves (cf.\ \cite[Example 9.2.8]{Bosch1990}) like abelian varieties do for smooth curves so that they still retain a connection with the original study of rational points on curves. We also mention intermediate results in the direction of the BHC given by Bombieri, Masser, and Zannier \cite{Bombieri2008a}, Maurin \cite{Maurin2008, Maurin2011}, Viada \cite{Viada2003}, and Zannier \cite{Zannier2000}. Finally, let us indicate that the conjecture becomes quite generally false if $\dim(X)$ is replaced with any $s < \dim(X)$.

In parallel to his work on the BHC for abelian varieties, Habegger \cite{Habegger2009} obtained a complete proof of the conjecture for tori. Regarding the general case of the BHC, no further progress was made since his two breakthrough articles \cite{Habegger2009a, Habegger2009}. In fact, several additional problems precluded further generalizations up to now. These problems originate from the ``mixed'' nature of semiabelian varieties (i.e., the additional structures induced by the non-triviality of the extension constituting the semiabelian variety). The aim of this article is to solve these problems. Its main result, Theorem \ref{theorem::main} below, yields the BHC in general. In line with \cite{Habegger2009a}, we actually prove a stronger version of the BHC here. To announce it, we introduce certain ``height cones''; for each subset $\Sigma \subset G(\IQbar)$ and each real number $\varepsilon>0$, we define such a height cone by setting
\begin{equation} \label{equation::heightcones}
C(\Sigma, h_L, \varepsilon)= \left\{ x \in G(\IQbar)  \ | \ \exists \ a\in \Sigma, b\in G(\IQbar)\!: x=a+b\text{ and } h_L(b)\leq \varepsilon \max \{ 1, h_L(a) \} \right\}.
\end{equation}

\begin{theorem} \label{theorem::main} Let $G$ be a semiabelian variety and $\overline{G}$ a compactification endowed with an ample line bundle $L$. Furthermore, let $X$ be a closed subvariety of $G$. In addition, assume that $G$, $\overline{G}$, $L$, and $X$ are defined over $\IQbar$. Let $h_L$ be a Weil height associated with $L$. For each integer $s$, there exists some $\varepsilon>0$ such that $h_L$ is bounded from above on $(X \setminus X^{(s)})(\overline{\IQ}) \cap C(G^{[s]}(\IQbar),h_L,\varepsilon)$.
\end{theorem}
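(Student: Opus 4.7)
The plan is to reduce Theorem \ref{theorem::main} to Habegger's two known cases, namely the BHC for tori \cite{Habegger2009} and for abelian varieties \cite{Habegger2009a}, by exploiting the canonical extension
\[
1 \to T \to G \xrightarrow{\pi} A \to 0
\]
and arguing by induction on $\dim G$. After standard reductions---I may assume $X$ is geometrically irreducible, pass to a finite extension of $\IQbar$, choose a cubical line bundle so that $h_L$ agrees with a canonical height $\hat h_L$ up to $O(1)$, and suppose that $X$ is not contained in any proper algebraic subgroup of $G$ (else the induction applies to a lower-dimensional semiabelian subvariety directly)---the task becomes to decompose the height of a point $x = a + b$, with $a \in H$, $\codim_G(H) \geq s$, and $h_L(b) \leq \varepsilon(1 + h_L(a))$, into an \emph{abelian} contribution controlled by $\pi(x) \in \pi(X)$ and a \emph{toric} contribution controlled by the fiber $F_x = \pi^{-1}(\pi(x)) \cap X$.

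On the abelian side, $\pi(H)$ is a connected algebraic subgroup of $A$ of codimension at least $s - \dim T + \dim(H \cap T)$, and Habegger's BHC on $A$ yields an upper bound for the $A$-component of $h_L(x)$ provided $\pi(x)$ lies outside a suitable anomalous locus $\pi(X)^{(s_A)}$. On the toric side, the fiber $F_x$ is a translate of a subvariety of $T$, on which $x$ is close in height to a point of $H \cap T$, so Habegger's toric BHC---applied fiberwise and made uniform over the base $\pi(X)$ by an argument involving heights of cycles---controls the $T$-component of $h_L(x)$. Summing the two contributions, via the height decomposition coming from the extension class of $G$ in $\Ext^1(A,T)$, and absorbing the resulting cross term into $\varepsilon$, should yield the desired bound $h_L(x) = O(1)$.

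The main obstacle, and precisely what has blocked previous generalizations, is that the non-anomalous locus $X \setminus X^{(s)}$ does not map under $\pi$ into $\pi(X) \setminus \pi(X)^{(s_A)}$, nor does it cut out non-anomalous fibers in $T$: mixed anomalous structures can be invisible on either factor separately. To deal with this I would stratify $X$ by the generic dimension of the fibers of $\pi|_X$, treat the open stratum of maximal generic dimension with the above abelian-plus-toric argument, and reduce each smaller stratum---where $\pi|_X$ degenerates---to a lower-dimensional semiabelian subvariety to which the inductive hypothesis applies. A further technical point is the matching of anomalous loci across $\pi$: the effective descriptions of $X^{(s)}$ due to Bombieri-Masser-Zannier \cite{Bombieri2007} and R\'emond \cite{Remond2009} provide explicit finite lists of subgroups of the relevant factors, and one must check that these lists interact correctly with the filtration of $G$ by $T$. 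Once this compatibility is established, a Bogomolov-style compactness argument finishes the proof: if heights were unbounded on $(X \setminus X^{(s)}) \cap C(G^{[s]}, h_L, \varepsilon)$ for every $\varepsilon > 0$, a limit extracted from a generic height-unbounded sequence would force a positive-dimensional $s$-anomalous subvariety of $X$, contradicting the assumption.
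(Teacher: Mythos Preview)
Your proposal has a genuine gap, and it is essentially the gap this paper is written to close. You correctly name the obstruction---that $X\setminus X^{(s)}$ need not project into $\pi(X)\setminus\pi(X)^{(s_A)}$ and need not cut out non-anomalous fibres---but the stratification-plus-induction you sketch does not resolve it. The concrete failure is this: for a connected subgroup $H\subset G$ of codimension $s$ one has $\codim_A\pi(H)+\codim_T(H\cap T)=s$, but the split between these two numbers is uncontrolled. In particular, when $\pi(H)=A$ (all of the constraint is toric) the abelian BHC applied to $\pi(X)$ with $s_A=0$ is vacuous, so $\widehat{h}_N(\pi(x))$ is entirely unconstrained; the toric BHC on the fibre $F_x$ can at best bound the toric component, and you obtain no bound on $\widehat{h}_L(x)=\widehat{h}_{M_{\overline G}}(x)+\widehat{h}_N(\pi(x))$. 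Your proposed fibrewise application of the toric BHC is also unsupported: the constant in Habegger's toric bound depends on the specific subvariety, and the phrase ``an argument involving heights of cycles'' does not supply a mechanism to make it uniform over a positive-dimensional base. Finally, the ``Bogomolov-style compactness argument'' at the end is not a recognizable mechanism here---Bogomolov concerns points of \emph{small} height, and extracting a limit from a height-unbounded sequence in a non-compact group does not by itself manufacture an anomalous subvariety.

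The paper avoids the decomposition along $\pi$ entirely, and for a structural reason: unlike tori or abelian varieties, the quotients $G/H$ of a non-split semiabelian $G$ fall into infinitely many isogeny classes (the paper exhibits an explicit one-parameter family of pairwise non-isogenous $\mathbb{G}_m$-extensions of a non-CM elliptic curve, all arising as quotients of a single $G$), so one cannot reduce to finitely many target groups as both of Habegger's proofs do. Instead the paper works directly on $G$ with a family of line bundles $M_{\overline{\Gamma(\varphi_{\mathrm{tor}})}}\otimes\pi^\ast\varphi_{\mathrm{ab}}^\ast N_j$ indexed by pairs $(\varphi_{\mathrm{tor}},\varphi_{\mathrm{ab}})$ that need not come from any homomorphism of $G$. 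Two height bounds are then played against each other: an upper bound on the associated height for $x$ near $G^{[s]}$, obtained by a compactness-and-approximation argument in the parameter space, and a lower bound $\widehat{h}_{L_1}(x)\geq\alpha\,\widehat{h}_L(x)-c$ coming from Siu's numerical bigness criterion. The uniform positivity of $\alpha$ is where the real work lies: the relevant intersection numbers are estimated by constructing explicit hermitian metrics, interpolating their Chern forms continuously over a compact set of parameters, and invoking Ax's theorem to show the top form does not vanish identically on $X$ when $X^{(s)}\neq X$. None of this machinery is present in your sketch, and it is not replaceable by the separate toric and abelian results.
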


The above Theorem \ref{theorem::main} is proven in the course of Section \ref{section::proof}. We sketch the proof to compare our approach with the one of Habegger from \cite{Habegger2009a, Habegger2009}. However, \textit{not} all new obstructions are yet present in this case (see Section \ref{section::structuralproperties} and the more involved Example \ref{example::nonrational} in particular).
Ignoring some preliminary reductions (Section \ref{subsection::reductions}), the proof consists of three major steps, which we outline successively in the following. 

In the first step (Section \ref{section::approximatinghomomorphisms}), we pass from algebraic subgroups $H$ to a family of $\IQ$-line bundles parameterized by bounded subsets $\mathcal{K}$ in a (subcone of a) finite-dimensional $\IQ$-vector space $V_\IQ$. This means that with each point $\phi \in V_\IQ$ is associated an element of $\mathrm{Pic}_\IQ(\overline{G}_\phi)=\mathrm{Pic}(\overline{G}_\phi) \otimes_\IZ \IQ$ for some compactification $\overline{G}_\phi$ of $G$, which usually depends itself on $\phi$. Subsequently, we use the boundedness of $\mathcal{K}$ to approximate its members by finitely many $\IQ$-line bundles on various compactifications of $G$ (Lemma \ref{lemma::finite_approximation}). In order to reduce book-keeping to a minimum while still presenting the main new ideas, we let $E$ be an elliptic curve and \textit{assume for now} that $G$ is the extension of $E^2$ by a $2$-dimensional torus $\mathbb{G}_m^2$. Even more, we consider only $2$-dimensional subgroups $H \subset G^{[2]}$ that are extensions of $E$ by a $1$-dimensional torus $\Gm$.

To start with, we replace the subgroups $H \subset G^{[2]}$ under consideration by their associated quotients $\pi_H: G\rightarrow G^\prime = G/H$. A point $x \in G(\IQbar)$ lies on a subgroup $H$ if and only if it is contained in the kernel of $\pi_H$. One can choose compactifications $\overline{G}$, $\overline{G}^\prime$ of $G$, $G^\prime$, an algebraic map $\overline{\pi}_H: \overline{G} \rightarrow \overline{G}^\prime$ extending $\pi_H$, and an ample line bundle $L^\prime$ on $\overline{G}^\prime$ such that $x \in \ker(\pi_H)$ implies
\begin{equation}
\label{equation::heightnaive}
\hhat_{\overline{\pi}_H^\ast L^\prime}(x)=0
\end{equation}
where $\hhat_{\overline{\pi}^\ast_H L^\prime}: G(\IQ) \rightarrow \IR$ is the Néron-Tate height associated with the line bundle $\overline{\pi}^\ast_H L^\prime$ (see Section \ref{section::heights} for this notion). 

Let us first consider exclusively the case where $G$ is the \textit{trivial} extension $\Gm^2 \times E^2$. All quotients of $G$ are then likewise trivial extensions; this allows us to restrict to surjective homomorphisms $\varphi: G \rightarrow G^\prime = \Gm \times E$. Any such homomorphism extends to an algebraic map $\overline{\varphi}: G_{\overline{\Gamma(\varphi_{\mathrm{tor}})}} \rightarrow \overline{G}^\prime = \IP^1 \times E$ for some ``graph'' compactification $G_{\overline{\Gamma(\varphi_{\mathrm{tor}})}}$ of $G$ (see Construction \ref{construction2}). Fixing once and for all an ample line bundle $L^\prime$ on $\overline{G}$, each $\varphi \in V = \Hom(G,G^\prime)$ yields a line bundle $\overline{\varphi}^\ast L^\prime \in \mathrm{Pic}(G_{\overline{\Gamma(\varphi_{\mathrm{tor}})}})$. Homogenity allows us to associate also a $\IQ$-line bundle $\overline{\phi}^\ast L^\prime$ with each quasi-homomorphism $$\phi \in V_\IQ = \Hom_\IQ(G,G^\prime) = \Hom(G,G^\prime) \otimes_\IZ \IQ;$$ to be precise, we have $\overline{\phi}^\ast L^\prime \in \mathrm{Pic}_\IQ(G_{\overline{\Gamma(n \cdot \phi_{\mathrm{tor}})}}) = \mathrm{Pic}(G_{\overline{\Gamma(n \cdot \phi_{\mathrm{tor}})}}) \otimes_\IZ \IQ$ with $n$ being a denominator of $\varphi$ (i.e., $n$ is an integer such that $n \cdot \phi \in \Hom(G,G^\prime)$). In this way, we obtain a $\IQ$-line bundle for each point of $V$. Let $V^\circ \subset V$ be the subset of ``surjective'' quasi-homomorphisms (i.e., those elements $\phi \in \Hom_\IQ(G,G^\prime)$ for which there exists an integer $n \geq 1$ such that $n \cdot \phi$ is a surjective homomorphism $G \rightarrow G^\prime$).

For our purposes, some manipulation of homomorphisms (cf.\ Lemma \ref{lemma::approximation}) allows to further restrict to a bounded subset $\mathcal{K} \subset V^\circ$ such that the distance between $\mathcal{K}$ and $V \setminus V^\circ$ (with respect to any linear norm on $V$) is strictly positive. This allows us to eventually arrange for the following assertion, which corresponds to our Lemmas \ref{lemma::finite_approximation} and \ref{lemma::height_approximation}: For each $\delta>0$, there exist finitely many ``surjective'' quasi-homomorphisms $\phi_1,\dots, \phi_{K} \in V^\circ$ and a constant $c(\delta)>0$ such that, for every $x \in H \subset G^{[2]}(\IQbar)$ with $H$ as above, we have
\begin{equation}
\label{equation::heightboundintro}
\hhat_{(\overline{\phi}_k)^\ast L^\prime}(x) \leq \delta \hhat_L(x) + c(\delta)
\end{equation}
for some $k \in \{ 1, \dots, K \}$. Comparing this inequality with (\ref{equation::heightnaive}), we notice that passing to a finite family of $\IQ$-line bundles worsens the bound but that the dependence on $\hhat_L(x)$ can be curbed by choosing $\delta$ sufficiently small. So far, this is just Habegger's argument as in \cite{Habegger2009a, Habegger2009}, although the focus in his work is more on $\Hom_\IQ(G,G^\prime)$ than on the associated $\IQ$-line bundles on compactifications of $G$.

Every \textit{split} semi-abelian variety can be essentially treated in this way, relying solely on quasi-homomorphisms. For general extensions $G \in \mathrm{Ext}^1(E^2,\Gm^2)$, however, a shift to $\IQ$-line bundles instead of quasi-homomorphism becomes essential. Indeed, the quotients of such a semiabelian variety $G$ regularly fall into infinitely many different isogeny classes; compare the footnote on p.\ \pageref{footnote} for the simpler case $\mathrm{Ext}^1(E, \Gm^2)$. This means that a basic premise of Habegger's approach is \textit{not} satisfied for general extensions. In fact, repeating the above procedure does not lead to finitely many line bundles. Consequently, it does not yield an inequality like (\ref{equation::heightboundintro}) with a uniform constant $c(\delta)$. To circumvent this problem, we define suitable $\IQ$-line bundles directly on $G$. These should generalize pullbacks of line bundles along quasi-homomorphisms. 

There are some indications on how to write down such line bundles. First, it is a well-known fact that a homomorphism $\varphi$ between semiabelian varieties is describable in terms of the induced homomorphism $\varphi_{\mathrm{tor}}$ between their maximal subtori and the induced homomorphism $\varphi_{\mathrm{ab}}$ between their underlying abelian varieties (see Lemma \ref{lemma::semiabelian1}). In the situation above, it is hence natural to consider a family of $\IQ$-line bundles parameterized by the $\mathbb{Q}$-vector space
\begin{equation*}
V_\IQ = \Hom_\IQ(\Gm^2,\Gm) \times \Hom_\IQ(E^2,E),
\end{equation*}
though not every pair $(\phi_{\mathrm{tor}},\phi_{\mathrm{ab}}) \in V$ comes from an actual quasi-homomorphism between semiabelian varieties. Second, a result of Knop and Lange \cite[Theorem 2.1]{Knop1985} states that linearized line bundles on compactifications of $G$ retain a ``product-like'' shape even if $G$ is a non-trivial extension and there is no section $G \rightarrow \Gm^2$ of the inclusion $\Gm^2 \hookrightarrow G$.

The already mentioned results of Section \ref{section::compactification} (especially Construction \ref{construction3}) allow us to define for each $(\varphi_{\mathrm{tor}},\varphi_{\mathrm{ab}})$ in
$$V = \Hom(\Gm^2,\Gm) \times \Hom(E^2,E)$$ 
a compactification $G_{\overline{\Gamma(\varphi_{\mathrm{tor}})}}$ of $G$, which only depends on (the graph of) $\varphi_{\mathrm{tor}}: \Gm^2 \rightarrow \Gm$, and a $\IQ$-line bundle $L_{(\varphi_{\mathrm{tor}},\varphi_{\mathrm{ab}})}$ on $G_{\overline{\Gamma(\varphi_{\mathrm{tor}})}}$. For each homomorphism $\varphi: G \rightarrow G^\prime$ with restriction $\varphi_{\mathrm{tor}}: \Gm^2 \rightarrow \Gm$ to maximal subtori, there furthermore exists an extension $\overline{\varphi}: G_{\overline{\Gamma(\varphi_{\mathrm{tor}})}} \rightarrow \overline{G}^\prime$ such that $\overline{\varphi}^\ast L^\prime = L_{(\varphi_{\mathrm{tor}},\varphi_{\mathrm{ab}})}$ for some ample line bundle $L^\prime$ on $\overline{G}^\prime$. The line bundles $L_{(\varphi_{\mathrm{tor}},\varphi_{\mathrm{ab}})}$ play a prominent role in our proof, generalizing the pullbacks $\overline{\varphi}^\ast L^\prime$ from the split case $G = \Gm^2 \times E^2$. Surprisingly, there is neither need for a homomorphism $\varphi: G \rightarrow G^\prime$ nor for a semiabelian variety $G^\prime$ to define them. Naturally, some checking is necessary to guarantee that they simulate pullbacks along homomorphisms sufficiently well (see e.g.\ Lemmas \ref{lemma::nonnegativeheight}, \ref{lemma::heightsclosehomomorphisms} and \ref{lemma::heightsgrouplaw}).

For the next two steps of the proof, we can revert to the general case of Theorem \ref{theorem::main}. In the second step of the proof (Section \ref{section::heightbound}), we establish two concurring height bounds similar to \cite{Habegger2009a, Habegger2009}. However, the non-homogenity of the canonical height on semiabelian varieties, which decomposes into a linear and a quadratic part, is yet another problem. A sensible choice of line bundles is needed to counterbalance this in the height estimates (cf.\ the proof of Lemma \ref{lemma::height_approximation}). The first of the two said height bounds is similar to (\ref{equation::heightboundintro}). The second opposing height bound is a consequence of Siu's numerical bigness criterion (\cite[Corollary 1.2]{Siu1993}). To apply Siu's criterion, we need to estimate two types of intersection numbers related to the line bundles $L_{(\varphi_{\mathrm{tor}},\varphi_{\mathrm{ab}})}$ and the Zariski closure of $X$ in $G_{\overline{\Gamma(\varphi_{\mathrm{tor}})}}$ (Lemma \ref{lemma::alpha}). Subject to sufficiently strong estimates on these intersections numbers (as stated in Lemmas \ref{lemma::intersection1} and \ref{lemma::intersection2}), we already finish the proof of Theorem \ref{theorem::main} at this point by combining the two opposite height bounds.

In the third and last step of our proof (Section \ref{section::intersections}), we estimate these intersection numbers. Homogeneity, or the lack hereof, is once again an issue. Serious difficulties seem to arise when trying to obtain \textit{lower} bounds on intersection numbers by counting torsion points as in \cite{Habegger2009a, Habegger2009}. Although some technical tools such as \cite[Proposition 3]{Habegger2009a} were already written up more generally than strictly necessary in order to foster future generalizations, it is not clear whether this can be done at all. Therefore, we provide an alternative to this argument (Lemma \ref{lemma::intersection1}) based on hermitian differential geometry (see also Sections \ref{section::hermitiandifferentialgeometry} and \ref{section::distribution} for details). In fact, this alternative is strikingly simple in the special case of abelian varieties treated in \cite{Habegger2009a}. We obtain the sought-after lower bounds on intersection numbers by integrating appropriately chosen $(1,1)$-forms. These $(1,1)$-forms are defined in Section \ref{section::chernforms} as real interpolations of Chern forms associated with specific hermitian metrics on the line bundles $L_{(\varphi_{\mathrm{tor}},\varphi_{\mathrm{ab}})}$. On the level of $(1,1)$-forms, balancing the different homogeneities of the ``toric'' and ``abelian'' contributions is an easy task (see e.g.\ our definition (\ref{equation::definition_omega})). 

Whereas the definition of the used $(1,1)$-forms and the verification of their basic properties is almost trivial for abelian varieties (Section \ref{subsection::abelian}), tori and hence general semiabelian varieties demand considerably more work (Section \ref{subsection::toric}). The reason for this is that any invariant hermitian metric on the line bundles under consideration is merely continuous and leads to a singular Chern current supported on the maximal compact subgroup $K_{G} \subseteq G(\IC)$ (see e.g.\ \cite[Lemme 6.3]{Chambert-Loir2000}). A singular Chern current being detrimental for the application of Ax's Theorem \cite{Ax1972} in Section \ref{section::distribution}, we have to work with a less natural non-invariant hermitian metric instead. For the Chern forms associated to such a metric, establishing some natural properties is a non-trivial task; the reader may compare the proof of Lemma \ref{lemma::chernforms_quasihomomorphisms} with the evident relation (\ref{equation::abelianhomogenity}). 

It should be mentioned that Chern forms were also used by Maurin \cite{Maurin2011}, R\'emond \cite{Remond2009}, and Vojta \cite{Vojta1996} to control intersections numbers appearing in diophantine geometry. In particular, both \cite{Maurin2011} -- in the case of tori -- and \cite{Vojta1996} -- in the case of semiabelian varieties -- endow line bundles with non-invariant hermitian metrics. Apart from this, it seems that the overlap of their work with our Sections \ref{section::chernforms} and \ref{section::distribution} is rather narrow. It is nevertheless noteworthy that Ax's Theorem plays an essential role here as it does in the work of Habegger \cite{Habegger2009a, Habegger2009} and Rémond \cite{Remond2009}. In contrast to Lemma \ref{lemma::intersection1}, our proof of the supplementary \textit{upper} bounds on intersection numbers in Lemma \ref{lemma::intersection2} uses algebraic intersection theory \cite{Fulton1998} to avoid problems steming from the non-compactness of $G$.

Finally, it should be mentioned that a previous announcement \cite{Kuehne2016b} stated a non-optimal version of the first step in the proof of Theorem \ref{theorem::main}. This included a non-effective compactness argument (\cite[Lemma 2]{Kuehne2016b}), which is replaced here by the simpler Lemma \ref{lemma::finite_approximation}. The improvement is due to the systematic avoidance of quasi-homomorphisms. Related to this is our Section \ref{section::structuralproperties}. Not being logically necessary for the main proof, it illustrates why a direct use of quasi-homomorphisms as in \cite{Habegger2009a,Habegger2009} proves difficult. Quintessentially, the surjective quasi-homomorphisms from a fixed semiabelian variety $G$ to other semiabelian varieties are more or less parameterized by the $\IQ$-points of certain algebraic varieties (Theorem \ref{theorem::realizable}). However, these varieties are generally rather complicated. This is in stark contrast to the special cases of both tori and abelian varieties, where they are just affine linear spaces. Since the set of quotients, or dually the set of algebraic subgroups, of a fixed semiabelian variety $G$ is interesting in various situations beyond the results of this article (e.g., in the Manin-Mumford conjecture or more generally in the Zilber-Pink conjectures), adding these findings here seemed beneficial to further investigations. To my knowledge, neither a statement like Theorem \ref{theorem::realizable} nor an explicit non-rational counterexample as in Example \ref{example::nonrational} is anywhere mentioned, or even hinted at, in the literature so far.

It may well be that the general framework of our method (i.e., the use of bounded families of $\IQ$-line bundles in combination with real interpolations of Chern forms) gives also some leeway in problems where no group structure is present.

\textbf{Notations and conventions.} \textit{Algebraic Geometry (General).} Denote by $k$ an arbitrary field. By a $k$-variety, we mean a reduced scheme of finite type over $k$. By a \textit{subvariety} of a $k$-variety we mean a closed reduced subscheme. Note that a subvariety is determined by its underlying topological space and we frequently identify both. The tangent bundle of a $k$-variety $X$ is written $TX$ and its fiber over a point $x \in X(k)$ is denoted by $T_x X$. Furthermore, $X^{\mathrm{sm}}$ denotes the smooth locus of $X$. 

\textit{Meromorphic functions.} For every $k$-variety $X$, we write $\mathcal{K}_X$ for the sheaf of its meromorphic functions (cf.\ \cite[Definition 7.1.13]{Liu2002}). With each meromorphic function $f \in \Gamma(X, \mathcal{K}_X)$, we associate the complement $D(f)$ of its zero set (i.e., those points $x \in X$ such that $f_x \notin \mathfrak{m}_x \mathcal{O}_{X,x}$).

\textit{Products and projections.} For any product $Y_1 \times_k \dots \times_k Y_m$ of algebraic varieties, we write $\pr_i$ ($i=1,\dots,m$) for the projection $Y_1 \times_k \dots \times_k Y_m \rightarrow Y_i$ without further specification of the varieties $Y_i$. This leads to multiple different usages of the same notation $\pr_i$, sometimes close to each other. However, this should nowhere cause confusion if context is taken into account.

\textit{Algebraic groups.} An algebraic $k$-group is a group scheme of finite type over $\mathrm{Spec}(k)$. We refer to \cite[Expos\'e VI$_\text{A}$]{SGA3I} for the basic properties of algebraic $k$-groups. An algebraic $k$-subgroup of an algebraic $k$-group $G$ is a $k$-subscheme $H$ such that the group law of $G$ induces a group law on $H$. Note that $H$ is necessarily Zariski closed in $G$ (\cite[Corollaire VI$_\text{A}$.0.5.2]{SGA3I}) and of finite type over $\mathrm{Spec}(k)$. Left-multiplication by an element $g \in G(k)$ is written $l_g: G \rightarrow G$. More generally, we use the same notation $l_g$ for the left-multiplication with respect to an action $G \times X \rightarrow X$.

A split $k$-torus is an algebraic $k$-group that is isomorphic to some direct product of copies of multiplicative groups $\mathbb{G}_{m}$. A $k$-torus is an algebraic group $G$ such that its base change $G_{k^{\mathrm{sep}}}$ to the separable closure $k^{\mathrm{sep}}$ of $k$ is a split torus. A linear $k$-algebraic group is an algebraic $k$-group whose underlying scheme is both affine and connected. 

For fixed algebraic $k$-groups $G_1$ and $G_3$, the isomorphism classes of Yoneda extensions 
\begin{equation} \label{equation::extension}
\begin{tikzcd} 
0 \ar[r] & G_1 \ar[r] & G_2 \ar[r] & G_3 \ar[r] & 0
\end{tikzcd}
\end{equation}
form an abelian group $\Ext^1_k(G_1,G_3)$ with respect to Baer summation (cf.\ \cite[Section I.3]{Oort1966}).

We write $[n]_G$ for the multiplication-by-$n$ map on any commutative algebraic group $G$. The notation $\cdot_G\! : G \times G \rightarrow G$ is used for the group law of $G$ and $e_G: k \rightarrow G$ denotes the identity of $G$. We omit the reference to $G$ in these notations when this group can be inferred from context.  
We write $A^\vee$ for the dual abelian variety associated with an abelian variety $A$. Pulling back line bundles along a homomorphism $\varphi\!: A \rightarrow B$ induces a homomorphism $\varphi^\vee \!: B^\vee \rightarrow A^\vee$ of the associated dual abelian varieties.

\textit{Line bundles and linearizations.} Line bundles are denoted by capital italic letters $L, M, \dots $ whereas the corresponding calligraphic letters $\mathcal{L}, \mathcal{M}, \dots$ are reserved for the invertible sheaves formed by their sections. The line bundle dual to $L$ is written $L^\vee$. In the situation where we have an algebraic group $G$ acting on a scheme $X$, we use Mumford's definition of $G$-linearization (\cite[Definition 1.6]{Mumford1994}) for general $\mathcal{O}_X$-modules. For an invertible sheaf $\mathcal{L}$ on $X$, a $G$-linearization corresponds to an action $\varrho\! : G \times L \rightarrow L$ such that the projection $L \rightarrow X$ is $G$-equivariant. We refer to \cite[Section 1.3]{Mumford1994} for details. Given a $G$-linearized line bundle $(L,\varrho)$ we write $(L,\varrho)^{\otimes n}$ for the line bundle $L^{\otimes n}$ with the $T$-linearization induced by $\varrho$. If $\varphi\!: H \rightarrow G$ is a homomorphism from another algebraic group $H$, $Y$ a scheme with $H$-action and $f\!: Y \rightarrow X$ a $\varphi$-equivariant algebraic map, we write $f^\ast(L,\varrho)$ for the line bundle $f^\ast L$ with the induced $H$-linearization. For a $G$-equivariant closed immersion $\iota\! : Y \hookrightarrow X$, we also write $(L,\varrho)|_Y$ instead of $\iota^\ast(L,\varrho)$.

\textit{Chern classes.} With a line bundle $L$ on a projective variety, we associate a first Chern class $c_1(L)$ in the sense of \cite{Fulton1998}; we refer the reader to there for an exposition on the basic properties of Chern classes and the basic intersection theory we are using. We denote by $[X]$ the $k$-cycle class associated with an irreducible algebraic variety $X$ of dimension $k$ (in some ambient projective variety). 

\textit{Complex points and analytifications.} Throughout this article, we choose once and for all an embedding $\IQbar \hookrightarrow \IC$. For every $\IQbar$-variety $X$, we consider its complex points $X(\IC)$ as a complex (analytic) space (see \cite{Grauert1994} for this notion), the analytification of $X$. By our above convention on varieties, $X(\IC)$ is in fact reduced.


\textit{Complex spaces, differential forms, and currents.} Let $S$ be a reduced complex (analytic) space. Recall that this means that $S$ is locally biholomorphic to a closed analytic subvariety $V$ in a complex domain $U \subset \IC^n$. A function $f$ on $S$ is smooth (resp.\ holomorphic, meromorphic) if, for each such sufficiently small local chart, it is the restriction of a smooth (resp.\ holomorphic, meromorphic) function on $U$. 
In the same way, we use local charts to define plurisubharmonic functions on $S$ as restrictions. 

Similarly, a smooth differential form $\omega$ on $S$ is a differential form on the smooth locus $S^\mathrm{sm}$ of $S$ with the following additional property: $S$ can be covered by local charts $V \subset U \subset \IC^n$ as above such that  for each such chart the differential form $\omega|_{V^{\mathrm{sm}}}$ is the restriction of a smooth differential form on $U$. There are also well-defined linear operators $d$ and $d^c = i/2\pi(\delbar - \del)$ on the smooth differential forms on $S$. For each local chart $V \subset U \subset \IC^n$, these are simply the restrictions of the operators of the same name on $\IC^n$. A differential form $\omega$ on $S$ is called closed (resp.\ exact) if $d\omega = 0$ (resp.\ there exists a differential form $\omega^\prime$ on $S$ such that $d\omega^\prime = \omega$).

A line bundle $L$ over $S$ is a complex analytic space over $S$ such that $S$ can be covered by open subsets $U$ with $L|_U = U \times \IC$. A smooth hermitian metric on $L$ is a smooth (in the above sense) function $\Vert \cdot \Vert: L \rightarrow \IR$ whose restriction to each fiber over $S$ is a hermitian metric. With such a metric, we can define a Chern form $c_1(L,\Vert \cdot \Vert)$ in the usual way; if $\mathbf{s}: U \rightarrow L$ is a non-zero holomorphic section over some open subset $U \subset S$, we set $c_1(L,\Vert \cdot \Vert)|_U= dd^c (-\log \Vert \mathbf{s} \Vert )$. This construction yields a smooth differential form $c_1(L,\Vert \cdot \Vert)$ on $S$.

\section{Preliminaries on Semiabelian Varieties}
\label{section::semiabelian}

\subsection{Basics}
\label{section::basics}

Recall that a semiabelian variety $G$ over $k$ is a connected smooth algebraic $k$-group that is the extension 
\begin{equation} \label{eqn::extension}
\begin{tikzcd} 
0 \ar[r] & T \ar[r] & G \ar[r] & A \ar[r] & 0
\end{tikzcd}
\end{equation}
of an abelian variety $A$ by a torus $T$. 
Any homomorphism from a smooth linear algebraic group to an abelian variety is the zero homomorphism (see e.g.\ \cite[Lemma 2.3]{Conrad2002}). Therefore, any smooth linear algebraic subgroup of $G$ must be contained in $T$. It follows that $T$ is the maximal smooth linear algebraic subgroup of $G$. We hence call $T$ \textit{the} toric part of $G$ and $G \rightarrow G/T=A$ (or just $A$) \textit{the} abelian quotient of $G$. For a semiabelian variety $G$ over $k$, we write $\eta_G$ for the Yoneda extension class in $\Ext^1_k(A,T)$ described by (\ref{eqn::extension}). Each homomorphism $\varphi: A \rightarrow B$ (resp.\ $\varphi: T \rightarrow S$) of abelian varieties (resp.\ tori) induces a pullback $\varphi^\ast: \Ext^1_k(B,T) \rightarrow \Ext^1_k(A,T)$ (resp.\ a pushforward $\varphi_\ast: \Ext^1_k(A,T) \rightarrow \Ext^1_k(A,S)$). 

The Weil-Barsotti formula (see \cite[Section III.18]{Oort1966} or the appendix to \cite{Moret-Bailly1981}) gives a canonical identification $\Ext^1_k(A,\mathbb{G}_m) = A^\vee(k)$. If $T$ is split (i.e., $T = \mathbb{G}_m^t$) we make frequent use of the identify $\Ext^1_k(A,\mathbb{G}_m^t) = \Ext^1_k(A,\mathbb{G}_m)^t = (A^\vee)^t(k)$. 
The pullback 
\begin{equation}
\label{equation::ext_pullback}
\varphi^\ast: \Ext^1_k(B,\mathbb{G}_m^t) \longrightarrow \Ext^1_k(A,\mathbb{G}_m^t)
\end{equation} 
along a homomorphism $\varphi: A \rightarrow B$ corresponds to the $t$-fold product $\varphi^\vee \times \cdots \times \varphi^\vee$ of the dual morphism $\varphi^\vee: B^\vee \rightarrow A^\vee$. Pushforwards also allow a simple description. Indeed, let $\varphi: \mathbb{G}_m^{t} \rightarrow \mathbb{G}_m^{t^\prime}$ be the homomorphism described by $\varphi^\ast(Y_v)= \prod_{u=1}^{t} X_u^{a_{uv}}$ in standard coordinates $X_1,\dots, X_{t}$ (resp.\ $Y_1,\dots, Y_{t_2}$) on $\mathbb{G}_m^{t}$ (resp.\ $\mathbb{G}_m^{t^\prime}$). Then, the pushforward
\begin{equation}
\label{equation::ext_pushforward}
\varphi_\ast: \Ext^1_k(A,\Gm^{t}) \longrightarrow \Ext^1_k(A,\Gm^{t^\prime})
\end{equation}
corresponds to the homomorphism $(A^\vee)^{t} \rightarrow (A^\vee)^{t^\prime}$ sending $(\eta_1, \dots, \eta_{t})$ to $(\sum_{u=1}^{t}a_{uv}\eta_u)_{1 \leq v\leq t^\prime}$.

As for abelian varieties, one calls two semiabelian varieties $G, G^\prime$ isogeneous if there exists an isogeny $G \rightarrow G^\prime$ (i.e., a surjective homomorphism $G \rightarrow G^\prime$ with finite kernel). Evidently, the multiplication-by-$n$ homomorphism $[n]$ of a semiabelian variety is an isogeny. As for abelian varieties, this yields an equivalence relation on semiabelian varieties. 

Finally, we note that quotients as well as smooth algebraic subgroups of a semiabelian variety are themselves semiabelian varieties (cf.\ \cite[Corollary 5.4.6]{Brion2017}). In particular, the algebraic subgroups appearing in Theorem \ref{theorem::main} are all semiabelian varieties because a well-known result of Cartier (\cite[Corollaire VI$_\text{B}$.1.6.1]{SGA3I}) states that all algebraic $k$-groups are smooth if $k$ has characteristic $0$.

\subsection{Homomorphisms and quasi-homomorphisms}
\label{section::homomorphisms}

We recall the fundamental result on homomorphisms between semiabelian varieties.

\begin{lemma} \label{lemma::semiabelian1} 
Let $G$ (resp.\ $G^\prime$) be a semiabelian variety over $k$ such that $A$ (resp.\ $A^\prime$) is the abelian quotient and $T$ (resp.\ $T^\prime$) is the toric part of $G$ (resp.\ $G^\prime$). For any homomorphism $\varphi: G\rightarrow G^\prime$ there exist unique homomorphisms $\varphi_{\mathrm{tor}}: T \rightarrow T^\prime$ and $\varphi_{\mathrm{ab}}: A \rightarrow A^\prime$ such that
\begin{equation} \label{equation::snakelemma}
\begin{tikzcd} 
0 \ar[r] & T \ar[r] \ar[d, "\varphi_{\mathrm{tor}}"] & G \ar[r] \ar[d, "\varphi"] & A \ar[r] \ar[d, "\varphi_{\mathrm{ab}}"] & 0 \\
0 \ar[r] & T^\prime \ar[r] & G^\prime \ar[r] & A^\prime \ar[r] & 0
\end{tikzcd}
\end{equation}
is a homomorphism of exact sequences. Furthermore, the induced map 
\begin{equation} \label{equation::injection}
\Hom(G,G^\prime) \longrightarrow \Hom(T,T^\prime) \times \Hom(A,A^\prime), \ \varphi \longmapsto (\varphi_{\mathrm{tor}}, \varphi_{\mathrm{ab}}),
\end{equation}
is an injective homomorphism with image
\begin{equation} \label{equation::image}
\{ (\varphi_{\mathrm{tor}}, \varphi_{\mathrm{ab}}) \in \Hom(T,T^\prime) \times \Hom(A,A^\prime) \ | \  (\varphi_{\mathrm{tor}})_\ast \eta_{G} = (\varphi_a)^\ast \eta_{G^\prime} \text{ in $\Ext^1_k(A,T^\prime)$} \}.
\end{equation}
\end{lemma}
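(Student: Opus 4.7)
The plan is to establish the three assertions in sequence: the existence and uniqueness of the induced pair $(\varphi_{\mathrm{tor}},\varphi_{\mathrm{ab}})$, the injectivity of the map (\ref{equation::injection}), and the identification (\ref{equation::image}) of its image.

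For existence and uniqueness, I would invoke the fact, already recalled in Section \ref{section::basics}, that any homomorphism from a smooth linear algebraic group to an abelian variety vanishes. Applied to the composition $T \hookrightarrow G \xrightarrow{\varphi} G^\prime \twoheadrightarrow A^\prime$, this forces $\varphi(T) \subseteq T^\prime$, so $\varphi_{\mathrm{tor}} := \varphi|_T$ is uniquely determined. The universal property of the fppf quotient $G \twoheadrightarrow A$ then produces a unique $\varphi_{\mathrm{ab}}: A \to A^\prime$ making (\ref{equation::snakelemma}) commute. For the injectivity of (\ref{equation::injection}), suppose both components of $\varphi$ vanish: then $\varphi_{\mathrm{ab}} = 0$ makes $\varphi$ factor through $T^\prime \hookrightarrow G^\prime$, while $\varphi_{\mathrm{tor}} = 0$ makes this factorization trivial on $T$ and hence descend to a homomorphism $A \to T^\prime$. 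The latter is zero by the same vanishing principle, so $\varphi = 0$.

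It remains to identify the image of (\ref{equation::injection}) with the set (\ref{equation::image}). The forward containment is formal: given $\varphi$, the outer rectangle of (\ref{equation::snakelemma}) exhibits a single extension of $A$ by $T^\prime$ which is simultaneously the pushout of $\eta_G$ along $\varphi_{\mathrm{tor}}$ and the pullback of $\eta_{G^\prime}$ along $\varphi_{\mathrm{ab}}$, so the two classes agree in $\Ext^1_k(A,T^\prime)$. For the substantive converse, given a compatible pair $(\varphi_{\mathrm{tor}},\varphi_{\mathrm{ab}})$, I would form the pushout $E_1$ of $\eta_G$ along $\varphi_{\mathrm{tor}}$ and the pullback $E_2$ of $\eta_{G^\prime}$ along $\varphi_{\mathrm{ab}}$ in the abelian category of fppf sheaves in abelian groups (where commutative algebraic $k$-groups sit as an abelian subcategory, as recalled in Section \ref{section::basics}). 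Both $E_1$ and $E_2$ are then extensions of $A$ by $T^\prime$ representing the same Yoneda class, so there is a Baer isomorphism $E_1 \xrightarrow{\sim} E_2$ compatible with the identities on $T^\prime$ and on $A$. Since $E_1$ receives a canonical homomorphism from $G$ and $E_2$ admits a canonical one to $G^\prime$, the composite $G \to E_1 \xrightarrow{\sim} E_2 \to G^\prime$ is a homomorphism whose toric and abelian components, computed on $T$ and modulo $T^\prime$ respectively, coincide with $\varphi_{\mathrm{tor}}$ and $\varphi_{\mathrm{ab}}$ by construction.

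The main obstacle is bookkeeping rather than conceptual: one must verify that the pushout and pullback constructed above really land in the subcategory of semiabelian varieties, and that the Baer equivalence of extension classes lifts to an actual isomorphism of algebraic groups over the identities on kernel and quotient. Both points are standard consequences of the abelian-category framework already cited from \cite{SGA3I}, after which the argument becomes a routine diagram chase.
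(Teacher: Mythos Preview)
Your proposal is correct and follows essentially the same approach as the paper: the paper does not give a detailed proof either, but sketches exactly your argument --- the vanishing of homomorphisms from linear groups to abelian varieties for existence, and ``abstract homological algebra in the abelian category of commutative algebraic groups (or fppf sheaves in abelian groups)'' for the remaining assertions, citing \cite{Bertrand1983}, \cite{Villareal2008}, and \cite[Chapter VII]{Serre1988}. Your pushout/pullback construction and appeal to Baer equivalence is precisely the homological algebra the paper gestures at, so you have in fact supplied more detail than the paper itself.
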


This lemma is well-known in the literature (see e.g.\ \cite{Bertrand1983} or \cite{Villareal2008}). In fact, the existence of a pair $(\varphi_{\mathrm{ab}}, \varphi_{\mathrm{tor}})$ follows directly from the fact that any map from a smooth linear algebraic group to an abelian variety is zero (\cite[Lemma 2.3]{Conrad2002}) and its uniqueness is obvious. The remaining assertions can be shown by standard homological algebra in the category of commutative algebraic $k$-groups, which is abelian by a result of Grothendieck \cite[Th\'eor\`eme VI$_\text{A}$.5.4.2]{SGA3I}. By the snake lemma, the homomorphism $\varphi$ is surjective (resp.\ an isogeny) if and only if both $\varphi_{\mathrm{tor}}$ and $\varphi_{\mathrm{ab}}$ are surjective (resp.\ isogenies). All of this is contained in \cite[Chapter VII]{Serre1988}, described in a pre-schematic language.

In the situation of Lemma \ref{lemma::semiabelian1} we call $\varphi_{\mathrm{tor}}$ (resp.\ $\varphi_{\mathrm{ab}}$) the toric (resp.\ abelian) component of $\varphi$. In addition, we say that $\varphi$ is represented by the pair $(\varphi_{\mathrm{tor}}, \varphi_{\mathrm{ab}})$ and, conversely, that $(\varphi_{\mathrm{tor}},\varphi_{\mathrm{ab}})$ represents $\varphi$. We state an immediate consequence of Lemma \ref{lemma::semiabelian1} for later reference as a separate lemma.

\begin{lemma} \label{lemma::semiabelian3} Assume that $k$ is algebraically closed. Let $G$ be a semiabelian variety over $k$ with abelian quotient $A$ and toric part $\mathbb{G}_m^{t}$. For every homomorphism $\varphi_{\mathrm{tor}}\!: \mathbb{G}_m^{t} \rightarrow \mathbb{G}_m^{t^\prime}$ and every isogeny $\varphi_{\mathrm{ab}}\!: A \rightarrow B$ there exists a semiabelian variety $G^\prime$ over $k$ and a homomorphism $\varphi\!: G \rightarrow G^\prime$ represented by $(\varphi_{\mathrm{tor}},\varphi_{\mathrm{ab}})$.
\end{lemma}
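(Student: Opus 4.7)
The strategy is to apply Lemma \ref{lemma::semiabelian1} in reverse: it suffices to construct a semiabelian variety $G^\prime$ with toric part $\mathbb{G}_m^{t^\prime}$ and abelian quotient $B$ whose extension class $\eta_{G^\prime} \in \Ext^1_k(B,\mathbb{G}_m^{t^\prime})$ satisfies the compatibility
\[
(\varphi_{\mathrm{tor}})_\ast \eta_G = \varphi_{\mathrm{ab}}^\ast \eta_{G^\prime}
\]
in $\Ext^1_k(A,\mathbb{G}_m^{t^\prime})$. Once such a $G^\prime$ is in hand, the pair $(\varphi_{\mathrm{tor}},\varphi_{\mathrm{ab}})$ lies in the image set (\ref{equation::image}) and is therefore represented by a unique homomorphism $\varphi\!: G \rightarrow G^\prime$, as desired.

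Under the Weil-Barsotti identifications $\Ext^1_k(A,\mathbb{G}_m^{t}) = (A^\vee)^{t}(k)$ and $\Ext^1_k(B,\mathbb{G}_m^{t^\prime}) = (B^\vee)^{t^\prime}(k)$, and using the explicit descriptions of pullback (\ref{equation::ext_pullback}) and pushforward (\ref{equation::ext_pushforward}), the displayed compatibility turns into a concrete equation on $k$-points. Writing $\eta_G = (\eta_1,\ldots,\eta_t) \in (A^\vee)^t(k)$ and letting $(a_{uv})$ be the matrix representing $\varphi_{\mathrm{tor}}$, the problem reduces to finding $\eta_{G^\prime} = (\eta^\prime_1,\ldots,\eta^\prime_{t^\prime}) \in (B^\vee)^{t^\prime}(k)$ with
\[
\varphi_{\mathrm{ab}}^\vee(\eta^\prime_v) = \sum_{u=1}^{t} a_{uv}\,\eta_u \qquad \text{for each } 1 \leq v \leq t^\prime.
\]

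Since $\varphi_{\mathrm{ab}}\!: A \rightarrow B$ is an isogeny, the dual morphism $\varphi_{\mathrm{ab}}^\vee\!: B^\vee \rightarrow A^\vee$ is also an isogeny; in particular, it is a finite surjective morphism of abelian varieties and hence surjective on $k$-points as $k$ is algebraically closed. Each $\eta^\prime_v$ can therefore be chosen to solve the corresponding lifting problem, yielding the sought extension class $\eta_{G^\prime}$; taking $G^\prime$ to be any representative of this Yoneda class concludes the argument. I do not anticipate a substantive obstacle: the whole proof is a short diagram chase on $\Ext^1$-groups, and the only non-formal input is the surjectivity of $\varphi_{\mathrm{ab}}^\vee$ on $k$-points, which is an immediate consequence of the algebraic closedness of $k$ together with the isogeny hypothesis on $\varphi_{\mathrm{ab}}$.
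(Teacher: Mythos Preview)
Your proof is correct and follows essentially the same approach as the paper: reduce via Lemma~\ref{lemma::semiabelian1} to solving $(\varphi_{\mathrm{tor}})_\ast \eta_G = \varphi_{\mathrm{ab}}^\ast \eta_{G^\prime}$ in $(A^\vee)^{t^\prime}(k)$, and then use that the dual $\varphi_{\mathrm{ab}}^\vee$ of an isogeny is an isogeny, hence surjective on $k$-points over an algebraically closed field. The paper's proof is the same argument, only written a bit more tersely.
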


\begin{proof} Write $(\varphi_{\mathrm{tor}})_\ast \eta_{G}=(\eta_1^\dprime,\cdots, \eta_{t^\prime}^\dprime) \in (A^\vee)^{t^\prime}(k)$. Since $\varphi_{\mathrm{ab}}^\vee: B^\vee \rightarrow A^\vee$ is an isogeny (cf.\ \cite[Remark (3) on p.\ 81]{Mumford1970}), there exist $\eta_i^\prime \in B^\vee(k)$ such that $\eta_i^\dprime = \varphi_{\mathrm{ab}}^\vee (\eta_i^\prime)$. Let $G^\prime$ be the semiabelian variety described by $\eta_{G^\prime}=(\eta_1^\prime,\dots,\eta_{t^\prime}^\prime) \in (B^\vee)^{t^\prime}(k)=\Ext^1_k(B,\Gm^{t^\prime})$. As $(\varphi_{\mathrm{tor}})_\ast \eta_G  = (\varphi_{\mathrm{ab}})^\ast \eta_{G^\prime}$, there exists a homomorphism $\varphi: G \rightarrow G^\prime$ representing $(\varphi_{\mathrm{tor}}, \varphi_{\mathrm{ab}})$ by Lemma \ref{lemma::semiabelian1}.
\end{proof}

We need to work also with quasi-homomorphisms of semiabelian varieties. First of all, note that for any semiabelian varieties $G$ and $G^\prime$ the $\IZ$-module $\Hom(G,G^\prime)$ of homomorphisms is torsion-free. Indeed, this is true for both tori and abelian varieties so that we may infer the general case from Lemma \ref{lemma::semiabelian1}. By quasi-homomorphisms we mean the elements of $\Hom_\IQ(G,G^\prime)=\Hom(G,G^\prime)\otimes_\IZ \IQ$.  In analogy to actual homomorphisms, each quasi-homomorphism is denoted in the form $\phi\!: G \rightarrow_\IQ G^\prime$. By tensoring (\ref{equation::injection}) with $\IQ$, we can also associate with each quasi-homomorphism $\phi\!: G \rightarrow_\IQ G^\prime$ uniquely a toric component $\phi_{\mathrm{tor}}\!: T \rightarrow_\IQ T^\prime$ and an abelian component $\phi_{\mathrm{ab}}\!: A \rightarrow_\IQ A^\prime$. With each quasi-homomorphism $\phi\!: G \rightarrow_\IQ G^\prime$ we can associate a ``kernel up to torsion'' $\ker(\phi)+\Tors(G)$ 
in the following way: Let $n$ be a denominator of $\phi$ (i.e., $n \cdot \phi \in \Hom(G,G^\prime)$) and set $\ker(\phi)+\Tors(G)=\ker(n \cdot \phi)+\Tors(G)$. 
Additionally, we say that $\phi$ is surjective if $n \cdot \phi$ is. These definitions are clearly independent of the chosen denominator $n$. Albeit a quasi-homomorphism $\phi_{\mathrm{ab}} \in \Hom_\IQ(A^\prime, A)$ does not induce a pullback as in (\ref{equation::ext_pullback}), it gives rise to a homomorphism
\begin{equation*}
(\phi_{\mathrm{ab}})^{\ast,\IQ}: \Ext^1_k(A,\Gm^t)_\IQ = (A^\vee(k) \otimes_\IZ \IQ)^t \longrightarrow ((A^\prime)^\vee(k) \otimes_\IZ \IQ)^{t} = \Ext^1_k(A^\prime,\Gm^t)_\IQ.
\end{equation*}
Similarly, a quasi-homomorphism $\phi_t \in \Hom_\IQ(\Gm^t,\Gm^{t^\prime})$ induces a homomorphism
\begin{equation*}
(\phi_{\mathrm{tor}})_{\ast,\IQ}: \Ext^1_k(A,\Gm^t)_\IQ = (A^\vee(k) \otimes_\IZ \IQ)^t \longrightarrow (A^\vee(k) \otimes_\IZ \IQ)^{t^\prime} = \Ext^1_k(A,\Gm^{t^\prime})_\IQ
\end{equation*}
in place of (\ref{equation::ext_pushforward}).
\section{Compactifications} \label{section::compactification}

To compactify semiabelian varieties we use a well-known construction proposed by Serre (cf.\ \cite[Section 3.2]{Serre2001} and Serre's appendix in \cite{Waldschmidt1979}). Let $G$ be a semiabelian variety over $k$ with split toric part $T = \Gm^t$ and abelian quotient $A$.\footnote{Using descent along a finite Galois extension of $k^\prime/k$ (compare \cite[Example 6.2.B]{Bosch1990}) such that $T_{k^\prime}$ splits, one can get rid of the splitting assumption a posteriori but we do not need this generality.} Furthermore, let $\overline{T}$ be a $T$-equivariant compactification of $T$. This means that we are given a dense open immersion $T \hookrightarrow \overline{T}$ with $\overline{T}$ a proper $k$-variety and that there is an extension $\cdot_{\overline{T}}\!: T \times \overline{T} \rightarrow \overline{T}$ of the group law $\cdot_T\!: T \times T \rightarrow T$. We endow $G \times_k \overline{T}$ with the $T$-action given by 
\begin{equation} \label{equation::taction}
t \cdot (g, \overline{t}) = (t \cdot_G g, t^{-1} \cdot_{\overline{T}} \overline{t}), \ t \in T(S), \overline{t} \in \overline{T}(S), g \in G(S),
\end{equation}
on $S$-points. It is well-known that the (categorical) quotient $G_{\overline{T}}=G \times_k \overline{T} / T$ in the category of $k$-schemes exists and is a proper $k$-variety (see e.g.\ \cite{Faltings1984, Knop1984}). In fact, there exists a (finite) Zariski covering $\{U_i\}$ of $A$ together with compatible $T$-equivariant trivializations $\phi_i: U_i \times_A G \rightarrow U_i \times_k T$ over each $U_i$. The isomorphisms
\begin{equation*}
\phi_j \circ \phi_i^{-1}|_{(U_i \cap U_j) \times_k T}: (U_i \cap U_j) \times_k T \longrightarrow (U_i \cap U_j) \times_k T
\end{equation*}
determine sections $t_{ij} \in \Gamma(U_i \cap U_j, T)$. The variety $G$ can be described as a gluing of these trivial $T$-torsors by means of the \v{C}ech cocycle $\{t_{ij}\} \in \check{\mathrm{H}}^1(\{U_i\}, T)$. (In fact, $\{t_{ij}\}$ is also the cocycle describing $\eta_G \in (A^\vee)^t(k)=\Ext^1_k(A,T)$ in the Barsotti-Weil formula.) Via the extension $\cdot_{\overline{T}}$ of the group law $\cdot_T$, the same \v{C}ech cocycle $\{ t_{ij}\}$ determines also a gluing of the $k$-varieties $U_i \times_k \overline{T}$, yielding a proper $k$-variety $X$ and a projection $\overline{\pi}\!: G_{\overline{T}} \rightarrow A$. There is a canonical map $p\!: G \times_k \overline{T}\rightarrow G_{\overline{T}}$ over $A$ such that its base change
\begin{equation*}
p \times_{G_{\overline{T}}} (U_i \times_k \overline{T}) \!: (U_i \times_A G) \times_k \overline{T} \longrightarrow U_i \times_k \overline{T}
\end{equation*}
coincides with the action
\begin{equation*}
U_i \times_k \cdot_{\overline{T}}\!: U_i \times_k (T \times_k \overline{T}) \longrightarrow U_i \times_k \overline{T}\end{equation*}
under the identification $U_i \times_A G = U_i \times_k T$ described by $\phi_i$. Neither $G_{\overline{T}}$ nor $p$ depends on the Zariski covering $\{ U_i \}$ as the above is compatible with any further refinement. 
In addition, the $G$-action given by the group law $+_G\!: G \times G \rightarrow G$ extends uniquely to an action $G \times G_{\overline{T}} \rightarrow G_{\overline{T}}$.

If $(M, \varrho\!: T \times_k M \rightarrow M)$ is a $T$-linearized line bundle on $\overline{T}$, we endow $G \times_{k} M$ with a $T$-action in a way similar to (\ref{equation::taction}) and form the quotient $G(M, \varrho)= G \times_k M /T$. Repeating the above procedure, it is easy to infer that $G(M,\varrho)$ is a line bundle over $G_{\overline{T}}$. One checks also a compatibility $G(M \otimes M^\prime, \varrho \otimes \varrho^\prime) \approx G(M,\varrho) \otimes G(M^\prime, \varrho^\prime)$ with tensor products.

\begin{lemma} \label{lemma::ampleness} Let $(M,\varrho)$ be an ample $T$-linearized line bundle on $\overline{T}$ and $N$ an ample line bundle on $A$. Then, the line bundle $G(M,\varrho)\otimes \overline{\pi}^\ast N$ (resp.\ $G(M,\varrho)$) is ample (resp.\ nef). 
\end{lemma}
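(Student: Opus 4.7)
The plan is to reduce to a projective-bundle situation over $A$ and apply classical ampleness criteria for vector bundles, exploiting that ampleness of line bundles depends only on the numerical class. Replacing $M$ by a sufficiently large positive tensor power $M^{\otimes m}$ and simultaneously $N$ by $N^{\otimes m}$ turns the claim into one whose truth implies the original (a line bundle is ample iff one, equivalently any, positive tensor power is ample). After this substitution I may therefore assume $M$ is $T$-equivariantly very ample, so that there is a $T$-equivariant closed immersion $\iota\!: \overline{T} \hookrightarrow \IP(W)$ with $M \canisom \iota^\ast \mathcal{O}(1)$ for some finite-dimensional $T$-representation $W$. The constructions recalled in Section \ref{section::compactification} are functorial enough to yield a $G$-equivariant closed immersion $G_{\overline{T}} \hookrightarrow G_{\IP(W)}$ over $A$, under which $G(M,\varrho)$ is the pullback of $G(\mathcal{O}(1),\mathrm{nat.})$. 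Pullbacks of ample line bundles along closed immersions being ample, it suffices to treat the case $\overline{T} = \IP(W)$, $M = \mathcal{O}(1)$.

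In that situation, the associated-bundle formalism identifies $G_{\IP(W)}$ with the projective bundle $\IP(E) \to A$, where $E = G \times^T W$ is the rank-$\dim W$ vector bundle on $A$ associated with the $T$-torsor $G \to A$ and the representation $W$; under this identification $G(\mathcal{O}(1),\mathrm{nat.})$ becomes the relative tautological bundle $\mathcal{O}_{\IP(E)}(1)$ and $\overline{\pi}$ becomes the structure morphism. Decomposing $W = \bigoplus_\chi W_\chi$ into $T$-weight spaces gives $E = \bigoplus_\chi W_\chi \otimes_k \mathcal{L}_\chi$, with $\mathcal{L}_\chi = G \times^{T,\chi} \IA^1$; by the Weil--Barsotti description of extensions recalled in Section \ref{section::basics}, $\mathcal{L}_\chi$ corresponds to the class $\chi_\ast \eta_G \in A^\vee(k) = \Pic^0(A)$ and is therefore algebraically, hence numerically, trivial on $A$. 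Via the canonical identification $\IP(E) = \IP(E \otimes N)$ together with $\mathcal{O}_{\IP(E \otimes N)}(1) = \mathcal{O}_{\IP(E)}(1) \otimes \pi^\ast N$, the claim reduces to ampleness of the vector bundle $E \otimes N = \bigoplus_\chi W_\chi \otimes (\mathcal{L}_\chi \otimes N)$ on $A$. Each factor $\mathcal{L}_\chi \otimes N$ is numerically equivalent to the ample bundle $N$ and is therefore itself ample; a direct sum of ample line bundles is an ample vector bundle, which gives ampleness of $E \otimes N$ and hence of $G(M,\varrho) \otimes \overline{\pi}^\ast N$.

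For the nef statement I argue by approximation: applying the already-established ample part with $M^{\otimes m}$ (still ample and $T$-linearized) in place of $M$ shows that $G(M,\varrho)^{\otimes m} \otimes \overline{\pi}^\ast N$ is ample for every integer $m \geq 1$. Consequently, the $\IQ$-class $G(M,\varrho) + \frac{1}{m}\overline{\pi}^\ast N$ lies in the ample cone for every $m$, and letting $m \to \infty$ places $G(M,\varrho)$ in the nef cone. The only genuinely non-formal ingredient in the plan is the Weil--Barsotti identification $\mathcal{L}_\chi \leftrightarrow \chi_\ast \eta_G$, which amounts to a direct \v{C}ech-cocycle check compatible with the cocycle presentation of $\eta_G$ already used in Section \ref{section::compactification}.
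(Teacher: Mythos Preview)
Your proof is correct. For the ampleness part, the paper takes a much shorter route: it simply cites \cite[Th\'eor\`eme 4.6.13 (ii)]{EGA2} (relatively ample plus ample pullback from the base implies ample on the total space) together with \cite[Lemma 1.7]{Knop1984}. You instead reduce explicitly to the projective-bundle situation $\IP(E)\to A$, decompose $E$ into character summands $W_\chi\otimes\mathcal{L}_\chi$ with $\mathcal{L}_\chi\in\Pic^0(A)$, and conclude via the standard fact that a direct sum of ample line bundles is an ample vector bundle. Your approach is more hands-on and makes visible \emph{why} the extension class $\eta_G$, living in $\Pic^0$, plays no role for numerical positivity; the paper's approach black-boxes this into the EGA reference but is considerably shorter. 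For the nef part the two arguments are essentially identical: apply the ample statement to $G(M,\varrho)^{\otimes k}\otimes\overline{\pi}^\ast N$ and let $k\to\infty$; you phrase this as passing to the closure of the ample cone, the paper as a limit of intersection numbers with curves.
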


\begin{proof} By \cite[Example 1.2.22]{Lazarsfeld2004}, the line bundle $M^{\otimes 3k}$ is normally generated for sufficiently large integers $k$. This allows us to apply \cite[Theorem 3.5]{Knop1984}, which yields that $G(M,\varrho)^{\otimes 3k} \otimes \overline{\pi}^\ast N^{\otimes 3k} = (G(M,\varrho) \otimes \overline{\pi}^\ast N)^{\otimes 3k}$ is normally generated\footnote{We use this notion as in \cite{Knop1984,Mumford1970a}. In particular, it is not required that $G_{\overline{T}}$ is normal.} and hence very ample (cf.\ \cite[p.\ 38]{Mumford1970a} for this final implication).\footnote{The author thanks Friedrich Knop for acknowledging a gap in the proof of \cite[Lemma 1.7]{Knop1984} and for pointing out this argument.}
	
For nefness, let $C$ be a proper curve in $G_{\overline{T}}$. We already know that $G(M,\varrho)^{\otimes k} \otimes (\overline{\pi}^\ast N) = G(M^{\otimes k},\varrho^{\otimes k}) \otimes (\overline{\pi}^\ast N)$ is ample for any integer $k\geq 1$. Hence, the degree of the $0$-cycle class
\begin{equation*}
k c_1(G(M,\varrho)) \cap [C] + c_1(\overline{\pi}^\ast N) \cap [C]
\end{equation*}
is positive for any $k$ (see e.g.\ \cite[Lemma 12.1]{Fulton1998}). Dividing by $k$ and taking the limit $k\rightarrow \infty$, we obtain
\begin{equation*}
\deg(c_1(G(M,\varrho)) \cap [C]) \geq 0,
\end{equation*} 
which means that $G(M,\varrho)$ is nef.
\end{proof}

We are interested in the behavior of the above constructions with regard to homomorphisms. For this, let $\varphi\!: G \rightarrow G^\prime$ be a homomorphism of semiabelian varieties with toric component $\varphi_{\mathrm{tor}}\!: T \rightarrow T^\prime$ as in (\ref{equation::snakelemma}). In addition, let $\overline{T}$ (resp.\ $\overline{T}^\prime$) be a $T$-equivariant (resp.\ $T^\prime$-equivariant) compactification of $T$ (resp.\ $T^\prime$) so that $\varphi_{\mathrm{tor}}$ extends to a $\varphi_{\mathrm{tor}}$-equivariant map $\overline{\varphi}_{\mathrm{tor}}\!: \overline{T} \rightarrow \overline{T}^\prime$. Endowing $G \times_k \overline{T}$ (resp.\ $G^\prime \times_k \overline{T}^\prime$) with a $T$-action (resp.\ $T^\prime$-action) as in (\ref{equation::taction}), the $\varphi_{\mathrm{tor}}$-equivariant map $\varphi \times_k \overline{\varphi}_{\mathrm{tor}}\!:G \times_k \overline{T} \rightarrow G^\prime \times_k \overline{T}^\prime$ induces a map $\overline{\varphi}\!: G_{\overline{T}} \rightarrow G_{\overline{T}^\prime}^\prime$. Let now $(M,\varrho)$ be a $T^\prime$-linearized line bundle on $\overline{T}^\prime$. We have $
\overline{\varphi}^\ast G^\prime(M, \varrho) \approx G(\overline{\varphi}_{\mathrm{tor}}^\ast (M,\varrho))$; for the line bundle $G \times_k \overline{\varphi}_{\mathrm{tor}}^\ast M$ is the pullback of $G^\prime \times_k M$ along $\varphi \times_k \overline{\varphi}_{\mathrm{tor}}$ and the induced map $G \times_k \overline{\varphi}_{\mathrm{tor}}^\ast M \rightarrow G^\prime \times_k M$ is $\varphi_{\mathrm{tor}}$-equivariant.

In these considerations, the case where $\varphi$ is the multiplication-by-$n$ homomorphism $[n]_G$ for a semiabelian variety $G$ with toric part $T$ is of particular importance. \textit{To avoid pathologies, some further technical requirements on both the $T$-equivariant compactification $\overline{T}$ and the $T$-linearizable line bundle $M$ should be met.} First, an extension of $[n]_T$ to a map $[n]_{\overline{T}}\!: \overline{T} \rightarrow \overline{T}$ should exist for each integer $n$. (Such an extension is unique by separatedness.) Under this condition, there is an extension $\overline{\varphi}=[n]_{\overline{G}}\!: G_{\overline{T}}\rightarrow G_{\overline{T}}$ of $[n]_G$ by the last paragraph. Second, there should be a $T$-equivariant isomorphism $[n]_{\overline{T}}^\ast M \approx M^{\otimes |n|}$. If this is satisfied, the last assertion of the preceding paragraph specializes to $[n]_{\overline{G}}^\ast G(M,\varrho) \approx G([n]^{\ast}_{\overline{T}}(M,\varrho))\approx G((M,\varrho)^{\otimes |n|})\approx G(M, \varrho)^{\otimes |n|}$.

 Before introducing the two types of compactifications to be employed in our proof of Theorem \ref{theorem::main}, we recall a further notion. Let $T$ be a torus with $T$-equivariant compactification $\overline{T}$. Pulling meromorphic functions back fabricates a $T$-linearization of $\mathcal{K}_{\overline{T}}$. Denote by $\pr_2\!: T \times \overline{T} \rightarrow \overline{T}$ the projection and by $\sigma\!: T \times \overline{T} \rightarrow \overline{T}$ the $T$-action on $\overline{T}$. A Cartier divisor $D$ on $\overline{T}$ is called $T$-invariant if the pullbacks $\pr_2^\ast D$ and $\sigma^\ast D$ are equal. In this case, $D$ gives rise to a $T$-invariant invertible subsheaf $\mathcal{O}(D)$ of $\mathcal{K}_{\overline{T}}$. Hence, there is an induced $T$-linearization on $\mathcal{O}(D)$. We always mean this linearization when associating a $T$-linearized line bundle $(L(D),\varrho_D)$ with a $T$-invariant Cartier divisor $D$. Note that this $T$-linearization on $\mathcal{O}(D)$ is uniquely characterized by the fact that its rational section $1 \in \mathcal{K}_{\overline{T}}(\overline{T})$ is $T$-invariant. 
 
Any $T$-invariant Cartier divisor $D$ on $\overline{T}$ yields naturally a Cartier divisor on $G_{\overline{T}}$. Indeed, assume that $D$ is represented by $(V_j,f_j)$ with Zariski opens $V_j \subset \overline{T}$. For each Zariski open $U_i \subset A$ this gives a Cartier divisor on $U_i \times_A G_{\overline{T}} = U_i \times_k \overline{T}$ that is represented by $(U_i \times_k V_j, f_j \circ \pr_2)$. By $T$-invariance, these Cartier divisors glue together to a Cartier divisor $G(D)$ on $G_{\overline{T}}$. Furthermore, it is easy to see that $L(G(D))$ is isomorphic to $G(L(D),\varrho_D)$.

\begin{construction}[$D_t$, $(M_t, \varrho_t)$] \label{construction0} The torus $\Gm=\mathrm{Spec}(k[X,X^{-1}])$ has a $\Gm$-equivariant compactification $\iota_1: \Gm \hookrightarrow \IP^1 = \mathrm{Proj}(k[Z_1,Z_2])$ with $\iota_1^\ast (Z_2/Z_1)= X$. There is an extension $[n]_{\IP^1}: \IP^1  \rightarrow \IP^1 $ of $[n]_{\Gm}: \Gm \rightarrow \Gm$. Let $E_0$ (resp.\ $E_\infty$) be the $\Gm$-invariant Cartier divisor on $\IP^1$ represented by 
\begin{equation*}
(D(Z_1), Z_2/Z_1) \text{ and } (D(Z_2), 1) \text{ (resp.\ } (D(Z_1), 1) \text{ and } (D(Z_2), Z_1/Z_2)\text{).}
\end{equation*}
For the torus $T=\Gm^t$, the map $\iota_t=\iota_1 \times \cdots \times \iota_1\!: \Gm^t \hookrightarrow \overline{T}=(\IP^1)^t$ gives a $T$-equivariant compactification. Denoting by $\pr_i\!:\overline{T}=(\IP^1)^t \rightarrow \IP^1$ the projection to the $i$-th component, we set $D_t = \sum_{1 \leq i \leq t} \pr_i^\ast(E_0+E_\infty)$ and $\mathcal{M}_{t}=\mathcal{O}(D_t)$. By the above, there is a natural $T$-linearization $\varrho_{t}=\varrho_{D_t}$ on the associated line bundle $M_{t}$ that acts trivially on its global section $1 \in \mathcal{M}_t(\overline{T})$. Furthermore, from the evident identity $[n]^\ast_{\overline{T}} D_t = |n| \cdot D_t$ of Cartier divisors we obtain an identity $[n]^\ast_{\overline{T}} \mathcal{M}_{t} = \mathcal{M}^{\otimes |n|}_{t}$ of $\mathcal{O}_X$-submodules of $\mathcal{K}_{\overline{T}}$ so that $[n]_{\overline{T}}^\ast (M_t, \varrho_t) = (M_t, \varrho_t)^{\otimes |n|}$.
\end{construction}

\begin{construction}[$\overline{G}$, $M_{\overline{G}}$, $G(D_t)$] \label{construction1}
Given a semiabelian variety $G$ having split toric part $T=\Gm^t$ and abelian quotient $\pi\!: G \rightarrow A$, we use the $T$-equivariant compactification $\iota_t: \Gm^t \hookrightarrow (\IP^1)^t=\overline{T}$ constructed above to obtain a smooth compactification $\overline{G}=G_{\overline{T}}$ with abelian quotient $\overline{\pi}: \overline{G} \rightarrow A$. The $T$-invariant line bundle $(M_t,\varrho_t)$ yields further a line bundle $M_{\overline{G}}=G(M_{t},\varrho_{t})$ on $\overline{G}$, which satisfies $[n]_{\overline{G}}^\ast M_{\overline{G}} \approx M_{\overline{G}}^{\otimes |n|}$. In addition, the line bundle $M_{\overline{G}}$ is associated with the Cartier divisor $G(D_t)$.

We remark that this compactification also appears in \cite{Chambert-Loir1999,Chambert-Loir2000,Vojta1996,Vojta1999} and Serre's appendix to \cite{Waldschmidt1979}.
\end{construction}

\begin{construction}[$G_{\overline{\Gamma(\varphi_{\mathrm{tor}})}}$, $M_{\overline{\Gamma(\varphi_{\mathrm{tor}})}}$, $\pi_{\overline{\Gamma(\varphi_{\mathrm{tor}})}}$] \label{construction3} Assume given a semiabelian variety $G$ with split toric part $\Gm^{t}$ and abelian quotient $\pi: G \rightarrow A$ as well as a homomorphism $\varphi_{\mathrm{tor}} \in \Hom(\Gm^{t},\Gm^{t^\prime})$. Let $\Gamma(\varphi_{\mathrm{tor}}) \subset \Gm^{t} \times \Gm^{t^\prime}$ be the graph of $\varphi_{\mathrm{tor}}$ and $\overline{\Gamma(\varphi_{\mathrm{tor}})}$ its Zariski closure in the $(\Gm^{t} \times \Gm^{t^\prime})$-equivariant compactification $(\mathbb{P}^1)^{t} \times (\mathbb{P}^1)^{t^\prime}$.\footnote{The reader is warned that the Zariski closure $\overline{\Gamma(\varphi_{\mathrm{tor}})}$ is not normal, but that we also have no use for its normality.} The projection to $\Gm^{t}$ induces an identification $\Gamma(\varphi_{\mathrm{tor}}) = \Gm^{t}$. In this way, $\overline{\Gamma(\varphi_{\mathrm{tor}})}$ can be considered as a $\Gm^{t}$-equivariant compactification of $\Gm^{t}$. As $[n]_{\Gamma(\varphi_{\mathrm{tor}})}$ is just the restriction of $[n]_{\Gm^{t} \times \Gm^{t^\prime}}$, it clearly extends to $\overline{\Gamma(\varphi_{\mathrm{tor}})}$ because $[n]_{\Gm^{t} \times \Gm^{t^\prime}}$ extends to $(\mathbb{P}^1)^{t} \times (\mathbb{P}^1)^{t^\prime}$. Therefore, there is an extension of $[n]_{G}$ to the ``graph compactification'' $G_{\overline{\Gamma(\varphi_{\mathrm{tor}})}}$. To fix notations, we record a self-explanatory commutative diagram 
\begin{equation} \label{equation::graphdiagram2}
\begin{tikzcd} 
\arrow[d, hook] (\IP^1)^{t} & \arrow[d, hook] \arrow[l, hook'] \arrow[r,hook] \Gm^{t} & \arrow[r, "\mathrm{pr}_2"] \arrow[ll, bend right, "\mathrm{pr}_1"'] \arrow[d, hook] \overline{\Gamma(\varphi_{\mathrm{tor}})} &  (\IP^1)^{t^\prime}
\\
\arrow[d] \overline{G} & \arrow[d, "\pi"] \arrow[l, hook'] \arrow[r, "\iota_{\overline{\Gamma(\varphi_{\mathrm{tor}})}}", hook] G & \arrow[ll, bend right, "q_{\overline{\Gamma(\varphi_{\mathrm{tor}})}}"' near start] \arrow[d, "\pi_{\overline{\Gamma(\varphi_{\mathrm{tor}})}}"] G_{\overline{\Gamma(\varphi_{\mathrm{tor}})}}
\\
A & \arrow[l, "\mathrm{id}_{A}"'] \arrow[r, "\mathrm{id}_{A}"] A & A.
\end{tikzcd}
\end{equation}
Construction \ref{construction0} gives a $\Gm^{t^\prime}$-linearized line bundle $(M_{t^\prime},\varrho_{t^\prime})$ on $(\IP^1)^{t^\prime}$. Its $(\Gm^{t} \times \Gm^{t^\prime})$-linearized pullback $\pr_2^\ast (M_{t^\prime},\varrho_{t^\prime})$ yields a line bundle $M_{\overline{\Gamma(\varphi_{\mathrm{tor}})}} = G_{\overline{\Gamma(\varphi_{\mathrm{tor}})}}(\pr_2^\ast(M_{t^\prime},\varrho_{t^\prime})|_{\overline{\Gamma(\varphi_{\mathrm{tor}})}})$ on $G_{\overline{\Gamma(\varphi_{\mathrm{tor}})}}$. Setting $\varphi_{\mathrm{tor}}=\id_{\Gm^{t}}$, this construction specializes to Construction \ref{construction1} above (i.e., $\overline{G} \approx G_{\overline{\Gamma(\id_{\Gm^t})}}$  with compatible $M_{\overline{G}} \approx M_{\overline{\Gamma(\id_{\Gm^t})}}$).

For any non-zero integer $n$, we can relate $(G_{\Gamma(\varphi_{\mathrm{tor}})},M_{\Gamma(\varphi_{\mathrm{tor}})})$ with $(G_{\Gamma(n \cdot \varphi_{\mathrm{tor}})},M_{\Gamma(n\cdot \varphi_{\mathrm{tor}})})$. For this, we define $G^\prime$ and $G^\dprime$ to be the semiabelian varieties such that $\eta_{G^\prime} = (\eta_G, (\varphi_{\mathrm{tor}})_\ast \eta_G)$ and $\eta_{G^\dprime} =(\eta_G, (n \cdot \varphi_{\mathrm{tor}})_\ast \eta_G)$ in $\Ext^1(A, \Gm^{t}\times \Gm^{t^\prime})$. The equivariant closed immersions $\overline{\Gamma(\varphi_{\mathrm{tor}})}, \overline{\Gamma(n\cdot \varphi_{\mathrm{tor}})} \subset (\IP^1)^{t} \times (\IP^1)^{t^\prime}$ yield closed immersions $G_{\overline{\Gamma(\varphi_{\mathrm{tor}})}} \subset \overline{G}^\prime$ and $G_{\overline{\Gamma(n \cdot \varphi_{\mathrm{tor}})}} \subset \overline{G}^\dprime$. In addition, the finite morphism $[1]_{(\IP^1)^{t}}\times [n]_{(\IP^1)^{t^\prime}}$ yields a finite map $\vartheta_n: \overline{G}^\prime \rightarrow \overline{G}^\dprime$. As $[1]_{(\IP^1)^{t}}\times [n]_{(\IP^1)^{t^\prime}}$ restricts to a $\Gm^{t}$-equivariant birational map $\overline{\Gamma(\varphi_{\mathrm{tor}})} \rightarrow \overline{\Gamma(n \cdot \varphi_{\mathrm{tor}})}$, $\vartheta_{n}$ restricts to a birational map $\vartheta_{\varphi_{\mathrm{tor}},n}: G_{\overline{\Gamma(\varphi_{\mathrm{tor}})}} \rightarrow G_{\overline{\Gamma(n\cdot \varphi_{\mathrm{tor}})}}$. Furthermore,
\begin{equation*}
\vartheta_{\varphi_{\mathrm{tor}},n}^\ast M_{\overline{\Gamma(n\cdot \varphi_{\mathrm{tor}})}} 
\approx
\vartheta_{n}^\ast
\overline{G}^\dprime(\pr_2^\ast(M_{t^\prime},\varrho_{t^\prime}))|_{\overline{\Gamma(n \cdot \varphi_{\mathrm{tor}})}}
\approx
\overline{G}^\prime(\pr_2^\ast(M_{t},\varrho_{t})^{\otimes |n|})|_{\overline{\Gamma(\varphi_{\mathrm{tor}})}}
\approx
M_{\overline{\Gamma(\varphi_{\mathrm{tor}})}}^{\otimes |n|}.
\end{equation*}
In addition, there are the evident relations $\pi_{\overline{\Gamma(\varphi_{\mathrm{tor}})}} = \pi_{\overline{\Gamma(n \cdot \varphi_{\mathrm{tor}})}} \circ \vartheta_{\varphi_{\mathrm{tor}},n}$, $q_{\overline{\Gamma(\varphi_{\mathrm{tor}})}} = q_{\overline{\Gamma(n \cdot \varphi_{\mathrm{tor}})}} \circ \vartheta_{\varphi_{\mathrm{tor}},n}$ and $\iota_{\overline{\Gamma(n\cdot \varphi_{\mathrm{tor}})}} = \vartheta_{\varphi_{\mathrm{tor}},n} \circ \iota_{\overline{\Gamma(\varphi_{\mathrm{tor}})}}$.
\end{construction}

\begin{construction}[$\overline{\varphi}: G_{\overline{\varphi_{\mathrm{tor}}}} \rightarrow \overline{G}^\prime$] 
\label{construction2} We describe a subcase of Construction \ref{construction3} for later reference, enlarging also the commutative diagram \eqref{equation::graphdiagram2}. In this case, we start with a homomorphism $\varphi\!: G \rightarrow G^\prime$ of semiabelian varieties with split toric parts $T=\Gm^{t}$ and $T^\prime=\Gm^{t^\prime}$. We obtain a compactification $G_{\overline{\Gamma(\varphi_{\mathrm{tor}})}}$ from Construction \ref{construction3}. Furthermore, the homomorphism $\varphi$ induces now an even larger commutative diagram
\begin{equation} \label{equation::graphdiagram}
\begin{tikzcd} 
\arrow[d, hook] (\IP^1)^{t} & \arrow[d, hook] \arrow[l, hook'] \arrow[r, hook] T & \arrow[r, "\mathrm{pr}_2"] \arrow[ll, bend right, "\mathrm{pr}_1"'] \arrow[d, hook] \overline{\Gamma(\varphi_{\mathrm{tor}})} & \arrow[d, hook] (\IP^1)^{t^\prime}
\\
\arrow[d, "\overline{\pi}_0"] \overline{G} & \arrow[d,"\pi_0"] \arrow[l, hook'] \arrow[r, "\iota_\varphi", hook] G & \arrow[ll, bend right, "q_{\varphi}"' near start] \arrow[d, "\overline{\pi}_\varphi"] \arrow[r, "\overline{\varphi}"] G_{\overline{\Gamma(\varphi_{\mathrm{tor}})}} & \arrow[d, "\overline{\pi}_1"] \overline{G}^\prime
\\
A & \arrow[l, "\mathrm{id}_{A}"'] \arrow[r, "\mathrm{id}_{A}"] A & \arrow[r, "\varphi_{\mathrm{ab}}"] A & A^\prime
\end{tikzcd}
\end{equation}
such that there is a decomposition $\varphi= \overline{\varphi} \circ \iota_{\varphi}$; the map $\overline{\varphi}: G_{\overline{\Gamma(\varphi_{\mathrm{tor}})}} \rightarrow \overline{G}^\prime$ here arises naturally as follows: the toric part $\varphi_{\mathrm{tor}}: \Gm^t \rightarrow \Gm^{t^\prime}$ of $\varphi$ extends to a $\varphi_{\mathrm{tor}}$-equivariant map $\overline{\varphi}_{\mathrm{tor}}: \overline{\Gamma(\varphi_{\mathrm{tor}})} \rightarrow (\IP^1)^{t^\prime}$, which is just a restriction of $\mathrm{pr}_2: (\IP^1)^t \times (\IP^1)^{t^\prime} \rightarrow (\IP^1)^{t^\prime}$. As described above, this induces a corresponding extension $\overline{\varphi}: G_{\overline{\Gamma(\varphi_{\mathrm{tor}})}} \rightarrow \overline{G}^\prime$ of $\varphi: G \rightarrow G^\prime$.
 
In addition, each line bundle $M_{\overline{\Gamma(\varphi_{\mathrm{tor}})}}$ is a pullback $\overline{\varphi}^\ast M_{\overline{G}^\prime}$ for some homomorphism $\varphi: G \rightarrow G^\prime$ of semiabelian varieties. In fact, we can take $\eta_{G^\prime} = (\varphi_{\mathrm{tor}})_\ast \eta_G \in \Ext^1(A,\Gm^{t^\prime})$ and the homomorphism $\varphi: G \rightarrow G^\prime$ represented by $(\varphi_{\mathrm{tor}},\id_A)$.
\end{construction}

\section{Heights}
\label{section::heights}

We consistently work with (logarithmic) Weil heights and refer to \cite[Theorem B.3.6]{Hindry2000} for the main features of Weil's height machinery. In short, it provides for each line bundle $L$ on a projective $\IQbar$-variety $X$ a class of height functions $h_L\!: X(\IQbar) \rightarrow \IR$ such that any two height functions attached to $(X,L)$ differ by a globally bounded function on $X(\IQbar)$.

Let $G$ be a semiabelian variety over $\IQbar$ with toric part $T$ and abelian quotient $\pi\!: G \rightarrow A$. \textit{Assume also given a $T$-equivariant compactification $\overline{T}$ of the torus $T$ and a $T$-linearized line bundle $(M,\varrho)$ on $\overline{T}$ such that $[n]_T$ extends to $[n]_{\overline{T}}\!:\overline{T} \rightarrow \overline{T}$ and that there is an isomorphism $[n]^\ast_{\overline{T}}(M,\varrho) \approx (M,\varrho)^{\otimes n}$.} For our purposes, these conditions on $\overline{T}$ and $(M,\varrho)$ are always satisfied. Additionally, we choose a symmetric line bundle $N$ on $A$. We furnish $G_{\overline{T}}$ with the line bundle $L=G(M,\varrho) \otimes \overline{\pi}^\ast N$, which is ample if both $M$ and $N$ are ample (Lemma \ref{lemma::ampleness}). Weil's height machinery supplies us with some height function $h_L\!: G_{\overline{T}}(\IQbar) \rightarrow \IR$. The function $h_L$ is neither unique nor does it enjoy homogeneity properties like the Néron-Tate height of a symmetric line bundle on an abelian variety. However, the following lemma remedies this partially. We call a $T$-linearized line bundle $T$-effective if it has a $T$-invariant non-zero global section.

\begin{lemma} \label{lemma::canonicalheight} For any $(M,\varrho)$ (resp.\ $N$) as above, there exists a function $\widehat{h}_{G(M,\varrho)}: G_{\overline{T}}(\IQbar) \rightarrow \IR$ (resp.\ $\widehat{h}_{\overline{\pi}^\ast N}: G_{\overline{T}}(\IQbar) \rightarrow \IR$) such that
\begin{enumerate}
\item[(a)] $|h_{G(M,\varrho)}-\widehat{h}_{G(M,\varrho)}|$ (resp.\ $|h_{\overline{\pi}^\ast N}-\widehat{h}_{\overline{\pi}^\ast N}|$) is globally bounded on $G(\IQbar)$,
\item[(b)] $\widehat{h}_{G(M,\varrho)}([n]x)=|n|\widehat{h}_{G(M,\varrho)}(x)$ (resp.\ $\widehat{h}_{\overline{\pi}^\ast N}([n]x)=n^2\widehat{h}_{\overline{\pi}^\ast N}(x)$) for any $x\in G(\IQbar)$ and any integer $n$.
\item[(c)] Given a second $T$-linearized line bundle $(M^\prime,\varrho^\prime)$ (resp.\ a symmetric line bundle $N^\prime$ on $A$) as above, we have the additivity relations
\begin{equation*}
\widehat{h}_{G(M \otimes M^\prime, \varrho \otimes \varrho^\prime)} = \widehat{h}_{G(M,\varrho)}+\widehat{h}_{G(M^\prime, \varrho^\prime)} \text{ and } \widehat{h}_{\overline{\pi}^\ast(N\otimes N^\prime)} = \widehat{h}_{\overline{\pi}^\ast N}+\widehat{h}_{\overline{\pi}^\ast N^\prime}.
\end{equation*}
\item[(d)] If $(M,\varrho)$ (resp.\ $N$) is $T$-effective (resp.\ ample), then $\widehat{h}_{G(M,\varrho)}|_{{G}(\IQbar)}$ (resp.\ $\widehat{h}_{\overline{\pi}^\ast N}$) is non-negative.
\end{enumerate}
Furthermore, $\widehat{h}_{G(M,\varrho)}$ (resp.\ $\widehat{h}_{\overline{\pi}^\ast N}$) is uniquely characterized by (a) and (b).
\end{lemma}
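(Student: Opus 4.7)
The plan is to apply Tate's classical limit construction twice, once for $G(M,\varrho)$ with its linear pullback relation and once for $\overline{\pi}^\ast N$ with its quadratic one. The two cases are formally parallel, and a single telescoping argument produces both canonical heights together with properties (a)--(d).

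First I fix any representative Weil heights $h_{G(M,\varrho)}$ and $h_{\overline{\pi}^\ast N}$, and I define
\begin{equation*}
\widehat{h}_{G(M,\varrho)}(x)=\lim_{n\to \infty} \frac{h_{G(M,\varrho)}([2^n]x)}{2^n},
\qquad
\widehat{h}_{\overline{\pi}^\ast N}(x)=\lim_{n\to \infty} \frac{h_{\overline{\pi}^\ast N}([2^n]x)}{4^n}.
\end{equation*}
The key input is that the compatibilities $[n]_{\overline{G}}^\ast G(M,\varrho)\cong G(M,\varrho)^{\otimes n}$ recorded at the end of Section~\ref{section::compactification} and the standard relation $[n]_A^\ast N\cong N^{\otimes n^2}$ for symmetric $N$ imply, by functoriality of the height machine,
\begin{equation*}
h_{G(M,\varrho)}([n]x)=n\, h_{G(M,\varrho)}(x)+O(1),\qquad
h_{\overline{\pi}^\ast N}([n]x)=n^2\, h_{\overline{\pi}^\ast N}(x)+O(1),
\end{equation*}
where the implied constants are independent of $x$. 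Writing the partial sums of each sequence as a telescope, the differences of consecutive terms are $O(2^{-n})$ respectively $O(4^{-n})$, so both sequences converge uniformly in $x$ to functions that differ from $h_{G(M,\varrho)}$ and $h_{\overline{\pi}^\ast N}$ by a globally bounded amount. This is property (a), and property (b) is immediate from the definition together with the fact that $[-1]$ leaves heights invariant up to $O(1)$ on each piece (so one can first establish (b) for $n=2^k$ and then bootstrap to general $n$ using the uniqueness argument below).

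Property (c) follows because the additivity of the height machine $h_{L\otimes L'}=h_L+h_{L'}+O(1)$ passes to the Tate limit: the bounded discrepancy is killed after dividing by $2^n$ (resp.\ $4^n$) and letting $n\to\infty$. For (d), a $T$-invariant non-zero global section of $(M,\varrho)$ restricts to a $T$-invariant regular function on $T$ and is therefore nowhere vanishing on $T$; by the construction of $G(M,\varrho)$ via $T$-descent it thus descends to a global section of $G(M,\varrho)$ that is nowhere vanishing on $G\subseteq G_{\overline{T}}$. The corresponding effective Cartier divisor does not meet $G(\overline{\mathbb{Q}})$, so the associated height is bounded below on $G(\overline{\mathbb{Q}})$; applying this bound to $[2^n]x$, dividing by $2^n$ and letting $n\to\infty$ yields $\widehat{h}_{G(M,\varrho)}\geq 0$ on $G(\overline{\mathbb{Q}})$. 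The ample case for $N$ is the standard fact that the Néron--Tate height of an ample symmetric line bundle on $A$ is non-negative, which is transported by $\overline{\pi}$.

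Finally, uniqueness is the usual rigidity argument: if $\widehat{h}_1$ and $\widehat{h}_2$ both satisfy (a) and (b), their difference $d=\widehat{h}_1-\widehat{h}_2$ is globally bounded on $G_{\overline{T}}(\overline{\mathbb{Q}})$ and satisfies $d([n]x)=|n|\, d(x)$ (resp.\ $n^2\, d(x)$); applied with $n\to\infty$ at any fixed $x$ this forces $d\equiv 0$. The only step that requires a little care is the descent of $T$-invariant sections in (d) and the compatibility of the isomorphism $[n]_{\overline{G}}^\ast G(M,\varrho)\cong G(M,\varrho)^{\otimes n}$ up to an $O(1)$ height error, but both are handled by the explicit construction of Section~\ref{section::compactification}; everything else is a direct transcription of Tate's limit procedure.
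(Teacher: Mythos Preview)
Your proposal is correct and follows essentially the same approach as the paper: both invoke Tate's limit procedure (the paper by citing \cite[Theorem B.4.1]{Hindry2000} directly, you by spelling it out), derive (c) from the additivity of the Weil height machine passed to the limit, and obtain (d) by observing that a $T$-invariant non-zero section of $(M,\varrho)$ descends to a global section of $G(M,\varrho)$ whose zero locus misses $G$, so the height is bounded below on $G(\overline{\mathbb{Q}})$. The only cosmetic difference is that the paper glues the descended section explicitly from the local trivializations $U_i\times_k \overline{T}$, whereas you phrase this as ``$T$-descent''; the content is identical.
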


It is natural to work with the unique $\widehat{h}_{L}=\widehat{h}_{G(M,\varrho)}+\widehat{h}_{\overline{\pi}^\ast N}$ instead of a non-canonical Weil height $h_{L}$. By (a) of the above theorem, their difference is globally bounded on $G(\IQbar)$. As for abelian varieties, the zero set of $\widehat{h}_{L}$ coincides with the torsion points of $G$ if both $M$ and $N$ are ample and $(M,\varrho)$ is $T$-effective. 

The assumption of $T$-effectivity in \textit{(d)} cannot be relaxed to mere effectivity. In fact, assume that $T=\Gm^t$ and let $Q_i$, $1 \leq i \leq t$, be the line bundles on $A$ such that $\eta_G = (Q_1,\dots,Q_t) \in A^\vee(k)^t = \Ext^1_k(A,T)$. For $\varrho$ running through all possible $T$-linearizations of the trivial line bundle $\IA^1_{\overline{T}}$, the line bundle $G(\IA^1_{\overline{T}},\varrho)$ runs through $\overline{\pi}^\ast(Q_1^{k_i} \otimes \cdots \otimes Q_t^{k_t})$ for arbitrary integers $k_i$, as a comparison of \v{C}ech cocycles shows. Except for this caveat, we do not need this and leave the verification to the interested reader.

\begin{proof} \textit{(a), (b)}: The first two assertions of the lemma as well as uniqueness can be inferred directly from \cite[Theorem B.4.1]{Hindry2000} applied to $G(M,\varrho)$ (resp.\ $N$). Indeed, $[n]_{\overline{G}}^\ast G(M,\varrho) \approx G(M,\varrho)^{\otimes n}$ by our assumption (compare Section \ref{section::compactification}) and $[n]_{\overline{G}}^\ast \overline{\pi}^\ast N \approx \overline{\pi}^\ast [n]_A^\ast N \approx \overline{\pi}^\ast N^{\otimes n^2}$ since $N$ is symmetric. (The result in \cite{Hindry2000} is stated for divisor classes on smooth varieties but it is also true for line bundles on general varieties with exactly the same proof. The reader may compare also \cite[Lemma 9.2.4]{Bombieri2006}.)

\textit{(c)}: As $G(M \otimes M^\prime,\varrho \otimes \varrho^\prime) \approx G(M,\varrho) \otimes G(M^\prime,\varrho^\prime)$, the global boundedness of $|h_{G(M\otimes M^\prime,\varrho \otimes \varrho^\prime)} - h_{G(M,\varrho)} - h_{G(M^\prime,\varrho^\prime)}|$ is a standard property of the Weil height. As (a) and (b) already characterize $\hhat_{G(M \otimes M^\prime,\varrho \otimes \varrho^\prime)}$ uniquely, we infer the first equality in (c). The second one follows similarly.

\textit{(d)}: Similarly, one observes that $\widehat{h}_{G(M,\varrho)}$ (resp.\ $\widehat{h}_{\overline{\pi}^\ast N}$) is non-negative if $h_{G(M,\varrho)}$ (resp.\ $h_{\overline{\pi}^\ast N}$) is bounded from below on $G(\IQbar)$ (resp.\ $G_{\overline{T}}(\IQbar)$). For the height $h_{\overline{\pi}^\ast N}$, this is true because the ampleness of $N$ implies that $N$ and hence $\overline{\pi}^\ast N$ has empty base locus. By assumption, we have a $T$-invariant non-zero global section $s\!: \overline{T} \rightarrow M$. This gives rise to local sections $s_i^\prime = U_i \times_k s\!: U_i \times_A G_{\overline{T}} = U_i \times_k \overline{T} \rightarrow U_i \times_k M = U_i \times_A G(M,\varrho)$. Due to the $T$-invariance of $s$, the sections $s_i^\prime$ glue together to a non-zero global section $s^\prime$ of $G(M,\varrho)$. Furthermore, $T$-invariance guarantees that $s_x$ generates $M_x$ for every $x \in T(\IQbar)$. We infer that $s^\prime_x$ generates $G(M,\varrho)$ for every $x \in G(\IQbar)$. Therefore the base locus of $G(M,\varrho)$ is contained in $G_{\overline{T}} \setminus G$ and $h_{G(M,\varrho)}|_{G(\IQbar)}$ is bounded from below.
\end{proof}

In addition, we have a good functorial behavior of the heights $\widehat{h}_{G(M,\varrho)}$ and $\widehat{h}_{\overline{\pi}^\ast N}$. To state precisely what this means, let $G$ (resp.\ $G^\prime$) be a semiabelian variety over $\IQbar$ with toric part $T$ (resp.\ $T^\prime$) and abelian quotient $A$ (resp.\ $A^\prime$). Take furthermore equivariant compactifications $\overline{T}$ and $\overline{T}^\prime$ so that $\varphi_{\mathrm{tor}}: T \rightarrow T^\prime$ extends to a $\varphi_{\mathrm{tor}}$-equivariant map $\overline{\varphi}_{\mathrm{tor}}: \overline{T} \rightarrow \overline{T}^\prime$. In this situation, we consider a $T^\prime$-linearized line bundle $(M,\varrho)$ on $\overline{T}^\prime$ such that there is a $T^\prime$-equivariant isomorphism $[n]_{\overline{T}^\prime}^\ast (M,\varrho) \approx (M,\varrho)^{\otimes n}$. We also take a symmetric ample line bundle $N$ on $A^\prime$. 

\begin{lemma} 
\label{lemma::functoriality}
In the situation described in the above paragraph, let $\varphi: G \rightarrow G^\prime$ be a homomorphism with toric (resp.\ abelian) component $\varphi_{\mathrm{tor}}$ (resp.\ $\varphi_{\mathrm{ab}}$). For every $x \in G(\IQbar)$, we have then
\begin{equation*}
\widehat{h}_{G^\prime(M,\varrho)}(\varphi(x))=\widehat{h}_{G(\varphi_{\mathrm{tor}}^\ast (M,\varrho))}(x) \text{ and } \widehat{h}_{(\overline{\pi}^\prime)^\ast N}(\varphi(x))=\widehat{h}_{\overline{\pi}^\ast (\varphi_{\mathrm{ab}}^\ast N)}(x).
\end{equation*}
\end{lemma}
\begin{proof}
This follows directly from the functorial behavior of the Weil height under pullback and the uniqueness assertion of Lemma \ref{lemma::canonicalheight}. 
\end{proof}

We note a further addendum to Lemma \ref{lemma::canonicalheight}, which is specifically related to the line bundles $M_{\overline{G}}$ and $M_{\overline{\Gamma({\varphi_{\mathrm{tor}}})}}$. 

\begin{lemma} \label{lemma::nonnegativeheight} Let $G$ a semiabelian variety with split toric part $\Gm^{t}$ and abelian quotient $\pi: G \rightarrow A$. For any $\varphi_{\mathrm{tor}} \in \Hom(\Gm^{t},\Gm^{t^\prime})$, the restriction of $\hhat_{M_{\overline{\Gamma(\varphi_{\mathrm{tor}})}}}: G_{\overline{\Gamma(\varphi_{\mathrm{tor}})}}(\IQbar) \rightarrow \IR$ to $G(\IQbar)$ is non-negative. In particular, the restriction of $\hhat_{M_{\overline{G}}}$ to $G(\IQbar)$ is non-negative.
\end{lemma}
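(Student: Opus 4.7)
The plan is to reduce the assertion to Lemma \ref{lemma::canonicalheight}(d) by exhibiting a $\Gm^t$-invariant non-zero global section of the line bundle $\pr_2^\ast(M_{t^\prime},\varrho_{t^\prime})|_{\overline{\Gamma(\varphi_{\mathrm{tor}})}}$ on $\overline{\Gamma(\varphi_{\mathrm{tor}})}$. Provided such a section exists, the line bundle is $\Gm^t$-effective in the sense of Section \ref{section::heights} (where the $\Gm^t$-action on $\overline{\Gamma(\varphi_{\mathrm{tor}})}$ is the one from Construction \ref{construction3}, i.e.\ the restriction of the natural $\Gm^t \times \Gm^{t^\prime}$-action via the closed immersion $\Gm^t \hookrightarrow \Gm^t \times \Gm^{t^\prime}$, $t \mapsto (t,\varphi_{\mathrm{tor}}(t))$), and Lemma \ref{lemma::canonicalheight}(d) immediately yields the non-negativity of $\hhat_{M_{\overline{\Gamma(\varphi_{\mathrm{tor}})}}}$ on $G(\IQbar)$.

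To construct this section I would start from the canonical global section $1 \in \mathcal{M}_{t^\prime}((\IP^1)^{t^\prime})$ recorded in Construction \ref{construction0}, which is $\Gm^{t^\prime}$-invariant with respect to the linearization $\varrho_{t^\prime}$. Pulling back along $\pr_2\!: (\IP^1)^t \times (\IP^1)^{t^\prime} \to (\IP^1)^{t^\prime}$ produces a non-zero global section of $\pr_2^\ast M_{t^\prime}$ which is $\Gm^t \times \Gm^{t^\prime}$-invariant for the linearization $\pr_2^\ast \varrho_{t^\prime}$. Its restriction to $\overline{\Gamma(\varphi_{\mathrm{tor}})}$ inherits $\Gm^t$-invariance because the $\Gm^t$-action on $\overline{\Gamma(\varphi_{\mathrm{tor}})}$ factors through the diagonal embedding into $\Gm^t \times \Gm^{t^\prime}$; and it is non-zero since $\pr_2^\ast 1$ is non-vanishing on the dense open $\Gm^t \times \Gm^{t^\prime} \subset (\IP^1)^t \times (\IP^1)^{t^\prime}$, and this open meets $\overline{\Gamma(\varphi_{\mathrm{tor}})}$ in the dense open $\Gamma(\varphi_{\mathrm{tor}})$.

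Combining these two steps, the line bundle $\pr_2^\ast(M_{t^\prime},\varrho_{t^\prime})|_{\overline{\Gamma(\varphi_{\mathrm{tor}})}}$ is $\Gm^t$-effective. Applying Lemma \ref{lemma::canonicalheight}(d) to the data $(\overline{T},(M,\varrho)) = (\overline{\Gamma(\varphi_{\mathrm{tor}})},\pr_2^\ast(M_{t^\prime},\varrho_{t^\prime})|_{\overline{\Gamma(\varphi_{\mathrm{tor}})}})$ then yields the non-negativity of $\hhat_{G_{\overline{\Gamma(\varphi_{\mathrm{tor}})}}(\pr_2^\ast(M_{t^\prime},\varrho_{t^\prime})|_{\overline{\Gamma(\varphi_{\mathrm{tor}})}})} = \hhat_{M_{\overline{\Gamma(\varphi_{\mathrm{tor}})}}}$ on $G(\IQbar)$. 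The second assertion follows by specializing $\varphi_{\mathrm{tor}} = \id_{\Gm^t}$, in which case Construction \ref{construction3} reduces to Construction \ref{construction1} and $M_{\overline{\Gamma(\id_{\Gm^t})}} \approx M_{\overline{G}}$ as already noted there.

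No step in this outline is genuinely difficult; the only mild subtlety is bookkeeping the various linearizations and verifying that the $\Gm^t$-action on $\overline{\Gamma(\varphi_{\mathrm{tor}})}$ used in Construction \ref{construction3} is indeed compatible with the one induced by pullback and restriction from $(\IP^1)^t \times (\IP^1)^{t^\prime}$. Once that compatibility is written down, the argument is a direct application of the effectivity criterion supplied by part (d) of the height lemma.
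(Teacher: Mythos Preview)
Your proposal is correct and uses the same essential ingredient as the paper: the $\Gm^t$-invariant global section $1 \in \mathcal{M}_{t^\prime}((\IP^1)^{t^\prime})$ from Construction~\ref{construction0}, together with Lemma~\ref{lemma::canonicalheight}(d). The organization differs slightly: the paper first reduces the general case to the special case $M_{\overline{G}^\prime}$ by invoking Construction~\ref{construction2} (where $M_{\overline{\Gamma(\varphi_{\mathrm{tor}})}} \approx p_\varphi^\ast M_{\overline{G}^\prime}$ for a suitable homomorphism $\varphi:G\to G^\prime$) and the functoriality relation~(\ref{equation::functoriality}), and then checks $\Gm^{t^\prime}$-effectivity of $(M_{t^\prime},\varrho_{t^\prime})$ on $(\IP^1)^{t^\prime}$; you instead pull the section back along $\pr_2$ and restrict to $\overline{\Gamma(\varphi_{\mathrm{tor}})}$, verifying $\Gm^t$-effectivity directly on the graph compactification without any reduction step. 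Your route is marginally more self-contained (no appeal to Construction~\ref{construction2} or functoriality of canonical heights), while the paper's route has the advantage of isolating the effectivity check on the simplest possible compactification $(\IP^1)^{t^\prime}$.
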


\begin{proof} In Construction \ref{construction2}, it is shown that each $M_{\overline{\Gamma(\varphi_{\mathrm{tor}})}}$ is isomorphic to the pullback of a line bundle of the form $M_{\overline{G}^\prime}$. Using Lemma \ref{lemma::functoriality}, it hence suffices to prove the non-negativity of $\hhat_{M_{\overline{G}}}$. This is already in the literature (cf.\ \cite[Lemme 3.9]{Chambert-Loir2000}), but we give the argument here for completeness because it is a direct consequence of Construction \ref{construction0}. The Cartier divisor $D_{t}$ on $\Gm^{t}$ is effective and $\Gm^{t}$-invariant so that the constant function $1 \in \mathcal{K}_{(\IP^1)^{t}}((\IP^1)^{t})$ gives rise to a $\Gm^{t}$-invariant global section of its associated line bundle $(M_{t},\varrho_{t})$. In other words, $(M_{t},\varrho_{t})$ is $\Gm^{t}$-effective and we can use Lemma \ref{lemma::canonicalheight} (d).
\end{proof}

Fix again a semiabelian variety $G$ over $\IQbar$ with toric part $\Gm^{t}$ and abelian quotient $\pi: G \rightarrow A$. Furthermore, let $\overline{G}$ be a compactification of $G$ and $\overline{\pi}\!: \overline{G} \rightarrow A$ its abelian quotient as in Construction \ref{construction1}. We want to estimate the difference between $\hhat_{M_{\overline{\Gamma(\varphi_{\mathrm{tor}})}}}$ and $\hhat_{M_{\overline{\Gamma(\varphi^\prime_{\mathrm{tor}})}}}$ for two ``close'' homomorphisms $\varphi_{\mathrm{tor}}, \varphi_{\mathrm{tor}}^\prime\in \Hom(\Gm^{t},\Gm^{t^\prime})$. Simultaneously, we examine the corresponding ``abelian'' analogue. For this purpose, let $A^\prime$ be a second abelian variety and $N$ (resp.\ $N^\prime$) an ample symmetric line bundle on $A$ (resp.\ $A^\prime$). We choose some linear norms $\vert\cdot\vert$ on $\Hom(\Gm^{t}, \Gm^{t^\prime})$ and $\Hom(A, A^\prime)$ for quantification.\footnote{A natural choice of norm for $\Hom(A, A^\prime)$, using the Rosati involution on $A \times A^\prime$, is introduced in \cite[Section 4]{Habegger2009a}. As there, however, we have no need to choose any particular norm.}

\begin{lemma} \label{lemma::heightsclosehomomorphisms} In the above situation, there exist constants $c_1$ and $c_2$ depending only on $G$, $N$, $t^\prime$, $A^\prime$, $N^\prime$ and the linear norms $\vert\cdot\vert$ on $\Hom(\Gm^{t},\Gm^{t^\prime})$ and $\Hom(A,A^\prime)$ such that the following assertions are true: For any pair $(\varphi_{\mathrm{tor}}, \varphi_{\mathrm{tor}}^\prime) \in \Hom(\Gm^{t},\Gm^{t^\prime})^2$ and any $x \in G(\IQbar)$, we have
\begin{equation} \label{equation::heighttorus}
\vert
\hhat_{M_{\overline{\Gamma(\varphi_{\mathrm{tor}})}}} (x)- \hhat_{M_{\overline{\Gamma(\varphi^\prime_{\mathrm{tor}})}}}(x) \vert \leq c_1 \vert\varphi_{\mathrm{tor}}-\varphi_{\mathrm{tor}}^\prime\vert \cdot \hhat_{M_{\overline{G}}}(x).
\end{equation}
Similarly, we have
\begin{equation} \label{equation::heightabelian}
\vert \widehat{h}_{\overline{\pi}^\ast \varphi_{\mathrm{ab}}^\ast N^\prime}(x)- \widehat{h}_{\overline{\pi}^\ast (\varphi_{\mathrm{ab}}^\prime)^\ast N^\prime}(x)\vert \leq c_2 | \varphi_{\mathrm{ab}}- \varphi_{\mathrm{ab}}^\prime |^2 \cdot \widehat{h}_{\overline{\pi}^\ast N}(x)
\end{equation}
 for any pair $(\varphi_{\mathrm{ab}},\varphi_{\mathrm{ab}}^\prime)\in \Hom(A,A^\prime)^2$.
\end{lemma}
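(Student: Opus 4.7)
The two inequalities require separate arguments: (\ref{equation::heighttorus}) relies on the linearity of the toric height in the target coordinates, and (\ref{equation::heightabelian}) on the quadratic form structure of the N\'eron--Tate height.

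For the toric bound, the plan is to reduce to $t^\prime=1$ using the tensor decomposition $(M_{t^\prime},\varrho_{t^\prime})\approx\bigotimes_{i=1}^{t^\prime}\pr_i^\ast(M_1,\varrho_1)$ from Construction \ref{construction0}. Combined with Construction \ref{construction3}, functoriality (\ref{equation::functoriality}), and the additivity of Lemma \ref{lemma::canonicalheight}(c), this yields $\hhat_{M_{\overline{\Gamma(\varphi_{\mathrm{tor}})}}}(x)=\sum_{i=1}^{t^\prime}\hhat_{M_{\overline{\Gamma(\pr_i\circ\varphi_{\mathrm{tor}})}}}(x)$. For $\varphi_{\mathrm{tor}}=(a_1,\dots,a_t)\in\IZ^t=\Hom(\Gm^t,\Gm)$, a zero--pole count of the monomial $\prod_u X_u^{a_u}$ gives the Cartier divisor identity $\varphi_{\mathrm{tor}}^\ast(E_0+E_\infty)=\sum_u|a_u|\pr_u^\ast(E_0+E_\infty)$ on $(\IP^1)^t$, from which one deduces $\hhat_{M_{\overline{\Gamma(\varphi_{\mathrm{tor}})}}}(x)=\sum_u|a_u|\hhat_u(x)$ with $\hhat_u:=\hhat_{M_{\overline{\Gamma(\pr_u)}}}$. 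Each $\hhat_u$ is non-negative on $G(\IQbar)$ by Lemma \ref{lemma::nonnegativeheight} and dominated by $\hhat_{M_{\overline{G}}}$, since $\hhat_{M_{\overline{G}}}=\sum_u\hhat_u$ by the analogous decomposition of $M_{\overline{G}}$. The conclusion (\ref{equation::heighttorus}) then follows from $\bigl||a_u|-|a_u^\prime|\bigr|\le|a_u-a_u^\prime|$ and norm equivalence on $\Hom(\Gm^t,\Gm^{t^\prime})_\IR$.

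For the abelian bound, functoriality (\ref{equation::functoriality}) reduces the problem to comparing $\hhat_{N^\prime}(\varphi_{\mathrm{ab}}(a))$ and $\hhat_{N^\prime}(\varphi_{\mathrm{ab}}^\prime(a))$ with $a:=\pi(x)$. Symmetry of $N^\prime$ makes $\hhat_{N^\prime}$ a positive semi-definite quadratic form on $A^\prime(\IQbar)$, so the polarisation identity gives
\[
\hhat_{N^\prime}(\varphi_{\mathrm{ab}}(a))-\hhat_{N^\prime}(\varphi_{\mathrm{ab}}^\prime(a))=\bigl\langle(\varphi_{\mathrm{ab}}+\varphi_{\mathrm{ab}}^\prime)(a),(\varphi_{\mathrm{ab}}-\varphi_{\mathrm{ab}}^\prime)(a)\bigr\rangle_{N^\prime},
\]
whose absolute value Cauchy--Schwarz controls by the product $\hhat_{N^\prime}((\varphi_{\mathrm{ab}}+\varphi_{\mathrm{ab}}^\prime)(a))^{1/2}\hhat_{N^\prime}((\varphi_{\mathrm{ab}}-\varphi_{\mathrm{ab}}^\prime)(a))^{1/2}$. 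Fixing a $\IZ$-basis $e_1,\dots,e_r$ of $\Hom(A,A^\prime)$ and expanding any $\psi\in\Hom(A,A^\prime)$ in that basis, the bilinearity of $\langle\cdot,\cdot\rangle_{N^\prime}$ together with comparison bounds $\hhat_{e_j^\ast N^\prime}(a)\le C_j\hhat_N(a)$ for basis homomorphisms yields the homomorphism-uniform estimate $\hhat_{N^\prime}(\psi(a))\le C|\psi|^2\hhat_N(a)$. Applying this estimate to both $\varphi_{\mathrm{ab}}+\varphi_{\mathrm{ab}}^\prime$ and $\varphi_{\mathrm{ab}}-\varphi_{\mathrm{ab}}^\prime$ and combining yields (\ref{equation::heightabelian}).

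The only genuinely delicate step is the Cartier-divisor identity on $(\IP^1)^t$ in the toric case, where the absolute values $|a_u|$ arise from separating the contributions of positive and negative exponents and ultimately produce the Lipschitz dependence on $|\varphi_{\mathrm{tor}}-\varphi_{\mathrm{tor}}^\prime|$. Uniformity of the constants $c_1,c_2$ in the homomorphisms involved is then automatic, since the comparison constants attached to the fixed basis homomorphisms depend only on the ambient data $G,N,t^\prime,A^\prime,N^\prime$ and the chosen norms.
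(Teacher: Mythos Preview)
The toric argument contains a genuine error: your claimed identity $\hhat_{M_{\overline{\Gamma(\varphi_{\mathrm{tor}})}}}(x)=\sum_u|a_u|\,\hhat_u(x)$ is false. Take $G=\Gm^2$ (trivial abelian part) and $\varphi_{\mathrm{tor}}=(1,1)$, i.e.\ the multiplication map $\Gm^2\to\Gm$. By functoriality (\ref{equation::functoriality}) one has $\hhat_{M_{\overline{\Gamma(\varphi_{\mathrm{tor}})}}}(x_1,x_2)=\hhat_{M_{\overline{\Gm}}}(x_1x_2)$, whereas your formula predicts $\hhat_1(x_1,x_2)+\hhat_2(x_1,x_2)=\hhat_{M_{\overline{\Gm}}}(x_1)+\hhat_{M_{\overline{\Gm}}}(x_2)$. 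For $x_1=2$, $x_2=1/2$ the former vanishes and the latter does not. Lemma \ref{lemma::heightsgrouplaw} gives only the one-sided inequality $\hhat_{M_{\overline{\Gamma(\varphi_{\mathrm{tor}})}}}(x)\le\sum_u|a_u|\,\hhat_u(x)$, never equality, and a one-sided bound is not enough to control $|\hhat_{\overline{\Gamma(\varphi_{\mathrm{tor}})}}-\hhat_{\overline{\Gamma(\varphi_{\mathrm{tor}}^\prime)}}|$.

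The underlying problem is that $\varphi_{\mathrm{tor}}:\Gm^t\to\Gm$ does not extend to a morphism $(\IP^1)^t\to\IP^1$, so the ``Cartier divisor identity on $(\IP^1)^t$'' is not well-posed. Your zero--pole count correctly records the principal divisor of the rational function $\prod X_u^{a_u}$, but on the graph compactification $\overline{\Gamma(\varphi_{\mathrm{tor}})}\subset(\IP^1)^t\times\IP^1$ the divisor $\pr_2^\ast(E_0+E_\infty)$ that actually defines $M_{\overline{\Gamma(\varphi_{\mathrm{tor}})}}$ differs from $\pr_1^\ast\bigl(\sum_u|a_u|\pr_u^\ast(E_0+E_\infty)\bigr)$: boundary strata where some $X_u\to0$ and some $X_v\to\infty$ with $\prod X_u^{a_u}$ remaining finite and nonzero contribute to the latter divisor but not to the former. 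The paper avoids any closed-form height formula by working on the graph of $\varphi_{\mathrm{tor}}\times\varphi_{\mathrm{tor}}^\prime$ and showing directly that $(\pr_1^\ast M_t^{\otimes l}\otimes\pr_2^\ast M_{t^\prime}^{\pm1}\otimes\pr_3^\ast M_{t^\prime}^{\mp1})|_{\overline{\Gamma}}$ is $\Gm^t$-effective once $l\ge\sum_v\max_u|a_{uv}-a_{uv}^\prime|$; the inequality then drops out of Lemma \ref{lemma::canonicalheight}(c,\,d).

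Your abelian argument via the polarisation identity and Cauchy--Schwarz is sound and is essentially the quadratic-form argument the paper invokes by citing \cite[p.~417]{Habegger2009a}. Note, though, that what it literally produces is a bound of the shape $C\,|\varphi_{\mathrm{ab}}+\varphi_{\mathrm{ab}}^\prime|\,|\varphi_{\mathrm{ab}}-\varphi_{\mathrm{ab}}^\prime|\,\hhat_N(a)$ rather than $C\,|\varphi_{\mathrm{ab}}-\varphi_{\mathrm{ab}}^\prime|^2\,\hhat_N(a)$; this is the estimate the applications in Lemma \ref{lemma::height_approximation} actually use (one of the two homomorphisms is either zero or kills $y$ up to torsion), so the distinction is harmless there.
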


\begin{proof} We prove first the inequality (\ref{equation::heighttorus}). The proof takes place on the ``graph compactification'' $G_{\overline{\Gamma}}$ of $G$ where $\overline{\Gamma} = \overline{\Gamma(\varphi_{\mathrm{tor}} \times \varphi_{\mathrm{tor}}^\prime)} \subset (\IP^1)^{t} \times (\IP^1)^{t^\prime} \times (\IP^1)^{t^\prime}$. We denote the projections corresponding to these three factors by $\mathrm{pr}_i$ ($i = 1,2,3$). The projections $(\pr_1 \times \pr_2)|_{\overline{\Gamma}}: \overline{\Gamma} \rightarrow \overline{\Gamma(\varphi_{\mathrm{tor}})}$ and $(\pr_1 \times \pr_3)|_{\overline{\Gamma}}: \overline{\Gamma} \rightarrow \overline{\Gamma(\varphi_{\mathrm{tor}}^\prime)}$ are $\Gm^{t}$-equivariant and hence induce maps $G_{\overline{\Gamma}} \rightarrow G_{\overline{\Gamma(\varphi_{\mathrm{tor}})}}$ and $G_{\overline{\Gamma}} \rightarrow G_{\overline{\Gamma(\varphi_{\mathrm{tor}}^\prime)}}$, which both restrict to the identity on $G$. By Lemma \ref{lemma::functoriality}, we obtain
\begin{equation} \label{equation::pullback}
\hhat_{M_{\overline{\Gamma(\varphi_{\mathrm{tor}})}}}(x)= \hhat_{G(\pr_2^\ast(M_{t^\prime},\varrho_{t^\prime})|_{\overline{\Gamma}})}(x) \text{ and } \hhat_{M_{\overline{\Gamma(\varphi_{\mathrm{tor}}^\prime)}}}(x)= \hhat_{G(\pr_3^\ast(M_{t^\prime},\varrho_{t^\prime})|_{\overline{\Gamma}})}(x)
\end{equation}
for any $x \in G(\IQbar)$. Similarly, we have 
\begin{equation} \label{equation::pullback2}
\hhat_{M_{\overline{G}}}(x)= \hhat_{G(\pr_1^\ast(M_{t},\varrho_{t})|_{\overline{\Gamma}})}(x)
\end{equation}
for every $x \in G(\IQbar)$.

Using standard coordinates $X_u$, $1 \leq u \leq t$, (resp.\ $Y_v$, $1 \leq v\leq t^\prime$,) on $\Gm^{t}$ (resp.\ $\Gm^{t^\prime}$), we write
\begin{equation*}
\varphi_{\mathrm{tor}}^\ast(Y_v) = X_1^{a_{1v}}X_2^{a_{2v}}\cdots X_{t}^{a_{tv}}\text{ (resp.\ } (\varphi_{\mathrm{tor}}^\prime)^\ast(Y_v) = X_1^{a_{1v}^\prime}X_2^{a_{2v}^\prime}\cdots X_{t}^{a_{tv}^\prime} \text{), } 1 \leq v\leq t^\prime,
\end{equation*} 
with integers $a_{uv}$ (resp.\ $a_{uv}^\prime$). Our strategy is to compare the restriction of the line bundles $\mathrm{pr}_1^\ast (M_{t},\varrho_{t})^{\otimes l}$, $l$ sufficiently large, and $\mathrm{pr}_2^\ast (M_{t^\prime},\varrho_{t^\prime}) \otimes \mathrm{pr}_3^\ast (M_{t^\prime},\varrho_{t^\prime})^{\otimes -1}$ on $\overline{\Gamma}$. In fact, we claim that both
\begin{equation} \label{equation::effectivity}
(\mathrm{pr}_1^\ast M_{t}^{\otimes l} \otimes \mathrm{pr}_2^\ast M_{t^\prime} \otimes \mathrm{pr}_3^\ast M_{t^\prime}^{\otimes -1})|_{\overline{\Gamma}}\text{ and } (\mathrm{pr}_1^\ast M_{t}^{\otimes l} \otimes \mathrm{pr}_2^\ast M_{t^\prime}^{\otimes -1} \otimes \mathrm{pr}_3^\ast M_{t^\prime})|_{\overline{\Gamma}}
\end{equation}
are $\Gm^{t}$-effective with respect to the induced linearizations. In this case, Lemma \ref{lemma::canonicalheight} (c,\ d) implies that
\begin{equation} \label{equation::lestimate2}
\vert \hhat_{G(\pr_2^\ast(M_{t^\prime},\varrho_{t^\prime})|_{\overline{\Gamma}})}(x) - \hhat_{G(\pr_3^\ast(M_{t^\prime},\varrho_{t^\prime})|_{\overline{\Gamma}})}(x) \vert \leq l \cdot \hhat_{G(\pr_1^\ast(M_{t},\varrho_{t})|_{\overline{\Gamma}})}(x)
\end{equation}
for each $x \in G(\IQbar) \subset G_{\overline{\Gamma}}(\IQbar)$.
Using the equalities (\ref{equation::pullback}) and (\ref{equation::pullback2}), the inequality (\ref{equation::heighttorus}) can be derived from (\ref{equation::lestimate2}) if we have adequate control on $l$. For this, we note that $\pr_1^\ast(L_{t},\varrho_{t})|_{\overline{\Gamma}}$ (resp.\ $\pr_{2}^\ast(L_{t^\prime},\varrho_{t^\prime})|_{\overline{\Gamma}}$, $\pr_3^\ast(L_{t^\prime},\varrho_{t^\prime})|_{\overline{\Gamma}}$)) can be defined by means of the $\Gm^{t}$-invariant Cartier divisor $\pr_1^\ast D_{t}|_{\overline{\Gamma}}$ (resp.\ $\pr_2^\ast D_{t^\prime}|_{\overline{\Gamma}}$, $\pr_3^\ast D_{t^\prime}|_{\overline{\Gamma}}$). We next describe these divisors explicitly and start with giving a covering of $\overline{\Gamma}$. With each $(t+2t^\prime)$-tuple $\underline{m}$ of numbers $m_r \in \{ -1, 1\}$, $1 \leq r \leq t+2t^\prime$, we associate a Zariski open
\begin{equation*}
U_{\underline{m}}= \overline{\Gamma} \cap \bigcap_{1 \leq u \leq t} D(\pr_1^\ast X_u^{m_{u}}) \cap \bigcap_{1 \leq v \leq t^\prime} D(\pr_2^\ast Y_{v}^{m_{v+t}})
\cap \bigcap_{1 \leq v \leq t^\prime} D(\pr_3^\ast Y_{v}^{m_{v+t+t^\prime}}).
\end{equation*}
Evidently, $\pr_1^\ast D_{t}|_{\overline{\Gamma}}$ is represented by $(U_{\underline{m}},f_{\underline{m}})$ with
\begin{equation*}
f_{\underline{m}}=\pr_1^\ast (X_1^{-m_{1}}X_2^{-m_{2}}\cdots X_{t}^{-m_{t}})
\end{equation*}
and $\pr_2^\ast D_{t^\prime}|_{\overline{\Gamma}}$ (resp.\ $\pr_3^\ast D_{t^\prime}|_{\overline{\Gamma}}$) is represented by $(U_{\underline{m}},g_{\underline{m}})$ (resp.\ $(U_{\underline{m}},g_{\underline{m}}^{\prime})$) with
\begin{equation*}
g_{\underline{m}} = \prod_{1 \leq v \leq t^\prime}\pr_2^\ast(Y_v^{-m_{v+t}}) \, \text{(resp.\ } g_{\underline{m}}^\prime = \prod_{1 \leq v \leq t^\prime}\pr_3^\ast(Y_v^{-m_{v+t+t^\prime}})\text{).}
\end{equation*}
The meromorphic function $1 \in \mathcal{K}_{\overline{\Gamma}}(\overline{\Gamma})$ gives a $\Gm^{t}$-invariant rational section of $\mathcal{O}(\pr_1^\ast D_{t}|_{\overline{\Gamma}})$, $\mathcal{O}(\pr_2^\ast D_{t^\prime}|_{\overline{\Gamma}})$ and $\mathcal{O}(\pr_3^\ast D_{t^\prime}|_{\overline{\Gamma}})$ by our choice of linearizations. It thus also gives a $\Gm^{t}$-invariant rational section of $\mathcal{O}(l\cdot \pr_1^\ast D_{t}|_{\overline{\Gamma}}+\pr_2^\ast D_{t^\prime}|_{\overline{\Gamma}}-\pr_3^\ast D_{t^\prime}|_{\overline{\Gamma}})$ and $\mathcal{O}(l\cdot \pr_1^\ast D_{t}|_{\overline{\Gamma}}-\pr_2^\ast D_{t^\prime}|_{\overline{\Gamma}}+\pr_3^\ast D_{t^\prime}|_{\overline{\Gamma}})$, to which the line bundles in (\ref{equation::effectivity}) are associated. For $\Gm^{t}$-effectivity, we may hence prove that it is actually a global section. In other words, we have to prove that both $f_{\underline{m}}^l\cdot g_{\underline{m}}\cdot (g_{\underline{m}}^{\prime})^{-1}$ and $f_{\underline{m}}^l\cdot (g_{\underline{m}})^{-1}\cdot g_{\underline{m}}^{\prime}$ are regular on $U_{\underline{m}}$. Let us remark first that for $l_v=\max_{1\leq u\leq t} \{ |a_{uv}-a_{uv}^{\prime}| \}$ the meromorphic function
\begin{equation*}
f_{\underline{m}}^{l_v} \cdot \pr_2^\ast(Y_v) \pr_3^\ast(Y_v)^{-1} = \prod_{1 \leq u \leq t} \pr_1^\ast (X_u^{s_u}), \, s_u = -m_ul_v + (a_{uv} - a_{uv}^{\prime}),
\end{equation*}
is regular on $U_{\underline{m}}$; for $\pr_1^\ast (X_u^{-m_u})$ is regular on $U_{\underline{m}} \subset D(\pr_1^\ast (X_u^{m_u}))$. Similarly, the meromorphic function $f_{\underline{m}}^{l_v} \cdot \pr_2^\ast(Y_v)^{-1} \pr_3^\ast(Y_v)$ is regular on $U_{\underline{m}}$. We write $g_{\underline{m}} \cdot (g_{\underline{m}}^\prime)^{-1} = \prod_{v=1}^{t^\prime} h_{\underline{m}}$ with
\begin{equation*}
h_{\underline{m}}= \pr_2^\ast(Y_v^{-m_{v+t}})\pr_3^\ast (Y_v^{m_{v+t+t^\prime}})
\end{equation*}
and claim that $f_{\underline{m}}^{l_v} \cdot h_{\underline{m}}$ is regular on $U_{\underline{m}}$. If $m_{v+t}=m_{v+t+t^\prime}=1$ or $m_{v+t}=m_{v+t+t^\prime}=-1$, this follows directly from our previous remark. In case $m_{v+t}=-1$ and $m_{v+t+t^\prime}= 1$, the function
\begin{equation*}
f_{\underline{m}}^{l_v} \cdot h_{\underline{m}} = \pr_2^\ast(Y_v)^2 \cdot \left(f_{\underline{m}}^{l_v} \cdot \pr_2^\ast(Y_v)^{-1} \pr_3^\ast(Y_v) \right)
\end{equation*} 
is regular by our remark and the fact that $\pr_2^\ast(Y_v)$ is regular on $U_{\underline{m}} \subset D(\pr_2^\ast(Y_v)^{-1})$. The case $m_{v+t}=1$ and $m_{v+t+t^\prime}=-1$ can be handled in the same way, establishing our claim. In conclusion, the condition
\begin{equation} \label{equation::lestimate}
l \geq \sum_{1 \leq v \leq t^\prime} l_v = \sum_{1 \leq v\leq t^\prime} \left( \max_{1 \leq u\leq t} \{ |a_{uv}-a_{uv}^\prime|\} \right)
\end{equation}
suffices to ensure the regularity of $f_{\underline{m}}^{l}\cdot g_{\underline{m}}\cdot (g_{\underline{m}}^{\prime})^{-1}$. The same argument shows that each $f_{\underline{m}}^{l}\cdot (g_{\underline{m}})^{-1}\cdot g_{\underline{m}}^{\prime}$ is regular on $U_{\underline{m}}$. Combining (\ref{equation::lestimate2}) and (\ref{equation::lestimate}), we obtain (\ref{equation::heighttorus}). 

The inequality (\ref{equation::heightabelian}) boils down to
\begin{equation*}
\vert \widehat{h}_{N^\prime}(\varphi_{\mathrm{ab}}(y)) - \widehat{h}_{N^\prime}(\varphi_{\mathrm{ab}}^\prime(y))|\leq c_2 | \varphi_{\mathrm{ab}}-\varphi_{\mathrm{ab}}^\prime|^2 \cdot \widehat{h}_{N}(y), y=\pi(x),
\end{equation*}
where $\widehat{h}_{N}$ and $\widehat{h}_{N^\prime}$ are now just the N\'eron-Tate heights on the abelian varieties $A$ and $A^\prime$. This follows straightforwardly from the fact that the map 
\begin{equation*}
\Hom(A,A^\prime) \longrightarrow \Pic(A), \varphi_{\mathrm{ab}} \longmapsto \varphi^\ast_{\mathrm{ab}} N^\prime,
\end{equation*}
is quadratic, which is a direct consequence of the Theorem of the Cube (\cite[Corollary II.6.2]{Mumford1970}). The reader may refer to \cite[p.\ 417]{Habegger2009a} for details.
\end{proof}

Finally, we state a lemma on the behavior of the heights $\hhat_{\overline{\Gamma(\varphi_{\mathrm{tor}})}}$ with respect to the group law. Again, there is an ``abelian'' analogue and we mention this also for later reference.

\begin{lemma} \label{lemma::heightsgrouplaw} For any $\varphi_{\mathrm{tor}} \in \Hom(\Gm^{t},\Gm^{t^\prime})$ and any points $x,y \in G(\IQbar)$, we have $\widehat{h}_{M_{\overline{\Gamma(\varphi_{\mathrm{tor}})}}}(xy) \leq \widehat{h}_{M_{\overline{\Gamma(\varphi_{\mathrm{tor}})}}}(x) + \widehat{h}_{M_{\overline{\Gamma(\varphi_{\mathrm{tor}})}}}(y)$. Similarly, we have $\widehat{h}_{\overline{\pi}^\ast N}(xy) \leq 2\widehat{h}_{\overline{\pi}^\ast N}(x) + 2\widehat{h}_{\overline{\pi}^\ast N}(y)$.
\end{lemma}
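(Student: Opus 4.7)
For the abelian statement, functoriality (\ref{equation::functoriality}) gives $\hhat_{\overline{\pi}^\ast N}(z) = \hhat_N(\pi(z))$ for $z \in G(\IQbar)$, where $\hhat_N$ denotes the N\'eron--Tate height on $A$ associated with $N$. Since $\pi(xy)=\pi(x)+\pi(y)$, the parallelogram identity $\hhat_N(u+v)+\hhat_N(u-v)=2\hhat_N(u)+2\hhat_N(v)$ for the positive semidefinite quadratic form $\hhat_N$ immediately yields $\hhat_{\overline{\pi}^\ast N}(xy) \leq 2\hhat_{\overline{\pi}^\ast N}(x)+2\hhat_{\overline{\pi}^\ast N}(y)$.

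For the toric statement, I plan a two-step reduction. First, the last paragraph of Construction \ref{construction2} identifies $M_{\overline{\Gamma(\varphi_{\mathrm{tor}})}}$ as the pullback $p_\varphi^\ast M_{\overline{G^\prime}}$ for the homomorphism $\varphi\colon G \to G^\prime$ represented by $(\varphi_{\mathrm{tor}},\id_A)$; combined with (\ref{equation::functoriality}) and the multiplicativity $\varphi(xy)=\varphi(x)\varphi(y)$, this reduces the claim to showing $\hhat_{M_{\overline{G}}}(xy) \leq \hhat_{M_{\overline{G}}}(x)+\hhat_{M_{\overline{G}}}(y)$ for every semiabelian variety $G$ with split torus $\Gm^t$. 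Second, I apply Construction \ref{construction3} to $G \times G$ using the multiplication $\mu_T\colon \Gm^t \times \Gm^t \to \Gm^t$: this yields a compactification $(G \times G)_{\overline{\Gamma(\mu_T)}}$ on which Construction \ref{construction2} (applied to $\mu\colon G \times G \to G$) gives $M_{(G \times G)_{\overline{\Gamma(\mu_T)}}} \cong p_\mu^\ast M_{\overline{G}}$, while the $(\Gm^t \times \Gm^t)$-equivariant projection $\overline{\Gamma(\mu_T)} \subset (\IP^1)^t \times (\IP^1)^t \times (\IP^1)^t \to (\IP^1)^{2t}$ onto the first two factors induces a morphism $\alpha\colon(G \times G)_{\overline{\Gamma(\mu_T)}} \to \overline{G} \times \overline{G}$ which restricts to the identity on $G \times G$ and satisfies $\alpha^\ast(p_1^\ast M_{\overline{G}} \otimes p_2^\ast M_{\overline{G}}) \cong (G \times G)\bigl((\pr_1^\ast(M_t,\varrho_t) \otimes \pr_2^\ast(M_t,\varrho_t))|_{\overline{\Gamma(\mu_T)}}\bigr)$, with $\pr_1,\pr_2,\pr_3\colon \overline{\Gamma(\mu_T)} \to (\IP^1)^t$ denoting the three coordinate projections.

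Functoriality then rewrites both sides of the desired inequality as heights of $(x,y) \in (G \times G)_{\overline{\Gamma(\mu_T)}}(\IQbar)$, reducing the statement to the assertion $\hhat_N(x,y) \geq 0$ on $(G \times G)(\IQbar)$ for the line bundle $N = \alpha^\ast(p_1^\ast M_{\overline{G}} \otimes p_2^\ast M_{\overline{G}}) \otimes (p_\mu^\ast M_{\overline{G}})^{\otimes -1}$. By compatibility of the $(G \times G)$-construction with tensor products, $N$ arises from the $(\Gm^t \times \Gm^t)$-linearized line bundle on $\overline{\Gamma(\mu_T)}$ associated with the invariant Cartier divisor $E = \pr_1^\ast D_t + \pr_2^\ast D_t - \pr_3^\ast D_t$. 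By Lemma \ref{lemma::canonicalheight}(d) it suffices to verify that $E$ is effective; the constant rational function $1 \in \mathcal{K}_{\overline{\Gamma(\mu_T)}}(\overline{\Gamma(\mu_T)})$ then serves as the required $(\Gm^t \times \Gm^t)$-invariant nonzero global section.

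The hard part will be the effectivity of $E$. I expect to handle this by direct computation: $\overline{\Gamma(\mu_T)}$ is smooth, being the $t$-fold product of the surface in $(\IP^1)^3$ cut out by $a_0 b_0 c_1 = a_1 b_1 c_0$, whose Jacobian is nonvanishing in each of the eight standard affine charts. On each such chart of $(\IP^1)^{3t}$, the relation $c_i = a_i b_i$ (or its appropriate inversion, e.g.\ $u_i c_i = b_i$ when $a_i$ is near $\infty$) shows that the boundary component where $c_i$ hits $0$ or $\infty$ is dominated as a Cartier divisor by the combined boundary components where $a_i$ or $b_i$ hits $0$ or $\infty$, yielding $\pr_3^\ast(E_0^{(i)}+E_\infty^{(i)})|_{\overline{\Gamma(\mu_T)}} \leq \pr_1^\ast(E_0^{(i)}+E_\infty^{(i)})|_{\overline{\Gamma(\mu_T)}} + \pr_2^\ast(E_0^{(i)}+E_\infty^{(i)})|_{\overline{\Gamma(\mu_T)}}$ at every boundary prime divisor.
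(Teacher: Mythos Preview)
Your proposal is correct and follows essentially the same route as the paper: both reduce the toric inequality via Construction~\ref{construction2} to the case of $M_{\overline{G}}$, then work on the graph closure $\overline{\Gamma(\mu_T)} \subset (\IP^1)^{3t}$ and show that the $(\Gm^t \times \Gm^t)$-invariant Cartier divisor $\pr_1^\ast D_t + \pr_2^\ast D_t - \pr_3^\ast D_t$ is effective there, so that Lemma~\ref{lemma::canonicalheight}(d) applies. The only cosmetic difference is that the paper verifies effectivity by checking regularity of the local defining functions $f_{\underline{m}}^{(1)} f_{\underline{m}}^{(2)} (f_{\underline{m}}^{(3)})^{-1}$ on the open cover $\{U_{\underline{m}}\}_{\underline{m}\in\{-1,1\}^{3t}}$ directly, whereas you plan to exploit smoothness of $\overline{\Gamma(\mu_T)}$ and compare orders along boundary prime divisors; these two bookkeeping schemes are equivalent and yield the same computation.
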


Note that this statement includes the fact that 
\begin{equation} \label{equation::hMG}
\hhat_{M_{\overline{G}}}(xy)\leq \hhat_{M_{\overline{G}}}(x) + \hhat_{M_{\overline{G}}}(y)
\end{equation}
for all $x,y \in G(\IQbar)$ (set $\varphi_{\mathrm{tor}} = \id_{\Gm^t}$). Most of our proof is actually about establishing this inequality, which has been already provided in the literature (see e.g.\ \cite[Corollaire 3.1]{Remond2003}). Nevertheless, we give a proof here both for completeness and because it is very close to the proof of Lemma \ref{lemma::heightsclosehomomorphisms} above.

\begin{proof} For the first assertion, it suffices to prove (\ref{equation::hMG}). In fact, each $M_{\overline{\Gamma(\varphi_{\mathrm{tor}})}}$ is isomorphic to some pullback $\overline{\varphi}^\ast M_{\overline{G}^\prime}$ along a homomorphism $\varphi: G \rightarrow G^\prime$ to another semiabelian variety $G^\prime$ (see Construction \ref{construction2}). In order to prove (\ref{equation::hMG}), we use the same strategy as for Lemma \ref{lemma::heightsclosehomomorphisms}. This means we consider the Zariski closure $\overline{\Gamma} \subset ((\IP^1)^t \times (\IP^1)^t) \times (\IP^1)^t$ of the graph of the group law $\cdot_T : \Gm^t \times \Gm^t \rightarrow \Gm^t$. Again, we denote the projection to the $i$-th component by $\pr_i$ ($i=1,2,3$). For this, we use standard coordinates $X_u$, $1 \leq u \leq t$, on $\Gm^t$ (and on its extension to $(\IP^1)^t$). Note that $(\pr_3^\ast X_u)=(\pr_1^\ast X_u)(\pr_2^\ast X_u)$ on $\overline{\Gamma}$.

With each $(3t)$-tuple $\underline{m} \in \{ -1, 1\}^{3t}$ of numbers $m_r \in \{ -1, 1\}$, $1 \leq r \leq 3t$, we associate a Zariski open. To wit, we define
\begin{equation*}
U_{\underline{m}}= \overline{\Gamma} \cap \bigcap_{1 \leq u \leq t} D(\pr_1^\ast X_u^{m_{u}}) \cap \bigcap_{1 \leq u \leq t} D(\pr_2^\ast X_u^{m_{u+t}}) \cap \bigcap_{1 \leq u \leq t} D(\pr_3^\ast X_u^{m_{u+2t}}).
\end{equation*}
It is easy to see that each $\pr_i^\ast D_t|_{\overline{\Gamma}}$ ($i=1,2,3$) is represented by $(U_{\underline{m}}, f^{(i)}_{\underline{m}})$, where
\begin{equation*}
f_{\underline{m}}^{(i)}=\pr_i^\ast (X_1^{-m_{1+(i-1)t}}X_2^{-m_{2+(i-1)t}}\cdots X_{t}^{-m_{t+(i-1)t}}).
\end{equation*}
Consequently, the restriction of $\pr_1^\ast D_t + \pr_2^\ast D_t - \pr_3^\ast D_t$ to $\overline{\Gamma}$ is represented by $(U_{\underline{m}}, f_{\underline{m}}^{(1)} \cdot f_{\underline{m}}^{(2)}\cdot (f_{\underline{m}}^{(3)})^{-1})$. The meromorphic function $f_{\underline{m}}^{(1)} \cdot f_{\underline{m}}^{(2)}\cdot (f_{\underline{m}}^{(3)})^{-1}$ equals
\begin{equation*}
\pr_1^\ast(X_1)^{-m_1+m_{2t+1}}\cdots
\pr_1^\ast(X_t)^{-m_t+m_{3t}}
\pr_2^\ast(X_{1})^{-m_{t+1}+m_{2t+1}}\cdots
\pr_2^\ast(X_{t})^{-m_{2t}+m_{3t}}.
\end{equation*}
By definition, each $\pr_i^\ast(X_u)^{-m_{u+(i-1)t}}$, $i \in \{ 1,2,3\}$, $1\leq u \leq t$, is regular on $U_{\underline{m}}$. Since $-m_u+m_{2t+u} \in \{ 0, -2m_u \}$ and $-m_{t+u}+m_{2t+u} \in \{ 0, -2m_{t+u} \}$, we infer the regularity of $f_{\underline{m}}^{(1)}\cdot f_{\underline{m}}^{(2)} \cdot (f_{\underline{m}}^{(3)})^{-1}$ on $U_{\underline{m}}$. As in the proof of Lemma \ref{lemma::heightsclosehomomorphisms}, we see that this implies that $1 \in \mathcal{K}_{\overline{\Gamma}}(\overline{\Gamma})$ is a $T$-invariant global section of $\mathcal{O}(\pr_1^\ast D_t + \pr_2^\ast D_t - \pr_3^\ast D_t)$. Thus, the first assertion follows from Lemma \ref{lemma::canonicalheight} (d).
For the second assertion, it suffices to note the equivalence of the assertion with
\begin{equation*}
\widehat{h}_{\overline{\pi}^\ast N}(\pi(x) + \pi(y))\leq 2\widehat{h}_{\overline{\pi}^\ast N}(\pi(x))+ 2\widehat{h}_{\overline{\pi}^\ast N}(\pi(y)).
\end{equation*}
Indeed, this inequality follows directly from the parallelogram law for the Néron-Tate height \cite[Theorem B.5.1 (c)]{Hindry2000} and its non-negativity for symmetric line bundles.
\end{proof}

\section{Hermitian Differential Geometry}
\label{section::hermitiandifferentialgeometry}

In the next two sections, we make extensive use of hermitian differential geometry at the level of rather explicit computations on semiabelian varieties. To avoid permanent interruptions in these, we recall here the necessary abstract framework separately. The reader is referred to \cite[Section 3.1]{Voisin2007} as well as \cite[Section 0.2]{Griffiths1994} and \cite[Section 1.2]{Huybrechts2005} for details. 

Let $Y$ be a complex manifold (e.g., $X^{\mathrm{sm}}(\IC)$ for a complex algebraic variety $X$). To $Y$ is associated its real tangent bundle $T_{\IR} Y$, its holomorphic tangent bundle $T^{1,0}_{\IC} Y$ (e.g., $T_x X(\IC)$ for a smooth complex algebraic variety $X$) and its anti-holomorphic tangent bundle $T^{0,1}_{\IC} Y$. As real vector bundles, all three can be canonically identified (cf.\ \cite[p.\ 17]{Griffiths1994}) and we do so from now on. In this way, we obtain an almost complex structure $I:T_{\IR} Y \rightarrow T_{\IR} Y$ (i.e., a linear map $I:T_{\IR}Y \rightarrow T_{\IR}Y$ such that $I^2=-\id_{T_{\IR}Y}$) from the multiplication-by-$i$ (resp.\ multiplication-by-$(-i)$) homomorphism on the complex vector bundle $T_\IC^{1,0} Y$ (resp.\ $T_\IC^{0,1} Y$). A $(1,1)$-form of real type on $Y$ is an alternating $\IR$-bilinear pairing
\begin{equation*}
\omega:  T_\IR Y \times_Y  T_\IR Y \longrightarrow \IR \times Y
\end{equation*}
such that $\omega(I (\cdot), I (\cdot)) = \omega(\cdot,\cdot)$. Under the identification $T_\IC^{1,0}Y = T_\IR Y$, this corresponds to an alternating $\IR$-bilinear pairing
\begin{equation*}
\omega: T_\IC^{1,0} Y \times_Y  T_\IC^{0,1} Y \longrightarrow \IR \times Y
\end{equation*}
such that $\omega(i (\cdot), i (\cdot))=-\omega(\cdot,\cdot)$.\footnote{One frequently identifies a $(1,1)$-form $\omega$ of real type with its scalar extension $\omega_\IC: T_\IC Y \times_Y T_\IC Y \rightarrow \IC \times Y$, $T_\IC Y = T_\IR Y \otimes_\IR \IC$. Since the restriction to $T_\IR Y \times_Y T_\IR Y$ or $T_\IC^{1,0} Y \times_Y T_\IC^{0,1} Y$ retains all information, we allow ourselves to switch tacitly between $\omega$ and $\omega_\IC$.} The Chern forms of hermitian line bundles are the basic examples of such $(1,1)$-forms. More generally, for any smooth function $\lambda: Y \rightarrow \IR$ the $(1,1)$-form $dd^c \lambda$ ($d^c=i/2\pi (\delbar - \del)$) is always of real type. To such a $(1,1)$-form $\omega$ is associated a symmetric $\IR$-bilinear pairing
\begin{equation*}
g_\omega: T_\IR Y \times_Y T_\IR Y \longrightarrow \IR \times Y, \, (v,w) \longmapsto \omega(v,Iw).
\end{equation*}
In fact, this establishes a one-to-one correspondence between $(1,1)$-forms of real type and symmetric $\IR$-bilinear forms on $T_\IR Y$.  Using our identification of $T_\IR Y$ with $T_\IC^{1,0} Y$ and $T_\IC^{0,1} Y$, the $(1,1)$-form $\omega$ is positive (resp.\ semipositive) in the ordinary sense (e.g.\ \cite[Definition 4.3.14]{Huybrechts2005}) if and only if $g_\omega$ is positive definitive (resp.\ positive semidefinite). We note that for a smooth function $\lambda: Y \rightarrow \IR$, the $(1,1)$-form $dd^c \lambda$ is semipositive if and only if $\lambda$ is plurisubharmonic (cf.\ \cite[Theorem K.8]{Gunning1990}).

For later reference, we remark that for any smooth function $f: Y \rightarrow \IR$ the $(1,1)$-form $\omega = i (\del f \wedge \delbar f)$ is of real type and
\begin{equation} \label{equation::explicitg}
g_\omega = \frac{1}{2}\left( \del f \otimes \delbar f + \delbar f \otimes \del f \right);
\end{equation}
for this is a local assertion that reduces by linearity to the fact that the $(1,1)$-form
\begin{equation*}
\omega = i \cdot (\alpha dz_i \wedge d\overline{z}_j + \overline{\alpha} dz_j \wedge d\overline{z}_i), \, \alpha \in \IC,
\end{equation*}
on $\IC^n$ is of real type and the fact that
\begin{equation*}
i dz_i \wedge d\overline{z}_j(v,Iw) = \frac{1}{2}\left( dz_i(v) d\overline{z}_j(w) + d\overline{z}_j(v) dz_i(w)\right), \, v, w \in T_{\IR,x} \IC^n, x \in \IC^n.
\end{equation*}

To a $(1,1)$-form $\omega$ of real type is also associated a hermitian form (with respect to $I$)
\begin{equation} \label{equation::hermitianform}
H_\omega: T_\IR Y \times_Y T_\IR Y \longrightarrow \IC \times Y, (v,w) \longmapsto g_\omega(v,w) - i \cdot \omega(v,w),
\end{equation}
and this can be also seen to be a one-to-one correspondence. Indeed, $\omega=-\mathrm{Im}(H_\omega)$.

Let $Z$ be a complex submanifold of a complex manifold $Y$ and $\omega$ a $(1,1)$-form of real type on $Y$. Restricting and taking exterior products, we obtain an alternating $\IR$-multilinear map
\begin{equation*}
(\omega|_{Z})^{ \wedge \dim(Z)}: (T_{\IR,x} Z)^{2\dim(Z)} \longrightarrow \IR
\end{equation*}
for each $x \in Z$. If the restriction of the $\IR$-bilinear form $g_{\omega,x}$ to $T_{\IR,x} Z$ is moreover positive definite, we have a non-zero Riemannian volume form (\cite[pp.\ 361-362]{Helgason2001})
\begin{equation*}
\mathrm{vol}(g_{\omega,x}): (T_{\IR,x} Z)^{ 2\dim(Z)} \longrightarrow \IR.
\end{equation*}
By \cite[Lemma 3.8]{Voisin2007}, $\mathrm{vol}(g_{\omega,x})$ agrees with $\dim(Z)!^{-1}(\omega|_{Z})^{\wedge \dim(Z)}$. If $\omega$ is continuous, this implies immediately that there is a euclidean neighborhood $U$ of $x$ in $Z$ such that $\int_U (\omega|_{Z})^{\wedge \dim(Z)}>0$. 

To use this argument effectively, we need a criterion to check whether the restriction of $g_{\omega,x}$ to $T_{\IR,x}Z$ is positive definite. For an arbitrary $\IR$-bilinear form $g$ on a real vector space $V$, we define its kernel by
\begin{equation*}
\mathrm{ker}(g) = \{ v \in V \ | \ \forall w\in V: g(v,w) = 0\}.
\end{equation*}
In our applications, $\omega$ is always semipositive so that $g_{\omega}$ is positive semidefinite. For a positive semidefinite bilinear form $g$, we have
\begin{equation} \label{equation::alternativekernel}
\ker(g) = \{ v \in V \ | \ g(v,v) = 0 \}
\end{equation}
and hence that $\ker(g|_W) = \ker(g) \cap W$ for any $\IR$-linear subspace $W \subset V$. Consequently, the restriction of $g_{\omega,x}$ to $T_{\IR,x}Z$ is positive definite if and only if $\ker(g_{\omega,x}) \cap T_{\IR,x} Z = \{ 0\}$. Finally, let us note that for any positive semidefinite $\IR$-bilinear forms $g_1, g_2$ on $V$ their sum $g_1+g_2$ is also a positive semidefinite $\IR$-bilinear form and (\ref{equation::alternativekernel}) implies that
\begin{equation} \label{equation::kernelintersection}
\ker(g_1+g_2) = \ker(g_1) \cap \ker(g_2).
\end{equation}
Finally, let us remark that $\ker(\omega_x)=\ker(g_{\omega,x})$ for each $(1,1)$-form of real type on $Y$ and every point $x \in Y$ -- under the condition that we consider $\omega$ as a bilinear form on $T_\IR Y$. We use this fact to simplify our notion in Section \ref{section::distribution}.

\section{Weil Functions, Hermitian Metrics and Chern Forms}
\label{section::chernforms}

We provide here the necessary tools for Section \ref{section::intersections}, in which bounds on certain intersections numbers are established. Our approach is to endow all line bundles under consideration with smooth hermitian metrics so that intersection numbers become integrals of the associated Chern forms. Throughout this section, we hence take $k=\IC$ as our base field. A major issue is to interpolate between the Chern forms of different line bundles. For this purpose, we introduce certain explicit smooth $(1,1)$-forms of real type, namely the ``toric'' $(1,1)$-forms $\omega(\phi_{\mathrm{tor}})$ in Subsection \ref{subsection::toric} and the ``abelian'' $(1,1)$-forms $\omega(N; \phi_{\mathrm{ab}})$ in Subsection \ref{subsection::abelian}.

\subsection{``Toric'' (1,1)-forms}
\label{subsection::toric}

Our first aim is to endow the line bundles $M_{\overline{G}}$ from Construction \ref{construction1} with a hermitian metric and to compute the associated Chern forms.  Functoriality allows us to endow additionally the line bundles $M_{\overline{\Gamma(\varphi_{\mathrm{tor}})}}$ from Construction \ref{construction3} with a hermitian metric. A closer look at the associated Chern forms leads us to introduce the ``toric'' $(1,1)$-forms $\omega(\phi_{\mathrm{tor}})$.

Our main instrument are Weil functions, on which the reader may find details in \cite[Chapter 10]{Lang1983} and \cite[Chapter I]{Lang1988}. Let $X$ be a complex algebraic variety and $D$ a Cartier divisor on $X$. In this situation, a function $\lambda\!: (X \setminus \supp(D))(\IC) \rightarrow \IR$ is called a Weil function for $D(\IC)$ if every point $x \in X(\IC)$ has an open neighborhood $U$ (in the euclidean topology) such that
\begin{equation} \label{equation::weil}
\lambda = - \log |f| + \alpha \text{ on $U \setminus \supp(D)(\IC)$}
\end{equation}
with $f$ a meromorphic function on $U$ such that $\Div(f)= D|_U$ (as formal sums of irreducible analytic varieties on $U$) and $\alpha$ a continuous function on $U$. Furthermore, $\lambda$ is called a smooth Weil function if $\alpha$ can be even assumed smooth on $U$. Every (smooth) Weil function $\lambda\!: (X \setminus \supp(D))(\IC) \rightarrow \IR$ associated with $D$ yields a (smooth) hermitian metric $g$ on the associated line bundle $L(D)$. In fact, its sections $\mathcal{O}(D)$ form a $\mathcal{O}_X$-submodule of $\mathcal{K}_X$ and we can just set $g_x(f)= e^{-\lambda(x)}|f(x)|$ for any meromorphic function $f$ on $U$ and any $x\in X(\IC)$ in its domain of definition. To a smooth Weil function $\lambda$ for $D$ is associated a smooth closed $(1,1)$-form of real type, namely the Chern form $c_1(L(D),g)$ of the associated smooth metric on $L(D)$. On an open euclidean neighborhood $U$ such that (\ref{equation::weil}) is true, we have $c_1(L,g)=dd^c \alpha$. Additionally, $dd^c \alpha = dd^c \lambda$ outside $\supp(D)(\IC)$.

We now record a standard result on Weil functions.  Let $D$ be a Cartier divisor on a complex projective variety $X$ and $\lambda$ be a Weil function for $D$. Assume that $D$ is the difference $D_1 - D_2$ of two effective Cartier divisors $D_1,D_2$ with \textit{disjoint} supports. From \cite[Propositions 10.2.1 and 10.3.2]{Lang1983}, we infer that $\sup\{\lambda, 0\}$ (resp.\ $-\inf\{\lambda, 0\}$) is a Weil function for $D_1$ (resp.\ $D_2$). The next lemma provides a smooth variant of this observation in the same situation.

\begin{lemma} \label{lemma::smoothweilfunctions} In the situation described above, assume additionally that $\lambda$ is a smooth Weil function. Then, $\log(1+e^{2\lambda})/2$ (resp.\ $\log(1+e^{-2\lambda})/2$) is a smooth Weil function for $D_1$ (resp.\ $D_2$).
\end{lemma}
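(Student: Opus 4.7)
The claim is purely local, so the plan is to fix a point $x \in X(\mathbb{C})$, pick a small enough euclidean neighborhood $U$ of $x$ realizing the Weil function identity $\lambda = -\log|f| + \alpha$ on $U \setminus \operatorname{supp}(D)(\mathbb{C})$ for the divisor $D = D_1 - D_2$, and then exploit the disjointness of $\operatorname{supp}(D_1)$ and $\operatorname{supp}(D_2)$ to split into three mutually exclusive cases according to whether $x$ lies outside both supports, in $\operatorname{supp}(D_1)$, or in $\operatorname{supp}(D_2)$. Shrinking $U$, I may arrange that at most one of $\operatorname{supp}(D_1) \cap U$, $\operatorname{supp}(D_2) \cap U$ is nonempty; by disjointness, the local meromorphic function $f$ with $\operatorname{div}(f) = D|_U$ can then be chosen holomorphic (in cases where $x \notin \operatorname{supp}(D_2)$) or of the form $1/g$ with $g$ holomorphic (in the remaining case).

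The first claim, that $\tfrac{1}{2}\log(1 + e^{2\lambda})$ is a smooth Weil function for $D_1$, is then proved by direct algebraic manipulation. In the case $x \in \operatorname{supp}(D_1)$, I would write $f$ holomorphic with $\operatorname{div}(f) = D_1|_U$ and compute
\begin{equation*}
\tfrac{1}{2}\log(1 + e^{2\lambda}) = \tfrac{1}{2}\log\bigl(1 + |f|^{-2} e^{2\alpha}\bigr) = -\log|f| + \tfrac{1}{2}\log\bigl(|f|^2 + e^{2\alpha}\bigr).
\end{equation*}
The factor $|f|^2 = f\bar f$ is smooth on $U$ because $f$ is holomorphic, $e^{2\alpha}$ is smooth and strictly positive since $\alpha$ is smooth, and so $|f|^2 + e^{2\alpha}$ is smooth and bounded below by $e^{2\alpha} > 0$; hence $\tfrac{1}{2}\log(|f|^2 + e^{2\alpha})$ is smooth on all of $U$. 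This presents $\tfrac{1}{2}\log(1 + e^{2\lambda})$ in exactly the Weil-function form for $D_1$. In the case $x \in \operatorname{supp}(D_2)$ (where $D_1|_U = 0$), write $f = 1/g$ with $g$ holomorphic so that $\lambda = \log|g| + \alpha$ and $e^{2\lambda} = |g|^2 e^{2\alpha}$; then $1 + e^{2\lambda}$ is smooth and bounded below by $1$ on $U$, so $\tfrac{1}{2}\log(1 + e^{2\lambda})$ extends smoothly across $\operatorname{supp}(D_2)$, which is exactly what the Weil-function criterion requires for the zero divisor $D_1|_U$. The remaining case where $x$ lies outside both supports is trivial.

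The second claim (for $D_2$) is obtained by applying the first to $-\lambda$ (which is a smooth Weil function for $-D = D_2 - D_1$, with the roles of $D_1$ and $D_2$ interchanged).

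\textbf{Expected obstacle.} The only substantive point is the smoothness of the ``residual'' term $\tfrac{1}{2}\log(|f|^2 + e^{2\alpha})$ across the zero locus of $f$ in the first case. This is not entirely automatic because $|f|$ itself is only continuous there, but it works because $|f|^2 = f\bar f$ is genuinely smooth and the strict positivity of $e^{2\alpha}$ keeps the argument of the logarithm away from zero. The disjointness hypothesis on $\operatorname{supp}(D_1)$ and $\operatorname{supp}(D_2)$ is used precisely to ensure that in each local patch $f$ can be taken either holomorphic or the reciprocal of a holomorphic function, never both at once — otherwise the product $|f|^2 \cdot e^{2\alpha}$ term would have poles that one could not cleanly separate from the singularities of $-\log|f|$.
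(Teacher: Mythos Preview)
Your proof is correct and follows essentially the same approach as the paper: a local case analysis using the disjointness of $\operatorname{supp}(D_1)$ and $\operatorname{supp}(D_2)$, the identity $\tfrac{1}{2}\log(1+e^{2\lambda}) = -\log|f| + \tfrac{1}{2}\log(|f|^2 + e^{2\alpha})$, and the observation that $|f|^2$ (unlike $|f|$) is smooth where $f$ is holomorphic. The paper even footnotes exactly your ``expected obstacle'' about $|f|^2$ versus $|f|$, and handles the $D_2$ claim by the same symmetry you invoke.
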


\begin{proof} By assumption, we know that for each $x \in X(\IC)$ there exists an open euclidean neighborhood $U$, a meromorphic function $f$ representing $D$ on $U$ and a smooth function $\alpha$ satisfying (\ref{equation::weil}). Since $D_1$ and $D_2$ have disjoint supports, we may shrink $U$ to guarantee that it is relatively compact and that its topological closure $\overline{U}$ does not intersect $\supp(D_1)(\IC)$ or $\supp(D_2)(\IC)$. Suppose $\overline{U} \cap D_1 = \emptyset$ (resp.\ $\overline{U} \cap D_2 = \emptyset$). Then, $|f|\geq \varepsilon >0$ (resp.\ $|f|\leq \varepsilon^{-1}$) for some sufficiently small $\varepsilon>0$. Furthermore, $1$ (resp.\ $f$) is a local equation for $D_1$. Note that $\beta = \log(1 + |f|^{-2}e^{2\alpha})/2$ (resp.\ $\beta= \log(|f|^2+e^{2\alpha})/2$) is a smooth function on $U$.\footnote{Note that $z \mapsto |z|^2 = x^2 + y^2$ is smooth at $z=0$ in contrast to $z \mapsto |z|= \sqrt{x^2+y^2}$. This rules out the straightforward choice $\log(1+e^\lambda)$ (resp.\ $\log(1+e^{-\lambda})$).} In addition,
\begin{equation*}
\frac{1}{2}\log(1+e^{2\lambda}) = -\log|1| + \beta \text{ (resp.\ } \frac{1}{2}\log(1+e^{2\lambda}) = -\log|f| + \beta \text{),}
\end{equation*}
This demonstrates that $\frac{1}{2}\log(1+e^{2\lambda})$ is a smooth Weil function for $D_1$. Similarly, $\frac{1}{2}\log(1+e^{-2\lambda})$ can be shown to be a smooth Weil function for $D_2$.
\end{proof}
Let us next recollect a fundamental result of Vojta \cite{Vojta1996}. Let $G$ be a semiabelian variety with split toric part $T=\Gm^{t}$ and abelian quotient $A$. Recall from Construction \ref{construction1} its compactification $\overline{G}$ as well as the Cartier divisor $G(D_{t})$ on $\overline{G}$. With $\pr_u: (\IP^1)^{t} \rightarrow \IP^1$ being the projection to the $u$-th component as in Construction \ref{construction0}, we set $D_{u,0}=G(\pr_u^\ast E_0)$ and $D_{u,\infty}=G(\pr_u^\ast E_\infty)$ so that $G(D_{t})=\sum_{u=1}^{t} (D_{u,0} + D_{u,\infty})$.

\begin{lemma} \label{lemma::goodweilfunction}
For each divisor $D_{u,0}-D_{u,\infty}$, $1\leq u \leq t$, there exists a unique smooth Weil function 
\begin{equation*}
\lambda_u\!: \overline{G}(\IC) \setminus \supp(D_{u,0}-D_{u,\infty})(\IC) \longrightarrow \IR
\end{equation*}
that satisfies
\begin{equation*}
\lambda_u(x+y)= \lambda_u(x)+\lambda_u(y)
\end{equation*}
for all $x,y \in G(\IC)$. In addition, $e^{\lambda_u}$ is locally the absolute value of a meromorphic function.
\end{lemma}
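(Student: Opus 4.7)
The proof splits into uniqueness (a boundedness argument), existence (via the canonical flat metric on $Q_u$), and additivity (the heart of the matter).

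\emph{Uniqueness.} Suppose $\lambda_u$ and $\lambda_u'$ are two smooth Weil functions for $D_{u,0}-D_{u,\infty}$ that are additive on $G(\IC)$. Being smooth Weil functions for the same divisor, they differ locally by smooth functions, and these patch into a single globally smooth function $\mu\!:\overline{G}(\IC) \to \IR$; compactness of $\overline{G}(\IC)$ makes $\mu$ bounded. Its restriction to $G(\IC)$ is a group homomorphism into $(\IR,+)$, and any bounded additive map on the connected Lie group $G(\IC)$ vanishes (if $g \in G(\IC)$, the orbit $\{n\mu(g)\}_{n\in\IZ}$ must be bounded, forcing $\mu(g)=0$). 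Hence $\lambda_u = \lambda_u'$ on $G(\IC)$, and, by the smoothness of $\mu$ on all of $\overline{G}(\IC)$, they agree everywhere away from the divisor.

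\emph{Existence.} The remark following Lemma \ref{lemma::canonicalheight} identifies $L(D_{u,0}-D_{u,\infty})$ with $\overline{\pi}^\ast Q_u$, where $Q_u \in \Pic^0(A) = A^\vee(\IC)$ is the $u$-th component of the extension class $\eta_G \in (A^\vee)^t$. Endow $Q_u$ with its canonical Appell--Humbert flat hermitian metric $\|\cdot\|_u$: the translation-invariant metric (unique up to positive scalar) whose Chern form vanishes identically. Pulling back along $\overline{\pi}$ and letting $s_u$ denote the canonical rational section with $\Div(s_u) = D_{u,0}-D_{u,\infty}$, set
\begin{equation*}
\lambda_u := -\log \|s_u\|_u.
\end{equation*}
Then $\lambda_u$ is a smooth Weil function for $D_{u,0}-D_{u,\infty}$, and since the pulled-back metric still has vanishing Chern form, $dd^c\lambda_u = 0$ off the support of the divisor. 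The local $dd^c$-Poincar\'e lemma then produces, around every point, a meromorphic $f$ with $\Div(f) = D_{u,0}-D_{u,\infty}$ and $\lambda_u = -\log|f|$; in particular, $e^{\lambda_u}$ is locally the absolute value of a meromorphic function.

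\emph{Additivity and main obstacle.} By Weil--Barsotti, $G$ is the fibered product over $A$ of the $\Gm$-extensions $Q_u^\times \to A$ (the total space of $Q_u$ with the zero section removed, equipped with its natural group structure), and each projection $\psi_u\!:G \to Q_u^\times$ is a homomorphism of algebraic groups. Under the identification $L(D_{u,0}-D_{u,\infty})|_G \cong \pi^\ast Q_u|_G$, the section $s_u|_G$ corresponds to $\psi_u$ regarded as a nowhere-zero section of $\pi^\ast Q_u$ (any scalar discrepancy is absorbed by the scaling freedom in $\|\cdot\|_u$). The decisive input is that $\|\cdot\|_u$ is multiplicative under the group law of $Q_u^\times$: in the uniformization $Q_u^\times(\IC) = (V \times \IC^\times)/\Lambda$, where $\Lambda$ embeds as $\lambda \mapsto (\lambda,\chi_u(\lambda))$ for the Appell--Humbert character $\chi_u\!:\Lambda \to U(1)$ classifying $Q_u \in \Pic^0(A)$, the metric descends from $\|(v,z)\|_u = |z|$, and this is trivially multiplicative under the component-wise group law on $V \times \IC^\times$. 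Pulling back along $\psi_u$ yields $\|s_u(xy)\|_u = \|s_u(x)\|_u\|s_u(y)\|_u$, hence $\lambda_u(xy) = \lambda_u(x)+\lambda_u(y)$. I expect this multiplicativity---the compatibility of the Appell--Humbert metric with the extension structure of $Q_u^\times$---to be the main obstacle; everything else is either formal or a routine application of Weil-function theory.
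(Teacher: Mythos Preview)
Your proof is correct and, at its core, uses the same mechanism as the paper: the line bundle $L(D_{u,0}-D_{u,\infty})\cong \overline{\pi}^\ast Q_u$ lies in $\overline{\pi}^\ast\Pic^0(A)$, so its canonical (Appell--Humbert) metric is flat, and the associated Weil function is additive and has $e^{\lambda_u}$ locally equal to $|f|$ for $f$ meromorphic. The difference is one of packaging. The paper does not argue directly: it cites Vojta's \cite[Proposition~2.6]{Vojta1996} for existence, uniqueness, and additivity, and for the ``absolute value of a meromorphic function'' clause it inspects Vojta's formula (2.6.3), reducing to the N\'eron function of a rational section of an algebraically trivial line bundle on $A$ and invoking the normalized theta-function formula \cite[Theorem~13.1.1]{Lang1983} with Hermitian form $H=0$.

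Your route is genuinely more self-contained: you replace the citation chain by (i) a clean boundedness argument for uniqueness, (ii) the explicit uniformization $Q_u^\times(\IC)=(V\times\IC^\times)/\Lambda$ with $\Lambda$ acting through a unitary character, on which the flat metric $\|(v,z)\|=|z|$ is visibly multiplicative for the componentwise group law, and (iii) the pluriharmonic $dd^c$-lemma to pass from $dd^c\lambda_u=0$ to $e^{\lambda_u}=|f|$ locally. This is exactly the content hidden behind Vojta's proposition and Lang's theta formula, so the two arguments are equivalent in substance; yours has the advantage of being readable without chasing references, while the paper's has the advantage of brevity. One small remark: the identification $L(D_{u,0}-D_{u,\infty})\cong\overline{\pi}^\ast Q_u$ is only canonical up to a sign (it could be $Q_u^{-1}$ depending on conventions for the linearization), but this is harmless for your argument since $Q_u^{-1}\in\Pic^0(A)$ as well.
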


Outside $\supp(D_{u,0}-D_{u,\infty})(\IC)$, we have locally $\lambda_u=\log |f|$ for some holomorphic function $f$. This implies $dd^c \lambda_u = 0$ on $G(\IC)$.

\begin{proof} This is stated in \cite[Proposition 2.6]{Vojta1996} except for the assertion about $e^{\lambda_u}$. Inspecting (2.6.3) in the proof of the said proposition, one sees that it suffices to prove the same assertion for the N\'eron function $\lambda_{(s)}$ (cf.\ \cite[Theorem 11.1.1]{Lang1983}) attached to the divisor $(s)$ on $A$. As $s$ is a rational section of an algebraically trivial line bundle by construction, the divisor $(s)$ is algebraically equivalent to the zero divisor. The explicit formula for $\lambda_{(s)}$ in terms of a normalized theta function (\cite[Theorem 13.1.1]{Lang1983}) directly yields the assertion in this case; the hermitian form $H$ in the formula is zero because $(s) \sim_{\mathrm{alg}} 0$ (cf.\ \cite[Proposition 2.2.2]{Birkenhake2004}).
\end{proof}

Using the Weil functions $\lambda_u$ we can define a subgroup
\begin{equation} \label{equation::maximalcompactsubgroup}
\{ x \in G(\IC) \ | \ \lambda_1(x)=\lambda_2(x)= \cdots = \lambda_{t}(x) = 0 \} \subset G(\IC).
\end{equation}
This coincides with the maximal compact subgroup $K_G$ of $G(\IC)$. Indeed, any homomorphism $K_G \rightarrow \IR$ vanishes by compactness so that $\lambda_u|_{K_G}=0$. By uniqueness, the restriction of $\lambda_u$ to the maximal torus $T(\IC)$ equals $-\log|X_u|$ (in standard coordinates $X_1,\dots,X_{t}$). Hence, the subgroup in (\ref{equation::maximalcompactsubgroup}) is topologically a fiber bundle with compact fiber $(S^1)^{t}$, $S^1=\{ z \in \IC \ | \ |z|=1 \}$, over the compact base $A(\IC)$. Therefore, it is compact itself and hence contained in $K_G$. As $G(\mathbb{C})$ is Hausdorff, its maximal compact subgroup $K_G$ is a closed subgroup. Using \cite[Theorem II.2.3]{Helgason2001} and counting dimensions, we see that $K_G$ is a real Lie subgroup of (real) dimension $2\dim(A)+t$.

Recall that $M_{\overline{G}}$ is the line bundle associated to the $T$-invariant Weil divisor $G(D_{t})$. By Lemmas \ref{lemma::smoothweilfunctions} and \ref{lemma::goodweilfunction}, the function
\begin{equation} \label{equation::weilfunction}
\lambda = \frac{1}{2} \sum_{u=1}^{t} \left( \log(1+e^{2\lambda_u})+ \log(1+e^{-2\lambda_u}) \right)
\end{equation} 
is a smooth Weil function for $G(D_{t})$. For the associated smooth hermitian line bundle, which is denoted $\overline{M}_{\overline{G}}$ in the sequel, we have
\begin{equation*}
c_1(\overline{M}_{\overline{G}}) = \frac{1}{2} \sum_{u=1}^{t} \left( dd^c\log(1+e^{2\lambda_u})+ dd^c\log(1+e^{-2\lambda_u}) \right) \, \text{on $G(\IC)$}.
\end{equation*}

The Weil functions of Lemma \ref{lemma::goodweilfunction} also satisfy some functoriality. To be precise, let $\varphi\!: G \rightarrow G^\prime$ be a homomorphism of semiabelian varieties with toric component $\varphi_{\mathrm{tor}}\!: T = \Gm^{t} \rightarrow T^\prime = \Gm^{t^\prime}$. Let $X_i$ (resp.\ $Y_j$) be the standard algebraic coordinates on $\Gm^{t}$ (resp.\ $\Gm^{t^\prime}$) and write $\varphi_{\mathrm{tor}}^\ast(Y_v)=X_1^{a_{1 v}}\cdots X_{t}^{a_{t v}}$ with integers $a_{uv}$. Lemma \ref{lemma::goodweilfunction} supplies Weil functions $\lambda_v^\prime$, $1 \leq u\leq t^\prime$, on $G^\prime$ and there is an identity
\begin{equation} \label{equation::lambda_functoriality}
\varphi^\ast \lambda^\prime_v = \lambda_v^\prime \circ \varphi =  a_{1 v} \lambda_1 + a_{2 v} \lambda_2  + \cdots + a_{t v} \lambda_{t} \, \text{on $G(\IC)$.}
\end{equation}
Indeed, the equality is valid on $T$ since the restriction of $\lambda_u$ (resp.\ $\lambda_v^\prime$) to the maximal torus $T(\IC) \approx (\IC^\times)^{t}$ (resp.\ $T^\prime(\IC) \approx (\IC^\times)^{t^\prime}$) is $(-\log |X_u|)$ (resp.\ $(-\log |Y_v|)$) as we noted above. It is also true on $K_{G}$ because $\varphi(K_{G})\subset K_{G^\prime}$. As $K_{G}$ and $T(\IC)$ generate $G(\IC)$ as a group, (\ref{equation::lambda_functoriality}) is true for all of $G(\IC)$. Note that $\varphi^\ast\lambda_v^\prime$ is independent of the abelian component of $\varphi$. Abusing notation, we therefore write $\varphi_{\mathrm{tor}}^\ast \lambda_v^\prime$ instead of $\varphi^\ast \lambda_v^\prime$. Even more, we can use (\ref{equation::lambda_functoriality}) to formally define $\phi_{\mathrm{tor}}^\ast \lambda_v^\prime: G(\IC) \rightarrow \IR$, $v=1,\dots, t^\prime$, for an arbitrary $\phi_{\mathrm{tor}} \in \Hom_\IR(\Gm^{t},\Gm^{t^\prime})$.

We now apply the results of the previous paragraph to endow the line bundles $M_{\overline{\Gamma(\varphi_{\mathrm{tor}})}}$, $\varphi_{\mathrm{tor}} \in \Hom(\Gm^{t},\Gm^{t^\prime})$, with hermitian metrics. For this, we use the homomorphism $\varphi: G \rightarrow G^\prime$ from Construction \ref{construction2} with $\overline{\varphi}^\ast M_{\overline{G}^\prime} \approx M_{\overline{\Gamma(\varphi_{\mathrm{tor}})}}$. We may endow $M_{\overline{\Gamma(\varphi_{\mathrm{tor}})}}$ with a hermitian metric such that $\overline{\varphi}^\ast \overline{M}_{\overline{G}^\prime} \approx \overline{M}_{\overline{\Gamma(\varphi_{\mathrm{tor}})}}$. Since the isomorphism between $\overline{\varphi}^\ast M_{\overline{G}^\prime}$ and $M_{\overline{\Gamma(\varphi_{\mathrm{tor}})}}$ is unique up to multiplication with a non-zero constant, this singles out a hermitian metric on $\overline{M}_{\overline{\Gamma(\varphi_{\mathrm{tor}})}}$ up to a non-zero constant scaling factor. Regardless of this indeterminate scaling factor, we have an identity of Chern forms $c_1(\overline{M}_{\overline{\Gamma(\varphi_{\mathrm{tor}})}})=\overline{\varphi}^\ast c_1(\overline{M}_{\overline{G}^\prime})$. Thus,
\begin{equation} \label{equation::chernformpullback}
c_1(\overline{M}_{\overline{\Gamma(\varphi_{\mathrm{tor}})}})
= \frac{1}{2}\sum_{v=1}^{t^\prime} \left( dd^c  \log(1+e^{2\varphi_{\mathrm{tor}}^\ast \lambda^\prime_v}) + dd^c \log(1+e^{-2\varphi_{\mathrm{tor}}^\ast \lambda^\prime_v}) \right) \text{ on $G(\IC)$.}
\end{equation}
Since the indeterminacy in the metric is negligible for our purposes, we suppress it in writing $\overline{M}_{\overline{\Gamma(\varphi_{\mathrm{tor}})}}$ for any hermitian line bundle as constructed above.
Again, the right hand side of (\ref{equation::chernformpullback}) depends only on $\varphi_{\mathrm{tor}}$ and is moreover well-defined for any $\phi_{\mathrm{tor}} \in \Hom_\IR(\Gm^{t},\Gm^{t^\prime})$. In other words, we can associate with each $\phi_{\mathrm{tor}}\in \Hom_\IR(\Gm^{t},\Gm^{t^\prime})$ a $(1,1)$-form
\begin{equation*}
\omega(\phi_{\mathrm{tor}}) = \frac{1}{2} \sum_{v=1}^{t^\prime} \left( dd^c  \log(1+e^{2\phi_{\mathrm{tor}}^\ast \lambda^\prime_v}) + dd^c \log(1+e^{-2\phi_{\mathrm{tor}}^\ast \lambda^\prime_v}) \right)
\end{equation*}
on $G(\IC)$. (Note that we do not claim that $\omega(\phi_{\mathrm{tor}})$ extends to any compactification of $G(\IC)$. In the proof of Lemma \ref{lemma::chernforms_quasihomomorphisms} we give such an extension in the case where $\phi_{\mathrm{tor}} \in \Hom(\Gm^t,\Gm^{t^\prime})$, but we neither can prove the existence of an extension in general nor do we need it.) In the remainder of this section, we establish basic properties of this $(1,1)$-form.

\begin{lemma} \label{lemma::positivity} Each $dd^c \log(1+e^{\pm 2\phi_{\mathrm{tor}}^\ast \lambda_v^\prime})$, $1\leq v \leq t^\prime$, is a semipositive $(1,1)$-form of real type on $G(\IC)$. Consequently, $\omega(\phi_{\mathrm{tor}})$ is a semipositive $(1,1)$-form of real type.
\end{lemma}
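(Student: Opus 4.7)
By the discussion in Section \ref{section::hermitiandifferentialgeometry}, every $(1,1)$-form of the shape $dd^c u$ for a smooth real-valued $u$ is automatically of real type, and is semipositive if and only if $u$ is plurisubharmonic. Hence the real-type assertion and the semipositivity of the sum $\omega(\phi_{\mathrm{tor}})$ both reduce to showing that each of the smooth functions $\log(1+e^{\pm 2\phi_{\mathrm{tor}}^\ast\lambda^\prime_v})$, $1\leq v\leq t^\prime$, is plurisubharmonic on $G(\IC)$.

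The main ingredient is the pluriharmonicity of the individual Weil functions $\lambda_u$ on $G(\IC)$. Indeed, by Lemma \ref{lemma::goodweilfunction} the function $e^{\lambda_u}$ is locally of the form $|g|$ with $g$ meromorphic, but on $G(\IC)$ the divisor $D_{u,0}-D_{u,\infty}$ has empty support, so $g$ can be chosen holomorphic and non-vanishing on a simply-connected euclidean neighborhood $U$; a local branch of logarithm gives $\lambda_u|_U = \log|g_u| = \mathrm{Re}(\log g_u)$, hence $dd^c\lambda_u = 0$ on $U$. By the formal definition extending (\ref{equation::lambda_functoriality}) to the real setting, $\phi_{\mathrm{tor}}^\ast\lambda^\prime_v = \sum_{u=1}^t a_{uv}\lambda_u$ is then an $\IR$-linear combination of pluriharmonic functions and is itself pluriharmonic on $G(\IC)$.

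Now I would invoke the standard composition principle: if $F\!:\IR\to\IR$ is convex and $h$ is pluriharmonic, then $F\circ h$ is plurisubharmonic. (Restricted to any complex line this is the classical statement that a convex function of a harmonic function is subharmonic, which follows from the sub-mean-value inequality via Jensen; no monotonicity of $F$ is needed since both $h$ and $-h$ are subharmonic.) A one-line derivative check shows that $F_{\pm}(t)=\log(1+e^{\pm 2t})$ are convex on $\IR$ (the second derivative equals $4e^{\pm 2t}/(1+e^{\pm 2t})^2>0$). Applying this with $h=\phi_{\mathrm{tor}}^\ast\lambda_v^\prime$ shows that each $\log(1+e^{\pm 2\phi_{\mathrm{tor}}^\ast\lambda_v^\prime})$ is plurisubharmonic on $G(\IC)$, which finishes the proof.

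The only step requiring any thought is the pluriharmonicity of $\lambda_u$; once one notices that the strengthened clause of Lemma \ref{lemma::goodweilfunction} (that $e^{\lambda_u}$ is locally the absolute value of a meromorphic function, not merely a Weil function in the weak sense) upgrades $\lambda_u$ from smooth to locally of the form $\log|g_u|$ with $g_u$ holomorphic non-vanishing on $G(\IC)$, the rest is entirely formal. In particular no delicate estimate is needed, in contrast to Lemma \ref{lemma::chernforms_quasihomomorphisms} later.
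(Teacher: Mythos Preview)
Your proof is correct and follows essentially the same route as the paper: reduce to plurisubharmonicity of $\log(1+e^{\pm 2\phi_{\mathrm{tor}}^\ast\lambda_v^\prime})$, use the pluriharmonicity $dd^c\lambda_u=0$ on $G(\IC)$ (already noted after Lemma~\ref{lemma::goodweilfunction}), and then compose with a convex function. The only cosmetic difference is that the paper invokes the standard ``convex increasing $\circ$ plurisubharmonic'' result (citing Gunning), whereas you observe, correctly, that monotonicity is unnecessary since $\phi_{\mathrm{tor}}^\ast\lambda_v^\prime$ is in fact pluriharmonic.
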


\begin{proof} It suffices to prove that $\log(1+e^{\pm 2\phi_{\mathrm{tor}}^\ast \lambda_v^\prime})$ is a plurisubharmonic function. This follows directly from $dd^c \lambda_u = 0$ (i.e., both $\lambda_u$ and $-\lambda_u$ are plurisubharmonic on $G(\IC)$) and the fact that $\log(1+e^x)$ is a convex monotonously increasing function (cf.\ \cite[Theorem K.5 (d)]{Gunning1990}).
\end{proof}

Furthermore, the map $\phi_{\mathrm{tor}} \mapsto \omega(\phi_{\mathrm{tor}})$ is continuous with respect to the euclidean topology on $\Hom_\IR(\Gm^{t},\Gm^{t^\prime}) \approx \IR^{t \times t^\prime}$ and the usual topology on smooth $(1,1)$-forms (cf.\ \cite[Section I.2]{Demailly2012} and \cite[Section 1.46]{Rudin1991}). By Lemma \ref{lemma::goodweilfunction}, there exists locally on $G(\IC)$ a non-zero holomorphic function $\kappa_u$ such that $e^{\lambda_u^\prime}=|\kappa_u|$. From $\lambda_u^\prime = \log (|\kappa_u|^2)/2$, we deduce
\begin{equation*}
\del \lambda_u^\prime = \frac{\overline{\kappa_u} \del \kappa_u}{2|\kappa_u|^2} = \frac{\del \kappa_u}{2 \kappa_u}
\text{ and } 
\delbar \lambda_u^\prime = \frac{\kappa_u \delbar \overline{\kappa_u}}{2|\kappa_u|^2} = \overline{\left(\frac{\del \kappa_u}{2 \kappa_u}\right)}.
\end{equation*}
Using Lemma \ref{lemma::ddc_computation} below, we hence obtain
\begin{equation} \label{equation::cherntoric}
dd^c \log(1+e^{2\phi_{\mathrm{tor}}^\ast \lambda^\prime_v}) +
dd^c \log(1+e^{-2\phi_{\mathrm{tor}}^\ast \lambda^\prime_v}) =
\frac{2i(\del \phi_{\mathrm{tor}}^\ast \lambda_v^\prime \wedge \delbar \phi_{\mathrm{tor}}^\ast \lambda_v^\prime)}{\pi(1+e^{2\phi_{\mathrm{tor}}^\ast \lambda^\prime_v})(1+e^{-2\phi_{\mathrm{tor}}^\ast \lambda^\prime_v})}.
\end{equation}

\begin{lemma} \label{lemma::ddc_computation} Let $\kappa_1,\dots,\kappa_m$ be zero-free holomorphic functions on an open subset $U \subset \IC^n$. Then,
\begin{equation*}
d d^c \log(1+|\kappa_1|^{a_1} \cdots |\kappa_m|^{a_m}) = \frac{i}{4\pi} \frac{|\kappa_1|^{a_1} \cdots |\kappa_m|^{a_m}}{(1+|\kappa_1|^{a_1}\cdots |\kappa_m|^{a_m})^2} \left(\sum_{u=1}^m \frac{ a_u \del \kappa_u}{\kappa_u}  \wedge \overline{\sum_{u=1}^m \frac{ a_u \del \kappa_u}{\kappa_u}} \right)
\end{equation*}
for any real numbers $a_1, \dots, a_m$.
\end{lemma}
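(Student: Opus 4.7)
The plan is a direct local computation exploiting the explicit multiplicative structure of $f := |\kappa_1|^{a_1}\cdots|\kappa_m|^{a_m}$. First I would rewrite the operator as $dd^c = \tfrac{i}{2\pi}\del\delbar$ and apply the standard chain rule
\begin{equation*}
\del\delbar\log(1+f) = \frac{\del\delbar f}{1+f} - \frac{\del f \wedge \delbar f}{(1+f)^2},
\end{equation*}
so the task reduces to computing $\del f$, $\delbar f$ and $\del\delbar f$ in closed form.

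For this, I would use that each $\kappa_u$ is zero-free and holomorphic: on a small chart a branch of $\log\kappa_u$ exists, so that $\log f = \sum_{u}\tfrac{a_u}{2}(\log\kappa_u + \log\overline{\kappa_u})$. Introducing the holomorphic $(1,0)$-form $\alpha := \sum_u \tfrac{a_u}{2}\,\del\kappa_u/\kappa_u$, one immediately obtains $\del f = f\alpha$ and $\delbar f = f\bar\alpha$. The decisive observation is that $\alpha$ is $\delbar$-closed (its coefficients $\del\kappa_u/\kappa_u$ are holomorphic), so the Leibniz rule yields $\del\delbar f = \del(f\bar\alpha) = \del f \wedge \bar\alpha = f\,\alpha\wedge\bar\alpha$. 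Plugging this into the previous display produces the telescoping simplification
\begin{equation*}
\del\delbar\log(1+f) = \frac{f(1+f)-f^2}{(1+f)^2}\,\alpha\wedge\bar\alpha = \frac{f}{(1+f)^2}\,\alpha\wedge\bar\alpha.
\end{equation*}

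The last step is purely cosmetic: expanding $\alpha\wedge\bar\alpha = \tfrac14 \bigl(\sum_u a_u\,\del\kappa_u/\kappa_u\bigr) \wedge \overline{\bigl(\sum_u a_u\,\del\kappa_u/\kappa_u\bigr)}$ and multiplying by the prefactor $\tfrac{i}{2\pi}$ produces exactly the announced constant $\tfrac{i}{8\pi}$ and matches the right-hand side of the claimed identity.

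I do not expect a genuine obstacle here. The only points requiring care are the signs in the Leibniz rule for $(0,1)$-forms and the fact that the branch choice for $\log\kappa_u$ is immaterial, because only the globally well-defined logarithmic derivatives $\del\kappa_u/\kappa_u$ enter the final formula. Since the identity is local, verifying it on charts small enough to admit branches of $\log\kappa_u$ is enough to conclude on all of $U$.
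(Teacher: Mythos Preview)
Your proposal is correct and follows essentially the same route as the paper: both introduce $f=|\kappa_1|^{a_1}\cdots|\kappa_m|^{a_m}$, compute $\del f=f\alpha$ and $\delbar f=f\bar\alpha$ with $\alpha=\tfrac12\sum_u a_u\,\del\kappa_u/\kappa_u$, use that $\del\bar\alpha=0$ to get $\del\delbar f=f\,\alpha\wedge\bar\alpha$, and then apply the chain rule for $\del\delbar\log(1+f)$ to obtain the telescoping $\tfrac{f}{(1+f)^2}\alpha\wedge\bar\alpha$. The only cosmetic difference is that you invoke local branches of $\log\kappa_u$ whereas the paper differentiates $|\kappa|^q$ directly; both yield the same logarithmic derivatives, and your remark that only the well-defined forms $\del\kappa_u/\kappa_u$ appear in the end handles this cleanly.
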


\begin{proof} Without loss of generality, we assume that all $a_1, \dots, a_m$ are non-zero. To simplify our notation, we set $f(z)= |\kappa_1|^{a_1}\cdots |\kappa_m|^{a_m}$. First, we note that $\del |\kappa|^q = \del (\kappa\overline{\kappa})^{q/2} = \frac{q}{2} |\kappa|^{q-2} \overline{\kappa} \del \kappa 
= \frac{q}{2} |\kappa|^{q} \left( \frac{\del \kappa}{\kappa} \right)$ (resp.\ $\delbar |\kappa|^q= \frac{q}{2} |\kappa|^{q} \overline{\left( \frac{\del \kappa}{\kappa} \right)}$) implies that
\begin{equation*}
\del f(z) = f(z) \cdot \sum_{u=1}^m \left( \frac{a_u \del \kappa_u}{2\kappa_u} \right) \text{ (resp. } \delbar f(z) = f(z) \cdot \sum_{u=1}^m \overline{\left(\frac{a_u \del \kappa_u}{2 \kappa_u}\right)} \text{).}
\end{equation*}
Since both $\kappa_u$ and $\kappa_u^{-1}$ are holomorphic, we have
\begin{equation*}
\del \overline{\kappa_u^{-1}\del\kappa_u}
= \overline{\delbar (\kappa_u^{-1} \del \kappa_u)}
= \overline{\delbar (\kappa_u^{-1})\wedge \del \kappa_u+\kappa_u^{-1}\delbar \del (\kappa_u)}
=0
\end{equation*}
and hence
\begin{align*}
\del \delbar f(z) 
&= \del f(z) \wedge \left(\sum_{u=1}^m \overline{\left(\frac{a_u \del \kappa_u}{2 \kappa_u}\right)} \right) + f(z) \cdot \del \left(\sum_{u=1}^m \overline{\left(\frac{a_u \del \kappa_u}{2 \kappa_u}\right)} \right) \\
&= f(z) \left(\sum_{u=1}^m \frac{ a_u \del \kappa_u}{2\kappa_u} \right) \wedge \left(\sum_{u=1}^m \overline{\left(\frac{a_u \del \kappa_u}{2 \kappa_u}\right)} \right).
\end{align*}
We compute
\begin{align*}
\del \delbar \log(1+f(z)) 
&= \del \left( \frac{1}{1+f(z)}\delbar f(z)\right) \\
&= \frac{-1}{(1+f(z))^2} \del f(z) \wedge \delbar f(z)+ \frac{1}{1+f(z)}\del \delbar f(z) \\
&= \left( \frac{-f(z)^2}{(1+f(z))^2} + \frac{f(z)}{1+f(z)}\right) \left(\sum_{u=1}^m \frac{ a_u \del \kappa_u}{2\kappa_u} \wedge \sum_{u=1}^m \overline{\left(\frac{a_u \del \kappa_u}{2 \kappa_u}\right)} \right) \\
&= \frac{f(z)}{4(1+f(z))^2} \left(\sum_{u=1}^m \frac{ a_u \del \kappa_u}{\kappa_u} \wedge \sum_{u=1}^m \overline{\left(\frac{a_u \del \kappa_u}{\kappa_u}\right)} \right).
\end{align*}
The assertion follows directly as $dd^c = (i/2\pi)(\del + \delbar)(\delbar - \del) = (i/\pi) \del \delbar$.
\end{proof}

The next lemma establishes an essential homogeneity property for $\omega(\phi_{\mathrm{tor}})$.

\begin{lemma} \label{lemma::chernforms_quasihomomorphisms} 
Let $G$ be a semiabelian variety with abelian quotient $\pi: G \rightarrow A$ and toric part $\Gm^{t}$. In addition, let $t^\prime$ be a non-negative integer and $\varpi$ a smooth closed $(1,1)$-form on $A(\IC)$. For every $\phi_{\mathrm{tor}} \in \Hom_\IQ(\Gm^{t},\Gm^{t^\prime})$, every algebraic subvariety $X \subset G$, and every non-negative integers $s_1$, $s_2$ satisfying $s_1+s_2=\dim(X)$, the integral
\begin{equation*}
\int_{X(\IC)} \omega(\phi_{\mathrm{tor}})^{\wedge s_1} \wedge (\pi^\ast \varpi)^{\wedge s_2} 
\end{equation*}
is finite and
\begin{equation} \label{equation::integrals}
\int_{X(\IC)} \omega(n \cdot \phi_{\mathrm{tor}})^{\wedge s_1} \wedge (\pi^\ast \varpi)^{\wedge s_2} = n^{s_1} \cdot \int_{X(\IC)} \omega(\phi_{\mathrm{tor}})^{\wedge s_1} \wedge (\pi^\ast \varpi)^{\wedge s_2}
\end{equation}
for each non-negative integer $n$.
\end{lemma}


\begin{proof}
Let us first prove the lemma assuming that $\phi_{\mathrm{tor}} = \varphi_{\mathrm{tor}} \in \Hom(\Gm^{t},\Gm^{t^\prime})$. In this situation, $\omega(\varphi_{\mathrm{tor}})$ extends to a smooth closed $(1,1)$-form on the (complex) analytic space $G_{\overline{\Gamma(\varphi_{\mathrm{tor}})}}(\IC)$. Indeed, $\omega(\varphi_{\mathrm{tor}})$ is precisely defined to agree with the restriction of the smooth differential form $\widetilde{\omega}(\varphi_{\mathrm{tor}})=c_1(\overline{M}_{\overline{\Gamma(\varphi_{\mathrm{tor}})}})$ on $G_{\overline{\Gamma(\varphi_{\mathrm{tor}})}}(\IC)\supset G(\IC)$. Writing $\overline{X}$ for the Zariski closure of $X$ in $G_{\overline{\Gamma(\varphi_{\mathrm{tor}})}}$, we have hence
\begin{equation*}
\int_{X(\IC)} \omega(\phi_{\mathrm{tor}})^{\wedge s_1} \wedge (\overline{\pi}^\ast \varpi)^{\wedge s_2} = \int_{\overline{X}(\IC)} \widetilde{\omega}(\varphi_{\mathrm{tor}})^{\wedge s_1} \wedge (\overline{\pi}^\ast \varpi)^{\wedge s_2}
\end{equation*}
because $(\overline{X} \setminus X)(\IC)$ is of positive codimension in $X(\IC)$. As we are integrating a smooth differential form over a compact analytic space, the integral on the right-hand side is evidently finite.

To show the second part of the assertion, still assuming that $\phi_{\mathrm{tor}} = \varphi_{\mathrm{tor}} \in \Hom(\Gm^{t},\Gm^{t^\prime})$, we start note that also $c_1(\overline{M}_{\overline{\Gamma(n \cdot \varphi_{\mathrm{tor}})}})$ is an extension of $\omega(n \cdot \varphi_{\mathrm{tor}})$ on $G_{\overline{\Gamma(n \cdot \varphi_{\mathrm{tor}})}}(\IC) \supset G(\IC)$. In addition, Construction \ref{construction3} supplies us with a map $\vartheta_{\varphi_{\mathrm{tor}},n}: G_{\overline{\Gamma(\varphi_{\mathrm{tor}})}} \rightarrow G_{\overline{\Gamma(n\cdot \varphi_{\mathrm{tor}})}}$, which is the identity on $G$. Therefore, the smooth closed $(1,1)$-form $\widetilde{\omega}(n\cdot \varphi_{\mathrm{tor}}) =\vartheta_{\varphi_{\mathrm{tor}},n}^\ast c_1(\overline{M}_{\overline{\Gamma(n\cdot \varphi_{\mathrm{tor}})}})$ extends $\omega(n \cdot \varphi_{\mathrm{tor}})$ to $G_{\overline{\Gamma(\varphi_{\mathrm{tor}})}}(\IC) \supset G(\IC)$. (Note that both extensions $\widetilde{\omega}(\varphi_{\mathrm{tor}})$ and $\widetilde{\omega}(n \cdot \varphi_{\mathrm{tor}})$ are actually unique.)


Denote by $\overline{\pi}: G_{\overline{\Gamma(\varphi_{\mathrm{tor}})}} \rightarrow A$ the abelian quotient. Since the boundary $(\overline{X} \setminus X)(\IC)$ has measure zero, (\ref{equation::integrals}) would follow from the equality 
\begin{equation} \label{equation::integrals2}
\int_{\overline{X}(\IC)} \widetilde{\omega}(m \cdot \varphi_{\mathrm{tor}})^{\wedge s_1} \wedge (\overline{\pi}^\ast \varpi)^{\wedge s_2} = m^{s_1} \cdot \int_{\overline{X}(\IC)} \widetilde{\omega}(\varphi_{\mathrm{tor}})^{\wedge s_1} \wedge (\overline{\pi}^\ast \varpi)^{\wedge s_2}.
\end{equation}
For this, we claim that any function
\begin{equation*}
\rho_v^{\pm}: G(\IC)\rightarrow \IR^{>0}, \, x \mapsto (1+e^{\pm 2 m \varphi_{\mathrm{tor}}^\ast \lambda_v^\prime})/(1+e^{\pm 2 \varphi_{\mathrm{tor}}^\ast \lambda_v^\prime})^m, 1 \leq v \leq t^\prime,
\end{equation*}
extends smoothly to $G_{\overline{\Gamma(\varphi_{\mathrm{tor}})}}(\IC)$. It suffices to prove that each $x\in (G_{\overline{\Gamma(\varphi_{\mathrm{tor}})}}\setminus G)(\IC)$ has a (euclidean) neighborhood on which $\rho_v^\pm$ extends smoothly. For this, we let $\varphi: G \rightarrow G^\prime$ be again the homomorphism from Construction \ref{construction2} so that $M_{\overline{\Gamma(\varphi_{\mathrm{tor}})}} \approx \overline{\varphi}^\ast M_{\overline{G}^\prime}$. As before, Lemma \ref{lemma::goodweilfunction} affords a Weil function $\lambda_v^\prime$, $1 \leq v \leq t^\prime$, for each divisor $D_{v,0}^\prime - D_{v,\infty}^\prime$ on $\overline{G}^\prime$. Its pullback $\overline{\varphi}^\ast \lambda_v^\prime$ along $\overline{\varphi}: G_{\overline{\Gamma(\varphi_{\mathrm{tor}})}} \rightarrow \overline{G}^\prime$ restricts to the function $\varphi_{\mathrm{tor}}^\ast \lambda_v^\prime: G(\IC) \rightarrow \IR$ formally defined by (\ref{equation::lambda_functoriality}). There exists a euclidean neighborhood $U$ of $\overline{\varphi}(x)$ and a meromorphic function $f$ on $U$ with $\Div(f)=(D_{v,0}^\prime-D_{v,\infty}^\prime)|_{U}$ such that $\lambda_{v}^\prime + \log|f|$ extends to a smooth function $\alpha$ on $U$. On $G(\IC) \cap \overline{\varphi}^{-1}(U) \subset G_{\overline{\Gamma(\varphi_{\mathrm{tor}})}}(\IC)$, we have
\begin{equation*}
\varphi_{\mathrm{tor}}^\ast \lambda_v^\prime = \lambda_v^\prime \circ \overline{\varphi} = - \log |f \circ \overline{\varphi}| + (\alpha \circ \overline{\varphi}) 
\end{equation*}
and thus
\begin{equation} \label{equation::extension_rho}
\rho_v^{\pm} = \frac{1+ |f \circ \overline{\varphi}|^{\mp 2m}e^{\pm 2m(\alpha \circ \overline{\varphi})}}{(1+ |f \circ \overline{\varphi}|^{\mp 2}e^{\pm 2(\alpha \circ \overline{\varphi})})^m} = \frac{|f \circ \overline{\varphi}|^{\pm 2m}+e^{\pm 2m(\alpha \circ \overline{\varphi})}}{(|f \circ \overline{\varphi}|^{\pm 2}+e^{\pm 2(\alpha \circ \overline{\varphi})})^m}.
\end{equation}
Since $\supp(D_{v,0}^\prime) \cap \supp(D_{v,\infty}^\prime) = \emptyset$, we have $x \notin \supp(D_{v,0}^\prime)$ or $x \notin \supp(D_{v,\infty}^\prime)$. Shrinking $U$ if necessary, we may hence assume that $f$ or $f^{-1}$ is holomorphic on $U$. In either case, (\ref{equation::extension_rho}) yields a smooth extension of $\rho_v^{\pm}$ on $\overline{\varphi}^{-1}(U)$. By uniqueness, these extensions glue together to a smooth function $\widetilde{\rho}_v^\pm: G_{\overline{\Gamma(\varphi_{\mathrm{tor}})}}(\IC) \rightarrow \IR^{>0}$. (In fact, $\widetilde{\rho}_v^\pm(x)=1$ for all $x \in \supp(D_{v,0}^\prime)(\IC) \cup \supp(D_{v,\infty}^\prime)(\IC)$ because $(1+e^{m x})/(1+e^{x})^m \rightarrow 1$ if $x \rightarrow \pm \infty$.) In addition,
\begin{equation} \label{equation::exactness}
\widetilde{\omega}(m\cdot \varphi_{\mathrm{tor}}) - m\widetilde{\omega}(\varphi_{\mathrm{tor}}) = \frac{1}{2} \sum_{v=1}^{t^\prime} \left( dd^c \log(\widetilde{\rho}_v^+) + dd^c \log(\widetilde{\rho}_v^-) \right);
\end{equation}
indeed, this equality is obvious on $G(\IC)$ and any $(1,1)$-form on $G(\IC)$ has at most one smooth extension to the compactification $G_{\overline{\Gamma(\varphi_{\mathrm{tor}})}}(\IC)$. We deduce from (\ref{equation::exactness}) that $\widetilde{\omega}(m\cdot \varphi_{\mathrm{tor}}) - m\widetilde{\omega}(\varphi_{\mathrm{tor}})$ is exact and hence Stokes' theorem (\cite[p.\ 33]{Griffiths1994}) in combination with a partition of unity implies (\ref{equation::integrals2}).

Now, let us consider a general $\phi_{\mathrm{tor}} \in \Hom_\IQ(\Gm^{t},\Gm^{t^\prime})$ and a positive integer $n$ that is a denominator for $\phi_{\mathrm{tor}}$ (i.e., $n \cdot \phi_{\mathrm{tor}} \in \Hom(\Gm^{t},\Gm^{t^\prime})$). Setting $Y=[n]^{-1}(X)$, the restriction $[n]|_Y: Y \rightarrow X$ is finite \'etale of degree $n^{2\dim(A)+t}$. By functoriality (\ref{equation::lambda_functoriality}), we have $[n]^\ast \omega(\phi_{\mathrm{tor}}) = \omega(n \cdot \phi_{\mathrm{tor}})$ and $[n]^\ast \omega(m \cdot \phi_{\mathrm{tor}}) = \omega(m \cdot n \cdot \phi_{\mathrm{tor}})$. We infer that
\begin{equation*}
n^{2\dim(A)+t}\int_{X(\IC)} \omega(m\cdot \phi_{\mathrm{tor}})^{\wedge s_1} \wedge (\overline{\pi}^\ast \varpi)^{\wedge s_2} = \int_{Y(\IC)} \omega(m \cdot n \cdot \phi_{\mathrm{tor}})^{\wedge s_1} \wedge (\overline{\pi}^\ast [n]^\ast \varpi)^{\wedge s_2}
\end{equation*}
and 
\begin{equation*}
n^{2\dim(A)+t}\int_{X(\IC)} \omega(\phi_{\mathrm{tor}})^{\wedge s_1} \wedge (\overline{\pi}^\ast \varpi)^{\wedge s_2} = \int_{Y(\IC)} \omega(n \cdot \phi_{\mathrm{tor}})^{\wedge s_1} \wedge (\overline{\pi}^\ast [n]^\ast \varpi)^{\wedge s_2}.
\end{equation*}
Since $n \cdot \phi_{\mathrm{tor}} \in \Hom(\Gm^{t},\Gm^{t^\prime})$, this reduces the assertion of the lemma to the already proven special case.
\end{proof}

\subsection{``Abelian'' (1,1)-forms}
\label{subsection::abelian}

This subsection is the ``abelian'' equivalent of the last one and we introduce here $(1,1)$-forms $\omega(N;\phi_{\mathrm{ab}})$ analogous to the $(1,1)$-forms $\omega(\phi_{\mathrm{tor}})$. In fact, we construct a $(1,1)$-form $\omega(N;\phi_{\mathrm{ab}})$ on $A(\IC)$ for each $\phi_{\mathrm{ab}} \in \Hom_\IR(A,A^\prime)$, $A$ and $A^\prime$ abelian varieties, and each ample line bundle $N$ on $A^\prime$. Having pullbacks from abelian quotients at our disposal, it suffices here to work on abelian varieties and the definition is technically less demanding.

Let $\varphi_{\mathrm{ab}}: A \rightarrow A^\prime$ be a homomorphism of abelian varieties. We choose lattices $\Lambda \subseteq \mathbb{C}^{g}$, $g=\dim(A)$, and $\Lambda^\prime \subseteq \mathbb{C}^{g^\prime}$, $g^\prime=\dim(A^\prime)$, such that $\IC^{g} \twoheadrightarrow \IC^{g}/\Lambda=A(\IC)$ and $\IC^{g^\prime} \twoheadrightarrow \IC^{g^\prime}/\Lambda^\prime=A^\prime(\IC)$ are universal coverings. In the sequel, each holomorphic tangent space $T_{x} A(\IC)$, $x \in A(\IC)$, (resp.\ $T_x A^\prime(\IC)$, $x^\prime \in A^\prime(\IC)$,) is identified with $\IC^{g}$ (resp.\ $\IC^{g^\prime}$) by virtue of this quotient map. We write $\widetilde{\varphi}_{\mathrm{ab}}: \IC^{g} \rightarrow \IC^{g^\prime}$ for the lifting of $\varphi_{\mathrm{ab}}$ along the universal coverings.

Let $N$ be an ample line bundle on $A^\prime$. The Appell-Humbert Theorem (see e.g.\ \cite[Section 2.2]{Birkenhake2004}) allows us to describe $N$ in terms of a pair $(H,\chi)$ consisting of a hermitian form $H: \mathbb{C}^{g^\prime} \times \mathbb{C}^{g^\prime} \rightarrow \IC$ such that $\mathrm{Im}\,H(\Lambda^\prime,\Lambda^\prime) \subseteq \IZ$ and a semicharacter $\chi: \Lambda^\prime \rightarrow S^1$ for $H$. It is well-known (cf.\ \cite[Exercise 2.6.2]{Birkenhake2004} and \cite[Theorem 7.10]{Voisin2007}) that $N$ can be endowed with a metric $g$ such that the Chern form $c_1(\overline{N})$ of the hermitian line bundle $\overline{N}=(N,g)$ is given by
\begin{equation} \label{equation::chernM}
c_1(\overline{N})_x\!: T_{\IR,x} A^\prime(\IC) \times T_{\IR,x} A^\prime(\IC) = \IC^{g^\prime} \times \IC^{g^\prime} \longrightarrow \IC, \, (v,w) \longmapsto -\mathrm{Im}(H)(v,w), \, x \in A^\prime(\IC).
\end{equation}
Ampleness of $N$ is equivalent to $H$ being positive definite (\cite[Proposition 4.5.2]{Birkenhake2004}), which is equivalent to $c_1(\overline{N})$ being a positive $(1,1)$-form. The pullback of $c_1(\overline{N})$ along $\varphi_{\mathrm{ab}}$ is given by
\begin{equation}\label{equation::chernformpullback2}
\varphi_{\mathrm{ab}}^\ast c_1(\overline{N})_x\!: T_{\IR,x} A(\IC) \times T_{\IR,x} A(\IC) = \IC^{g} \times \IC^{g} \longrightarrow \IC, \, (v,w) \longmapsto -\mathrm{Im}(H)(\widetilde{\varphi}_{\mathrm{ab}}(v),\widetilde{\varphi}_{\mathrm{ab}}(w)),
\end{equation}
for each $x \in A(\IC)$. Lifting homomorphisms $A \rightarrow A^\prime$ to homomorphisms $\IC^{g} \rightarrow \IC^{g^\prime}$ of the universal coverings induces an injection $\Hom_\IR(A,A^\prime) \hookrightarrow \Hom_\IR(\IC^{g},\IC^{g^\prime})$, $\phi_{\mathrm{ab}} \mapsto \widetilde{\phi}_{\mathrm{ab}}$. As in (\ref{equation::chernformpullback}), the right hand side of (\ref{equation::chernformpullback2}) is well-defined for any $\widetilde{\phi}_{\mathrm{ab}} \in \Hom_\IR(\IC^{g},\IC^{g^\prime})$. For an element $\phi_{\mathrm{ab}} \in \Hom_\IR(A,A^\prime)$, we hence define the $(1,1)$-form $\omega(N;\phi_{\mathrm{ab}})$ on $A$ by demanding
\begin{equation*}
\omega(N;\phi_{\mathrm{ab}})_x: T_{\IR,x} A(\IC) \times T_{\IR,x} A(\IC) = \IC^{g} \times \IC^{g} \longrightarrow \IC, \, (v,w) \longmapsto -\mathrm{Im}(H)(\widetilde{\phi}_{\mathrm{ab}}(v),\widetilde{\phi}_{\mathrm{ab}}(w)),
\end{equation*}
for each $x \in A(\IC)$. Since $c_1(\overline{N})$ is positive and of real type, $\omega(N;\phi_{\mathrm{ab}})$ is semipositive and of real type as well. In addition, $\omega(N;\phi_{\mathrm{ab}})$ only depends on $\phi_{\mathrm{ab}}$ and the hermitian form $H$ associated with $N$ (i.e., the N\'eron-Severi class of $N$) but we have no use for this fact in the following.  Yet again, the assignment $\phi_{\mathrm{ab}} \mapsto \omega(N;\phi_{\mathrm{ab}})$ is continuous with respect to the usual topologies. Finally, there is the obvious homogeneity relation 
\begin{equation} \label{equation::abelianhomogenity}
\omega(N;n\cdot \phi_{\mathrm{ab}})= n^2 \cdot \omega(N;\phi_{\mathrm{ab}}).
\end{equation}

\section{Distributions, analytic subgroups, and Ax's Theorem}
\label{section::distribution}

In general, the $(1,1)$-forms $\omega(\phi_{\mathrm{tor}})$ and $\omega(N;\phi_{\mathrm{ab}})$ introduced in Section \ref{section::chernforms} have no realization as Chern forms of hermitian line bundles. As we show in this section, they nevertheless convey geometric information and are closely connected to the group structure of the semiabelian variety.

Once again, we consider a semiabelian variety $G$ with abelian quotient $\pi: G \rightarrow A$ and toric part $T=\Gm^{t}$. Let $t^\prime$ be a non-negative integer, $A^\prime$ an abelian variety and $N$ an ample line bundle on $A^\prime$. For each $\phi_{\mathrm{tor}} \in \Hom_\IR(\Gm^{t},\Gm^{t^\prime})$ (resp.\ $\phi_{\mathrm{ab}} \in \Hom_\IR(A,A^\prime)$), we have a semipositive $(1,1)$-form $\omega(\phi_{\mathrm{tor}})$ (resp.\ $\pi^\ast \omega(N; \phi_{\mathrm{ab}})$) of real type on $G(\IC)$. Set
\begin{equation*}
\omega= c \cdot \omega(\phi_{\mathrm{tor}}) + \pi^\ast \omega(N; \phi_{\mathrm{ab}})
\end{equation*}
for some arbitrary positive constant $c>0$. (The flexibility provided by $c$ is needed later in the proof of Lemma \ref{lemma::intersection1} in order to remedy the fact that there is no ample line bundle on a general semiabelian variety that is homogeneous with respect to the multiplication-by-$n$ map $[n]$.) Since $g_{\omega(\phi_{\mathrm{tor}})}$ and $g_{\pi^\ast \omega(N; \phi_{\mathrm{ab}})}$ are positive semidefinite, we infer from (\ref{equation::kernelintersection}) that
\begin{equation} \label{equation::kernelsplit}
\ker(\omega_x) = \ker(\omega(\phi_{\mathrm{tor}})_x) \cap \ker(\pi^\ast \omega(N; \phi_{\mathrm{ab}})_x) \subseteq T_{\IR,x} G(\IC)
\end{equation}
for each $x \in G(\IC)$. In addition, $\omega(I (\cdot), I (\cdot)) = \omega(\cdot,\cdot)$ implies that $\ker(\omega(\phi_{\mathrm{tor}})_x)$ is invariant under $I$. In fact, both $\ker(\omega(\phi_{\mathrm{tor}})_x)$ and $\ker(\omega(N; \phi_{\mathrm{ab}})_x)$ are $I$-invariant for the same reason. Under our standing identification of $T_{\IR} G(\IC)$ and $T^{1,0}_\IC G(\IC)$, this means that $\ker(\omega_x)$ is a $\IC$-linear subspace of $T^{1,0}_{\IC,x} G(\IC)$. Our next observation is that this yields a left-invariant holomorphic distribution (i.e., a holomorphic vector subbundle) $\ker(\omega) \subset T^{1,0}_\IC G(\IC)$, which is a straightforward consequence of the lemma below.

\begin{lemma} \label{lemma::translationinvariance} For every $x, y \in G(\IC)$, we have $(dl_y)_x \ker(\omega_x)= \ker(\omega_{y + x})$.
\end{lemma}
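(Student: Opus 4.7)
The plan is to verify left-invariance of each of the two kernel fields on the right-hand side of (\ref{equation::kernelsplit}) separately. The abelian piece is immediate: the form $\omega(N;\phi_{\mathrm{ab}})$ on $A(\IC)$ is given pointwise by the constant bilinear form $(v,w)\mapsto -\mathrm{Im}(H)(\widetilde{\phi}_{\mathrm{ab}}(v),\widetilde{\phi}_{\mathrm{ab}}(w))$ on $\IC^{g}$, transported to each tangent space $T_{\IR,x}A(\IC)=\IC^{g}$ via the tautological covering identification. Since this identification intertwines translation on $A(\IC)$ with the identity on $\IC^{g}$, the form $\omega(N;\phi_{\mathrm{ab}})$ is translation-invariant on $A(\IC)$. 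The relation $\pi\circ l_{y}=l_{\pi(y)}\circ\pi$ then yields $l_{y}^{\ast}\overline{\pi}^{\ast}\omega(N;\phi_{\mathrm{ab}})=\overline{\pi}^{\ast}\omega(N;\phi_{\mathrm{ab}})$, so the bilinear form $g_{\overline{\pi}^{\ast}\omega(N;\phi_{\mathrm{ab}})}$ and its kernel field are $G(\IC)$-translation-invariant.

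The toric piece is the main obstacle, because $\omega(\phi_{\mathrm{tor}})$ itself is \emph{not} translation-invariant on $G(\IC)$. The crucial observation will be that its pointwise kernel is nevertheless translation-invariant. Setting $f_{v}:=\phi_{\mathrm{tor}}^{\ast}\lambda_{v}^{\prime}$, the identity (\ref{equation::cherntoric}) lets me recast
\[\omega(\phi_{\mathrm{tor}})=\frac{1}{2\pi}\sum_{v=1}^{t^{\prime}}\rho_{v}\cdot i\,\del f_{v}\wedge\delbar f_{v},\qquad \rho_{v}:=\frac{1}{(1+e^{2f_{v}})(1+e^{-2f_{v}})}>0,\]
so that the failure of invariance is entirely absorbed into the strictly positive weights $\rho_{v}$. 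By (\ref{equation::explicitg}), the bilinear form attached to $i\,\del f_{v}\wedge\delbar f_{v}$ satisfies $g(w,w)=|\del f_{v}(w)|^{2}$ (using $\delbar f_{v}(w)=\overline{\del f_{v}(w)}$ since $f_{v}$ is real-valued), and its kernel at $x$ is therefore $\{w\in T_{\IR,x}G(\IC):\del f_{v}|_{x}(w)=0\}$, unaffected by the positive factor $\rho_{v}(x)$. Semipositivity of each summand (Lemma \ref{lemma::positivity}), together with (\ref{equation::kernelintersection}), then gives
\[\ker g_{\omega(\phi_{\mathrm{tor}}),x}=\bigcap_{v=1}^{t^{\prime}}\{w\in T_{\IR,x}G(\IC):\del f_{v}|_{x}(w)=0\}.\]

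To conclude, each $\lambda_{u}$ is an additive homomorphism $G(\IC)\to\IR$ by Lemma \ref{lemma::goodweilfunction}, so the real-linear extension $f_{v}=\phi_{\mathrm{tor}}^{\ast}\lambda_{v}^{\prime}$ defined through (\ref{equation::lambda_functoriality}) is itself a continuous homomorphism. Hence $l_{y}^{\ast}f_{v}-f_{v}$ is the constant $f_{v}(y)$, which yields $l_{y}^{\ast}(\del f_{v})=\del f_{v}$ and therefore the left-invariance of each $\ker\del f_{v}$ as a real-analytic distribution on $G(\IC)$. Intersecting over $v$ establishes left-invariance of $\ker g_{\omega(\phi_{\mathrm{tor}})}$; combining this with the abelian piece through (\ref{equation::kernelsplit}) gives the required identity $(dl_{y})_{x}\ker g_{\omega,x}=\ker g_{\omega,yx}$.
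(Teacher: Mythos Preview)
Your proof is correct and follows essentially the same route as the paper's: both split via (\ref{equation::kernelsplit}), handle the abelian summand by translation-invariance of $c_1(\overline{N})$, and handle the toric summand by rewriting $\omega(\phi_{\mathrm{tor}})$ through (\ref{equation::cherntoric}) and (\ref{equation::explicitg}) so that its kernel becomes $\bigcap_v\ker(\del f_v)$, whose invariance follows from the additivity of the $\lambda_u$. Your presentation is slightly more streamlined in passing directly to $g(w,w)=|\del f_v(w)|^2$, but the argument is the same.
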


\begin{proof}
Because of (\ref{equation::kernelsplit}), it suffices to prove that
\begin{equation} \label{equation::invariance}
(dl_y)_x \ker(\omega(\phi_{\mathrm{tor}})_{x})=\ker(\omega(\phi_{\mathrm{tor}})_{y + x}) \text{ and } (dl_y)_x \ker(\pi^\ast \omega(N; \phi_{\mathrm{ab}})_{x})= \ker(\pi^\ast \omega(N; \phi_{\mathrm{ab}})_{y + x})
\end{equation}
for all $x, y \in G(\IC)$. The latter equality is a direct consequence of the fact that the Chern form $c_1(\overline{N})$ on $A^\prime(\IC)$ is translation-invariant, which can be read off from (\ref{equation::chernM}). Using (\ref{equation::explicitg}), we see that (\ref{equation::cherntoric}) implies
\begin{equation*}
g_{\omega(\phi_{\mathrm{tor}})}=\sum_{v=1}^{t^\prime}\frac{\del \phi_{\mathrm{tor}}^\ast \lambda_v^\prime \otimes \delbar \phi_{\mathrm{tor}}^\ast \lambda_v^\prime + \delbar \phi_{\mathrm{tor}}^\ast \lambda_v^\prime \otimes \del \phi_{\mathrm{tor}}^\ast \lambda_v^\prime}{2\pi(1+e^{2\phi_{\mathrm{tor}}^\ast \lambda^\prime_v})(1+e^{-2\phi_{\mathrm{tor}}^\ast \lambda^\prime_v})}.
\end{equation*}
By Lemma \ref{lemma::positivity}, each $
\del \phi_{\mathrm{tor}}^\ast \lambda_v^\prime \otimes \delbar \phi_{\mathrm{tor}}^\ast \lambda_v^\prime + \delbar \phi_{\mathrm{tor}}^\ast \lambda_v^\prime \otimes \del \phi_{\mathrm{tor}}^\ast \lambda_v^\prime$ is a positive semidefinite bilinear form on $T_x G(\IC)$ and it follows by (\ref{equation::kernelintersection}) that
\begin{equation*}
\ker(\omega(\phi_{\mathrm{tor}}))= \bigcap_{v=1}^{t^\prime} \ker (\del \phi_{\mathrm{tor}}^\ast \lambda_v^\prime \otimes \delbar \phi_{\mathrm{tor}}^\ast \lambda_v^\prime + \delbar \phi_{\mathrm{tor}}^\ast \lambda_v^\prime \otimes \del \phi_{\mathrm{tor}}^\ast \lambda_v^\prime).
\end{equation*}
In addition, (\ref{equation::alternativekernel}) implies that
\begin{equation*}
 \ker ((\del \phi_{\mathrm{tor}}^\ast \lambda_v^\prime \otimes \delbar \phi_{\mathrm{tor}}^\ast \lambda_v^\prime + \delbar \phi_{\mathrm{tor}}^\ast \lambda_v^\prime \otimes \del \phi_{\mathrm{tor}}^\ast \lambda_v^\prime)_x) = \{ w \in T_{\IR,x} G(\IC) \ | \ \del \phi_{\mathrm{tor}}^\ast \lambda_v^\prime(w) \cdot \delbar \phi_{\mathrm{tor}}^\ast \lambda_v^\prime(w) = 0 \}.
\end{equation*}
Since each $\phi_{\mathrm{tor}}^\ast \lambda_v^\prime$ is real-valued, we have $\overline{\del \phi_{\mathrm{tor}}^\ast \lambda_v^\prime (v)} = \delbar \phi_{\mathrm{tor}}^\ast \lambda_v^\prime (v)$ and thus
\begin{equation*}
\ker ((\del \phi_{\mathrm{tor}}^\ast \lambda_v^\prime \otimes \delbar \phi_{\mathrm{tor}}^\ast \lambda_v^\prime + \delbar \phi_{\mathrm{tor}}^\ast \lambda_v^\prime \otimes \del \phi_{\mathrm{tor}}^\ast \lambda_v^\prime)_x) = \{ w \in T_{\IR,x} G_0(\IC) \ | \ \del \phi_{\mathrm{tor}}^\ast \lambda_v^\prime(w) = 0 \} = \ker(\del \phi_{\mathrm{tor}}^\ast \lambda_v^\prime).
\end{equation*}
Note that $(\del \phi_{\mathrm{tor}}^\ast \lambda_v^\prime)_x$ is a $\IR$-linear map $T_{\IR,x}G(\IC) \rightarrow \IC$. 
As each $\lambda_v^\prime$ is a homomorphism, we have $(\phi_{\mathrm{tor}}^\ast \lambda_v^\prime \circ l_y)(\cdot) = \phi_{\mathrm{tor}}^\ast \lambda_v^\prime (\cdot)+ \phi_{\mathrm{tor}}^\ast \lambda_v^\prime (y)$ and therefore $\del (\phi_{\mathrm{tor}}^\ast \lambda_v^\prime \circ l_y) = \del \phi_{\mathrm{tor}}^\ast \lambda_v^\prime$. We infer 
\begin{equation*}
(dl_y) \ker (\del \phi_{\mathrm{tor}}^\ast \lambda_v^\prime)_x = \ker (\del (\phi_{\mathrm{tor}}^\ast \lambda_v^\prime \circ l_y))_{y + x} = \ker (\del \phi_{\mathrm{tor}}^\ast \lambda_v^\prime)_{y + x},
\end{equation*}
thus establishing the first equation of (\ref{equation::invariance}).
\end{proof}

Having proven translation-invariance, we can easily determine the rank of $\ker(\omega)$ by determining the dimension of $\ker(\omega_e)$. We do this next under some surjectivity assumption on $\phi_{\mathrm{tor}}$ and $\phi_{\mathrm{ab}}$. To describe this assumption,  we recall that the complex exponential map gives a universal covering $\IC \rightarrow \Gm(\IC)$. Taking products, we obtain universal coverings $\IC^{t} \rightarrow \Gm^{t}(\IC)$ and $\IC^{t^\prime} \rightarrow \Gm^{t^\prime}(\IC)$. Each homomorphism $\varphi_{\mathrm{tor}} \in \Hom(\Gm^{t},\Gm^{t^\prime})$ lifts to a linear map $\widetilde{\varphi}_{\mathrm{tor}}: \IC^{t} \rightarrow \IC^{t^\prime}$ (cf.\ \cite[Theorems 3.25 and 3.27]{Warner1983}). Tensoring with $\IR$, we obtain an injection $\Hom_\IR(\Gm^{t},\Gm^{t^\prime})\hookrightarrow \Hom_\IR(\IC^{t},\IC^{t^\prime})$, $\phi_{\mathrm{tor}} \mapsto \widetilde{\phi}_\mathrm{tor}$, and set 
\begin{equation*}
\Hom_\IR^{\circ}(\Gm^{t},\Gm^{t^\prime})= \{ \phi_{\mathrm{tor}} \in \Hom_\IR(\Gm^{t},\Gm^{t^\prime}) \ | \ \text{$\widetilde{\phi}_\mathrm{tor}$ is surjective} \}.
\end{equation*}
In Subsection \ref{subsection::abelian}, we have associated with each $\phi_{\mathrm{ab}} \in \Hom(A,A^\prime)$ a linear map $\widetilde{\phi}_\mathrm{ab}: \IC^{g} \rightarrow \IC^{g^\prime}$ and we define similarly
\begin{equation*}
\Hom_\IR^{\circ}(A,A^\prime)= \{ \phi_{\mathrm{ab}} \in \Hom_\IR(A,A^\prime) \ | \ \text{$\widetilde{\phi}_\mathrm{ab}$ is surjective} \}.
\end{equation*}

\begin{lemma} \label{lemma::distribution_rank} If $\phi_{\mathrm{tor}} \in \Hom^\circ_\IR(\Gm^{t},\Gm^{t^\prime})$ and $\phi_{\mathrm{ab}} \in \Hom^\circ_\IR(A,A^\prime)$, then $\ker(\omega)$ has rank $(t - t^\prime)+(\dim(A)-\dim(A^\prime))$ (as a complex vector bundle).
\end{lemma}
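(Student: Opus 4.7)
The plan is to exploit the translation invariance of $\ker(g_\omega)$ established in Lemma \ref{lemma::translationinvariance} to reduce the computation to the single fiber $\ker(g_{\omega,e})$ at the identity, identified with a subspace of $\mathrm{Lie}(G) \canisom T^{1,0}_{\IC,e} G(\IC)$. By the splitting (\ref{equation::kernelsplit}),
\begin{equation*}
\ker(g_{\omega,e}) = \ker(g_{\omega(\phi_{\mathrm{tor}}),e}) \cap \ker(g_{\overline{\pi}^\ast \omega(N;\phi_{\mathrm{ab}}),e}),
\end{equation*}
and I would analyze the two factors independently before intersecting.

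For the abelian factor, the formula (\ref{equation::chernformpullback2}) combined with positive definiteness of the hermitian form $H$ attached to the ample $N$ shows that the kernel on $\mathrm{Lie}(A)$ is exactly $\ker(\widetilde{\phi}_{\mathrm{ab}})$, a complex subspace of complex codimension $\dim(A^\prime)$ by the surjectivity hypothesis. Pulling back via $d\pi_e\!: \mathrm{Lie}(G)\twoheadrightarrow \mathrm{Lie}(A)$ gives a $\IC$-subspace of $\mathrm{Lie}(G)$ of the same codimension $\dim(A^\prime)$.

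For the toric factor, the computation inside the proof of Lemma \ref{lemma::translationinvariance} shows that
\begin{equation*}
\ker(g_{\omega(\phi_{\mathrm{tor}}),e}) = \bigcap_{v=1}^{t^\prime} \ker(\del(\phi_{\mathrm{tor}}^\ast \lambda_v^\prime))_e,
\end{equation*}
so I would determine the rank of the $\IC$-linear map $\Psi\!: \mathrm{Lie}(G)\to \IC^{t^\prime}$ with components $\del(\phi_{\mathrm{tor}}^\ast \lambda_v^\prime)_e$. The subtle point here is that $G$ is in general a non-split extension, so there is no canonical projection $\mathrm{Lie}(G)\to \mathrm{Lie}(T)$. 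Instead, $\lambda_u$ is determined as an $\IR$-linear functional by (i) its restriction to $\mathrm{Lie}(T) = \IC^t$ equals $-\mathrm{Re}(z_u)$ (from $\lambda_u|_{T(\IC)} = -\log|X_u|$) and (ii) its vanishing on $\mathrm{Lie}(K_G)$ (since $\lambda_u$ is a real homomorphism and $K_G$ compact), using the decomposition $\mathrm{Lie}(G) = \mathrm{Lie}(T) + \mathrm{Lie}(K_G)$ observed after (\ref{equation::maximalcompactsubgroup}). The formula $\del f(v)= \tfrac{1}{2}(df(v)- i\, df(iv))$ then yields $\Psi|_{\mathrm{Lie}(T)} = -\tfrac{1}{2}\widetilde{\phi}_{\mathrm{tor}}$, which is surjective by hypothesis; a fortiori $\Psi$ is surjective and the toric kernel has complex codimension $t^\prime$.

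The final step is transversality. The combined $\IC$-linear map
\begin{equation*}
\Phi\!: \mathrm{Lie}(G) \longrightarrow \IC^{t^\prime}\oplus \IC^{\dim(A^\prime)}, \quad x\longmapsto \bigl(\Psi(x),\, \widetilde{\phi}_{\mathrm{ab}}(d\pi_e(x))\bigr),
\end{equation*}
has kernel equal to $\ker(g_{\omega,e})$. Since $d\pi_e$ kills $\mathrm{Lie}(T)$, the restriction $\Phi|_{\mathrm{Lie}(T)}$ surjects onto $\IC^{t^\prime}\oplus 0$, while the induced map on $\mathrm{Lie}(G)/\mathrm{Lie}(T)\canisom \mathrm{Lie}(A)$ surjects onto $0\oplus \IC^{\dim(A^\prime)}$ via $\widetilde{\phi}_{\mathrm{ab}}$. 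Hence $\Phi$ is surjective and $\dim_\IC \ker(g_{\omega,e}) = (t+\dim(A)) - (t^\prime+\dim(A^\prime))$, the claimed rank. The main obstacle is the toric step: pinning down $\Psi|_{\mathrm{Lie}(T)}$ requires carefully tracking the two defining conditions on $\lambda_u$ in the absence of any canonical splitting, and verifying that the $(1,0)$-part extraction converts the real-valued character $-\mathrm{Re}(z_u)$ precisely into the $\IC$-linear functional $z_u$ (up to scaling), so that the surjectivity hypothesis on $\widetilde{\phi}_{\mathrm{tor}}$ is exactly what is needed.
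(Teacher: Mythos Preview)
Your proposal is correct and reaches the same conclusion as the paper, but the architecture differs in a way worth noting. The paper organizes the computation via an exact sequence
\[
0 \longrightarrow \ker(g_{\omega(\phi_{\mathrm{tor}}),e}|_{T_{\IR,e}T(\IC)}) \longrightarrow \ker(g_{\omega,e}) \longrightarrow \ker(g_{\omega(N;\phi_{\mathrm{ab}}),e}) \longrightarrow 0,
\]
and the nontrivial surjectivity on the right is obtained by exhibiting an explicit $I$-invariant complement $V = T_{\IR,e}K_G \cap I(T_{\IR,e}K_G)$ to $T_{\IR,e}T(\IC)$ inside $T_{\IR,e}G(\IC)$, on which all the forms $\del\lambda_u$ and $\delbar\lambda_u$ vanish. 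The two outer terms are then computed separately. You instead package everything into a single $\IC$-linear map $\Phi\!:\mathrm{Lie}(G)\to\IC^{t^\prime}\oplus\IC^{\dim(A^\prime)}$ and argue surjectivity by a block-triangular observation: $\Phi|_{\mathrm{Lie}(T)}$ hits $\IC^{t^\prime}\oplus 0$ (because $d\pi_e$ kills $\mathrm{Lie}(T)$ and $\Psi|_{\mathrm{Lie}(T)}$ is, up to scaling, $\widetilde{\phi}_{\mathrm{tor}}$), while the induced map on $\mathrm{Lie}(A)$ hits $0\oplus\IC^{\dim(A^\prime)}$ via $\widetilde{\phi}_{\mathrm{ab}}$. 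This is slightly more economical: you never need to build the complement $V$ or invoke $K_G$ beyond the remark that $\lambda_u$ vanishes on it, since surjectivity of $\Psi$ already follows from surjectivity of its restriction to $\mathrm{Lie}(T)$. The paper's route, by contrast, makes the geometric role of the maximal compact subgroup more visible and yields the exact sequence as a by-product, which may be of independent interest. Both arguments ultimately hinge on the same identifications $\del\lambda_u|_{T_{\IR,e}T(\IC)} = -dz_u/2$ and $\ker(g_{\omega(N;\phi_{\mathrm{ab}}),e}) = \ker(\widetilde{\phi}_{\mathrm{ab}})$.
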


\begin{proof} First, we claim that there is a commutative exact diagram
\begin{equation} \label{equation::tangentspaces_exactdiagram}
\begin{tikzcd}
0 \arrow[r] & \ker(\omega(\phi_{\mathrm{tor}})|_{T_{\IR,e} T(\IC)}) \arrow[r] \arrow[d, hook] & \ker(\omega_e) \arrow[r] \arrow[d, hook] & \ker(\omega(N;\phi_{\mathrm{ab}})_e) \arrow[r] \arrow[d, hook] & 0 \\
0 \arrow[r] & T_{\IR,e} T(\IC) \arrow[r] & T_{\IR,e} G(\IC) \arrow[r, "(d\pi)_e"] & T_{\IR,e} A(\IC) \arrow[r] & 0.
\end{tikzcd}
\end{equation}
Except for the surjectivity of $\ker(\omega_e) \rightarrow \ker(\omega(N;\phi_{\mathrm{ab}})_e)$, this is a direct consequence of semipositivity and (\ref{equation::alternativekernel}). For surjectivity, it suffices to prove that there exists an $I$-invariant subspace $V \subset \ker(\omega(\phi_{\mathrm{tor}})_e)$ such that
\begin{equation} \label{equation::tangentspacedecomposition}
T_{\IR,e}T(\IC) \oplus V = T_{\IR,e}G(\IC).
\end{equation}
Given such a decomposition, we can find for any $v\in \ker(\omega(N;\phi_{\mathrm{ab}})_e)$ a $(d\pi)_e$-preimage $w\in V$. Furthermore, we have
\begin{equation*}
g_{\omega,e}(w,w)
= c \cdot g_{\omega(\phi_{\mathrm{tor}}),e}(w,w)+g_{\pi^\ast \omega(N;\phi_{\mathrm{ab}}),e}(w,w) 
=g_{\omega(N;\phi_{\mathrm{ab}}),e}(v,v) = 0
\end{equation*} 
since $w \in \ker(\omega(\phi_{\mathrm{tor}})_e)$ and $(d\pi)_e(w) = v\in \ker(\omega(N; \phi_{\mathrm{ab}})_e)$. Recall that the maximal compact subgroup $K_{G} \subset G(\IC)$ is a real Lie subgroup such that $\dim_\IR(T_{\IR,e} K_{G}) = 2\dim(A)+t$. 

We now claim that $V = T_{\IR,e} K_{G} \cap I (T_{\IR,e} K_{G}) \subset T_{\IR,e} G(\IC)$ is a suitable choice for (\ref{equation::tangentspacedecomposition}). Since each Weil function $\lambda_u$ ($1\leq u \leq t$) is constant zero on $K_{G}$, both the $(1,0)$-forms $\del \lambda_u$ ($1\leq u \leq t$) and the $(0,1)$-forms $\delbar \lambda_u$ ($1\leq u \leq t$) have to vanish on $V$. This immediately implies that $V \subset \ker(\omega(\phi_{\mathrm{tor}})_e)$. We already know that $\lambda_u|_{T(\IC)} = - \log |z_u|$ in standard coordinates $z_1,\dots, z_{t}$ on $T(\IC)= \Gm^{t}(\IC)$. We compute that
\begin{equation} \label{equation::dellambda}
\del \lambda_u|_{T_{\IR}T(\IC)} = -dz_u/2z_u \text{ and } 
\delbar \lambda_u|_{T_{\IR}T(\IC)}= -d\overline{z}_u/2\overline{z}_u \text{ ($1\leq u \leq t$),}
\end{equation}
which shows that the restrictions of $\del \lambda_1,\dots, \del \lambda_{t}, \delbar \lambda_1^\prime, \dots, \delbar \lambda_{t}^\prime$ to $T_{\IR,e}T(\IC)$ form a $\IC$-basis of $\Hom_{\IR}(T_{\IR,e}T(\IC),\IC)$. Since each of these forms vanishes on $V$ (see (\ref{equation::maximalcompactsubgroup})), we have $T_{\IR,e} T(\IC) \cap V = \{ 0 \}$. As $\dim_\IR(V) = \dim_\IR(T_{\IR,e} K_{G} \cap I (T_{\IR,e} K_{G})) \geq 2 \dim (A)$, we obtain the direct sum decomposition (\ref{equation::tangentspacedecomposition}).

Using (\ref{equation::tangentspaces_exactdiagram}), it remains to compute the dimensions of the $I$-invariant $\IR$-linear subspaces $\ker(\omega(\phi_{\mathrm{tor}})|_{T_{\IR,e} T(\IC)})$ and $\ker(\omega(M;\phi_{\mathrm{ab}})_e)$. For the former one, let us represent $\widetilde{\phi}_\mathrm{tor} \in \Hom_\IR(\IC^{t},\IC^{t^\prime})$ as a matrix $(a_{uv})_{1 \leq u \leq t, 1 \leq v \leq t^\prime} \in \IR^{t \times t^\prime}$. As in the proof of Lemma \ref{lemma::translationinvariance} above, we have
\begin{align*}
\ker(\omega(\phi_{\mathrm{tor}})|_{T_{\IR,e}T(\IC)})
&= \bigcap_{v=1}^{t^\prime}{\ker ((a_{1v}\del\lambda_1+\dots + a_{tv}\del\lambda_{t})|_{T_{\IR,e}T(\IC)}}) \\
&= \bigcap_{v=1}^{t^\prime}{\ker (a_{1v}dz_1 + \dots + a_{tv} dz_{t}: T_{\IR,e}T(\IC) \rightarrow \IC)}.
\end{align*}
Setting $dz_i = dx_i + i dy_i$ with $dx_i, dy_i: T_{\IR,e}T(\IC) \rightarrow \IR$, we can rewrite this as
\begin{equation*}
\ker(\omega(\phi_{\mathrm{tor}})|_{T_{\IR,e}T(\IC)}) = \bigcap_{v=1}^{t^\prime} \ker (a_{1v}dx_1 + \dots + a_{tv} dx_{t})\cap \bigcap_{i=1}^{t^\prime} \ker (a_{1v}dy_1 + \dots + a_{tv} dy_{t}).
\end{equation*}
The condition $\phi_{\mathrm{tor}} \in \Hom_\IR^\circ(\Gm^{t},\Gm^{t^\prime})$ is equivalent to the matrix $(a_{uv})$ having maximal rank $t^\prime$. This implies that the $2t^\prime$ real-valued functionals 
\begin{equation*}
a_{1v}dx_1 + \cdots + a_{tv} dx_{t}, a_{1v}dy_1 + \cdots + a_{tv}  dy_{t} \ (1 \leq v \leq t^\prime)
\end{equation*}
on $T_{\IR,e}T(\IC)$ are $\IR$-linearly independent. From this, we infer
\begin{equation*}
\dim_{\IR} \ker(\omega(\phi_{\mathrm{tor}})|_{T_{\IR,e} T(\IC)}) = 2(t - t^\prime).
\end{equation*}
For $\ker(\omega(N;\phi_{\mathrm{ab}})_e)$, it follows directly from (\ref{equation::chernformpullback2}) that
\begin{equation*}
g_{\omega(N;\phi_{\mathrm{ab}}),e}\!: T_{\IR,x} A(\IC) \times T_{\IR,x} A(\IC) = \IC^{g} \times \IC^{g} \longrightarrow \IC, \, (v,w) \longmapsto \mathrm{Re}(H)(\widetilde{\phi}_\mathrm{ab}(v),\widetilde{\phi}_\mathrm{ab}(w))
\end{equation*}
(compare with (\ref{equation::hermitianform})). Since $H$ is positive definite, so is its real part $\mathrm{Re}(H)$. Using (\ref{equation::alternativekernel}), we deduce that
\begin{equation*}
\ker(\omega(N;\phi_{\mathrm{ab}})_e)= \widetilde{\phi}_\mathrm{ab}^{-1}(\ker(\mathrm{Re}(H)))=\widetilde{\phi}_\mathrm{ab}^{-1}(\{ 0 \})= \ker(\widetilde{\phi}_\mathrm{ab}).
\end{equation*}
Finally, $\phi_{\mathrm{ab}} \in \Hom_\IR^\circ(A,A^\prime)$ implies that $\dim_{\IR} \ker(\omega(N;\phi_{\mathrm{ab}})_e) = 2(\dim(A)- \dim(A^\prime))$.
\end{proof}

In summary, we have proven that $\ker(\omega) \subset T_\IC^{1,0}G(\IC)$ is a left-invariant holomorphic distribution on $G(\IC)$. By the holomorphic Frobenius theorem (\cite[Theorem 2.26]{Voisin2007}), the distribution $\ker(\omega)$ is (holomorphically) integrable since the Lie bracket on $T_\IC^{1,0} G(\IC)$ vanishes. In fact, the integral manifold of $\ker(\omega)$ through a given point $x \in G(\IC)$ coincides with the analytic subgroup $x\cdot \exp_{G(\IC)}(\ker(\omega)_e)$ with $\exp_{G(\IC)}: T_\IC^{1,0}G(\IC) \rightarrow G(\IC)$ being the Lie group exponential (cf.\ \cite[Theorem II.1.7]{Helgason2001}).

As indicated in Section \ref{section::hermitiandifferentialgeometry}, we are interested in determining when a submanifold $Y \subset G(\IC)$ and a point $x \in Y(\IC)$ are such that $g_\omega|_{T_{\IR,x} Y}$ is positive definite. For this purpose, we introduce an elementary lemma about integrable (holomorphic) distributions.

\begin{lemma} \label{lemma::distributions} Let $M$ be a complex manifold of dimension $n$, $\mathcal{D} \subset T^{1,0}_{\IC} M$ an integrable holomorphic distribution of rank $m$ on $M$, $Z \subset M$ a $k$-dimensional analytic subvariety and $x$ a point on $Z$. Assume that there exists an open neighborhood $U \subset M$ of $x$ such that $\dim_\IC(\mathcal{D} \cap T_{\IC,y}^{1,0} Z) \geq l$ for any $y \in Z^{\mathrm{sm}} \cap U$. 
Then, the integral submanifold $L \subset U$ of $\mathcal{D}$ through $x$ satisfies $\dim_{x}(L \cap Z)\geq l$.
\end{lemma}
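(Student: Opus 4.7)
The plan is to apply the holomorphic Frobenius theorem to straighten $\mathcal{D}$ locally, and then to invoke a standard fiber-dimension inequality for holomorphic maps of analytic spaces. Since $\mathcal{D}$ is integrable, I would shrink $U$ if necessary and choose holomorphic coordinates $(z_1, \dots, z_n)$ on $U$ centered at $x$ such that $\mathcal{D}$ is the span of $\partial/\partial z_1, \dots, \partial/\partial z_m$. Introducing the projection $\pi\colon U \to \IC^{n-m}$, $(z_1,\dots,z_n) \mapsto (z_{m+1},\dots,z_n)$, the integral submanifold $L$ of $\mathcal{D}$ through $x$ coincides locally with $\pi^{-1}(0)$. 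The task therefore reduces to establishing $\dim_x(Z \cap \pi^{-1}(0)) \geq l$.

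Next, I would replace $Z$ by a $k$-dimensional irreducible component $W$ of $Z \cap U$ passing through $x$ and analyze the restricted map $f := \pi|_W\colon W \to \IC^{n-m}$. At any smooth point $y \in W$ we have the identity $\ker(df)_y = T^{1,0}_{\IC,y} W \cap \mathcal{D}_y = T^{1,0}_{\IC,y} Z \cap \mathcal{D}_y$, which has complex dimension at least $l$ by the hypothesis of the lemma. Hence $\mathrm{rank}(df)_y \leq k - l$ at every smooth point of $W$, and since $W^{\mathrm{sm}}$ is open and dense in $W$, the generic rank of $f$ on $W$ is at most $k - l$. Consequently, the analytic image $f(W) \subseteq \IC^{n-m}$ has dimension at most $k - l$.

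Finally, I would invoke the standard fiber-dimension inequality for a holomorphic map from an irreducible analytic space, namely
\begin{equation*}
\dim_y f^{-1}(f(y)) \geq \dim_y W - \dim_{f(y)} f(W) \quad \text{for every $y \in W$}.
\end{equation*}
Evaluating this inequality at $y = x$ yields $\dim_x(W \cap \pi^{-1}(0)) \geq k - (k - l) = l$, and since $W \subseteq Z$ and $\pi^{-1}(0)$ coincides with $L$ in our coordinate patch, the desired bound $\dim_x(Z \cap L) \geq l$ follows.

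The main subtlety I anticipate is passing from the pointwise rank estimate on $W^{\mathrm{sm}}$ to the global dimension bound on $f(W)$: one needs that rank is lower semicontinuous and that the singular locus $W \setminus W^{\mathrm{sm}}$ has strictly smaller dimension than $W$, so that its image cannot dominate $f(W)$. This is standard but must be handled in order to turn the infinitesimal hypothesis into a statement about the analytic image. It is also worth noting that the hypothesis on $T^{1,0}_{\IC,y} Z$ at singular points of $Z$ is never actually needed; only its restriction to $W^{\mathrm{sm}}$ enters, which is convenient since at singular points the Zariski tangent space is merely an upper bound for the actual tangent geometry.
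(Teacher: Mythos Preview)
Your proposal is correct and follows essentially the same route as the paper: straighten $\mathcal{D}$ via the holomorphic Frobenius theorem to obtain a submersion $f:U\to\IC^{n-m}$ whose fibers are the integral leaves, observe that the hypothesis forces $\mathrm{rank}\,d(f|_Z)\leq k-l$, and deduce the fiber-dimension bound. The only cosmetic difference is that the paper cites Gunning's Lemma~L.6 directly for the conclusion ``rank of the differential $\leq k-l$ everywhere $\Rightarrow$ every fiber has local dimension $\geq l$'', whereas you unpack this by bounding $\dim f(W)$ and then invoking the fiber-dimension inequality; both arguments are standard and equivalent here.
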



\begin{proof} By shrinking $U$ if necessary, we can assume that there exists a holomorphic flat chart $f: U \rightarrow \IC^{n-m}$ for $\mathcal{D}|_U$. Recall that this means that $f$ is a submersion and that each non-empty fiber of $f$ is an integral submanifold for $\mathcal{D}|_U$. By our assumption, the differential $d(f|_Z): T_{\IC,y}^{1,0}Z \rightarrow \IC^{n-m}$ has rank $\leq k-l$ for every $y \in Z^{\mathrm{sm}} \cap U$. By \cite[Lemma L.6]{Gunning1990a}, the local dimension of any fiber $f|_Z^{-1}(f(y)) = f^{-1}(f(y)) \cap Z$, $y \in Z^{\mathrm{sm}} \cap U$, is $\geq l$ everywhere. If $x$ is a smooth point of $Z$, this already implies $\dim_x(f^{-1}(f(x)) \cap Z) \geq l$. For $x$ in the singular locus, we use also the upper semi-continuity of the fiber dimension \cite[Lemma L.2]{Gunning1990a} to conclude the proof.
\end{proof}

Finally, we are ready to use Ax's Theorem to show non-degeneracy in all cases of interest.

\begin{lemma} \label{lemma::ax} Let $X \subset G$ be an algebraic subvariety such that $X^{(s)} \neq X$ for some non-negative integer $s$. Then $(\omega|_X)^{\wedge \dim(X)} \neq 0$ for every $(\phi_{\mathrm{tor}},\phi_{\mathrm{ab}}) \in \Hom^\circ_\IR(\Gm^{t},\Gm^{t^\prime}) \times \Hom^\circ_\IR(A,A^\prime)$ with $t^\prime + \dim(A^\prime) \geq s$.
\end{lemma}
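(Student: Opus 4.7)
The plan is to argue by contradiction. Suppose $(\omega|_X)^{\wedge \dim(X)} \equiv 0$; I will deduce that $X = X^{(s)}$, against our hypothesis. Since the ``toric'' factor $\omega(\phi_{\mathrm{tor}})$ is semipositive by Lemma \ref{lemma::positivity} and the ``abelian'' factor $\overline{\pi}^\ast \omega(N;\phi_{\mathrm{ab}})$ is semipositive by the discussion in Section \ref{subsection::abelian}, the sum $\omega$ is semipositive of real type, so $g_\omega$ is positive semidefinite on $T_{\IR} G(\IC)$. At any smooth point $x \in X^{\mathrm{sm}}(\IC)$, were $g_\omega|_{T_{\IR,x}X^{\mathrm{sm}}}$ positive definite, Section \ref{section::hermitiandifferentialgeometry} would furnish a Euclidean neighborhood $U$ of $x$ in $X^{\mathrm{sm}}$ with $\int_U (\omega|_X)^{\wedge \dim X}>0$, contradicting the assumed vanishing. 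Using the criterion $\ker(g_{\omega,x})\cap T_{\IR,x}X^{\mathrm{sm}}=\{0\}$ derived from (\ref{equation::alternativekernel}), this forces $\ker(g_{\omega,x})\cap T_{\IR,x}X^{\mathrm{sm}}$ to be nonzero; since both subspaces are $I$-invariant, the intersection is in fact a \emph{complex} subspace of $T_{\IC,x}^{1,0} X^{\mathrm{sm}}$ of dimension $\geq 1$.

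Next, I will promote this pointwise degeneracy to a positive-dimensional analytic subvariety of $X$ through every smooth point. Lemma \ref{lemma::translationinvariance}, Lemma \ref{lemma::distribution_rank}, and the holomorphic Frobenius theorem identify $\ker(g_\omega)$ as a left-invariant integrable holomorphic distribution whose integral submanifold through $x$ is the coset $xK$ of the connected analytic subgroup
\[
K = \exp_{G(\IC)}\bigl(\ker(g_{\omega,e})\bigr) \subseteq G(\IC),\qquad \dim_\IC(K) = (t-t^\prime) + (\dim(A)-\dim(A^\prime)) =: r.
\]
Here the hypothesis $(\phi_{\mathrm{tor}},\phi_{\mathrm{ab}}) \in \Hom^\circ_\IR(\Gm^{t},\Gm^{t^\prime})\times \Hom^\circ_\IR(A,A^\prime)$ is exactly what Lemma \ref{lemma::distribution_rank} requires. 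Applying Lemma \ref{lemma::distributions} with $M = G(\IC)$, $\mathcal{D} = \ker(g_\omega)$, $Z = X(\IC)$, and $l=1$, I conclude $\dim_x(xK \cap X) \geq 1$; I then select an irreducible analytic component $Y$ of $xK \cap X$ through $x$ with $\dim(Y) \geq 1$.

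The key step is Ax's theorem \cite{Ax1972}: the irreducible analytic set $Y$ is contained simultaneously in the algebraic subvariety $X$ and in the coset $xK$ of the connected analytic subgroup $K \subseteq G(\IC)$. Ax's theorem then produces a connected algebraic subgroup $H \subseteq G$ with $\dim(H) \leq \dim(K) = r$ and a point $y^\prime \in G(\IC)$ such that the Zariski closure $\overline{Y}$ satisfies $\overline{Y} \subseteq y^\prime H$. Consequently
\[
\codim_G(H) \geq \dim(G)-r = t^\prime + \dim(A^\prime) \geq s,
\]
so $\max\{0,\, s - \codim_G(H)\} = 0 < \dim(\overline{Y})$, which means $\overline{Y}$ is a positive-dimensional $s$-anomalous subvariety of $X$ containing $x$. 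Hence $x \in \overline{Y} \subseteq X^{(s)}$. As $x$ was arbitrary and $X^{(s)}$ is Zariski closed in $X$ while $X^{\mathrm{sm}}$ is Zariski dense in $X$, this yields $X = X^{(s)}$, the desired contradiction.

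The step I expect to be the main obstacle is the correct invocation of Ax's theorem, because one needs not merely \emph{some} algebraic subgroup containing a translate of $\overline{Y}$ but an algebraic subgroup whose dimension is bounded by $\dim(K)$; this is precisely the content of the Ax--Lindemann type statement and is why $\phi_{\mathrm{tor}}, \phi_{\mathrm{ab}}$ had to be taken in $\Hom^\circ_\IR$ (so that $K$ has the small dimension computed in Lemma \ref{lemma::distribution_rank}). A secondary subtlety is the initial passage from the real degeneracy of $g_\omega|_{T_{\IR,x}X}$ to an honest complex direction in $T_{\IC,x}^{1,0}X^{\mathrm{sm}}$; this is ultimately why the framework of $(1,1)$-forms of real type, with its systematic use of the almost complex structure $I$, was set up in Section \ref{section::hermitiandifferentialgeometry}.
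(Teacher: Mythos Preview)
Your overall architecture is correct and matches the paper's, but the invocation of Ax's theorem is where the argument breaks. You assert that Ax produces an algebraic subgroup $H$ with $\overline{Y}\subseteq y'H$ and $\dim(H)\leq \dim(K)$. Ax's theorem does \emph{not} give this bound: for a generic complex analytic subgroup $K$ of $G(\IC)$ the Zariski closure of a coset $xK$ can be all of $G$ (think of a one-parameter subgroup of $\Gm^2$ with irrational slope), so there is no reason for $\overline{Y}\subseteq\overline{xK}^{\mathrm{Zar}}$ to sit inside a coset of an algebraic subgroup of dimension $\leq\dim(K)$. You yourself flagged this step as the likely obstacle, and it is.

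The correct form of \cite[Corollary 1]{Ax1972}, which the paper uses, takes as input an irreducible \emph{algebraic} subvariety $Z$ together with an irreducible analytic component $W$ of $Z(\IC)\cap xK$, and outputs an algebraic subgroup $H$ with $Z\subset xH$ and
\[
\dim(H)\ \leq\ \dim(Z)+\dim(K)-\dim(W).
\]
The paper applies this with $Z=X$ and $W$ the germ of $L_x\cap X(\IC)$ at $x$ (which has $\dim\geq 1$ by your own Lemma~\ref{lemma::distributions} argument). This yields $X\subset xH$ with $\dim(H)\leq \dim(X)+r-1<\dim(X)+\dim(G)-s$, hence $s-\codim_G(H)<\dim(X)$, so $X$ itself is $s$-anomalous and $X=X^{(s)}$ in one stroke. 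Your detour through $\overline{Y}$ and the union over all smooth $x$ is unnecessary. If you insist on that route, it can be repaired by applying Ax with $Z=\overline{Y}^{\mathrm{Zar}}$ and the correct bound $\dim(H)\leq \dim(\overline{Y})+\dim(K)-\dim(Y)$; one then checks $s-\codim_G(H)\leq\dim(\overline{Y})-1<\dim(\overline{Y})$, so $\overline{Y}$ is $s$-anomalous and $x\in X^{(s)}$. But the paper's direct application to $X$ is both shorter and avoids having to take Zariski closures of analytic components.
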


\begin{proof} Assume $(\omega|_X)^{\wedge \dim(X)} = 0$, which means that $\dim_\IC(\ker(\omega_x)\cap T_x X) \geq 1$ for any $x\in X(\IC)$. For each $x \in X(\IC)$, let $L_x=x \cdot \exp_{G(\IC)}(\ker(\omega|_e))$ be the integral manifold of $\ker(\omega)$ through $x$. By Lemma \ref{lemma::distribution_rank}, the holomorphic distribution $\ker(\omega)$ has rank $\leq \dim(G)-s$ and this is also the dimension of $L_x$. From Lemma \ref{lemma::distributions}, we know that $\dim_x(L_x \cap X(\IC))\geq 1$. This is an intersection of an algebraic subvariety with an analytic subgroup in $G(\IC)$. Applying Ax's Theorem (\cite[Corollary 1]{Ax1972}), we obtain for each $x \in X(\IC)$ an algebraic subgroup $H \subset G$ such that $X \subset x H$ and
\begin{equation*}
\dim(H) \leq \dim(X) + \dim(L_x) - \dim_x(L_x \cap X(\IC)) <  \dim(X) + \dim(G) - s.
\end{equation*}
A comparison with (\ref{equality::codim2}) shows that this implies that $X$ is itself an $s$-anomalous variety, associated with $H$, and hence $X=X^{(s)}$.
\end{proof}

\section{Proof of Theorem \ref{theorem::main}}
\label{section::proof}

In this section, all algebraic groups are over $\mathrm{Spec}(\IQbar)$ without further mention. As usual, $T$ denotes the toric part of $G$ and $A$ the underlying abelian variety. Since our base field is $\IQbar$, the torus $T$ is split and we keep fixed a splitting throughout this section (i.e., assume $T = \Gm^{t}$). 

\subsection{Reductions}
\label{subsection::reductions}
We start with an elementary observation related to the ``height cones'' introduced in (\ref{equation::heightcones}). Let $h,h^\prime: G(\IQbar) \rightarrow \IR$ be functions satisfying
\begin{equation} \label{equation::heightinequalities}
c_3 h^\prime(x) - c_4 \leq h(x) \leq c_5 h^\prime(x) + c_6 \text{ and } h^\prime(x_1+ x_2) \leq c_7(h^\prime(x_1) + h^\prime(x_2))+c_8
\end{equation}
for all $x,x_1,x_2 \in G(\IQbar)$ with constants $c_i>0$ ($i \in \{3,\dots ,8\}$). For any subset $\Sigma \subseteq G(\IQbar)$ and any $\varepsilon>0$, there is an inclusion
\begin{equation*}
C(\Sigma,h^\prime,\varepsilon^\prime) \subseteq C(\Sigma,h,\varepsilon) \cup \{ x\in G(\IQbar) \ | \ h^\prime(x)< c_9 \}
\end{equation*}
with $\varepsilon^\prime = \varepsilon c_3c_5^{-1}/2$ and some constant $c_9=c_9(c_3,\dots,c_8,\varepsilon,\varepsilon^\prime)$. We leave this straightforward computation to the reader.

Let $\overline{G}$ be the compactification of $G$ and $M_{\overline{G}}$ the line bundle as in Construction \ref{construction1}. Furthermore, let $N$ be a ample symmetric line bundle on $A$. By Lemma \ref{lemma::ampleness}, $L = M_{\overline{G}} \otimes \overline{\pi}^\ast N$ is an ample line bundle on $\overline{G}$. For Theorem \ref{theorem::main}, it is sufficient to prove the boundedness of $h_{L}$ on $(X \setminus X^{(s)})(\overline{\IQ}) \cap C(G^{[s]}(\IQbar),h_{L},\varepsilon)$. In fact, let $L^\prime$ be an arbitrary ample line bundle on an arbitrary compactification $\overline{G}^\prime$ of $G$ and $h_{L^\prime}$ an associated Weil height. Applying \cite[Proposition 2.3]{Vojta1999} to the identity map $\id_G$, which gives a birational map $\overline{G} \dashrightarrow \overline{G}^\prime$, and the line bundles $L$ and $L^\prime$ we obtain the first two inequalities in (\ref{equation::heightinequalities}). The third inequality follows from applying the same proposition to the group law $+_G$, understood as a rational map $\overline{G}^\prime \times \overline{G}^\prime \dashrightarrow \overline{G}^\prime$. We may thus use our above observation to ensure the asserted reduction. Considering also Lemma \ref{lemma::canonicalheight} (a), we see that it even suffices to prove that $\widehat{h}_{L} = \widehat{h}_{M_{\overline{G}}}+\widehat{h}_{N}: G(\IQbar) \rightarrow \IR^{\geq 0}$ is bounded from above on $(X \setminus X^{(s)})(\overline{\IQ}) \cap C(G^{[s]}(\IQbar),\widehat{h}_{L},\varepsilon)$.

Our last reduction step is to note that Theorem \ref{theorem::main} is easily inferred from the following proposition, which is shown in the remaining parts of Section \ref{section::proof}.

\begin{proposition} \label{proposition::induction_step} Let $X \subseteq G$ be an irreducible Zariski closed subset of positive dimension such that $X^{(s)} \neq X$. Then, there exists a non-empty Zariski open subset $U \subseteq X$ and some $\varepsilon > 0$ such that $\widehat{h}_{L}$ is bounded on $U \cap C(G^{[s]}(\IQbar),\widehat{h}_{L},\varepsilon)$.
\end{proposition}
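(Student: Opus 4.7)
The plan is to establish two opposing height bounds on the auxiliary quantities $\widehat{h}_{L_k}(x)$ associated to carefully chosen line bundles $L_k$ of the form (\ref{equation::linebundlefamily}), and then combine them to force an upper bound on $\widehat{h}_L(x)$. Fix a candidate point $x \in (X \setminus X^{(s)})(\IQbar) \cap C(G^{[s]}(\IQbar), \widehat{h}_L, \varepsilon)$ and write $x = a + b$ with $a \in G^{[s]}(\IQbar)$ and $\widehat{h}_L(b) \leq \varepsilon(1 + \widehat{h}_L(a))$ as in (\ref{equation::heightcones}).

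\emph{Upper bound.} Apply Lemma \ref{lemma::height_approximation} to $a$: for any prescribed $\delta > 0$ it produces finitely many pairs $(\phi_{\mathrm{tor}}^{(k)}, \phi_{\mathrm{ab}}^{(k)}) \in V_\IQ^{(t_k^\prime,j_k)}$ with $t_k^\prime + \dim(A_{j_k}^\prime) \geq s$, giving rise to the line bundles $L_k$ of shape (\ref{equation::linebundlefamily}), such that for at least one index $k$ we have $\widehat{h}_{L_k}(a) \leq \delta \widehat{h}_L(a) + c(\delta)$. The quasi-triangle inequalities for $\widehat{h}_{M_{\overline{\Gamma(\phi_{\mathrm{tor}}^{(k)})}}}$ and $\widehat{h}_{\overline{\pi}^\ast N^\prime}$ furnished by Lemma \ref{lemma::heightsgrouplaw}, combined with the cone estimate on $\widehat{h}_L(b)$, then transport this from $a$ to $x$, producing an inequality of the form
\begin{equation*}
\widehat{h}_{L_k}(x) \leq (\delta + C_1 \varepsilon)\,\widehat{h}_L(x) + C_2(\delta,\varepsilon)
\end{equation*}
with $C_1$ depending only on $G$ and $L$.

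\emph{Lower bound.} For each of the finitely many pairs produced above, let $\overline{X}$ denote the Zariski closure of $X$ inside the compactification $G_{\overline{\Gamma(\phi_{\mathrm{tor}}^{(k)})}}$ of Construction \ref{construction3}. The intersection estimates of Lemma \ref{lemma::intersection1} provide a \emph{positive} lower bound on $c_1(L_k|_{\overline{X}})^{\wedge \dim X}$ (this is where both the hypothesis $X \neq X^{(s)}$ and the distributional analysis of Section \ref{section::distribution} enter, via Lemma \ref{lemma::ax}), while Lemma \ref{lemma::intersection2} provides an upper bound on the mixed number $c_1(L_k|_{\overline{X}})^{\wedge (\dim X -1)} \cdot c_1(L|_{\overline{X}})$. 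Since both $L_k|_{\overline{X}}$ and $L|_{\overline{X}}$ are nef (Lemma \ref{lemma::ampleness}), Siu's numerical bigness criterion yields an integer $m$, bounded uniformly by the ratio of these two intersection numbers, such that $L_k^{\otimes m} \otimes L^{-1}$ is big on $\overline{X}$. A sufficiently high tensor power therefore has a non-zero global section vanishing on some proper Zariski closed $V_k \subsetneq \overline{X}$, and Weil's machinery produces a constant $C_3$ with
\begin{equation*}
m\,\widehat{h}_{L_k}(x) - \widehat{h}_L(x) \geq -C_3 \quad\text{for every } x \in (X \setminus V_k)(\IQbar).
\end{equation*}
Crucially, the compactness of the parameter space in the approximation lemma (essentially a consequence of Lemma \ref{lemma::finite_approximation}) ensures that $m$ can be taken independent of $\delta$.

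\emph{Combination and conclusion.} Set $U := X \setminus (V_1 \cup \cdots \cup V_K)$; this is Zariski open and non-empty since the $V_k$ are proper closed. For $x \in U \cap C(G^{[s]}(\IQbar),\widehat{h}_L,\varepsilon)$, chaining the two bounds yields
\begin{equation*}
(1 - m\delta - C_1 m\varepsilon)\,\widehat{h}_L(x) \leq m\,C_2(\delta,\varepsilon) + C_3.
\end{equation*}
Choosing first $\delta$ so that $m\delta < 1/3$ and then $\varepsilon$ so that $C_1 m\varepsilon < 1/3$, the coefficient on the left is uniformly positive, and $\widehat{h}_L(x)$ is bounded above by a constant depending on $G, X, L, \delta, \varepsilon$ only. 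The main obstacle is clearly the lower intersection bound of Lemma \ref{lemma::intersection1}: the non-compactness of $G$ and the fact that the natural invariant hermitian metric on $M_{\overline{G}}$ produces a singular Chern current (so that standard Arakelov volume arguments break down) force one to work with the non-invariant smooth metrics and the interpolating $(1,1)$-forms $\omega(\phi_{\mathrm{tor}}) + \overline{\pi}^\ast\omega(N^\prime;\phi_{\mathrm{ab}})$ introduced in Section \ref{section::chernforms}, and to extract strict positivity of $\bigl(\omega|_X\bigr)^{\wedge \dim X}$ from Ax's theorem in the form of Lemma \ref{lemma::ax}.
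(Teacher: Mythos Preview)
Your proposal follows essentially the same architecture as the paper: an upper bound on $\widehat{h}_{L_k}(x)$ from the height-approximation machinery (Lemma~\ref{lemma::height_approximation}), a lower bound via Siu's criterion fed by the intersection estimates of Lemmas~\ref{lemma::intersection1} and~\ref{lemma::intersection2} (with Ax's theorem in the form of Lemma~\ref{lemma::ax} supplying the crucial positivity), and a combination forcing $\widehat{h}_L(x)$ bounded on a Zariski open $U$. You have also correctly identified where the hypothesis $X\neq X^{(s)}$ enters and why the non-invariant smooth metrics of Section~\ref{section::chernforms} are needed.

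The one point that is oversimplified is the claim that ``$m$ can be taken independent of~$\delta$'' by compactness alone. In the paper the auxiliary bundle is
\[
L_1 \;=\; M_{\overline{\Gamma}_k}^{\otimes n_\delta}\otimes \pi_{\overline{\Gamma}_k}^{\ast}(n_\delta\cdot\phi_{k,\mathrm{ab}})^{\ast}N_j,
\]
with an explicit $n_\delta$-power inserted precisely to reconcile the \emph{linear} homogeneity of the toric factor with the \emph{quadratic} homogeneity of the abelian one. As a result the actual upper bound is $\widehat{h}_{L_1}(x)\le c_{10}\,n_\delta^{2}(\delta+\varepsilon)(\widehat{h}_L(x)+1)$, not $(\delta+C_1\varepsilon)\widehat{h}_L(x)+C_2$, and the Siu ratio satisfies $\alpha\ge c_{17}\,n_\delta^{2}$ (Lemma~\ref{lemma::alpha}), so your $m\sim 1/\alpha$ is of order $n_\delta^{-2}$ rather than constant. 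What compactness of $\mathcal{K}_\delta$ really buys (via the uniform bounds on the homogeneous functions $\beta_i,\gamma_i$) is that $c_{17}$ is independent of~$\delta$; the $n_\delta^{2}$ factors then cancel in the combination, yielding $(c_{17}-c_{10}(\delta+\varepsilon))\widehat{h}_L(x)\le n_\delta^{-2}c_{14}(\delta)+c_{10}(\delta+\varepsilon)$. With this bookkeeping correction your argument is exactly the paper's.
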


\begin{proof}[Proof of Theorem \ref{theorem::main} (using Proposition \ref{proposition::induction_step})] We perform an induction on $\dim(X)$. Theorem \ref{theorem::main} is clearly trivial if $X$ has dimension zero, which starts our induction. Assume now that $X$ is positive dimensional and that the assertion of the theorem, with $h_L$ replaced by $\hhat_L$, is already known for any $X^\prime$ with $\dim(X^\prime)<\dim(X)$. Without loss of generality, we can additionally assume that $X$ is irreducible and that $X^{(s)} \neq X$. Applying Proposition \ref{proposition::induction_step} to $X$, we obtain a non-empty Zariski open subset $U \subseteq X$ and a real number $\varepsilon>0$ such that $\widehat{h}_{L}$ is bounded on $U(\IQbar) \cap C(G^{[s]}(\IQbar),\widehat{h}_{L},\varepsilon)$. Now, $X^\prime = X \setminus U$ has dimension strictly less than $\dim(X)$ so that we may apply our inductive hypothesis to $X^\prime$. We obtain that $\widehat{h}_L$ is bounded on $(X^\prime \setminus (X^{\prime})^{(s)})(\IQbar) \cap C(G^{[s]}(\IQbar),\hhat_L,\varepsilon^\prime)$ for some $\varepsilon^\prime>0$. In conclusion, we know that $\widehat{h}_L$ is bounded on
\begin{equation*}
(X \setminus (X^{\prime})^{(s)})(\IQbar) \cap C(G^{[s]}(\IQbar),\hhat_L,\min\{ \varepsilon,\varepsilon^\prime \})
\end{equation*}
As $(X^\prime)^{(s)} \subseteq X^{(s)}$ by (\ref{equality::codim2}), this yields the assertion of Theorem \ref{theorem::main} for $X$.
\end{proof} 

\subsection{Approximating homomorphisms} \label{section::approximatinghomomorphisms}

The following lemma is useful for reducing the proof of the main theorem to a manageable situation. 

\begin{lemma} \label{lemma::homomorphisms} There exist finitely many abelian varieties $A_1^\prime,\dots, A_{j_0}^\prime$ (depending on $s$) such that each $x \in G^{[s]}(\IQbar)$ is contained in the kernel of some surjective homomorphism $\varphi\!:G \rightarrow G^\prime$, $\dim(G^\prime)\geq s$, that is represented (as in Lemma \ref{lemma::semiabelian1}) by a diagram
\begin{equation*}
\xymatrix{0 \ar[r] & \Gm^{t} \ar[r] \ar[d]^{{\varphi}_{\mathrm{tor}}} & G \ar[r] \ar[d]^{{\varphi}} & A \ar[r] \ar[d]^{{\varphi}_{\mathrm{ab}}} & 0  \\
		  0 \ar[r] & \mathbb{G}_m^{t^\prime} \ar[r] & G^\prime \ar[r] & A_j^\prime \ar[r] & 0.
}
\end{equation*}
\end{lemma}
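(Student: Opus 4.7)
The plan is to combine Poincar\'e's complete reducibility theorem with Lemma \ref{lemma::semiabelian3}. First, I would enumerate a set of representatives $A_1^\prime,\dots,A_{j_0}^\prime$ for the isogeny classes of abelian quotients $A/B$, where $B$ runs over all abelian subvarieties of $A$. Since $A$ decomposes up to isogeny as a product $A_1^{n_1}\times \cdots \times A_r^{n_r}$ of powers of pairwise non-isogenous simple abelian varieties, every abelian subvariety of $A$ is, up to isogeny, a subproduct $A_1^{m_1}\times \cdots \times A_r^{m_r}$ with $0 \leq m_i \leq n_i$, and the same holds for the corresponding quotients. Hence there are only finitely many such $A_j^\prime$.

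Next, for a given $x \in G^{[s]}(\IQbar)$ the definition of $G^{[s]}$ supplies an algebraic subgroup $H \subseteq G$ of codimension $\geq s$ containing $x$. In characteristic zero $H$ is smooth (Cartier), so it is itself a semiabelian variety by the final remark of Subsection \ref{section::basics}. The quotient map $\varphi_0\colon G \twoheadrightarrow G/H =: G_0^\prime$ is therefore a surjective homomorphism of semiabelian varieties whose kernel contains $x$ and which satisfies $\dim(G_0^\prime) = \dim(G) - \dim(H) \geq s$. Its abelian component is the surjection $A \twoheadrightarrow A/\pi(H)$. The identity component $\pi(H)^0$ is an abelian subvariety of $A$ (as a connected smooth algebraic subgroup of an abelian variety), and $\pi(H)/\pi(H)^0$ is a finite group scheme, so $A/\pi(H)$ is isogenous to $A/\pi(H)^0$, which in turn is isogenous to exactly one of the $A_j^\prime$. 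Combining these two isogenies (and using that isogeny is symmetric up to multiplication by integers) produces an isogeny $\psi\colon A/\pi(H) \to A_j^\prime$.

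Finally, I would upgrade $G_0^\prime$ to a semiabelian variety whose abelian quotient is literally $A_j^\prime$. For this, I apply Lemma \ref{lemma::semiabelian3} to $G_0^\prime$ with toric component $\id_{\Gm^{t^\prime}}$ (where $\Gm^{t^\prime}$ is the toric part of $G_0^\prime$) and abelian component $\psi$, obtaining a semiabelian variety $G^\prime$ together with an isogeny $\psi_\ast\colon G_0^\prime \to G^\prime$ representing $(\id,\psi)$. The composition $\varphi := \psi_\ast \circ \varphi_0\colon G \to G^\prime$ is then surjective, still has $x$ in its kernel, and by the uniqueness clause in Lemma \ref{lemma::semiabelian1} fits into a diagram of the stated form whose bottom row reads $1 \to \Gm^{t^\prime} \to G^\prime \to A_j^\prime \to 0$. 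Because $\psi_\ast$ is an isogeny, $\dim(G^\prime) = \dim(G_0^\prime) \geq s$.

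There is no serious obstacle here: the argument reduces to routine applications of Poincar\'e's theorem, Cartier's smoothness result, and the structural Lemma \ref{lemma::semiabelian3}. The main point to keep in mind is that $H$ need not be connected, so one passes to the identity component $\pi(H)^0$ of its image in $A$ before invoking Poincar\'e, and then uses Lemma \ref{lemma::semiabelian3} to replace the isogenous abelian quotient by one from the fixed finite list $A_1^\prime,\dots,A_{j_0}^\prime$.
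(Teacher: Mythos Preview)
Your proposal is correct and follows essentially the same route as the paper: take the quotient $G/H$, use Poincar\'e's complete reducibility to match its abelian quotient with one of finitely many representatives $A_j^\prime$, and then adjust by an isogeny at the semiabelian level via Lemma~\ref{lemma::semiabelian3}. The only cosmetic difference is that the paper constructs $G^\prime$ by pulling back $\eta_{G/H}$ along an isogeny $A_j^\prime \to B$ and then inverts the resulting isogeny $G^\prime \to G/H$, whereas you apply Lemma~\ref{lemma::semiabelian3} directly to an isogeny $A/\pi(H) \to A_j^\prime$; your version is slightly more direct but the content is identical.
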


As $\varphi_{\mathrm{tor}}$ is surjective, we clearly have $t^\prime \leq t$.

\begin{proof} Evidently, if $x \in H(\IQbar)$ with $\codim_G(H) \geq s$ then $x$ is in the kernel of the quotient $\pi\!: G \rightarrow G/H$. The toric part of $G/H$ can be identified with $\Gm^{t^\prime}$. The abelian component $\pi_{\mathrm{ab}}\!: A \rightarrow B$ of $\pi$ is surjective. By Poincaré's complete reducibility theorem (\cite[Theorem 1 on p.\ 173]{Mumford1970}), there exist only finitely many quotients $A \rightarrow A_j^\prime$, $1 \leq j \leq j_0$, up to isogeny. In particular, there exists an isogeny $\psi: B \rightarrow A_j^\prime$ for some $j^\prime \in \{1, \dots, j_0\}$. By Lemma \ref{lemma::semiabelian3}, there exists a semiabelian variety $G^\prime$ and a unique homomorphism $\psi^\prime: G/H \rightarrow G^\prime$ with toric part $\mathrm{id}_{\Gm^{t^\prime}}$ and abelian part $\psi$. We can take $\varphi = \psi^\prime \circ \pi$.
\end{proof}
If $G$ is an abelian variety, Poincaré's complete reducibility theorem yields immediately the existence of finitely many quotients $\varphi_i: G \rightarrow G_i$, $\dim(G_i)\geq s$, such that each $x \in G^{[s]}(\IQbar)$ is contained in the kernel of some $\varphi_i$. In addition, if $G$ is a torus a similar statement is true for more trivial reasons. Nevertheless, the analogous statement is false for general semiabelian varieties as simple examples show.\footnote{\label{footnote}In fact, consider the semiabelian variety $G$ that is the $\Gm^2$-extension of a non-CM elliptic curve $E$ represented by $(\eta_1,\eta_2)\in E^\vee(\overline{\IQ})^2 $. Assume also that $\IZ \eta_1 + \IZ \eta_2$ is a free $\IZ$-module of rank $2$. For each integer $n$, we consider the $\Gm$-extension $G^{(n)}$ of $E$ given by $n\eta_1 + \eta_2 \in E^\vee(\IQbar)$ and the homomorphism $\varphi^{(n)}\!: G \rightarrow G^{(n)}$ described by $(\varphi_{\mathrm{tor}}^{(n)})^\ast(Y_1)= X_1^n X_2$ and $\varphi_{\mathrm{ab}}^{(n)} = \id_E$. 
There exists a point $x \in \ker(\varphi^{(n)})(\IQbar) \subset G^{[2]}(\IQbar)$ that is not contained in any other algebraic subgroup of codimension $2$. Therefore any surjective homomorphism $\varphi\!: G \rightarrow G^\prime$, $\dim(G^\prime) = 2$, with $p \in \ker(\varphi)(\IQbar)$ factors through $\varphi^{(n)}$. However, Lemma \ref{lemma::semiabelian1} implies that $G^{(n)}$ and $G^{(m)}$ are not isogeneous if $n\neq m$. Indeed, all $\Gm$-extensions isogeneous to $G^{(n)}$ are represented by ``rational multiples'' of $n\eta_1 + \eta_2 \in E^\vee(\IQbar)$.} Our lemma is optimal in the general case.

By Lemma \ref{lemma::semiabelian1}, we may associate with each $\varphi \in \Hom(G, G^\prime)$ as in Lemma \ref{lemma::homomorphisms} a pair
\begin{equation*}
(\varphi_{\mathrm{tor}}, \varphi_{\mathrm{ab}})\in \Hom(\Gm^{t},\Gm^{t^\prime}) \times \Hom(A,A_j^\prime), \ t^\prime \in \{ 0, \dots, t\}, \ j \in \{ 1, \dots, j_0\}.
\end{equation*}
This allows us to concentrate on a finite number of fixed finite rank $\IZ$-modules
\begin{equation} \label{equation::Vij}
V^{(t^\prime,j)} = \Hom(\Gm^{t},\mathbb{G}_m^{t^\prime}) \times \Hom(A,A_j^\prime), \ t^\prime \in \{ 0, \dots, t\}, \ j \in \{ 1, \dots, j_0\}.
\end{equation}
instead of infinitely many different $\Hom(G,G^\prime)$. We study now one of these modules separately and drop the superscripts, writing $V$ instead of $V^{(t^\prime,j)}$. As $V$ is a free $\IZ$-module, it embeds into $V_\IQ = V \otimes_\IZ \IQ$ and $V_\IR = V \otimes_\IZ \IR$. Furthermore, a quasi-homomorphism $\phi \in \Hom_\IQ(G,G^\prime)$ determines a pair $(\phi_{\mathrm{tor}},\phi_{\mathrm{ab}}) \in V_\IQ$. However, the relation between elements $(\phi_{\mathrm{tor}}, \phi_{\mathrm{ab}})\in V_\IQ$ and actual quasi-homomorphisms $\phi: G \rightarrow_{\IQ} G^\prime$ of semiabelian varieties is quite intricate. The reader is referred to Section \ref{section::structuralproperties} for details. As witnessed by the results of Section \ref{section::distribution}, we have a special interest in pairs that are contained in
\begin{equation*}
V_\IR^\circ = \Hom_\IR^{\circ}(\Gm^{t},\Gm^{t^\prime}) \times \Hom_\IR^{\circ}(A,A_j^\prime) \subset V_\IR.
\end{equation*}
For this reason, we also define $V_\IQ^\circ = V_\IQ \cap V_\IR^\circ$. It is easy to see that a quasi-homomorphism $\phi_{\mathrm{tor}} \in \Hom_\IQ(\Gm^{t},\Gm^{t^\prime})$ (resp.\ $\phi_{\mathrm{ab}} \in \Hom_\IQ(A,A_j^\prime)$) is contained in $\Hom_\IR^\circ(\Gm^{t},\Gm^{t^\prime})$ (resp.\ $\Hom_\IR^\circ(A,A_j^\prime)$) if and only if it is surjective in the sense of Section \ref{section::homomorphisms}.

With these preparations, we can state our first approximation result. The proof is a simple reduction to the abelian and toric cases treated in \cite{Habegger2009a, Habegger2009}.

\begin{lemma} \label{lemma::approximation} There exists a compact subset $\mathcal{K}=\mathcal{K}_{\mathrm{tor}} \times \mathcal{K}_{\mathrm{ab}} \subset V_\IR^\circ$ such that the following assertion is true: Let $x \in G(\IQbar)$ be contained in the kernel of a surjective homomorphism $\varphi:G \rightarrow G^\prime$ of semiabelian varieties that is represented by some $(\varphi_{\mathrm{tor}}, \varphi_{\mathrm{ab}}) \in V$. Then, there exists a semiabelian variety $G^{\dprime}$ and a surjective quasi-homomorphism $\phi: G \rightarrow_{\IQ} G^\dprime$ such that $x \in \ker(\phi)+\Tors(G)$ and $\phi$ is represented by some $(\phi_{\mathrm{tor}},\phi_{\mathrm{ab}}) \in V_\IQ \cap \mathcal{K}$.
\end{lemma}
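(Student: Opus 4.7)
The plan is to replace $\varphi: G \to G'$ by a post-composition $\phi := \psi \circ \varphi: G \rightarrow_{\IQ} G''$ along a quasi-isogeny $\psi: G' \rightarrow_{\IQ} G''$ chosen so that $(\phi_{\mathrm{tor}}, \phi_{\mathrm{ab}}) = (\psi_{\mathrm{tor}}\varphi_{\mathrm{tor}}, \psi_{\mathrm{ab}}\varphi_{\mathrm{ab}})$ falls into a fixed compact region. Any pair $(\psi_{\mathrm{tor}}, \psi_{\mathrm{ab}})$ in the rational subgroup $\Theta_{\IQ} := \GL_{t'}(\IQ) \times (\End(A_j') \otimes \IQ)^\times$ gives rise to such a $\psi$: clearing denominators by an $L \in \INplus$ turns $L\psi_{\mathrm{tor}}$ into an isogeny of $\Gm^{t'}$ and $L\psi_{\mathrm{ab}}$ into an isogeny in $\End(A_j')$, so Lemma \ref{lemma::semiabelian3} applied to $G'$ furnishes a semiabelian variety $G''$ and a homomorphism $G' \to G''$ representing $(L\psi_{\mathrm{tor}}, L\psi_{\mathrm{ab}})$; dividing by $L$ recovers the desired $\psi$. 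The assumption $\varphi(x) = 0$ then forces $(L\phi)(x) = (L\psi)(\varphi(x)) = 0$, whence $x \in \ker(L\phi) \subseteq \ker(\phi) + \Tors(G)$.

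The main input is that the orbit space of $V_{\IR}^\circ$ under the natural left-multiplication action of
\begin{equation*}
\Theta_{\IR} := \GL_{t'}(\IR) \times (\End(A_j') \otimes \IR)^\times
\end{equation*}
is compact. For the toric factor this is immediate: $\Hom_{\IR}^\circ(\Gm^t, \Gm^{t'}) / \GL_{t'}(\IR)$ is the Grassmannian $\mathrm{Gr}_{t'}(\IR^t)$ of $t'$-dimensional quotients of $\IR^t$. For the abelian factor, Poincar\'e complete reducibility (\cite[p.~173]{Mumford1970}) provides decompositions $A \sim \prod_i B_i^{n_i}$ and $A_j' \sim \prod_i B_i^{k_i}$ into isotypic components indexed by pairwise non-isogenous simple abelian varieties $B_i$ (with $0 \leq k_i \leq n_i$), whence
\begin{equation*}
\Hom_{\IR}(A, A_j') \cong \prod_i \Mat_{k_i \times n_i}(D_i), \qquad \End_{\IR}(A_j') \cong \prod_i \Mat_{k_i \times k_i}(D_i),
\end{equation*}
where $D_i := \End(B_i) \otimes \IR$ is a finite-dimensional semisimple $\IR$-algebra. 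Wedderburn-decomposing each $D_i$ into matrix algebras over $\IR$, $\IC$, or $\mathbb{H}$ and quotienting the full-rank matrices by the induced left $\GL$-action yields a finite product of compact Grassmannians over these division algebras, proving the claim.

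To finish, select a compact $\Sigma \subset V_{\IR}^\circ$ that surjects onto $V_{\IR}^\circ / \Theta_{\IR}$ and let $\mathcal{K}$ be a closed $\delta$-neighborhood of $\Sigma$ for some $\delta > 0$ small enough that $\mathcal{K} \subset V_{\IR}^\circ$ (possible since $\Sigma$ is compact and $V_{\IR}^\circ$ is open). For any surjective $\varphi$ with $v := (\varphi_{\mathrm{tor}}, \varphi_{\mathrm{ab}}) \in V \cap V_{\IR}^\circ$, surjectivity onto the compact quotient yields some $\widetilde{\psi} \in \Theta_{\IR}$ with $\widetilde{\psi} \cdot v \in \Sigma$; density of $\Theta_{\IQ}$ in $\Theta_{\IR}$ (rationals are dense and invertibility is open) then lets us pick $\psi \in \Theta_{\IQ}$ so close to $\widetilde{\psi}$ that $\|\psi \cdot v - \widetilde{\psi} \cdot v\| < \delta$---the required precision depends on the individual $v$, but this is harmless because we only need a \emph{single} rational representative per orbit to land in $\mathcal{K}$. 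Then $\psi \cdot v \in \mathcal{K} \cap V_{\IQ}$, and the first paragraph supplies the sought-after quasi-homomorphism $\phi$. The principal obstacle is the compactness of the abelian orbit quotient, which relies on Poincar\'e reducibility and the identification of orbit spaces of full-rank matrices over division algebras with Grassmannians; clearing denominators and the rational approximation are routine.
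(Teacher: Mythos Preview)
Your proof is correct and follows the same overall strategy as the paper: post-compose $\varphi$ with a quasi-isogeny $\psi$ built from a pair $(\psi_{\mathrm{tor}},\psi_{\mathrm{ab}})$ of quasi-isogenies on $\Gm^{t'}$ and $A_j'$, invoking Lemma~\ref{lemma::semiabelian3} to produce the target $G''$. The paper simply cites \cite[Lemma~2]{Habegger2009a} and \cite[Lemma~4.2]{Habegger2009} as black boxes for the existence of $\mathcal{K}_{\mathrm{ab}}$ and $\mathcal{K}_{\mathrm{tor}}$; you instead supply a self-contained argument by identifying the orbit space $V_\IR^\circ/\Theta_\IR$ with a product of Grassmannians over $\IR$, $\IC$, $\mathbb{H}$ (via Poincar\'e reducibility and Wedderburn) and then approximating a real representative by a rational one. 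This is not a genuinely different route---it is essentially the content of Habegger's lemmas written out---but it has the advantage of being self-contained and making transparent why the abelian and toric cases are formally parallel. Two minor points worth tightening: you should note that $\Sigma$ can be taken of product form (so that its $\delta$-neighbourhood in the sup-norm is $\mathcal{K}_{\mathrm{tor}}\times\mathcal{K}_{\mathrm{ab}}$ as the statement demands), and the equivalence ``$\widetilde{\phi}_{\mathrm{ab}}$ surjective $\Leftrightarrow$ full row rank in each Wedderburn factor'' relies on the faithfulness of the analytic representation $D_i\hookrightarrow\End_\IC(T_0B_i)$, which guarantees that every simple factor of $D_i$ acts nontrivially.
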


The reader may be reminded that $\dim(G^\prime)=\dim(G^\dprime)$ as well as the fact that $\Gm^{t^\prime}$ (resp.\ $A_j^\prime$) is the toric part (resp.\ the abelian quotient) of both $G^\prime$ and $G^\dprime$ is automatic.

\begin{proof}

Using again Lemma \ref{lemma::semiabelian1}, we obtain a commutative diagram
\begin{equation*}
\xymatrix{0 \ar[r] & \Gm^{t} \ar[r] \ar^{\varphi_{\mathrm{tor}}}[d] & G \ar[r] \ar[d]^{\varphi} & A \ar[r] \ar[d]^{\varphi_{\mathrm{ab}}} & 0  \\
		  0 \ar[r] & \Gm^{t^\prime} \ar[r] & G^\prime \ar[r] & A_j^\prime \ar[r] & 0\text{.}
}
\end{equation*}
By \cite[Lemma 2]{Habegger2009a}, there exists some compact subset $\mathcal{K}_{\mathrm{ab}} \subset \Hom_\IR^\circ(A,A_j^\prime)$ such that for every surjective $\psi \in \Hom_\IQ(A,A_j^\prime)$ there exists a surjective $\psi^\prime \in \Hom_\IQ(A_j^\prime,A_j^\prime)$ with $\psi^\prime \circ \psi \in \mathcal{K}_{\mathrm{ab}}$. As $\varphi$ is surjective, the same is true for its abelian component $\varphi_{\mathrm{ab}}$. Hence, we may apply the lemma with $\psi=\varphi_{\mathrm{ab}}$ and obtain a quasi-homomorphism $\psi_{\mathrm{ab}}^\prime: A_j^\prime \rightarrow_\IQ A_j^\prime$ such that $\psi_{\mathrm{ab}}^\prime \circ \varphi_{\mathrm{ab}}\in \mathcal{K}_{\mathrm{ab}}$. Similarly, we can extract from the proof of \cite[Lemma 4.2]{Habegger2009} that there exists a compact set $\mathcal{K}_{\mathrm{tor}} \subset \Hom_\IR^\circ(\Gm^{t},\Gm^{t^\prime})$ such that there always exists a surjective quasi-homomorphism $\psi_{\mathrm{tor}}^\prime: \Gm^{t^\prime} \rightarrow_\IQ \Gm^{t^\prime}$ with $\psi_{\mathrm{tor}}^\prime \circ \varphi_{\mathrm{tor}} \in \mathcal{K}_{\mathrm{tor}}$. We claim that $\mathcal{K} = \mathcal{K}_{\mathrm{tor}} \times \mathcal{K}_{\mathrm{ab}} \subset V_\IR^\circ$ satisfies the assertion of the lemma.

Let $n$ be a positive integer such that $n \cdot \psi_{\mathrm{ab}}^\prime \in \Hom(A_j^\prime,A_j^\prime)$ and $n \cdot \psi_{\mathrm{tor}}^\prime \in \Hom(\Gm^{t^\prime},\Gm^{t^\prime})$. 
By Lemma \ref{lemma::semiabelian3}, there exists a semiabelian variety $G^\dprime$ and a homomorphism $\varphi^\prime: G^\prime \rightarrow G^\dprime$ such that
\begin{equation*}
\xymatrix{0 \ar[r] & \Gm^{t^\prime} \ar[r] \ar^{n \cdot \psi_{\mathrm{tor}}^\prime}[d] & G^\prime \ar[r] \ar[d]^{\varphi^\prime} & A_j^\prime \ar[r] \ar[d]^{n \cdot \psi_{\mathrm{ab}}^\prime} & 0  \\
		  0 \ar[r] & \Gm^{t^\prime} \ar[r] & G^\dprime \ar[r] & A_j^\prime \ar[r]& 0
}
\end{equation*}
is a commutative diagram with exact rows. The homomorphism $\varphi^\prime \circ \varphi: G \rightarrow G^\dprime$ is represented by
\begin{equation*}
n \cdot (\psi_{\mathrm{tor}}^\prime \circ \varphi_{\mathrm{tor}}, \psi_{\mathrm{ab}}^\prime \circ \varphi_{\mathrm{ab}}) \in V \cap n \cdot \mathcal{K}.
\end{equation*}
Multiplying with $n^{-1}$, we get a quasi-homomorphism $\phi: G \rightarrow_\IQ G^\dprime$ that is represented by
\begin{equation*}
(\psi_{\mathrm{tor}}^\prime \circ \varphi_{\mathrm{tor}}, \psi_{\mathrm{ab}}^\prime \circ \varphi_{\mathrm{ab}}) \in V_\IQ \cap \mathcal{K}.
\end{equation*}
This is evidently the quasi-homomorphism we are searching for.
\end{proof}

For the next lemma, we endow $\Hom_\IR(\Gm^{t},\Gm^{t^\prime})$ and $\Hom_\IR(A,A_j^\prime)$ with linear norms. As all norms on a finite-dimensional $\IR$-vector space are equivalent, the precise choice is irrelevant for our purposes. Therefore, we just fix an arbitrary norm $\vert \cdot \vert$ on $\Hom_\IR(\Gm^{t},\Gm^{t^\prime})$ and $\Hom_\IR(A,A_j^\prime)$ for the sequel. We slightly abuse notation in denoting both norms by $|\cdot|$. For each real $r>0$, we denote by $B_r(\phi_{\mathrm{tor}})$ (resp.\ $B_{r^{1/2}}(\phi_{\mathrm{ab}})$) the open ball with radius $r$ (resp.\ $r^{1/2}$) around $\phi_{\mathrm{tor}} \in \Hom_\IR(\Gm^{t},\Gm^{t^\prime})$ (resp.\ $\phi_{\mathrm{ab}} \in \Hom_\IR(A,A_j^\prime)$). In addition, we set
\begin{equation*}
B_r(\phi_{\mathrm{tor}},\phi_{\mathrm{ab}}) =  B_r(\phi_{\mathrm{tor}}) \times B_{r^{1/2}}(\phi_{\mathrm{ab}}), \, (\phi_{\mathrm{tor}},\phi_{\mathrm{ab}})\in V_\IR.
\end{equation*} 

\begin{lemma} \label{lemma::finite_approximation} Let $\delta>0$ be arbitrary. Then, there exists an integer $n_\delta \geq 1$ and a finite set 
\begin{equation*}
\{ (\phi_{1,\mathrm{tor}},\phi_{1,\mathrm{ab}}),\dots, (\phi_{k_\delta,\mathrm{tor}},\phi_{k_\delta,\mathrm{ab}}) \} \subset n_\delta^{-1} V
\end{equation*}
such that for each $(\phi_{\mathrm{tor}},\phi_{\mathrm{ab}}) \in\mathcal{K}$ we have $(\phi_{\mathrm{tor}},\phi_{\mathrm{ab}}) \in B_\delta(\phi_{k,\mathrm{tor}},\phi_{k,\mathrm{ab}})$ for some $1 \leq k \leq k_\delta$.
\end{lemma}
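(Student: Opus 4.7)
The plan is to combine the compactness of $\mathcal{K}\subset V_\IR$ with the density of $V_\IQ$ in $V_\IR$, noting that $V$ is a free $\IZ$-module of finite rank so that $V_\IQ = V\otimes_\IZ \IQ$ is a $\IQ$-form of $V_\IR$ and is in particular dense. First I would cover $\mathcal{K}$ by the open balls $B_{\delta/4}(\phi_{\mathrm{tor}},\phi_{\mathrm{ab}})$ with centers ranging over $\mathcal{K}$ itself. By compactness, finitely many centers $(\phi_{k,\mathrm{tor}},\phi_{k,\mathrm{ab}})\in\mathcal{K}$, $k=1,\dots,k_\delta$, suffice.

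Next I would use density to choose, for each $k$, a rational approximation $(\psi_{k,\mathrm{tor}},\psi_{k,\mathrm{ab}})\in V_\IQ$ satisfying
\begin{equation*}
|\psi_{k,\mathrm{tor}}-\phi_{k,\mathrm{tor}}|<\delta/4 \quad\text{and}\quad |\psi_{k,\mathrm{ab}}-\phi_{k,\mathrm{ab}}|<(\delta/4)^{1/2}.
\end{equation*}
Taking $n_\delta$ to be any common denominator of these finitely many rational points, one has $(\psi_{k,\mathrm{tor}},\psi_{k,\mathrm{ab}})\in n_\delta^{-1}V$ for every $k$.

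Finally, for any $(\phi_{\mathrm{tor}},\phi_{\mathrm{ab}})\in\mathcal{K}$ lying in some $B_{\delta/4}(\phi_{k,\mathrm{tor}},\phi_{k,\mathrm{ab}})$, the triangle inequality yields $|\phi_{\mathrm{tor}}-\psi_{k,\mathrm{tor}}|<\delta/2<\delta$ and $|\phi_{\mathrm{ab}}-\psi_{k,\mathrm{ab}}|<2(\delta/4)^{1/2}=\delta^{1/2}$, so $(\phi_{\mathrm{tor}},\phi_{\mathrm{ab}})\in B_{\delta}(\psi_{k,\mathrm{tor}},\psi_{k,\mathrm{ab}})$, which gives the claim.

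Since this is essentially a routine compactness-plus-density argument, I do not expect any serious obstacle. The only mild subtlety is the square-root scaling used in the definition of $B_r(\phi_{\mathrm{tor}},\phi_{\mathrm{ab}})$ for the abelian component (contrasting with the linear scaling for the toric component); this is why I choose the initial covering radius to be $\delta/4$ rather than $\delta/2$, so that the triangle inequality still delivers $\delta^{1/2}$ in the abelian factor after the rational perturbation.
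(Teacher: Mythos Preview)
Your argument is correct and rests on the same two ingredients as the paper's proof, namely compactness of $\mathcal{K}$ and density of $V_\IQ$ in $V_\IR$. The paper merely reverses the order: it first fixes $n_\delta$ large enough that the balls $B_\delta(\phi_{\mathrm{tor}},\phi_{\mathrm{ab}})$ with $(\phi_{\mathrm{tor}},\phi_{\mathrm{ab}})\in n_\delta^{-1}V$ already cover $V_\IR$, and then extracts a finite subcover of $\mathcal{K}$, thereby sidestepping the triangle-inequality step and the square-root bookkeeping you carry out.
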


\begin{proof} For sufficiently large $n_\delta$, the open sets
\begin{equation*}
B_\delta(\phi_{\mathrm{tor}},\phi_{\mathrm{ab}}) =  B_\delta(\phi_{\mathrm{tor}}) \times B_{\delta^{1/2}}(\phi_{\mathrm{ab}}), (\phi_{\mathrm{tor}},\phi_{\mathrm{ab}}) \in n_\delta^{-1}V,
\end{equation*}
cover all of $V_\IR$. By compactness, finitely many of these open sets suffice to cover all of $\mathcal{K}$.
\end{proof}

In both \cite{Habegger2009a} and \cite{Habegger2009}, a step analogous to Lemma \ref{lemma::finite_approximation} is performed quite explicitly with a quantitatively much better result, using diophantine approximation. The above weaker estimate is however sufficient for our proof.

\subsection{Height bounds}
\label{section::heightbound}

In this section, we derive two competing height bounds. The first one (Lemma \ref{lemma::height_approximation}) is valid for any $x \in C(G^{[s]},\widehat{h}_L,\varepsilon)$, whereas the second one (\ref{equation::secondheightinequality}) is valid for almost all $x \in (X \setminus X^{(s)})(\overline{\IQ})$. In combination, they imply the desired Proposition \ref{proposition::induction_step}.

Throughout this section, we keep fixed some sufficiently small $\delta$; the precise conditions on $\delta$ can be found in (\ref{equation::delta2}) and (\ref{equation::delta}). \textit{For the constants to be introduced in the sequel, we have to distinguish between those depending only on $G$ and $X$ and those that depend additionally on $\delta$. For this purpose, the former are written plainly $c_i$ whereas the latter are written $c_i(\delta)$. None of these constants depends on the point $x \in G(\IQbar)$ under consideration.}

We now consider a point $x \in (X \setminus X^{(s)})(\IQbar) \cap C(G^{[s]}(\IQbar),\widehat{h}_L,\varepsilon)$. Write $x=y+z$ with $y \in G^{[s]}(\IQbar)$ and $\widehat{h}_L(z)\leq \varepsilon \max \{ 1, \widehat{h}_L(y)\}$. Assuming $\varepsilon< 1/4$, we obtain
\begin{equation*}
\hhat_L(y)=\hhat_L(x-z)\leq 2 \hhat_L(x)+2\hhat_L(z) \leq 2 \hhat_L(x)+\frac{\hhat_L(y)}{2}+\frac{1}{2}
\end{equation*}
by using Lemma \ref{lemma::canonicalheight} (b) and Lemma \ref{lemma::heightsgrouplaw} for the first inequality. Hence, we have that 
\begin{equation} \label{equation::coneinequalities}
\hhat_L(y)\leq 4\hhat_L(x)+1 \text{ and } \hhat_L(z)\leq \varepsilon(4\hhat_L(x)+2).
\end{equation}

Denote by $A_1,\dots,A_{j_0}^\prime$ the abelian varieties afforded by Lemma \ref{lemma::homomorphisms}. We endow each $A_j^\prime$ with an ample symmetric line bundle $N_j$, $1 \leq j \leq j_0$. There exists a semiabelian variety $G^\prime$ with abelian quotient $A_{j}^\prime$ ($j \in \{ 1, \dots, j_0 \}$) and toric part $\Gm^{t^\prime}$ ($t^\prime \in \{ 0, \dots, t \}$) such that there is a surjective homomorphism $\varphi: G \rightarrow G^\prime$ satisfying $y \in \ker(\varphi)$. We emphasize that it is essential that there are only finitely many choices for $j^\prime$ and $t^\prime$ as $x$ varies; otherwise, we would not be able to choose all constants below independent of the point $x$.

Consider the $\IZ$-module $V=V^{(t^\prime,j)}$ defined in (\ref{equation::Vij}) and choose linear norms on $\Hom_\IR(\Gm^{t},\Gm^{t^\prime})$ and $\Hom_\IR(A, A_j^\prime)$, which we simply denote both by $|\cdot|$. Lemma \ref{lemma::approximation} yields a compact set 
\begin{equation*}
\mathcal{K} = \mathcal{K}_{\mathrm{tor}} \times \mathcal{K}_{\mathrm{ab}} \subset V_\IR^\circ = \Hom_\IR^{\circ}(\Gm^{t},\Gm^{t^\prime}) \times \Hom_\IR^{\circ}(A,A_j^\prime) \subset V_\IR
\end{equation*}
and a quasi-homomorphism $\phi_0: G \rightarrow_{\IQ} G_0$ represented by some $(\phi_{0,\mathrm{tor}}, \phi_{0,\mathrm{ab}}) \in \mathcal{K}$ such that $y \in \ker(\phi_0)+\Tors(G)$. We compactify $G_0$ by $\overline{G}_0$ as in Construction \ref{construction1} and endow $\overline{G}_0$ with the ample line bundle (cf.\ Lemma \ref{lemma::ampleness})
\begin{equation*}
L_0 = M_{\overline{G}_0} \otimes (\overline{\pi}_0)^\ast N_j
\end{equation*} 
where $\overline{\pi}_0: \overline{G}_0 \rightarrow A_j^\prime$ denotes the usual projection.

Since $\mathcal{K}_{\mathrm{tor}}$ (resp.\ $\mathcal{K}_{\mathrm{ab}}$) is compact and contained in the open subset $\Hom_\IR^\circ(\Gm^{t},\Gm^{t^\prime})$ (resp.\ $\Hom_\IR^\circ(A,A_j^\prime)$), the distance between $\mathcal{K}_{\mathrm{tor}}$ (resp.\ $\mathcal{K}_{\mathrm{ab}}$) and the complement 
\begin{equation*}
\mathcal{C}_{\mathrm{tor}}=\Hom_\IR(\Gm^{t},\Gm^{t^\prime}) \setminus \Hom_\IR^\circ(\Gm^{t},\Gm^{t^\prime}) \
\text{(resp.\ }
\mathcal{C}_{\mathrm{ab}}=\Hom_\IR(A,A_j^\prime)\setminus \Hom_\IR^\circ(A,A_j^\prime))
\end{equation*}
is strictly positive. We assume that
\begin{equation} \label{equation::delta2}
\delta < \min \{\mathrm{dist}(\mathcal{K}_{\mathrm{tor}},\mathcal{C}_\mathrm{tor}),\mathrm{dist}(\mathcal{K}_{\mathrm{ab}},\mathcal{C}_{\mathrm{ab}})^{2} \}.
\end{equation}
By the triangle inequality, this implies that the distance between $\mathcal{K}_\delta=\mathcal{K}+B_\delta(0,0)$ and $V_\IR \setminus V_\IR^\circ$ is strictly positive. Consequently, $\mathcal{K}_\delta$ is a relatively compact subset of $V_\IR^\circ$. We choose pairs
\begin{equation} \label{equation::pairs}
(\phi_{k,\mathrm{tor}},\phi_{k,\mathrm{ab}})\in n_\delta^{-1} V,\, 1 \leq k \leq k_\delta,
\end{equation}
such that the conclusion of Lemma \ref{lemma::finite_approximation} is true. Discarding pairs if necessary, we may assume that $(\phi_{k,\mathrm{tor}},\phi_{k,\mathrm{ab}}) \in \mathcal{K}_\delta$ and hence that $(\phi_{k,\mathrm{tor}},\phi_{k,\mathrm{ab}}) \in V_\IQ^\circ$.  Our choice of the pairs (\ref{equation::pairs}) allows us to pick a pair $(\phi_{k,\mathrm{tor}},\phi_{k,\mathrm{ab}})$, $k \in \{ 1, \dots, k_\delta\}$, with $(\phi_{0,\mathrm{tor}},\phi_{0,\mathrm{ab}}) \in B_\delta(\phi_{k,\mathrm{tor}},\phi_{k,\mathrm{ab}})$. Renumbering if necessary, we can even impose that $$(\phi_{0,\mathrm{tor}},\phi_{0,\mathrm{ab}}) \in B_\delta(\phi_{1,\mathrm{tor}},\phi_{1,\mathrm{ab}})$$ in order to simplify our notation. Again, let us emphasize that it is important that we only have to choose among finitely many pairs \eqref{equation::pairs} so that all constants in the sequel can be taken independent of $k$ and hence of the point $x$.

Set $\overline{\Gamma}_{1}=\overline{\Gamma(n_\delta \cdot \phi_{1,\mathrm{tor}})} \subset (\IP^1)^{t} \times (\IP^1)^{t^\prime}$. From Construction \ref{construction3}, we obtain a compactification $G_{\overline{\Gamma}_{1}}$ endowed with a line bundle $M_{\overline{\Gamma}_{1}}$. Denoting by $\pi_{\overline{\Gamma}_{1}}: G_{\overline{\Gamma}_{1}}\rightarrow A$ the projection to the abelian quotient, the line bundle
\begin{equation*}
L_{1}=M_{\overline{\Gamma}_{1}}^{\otimes n_\delta} \otimes (\pi_{\overline{\Gamma}_{1}})^\ast (n_\delta \cdot \phi_{1,\mathrm{ab}})^\ast N_j
\end{equation*}
is nef by Lemma \ref{lemma::ampleness}.

Let $n$ be a denominator of $\phi_0$ (i.e., $n$ is an integer such that $\psi_0 = n \cdot \phi_0 \in \Hom(G,G_0)$). We also write $(\psi_{1,\mathrm{tor}},\psi_{1,\mathrm{ab}})$ for $n_\delta \cdot (\phi_{1,\mathrm{tor}},\phi_{1,\mathrm{ab}}) \in V$.
As $\mathcal{K}_\delta \subset V_\IR^\circ$ is relatively compact, there exists a constant $c_{10}>1$ such that
\begin{equation*}
c_{10}^{-1} \leq \min \{ |\phi_{\mathrm{tor}}|, |\phi_{\mathrm{ab}}| \} \leq \max \{ |\phi_{\mathrm{tor}}| , |\phi_{\mathrm{ab}}| \} \leq c_{10}
\end{equation*}
for any $(\phi_{\mathrm{tor}},\phi_{\mathrm{ab}}) \in \mathcal{K}_\delta$. Since $n_\delta^{-1}(\psi_{1,\mathrm{tor}},\psi_{1,\mathrm{ab}})=(\phi_{1,\mathrm{tor}},\phi_{1,\mathrm{ab}})\in \mathcal{K}_\delta$, we infer
\begin{equation} \label{equation::ndelta}
c_{10}^{-1} n_\delta \leq \min \{ |\psi_{1,\mathrm{tor}}|,|\psi_{1,\mathrm{ab}}| \} \leq \max \{ |\psi_{1,\mathrm{tor}}|,|\psi_{1,\mathrm{ab}}| \} \leq c_{10} n_\delta.
\end{equation}

We can now demonstrate the first of the two announced height bounds.



\begin{lemma} \label{lemma::height_approximation} There is some constant $c_{11}>0$ such that
\begin{equation} \label{equation::heightbound}
\hhat_{L_1}(x)\leq c_{11}n_\delta^2(\delta+\varepsilon)(\hhat_L(x)+1).
\end{equation}
\end{lemma}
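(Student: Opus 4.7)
The plan is to bound $\hhat_{L_1}(x)=n_\delta\hhat_{M_1}(x)+\hhat_{\pi_1^\ast\psi_{1,\mathrm{ab}}^\ast N_j}(x)$ (by Lemma~\ref{lemma::canonicalheight}\,(c)) by splitting $x=y+z$ via Lemma~\ref{lemma::heightsgrouplaw}, then treating $y$ and $z$ separately. The point is that for $y$ the approximation of $\phi_0$ by $\phi_1$ will give a $\delta$-factor, while for $z$ we only get a $\hhat_L(z)$-factor which is already under control by the cone condition and~(\ref{equation::coneinequalities}). The heights $\hhat_{M_{\overline{G}}}$ and $\hhat_{\overline{\pi}^\ast N}$ will be freely replaced by the larger $\hhat_L=\hhat_{M_{\overline{G}}}+\hhat_{\overline{\pi}^\ast N}$ by Lemma~\ref{lemma::nonnegativeheight}.

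For the contribution of $y$, pick a denominator $n$ for $\phi_0$ and set $\psi_0=n\phi_0\in\Hom(G,G_0)$. Since $y\in\ker(\phi_0)+\Tors(G)=\ker(\psi_0)+\Tors(G)$ we have $\psi_0(y)\in\Tors(G_0)$, hence $\hhat_{M_{\overline{G}_0}}(\psi_0(y))=\hhat_{\overline{\pi}_0^\ast N_j}(\psi_0(y))=0$ by Lemma~\ref{lemma::canonicalheight}\,(b). By functoriality~(\ref{equation::functoriality}) this yields $\hhat_{M_{\overline{\Gamma(\psi_{0,\mathrm{tor}})}}}(y)=0$ and $\hhat_{\overline{\pi}^\ast\psi_{0,\mathrm{ab}}^\ast N_j}(y)=0$. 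To now compare with the heights attached to $(\psi_{1,\mathrm{tor}},\psi_{1,\mathrm{ab}})=n_\delta(\phi_{1,\mathrm{tor}},\phi_{1,\mathrm{ab}})$, I rescale everything to a common denominator: the pairs $nn_\delta(\phi_{0,\mathrm{tor}},\phi_{0,\mathrm{ab}})$ and $nn_\delta(\phi_{1,\mathrm{tor}},\phi_{1,\mathrm{ab}})$ are genuine elements of $V$, and by the choice of $(\phi_{1,\mathrm{tor}},\phi_{1,\mathrm{ab}})\in B_\delta(\phi_{0,\mathrm{tor}},\phi_{0,\mathrm{ab}})$ they differ by at most $nn_\delta\delta$ in the toric factor and $nn_\delta\delta^{1/2}$ in the abelian factor. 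Lemma~\ref{lemma::heightsclosehomomorphisms} then yields
\begin{equation*}
\hhat_{M_{\overline{\Gamma(nn_\delta\phi_{1,\mathrm{tor}})}}}(y)\le c_1nn_\delta\delta\,\hhat_{\overline{G}}(y),\qquad
\hhat_{\overline{\pi}^\ast(nn_\delta\phi_{1,\mathrm{ab}})^\ast N_j}(y)\le c_2(nn_\delta)^2\delta\,\hhat_{\overline{\pi}^\ast N}(y),
\end{equation*}
since the vanishing of the corresponding heights on the $\phi_0$-side passes through the rescaling. Using $\vartheta_{\psi_{1,\mathrm{tor}},n}^\ast M_{\overline{\Gamma(n\psi_{1,\mathrm{tor}})}}\approx M_1^{\otimes n}$ from Construction~\ref{construction3} together with symmetry of $N_j$ (so $[n]_A^\ast N_j\approx N_j^{\otimes n^2}$), both left-hand sides equal $n$-times (resp.\ $n^2$-times) the corresponding $\psi_1$-heights. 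The factors of $n$ cancel, leaving $\hhat_{M_1}(y)\le c_1n_\delta\delta\,\hhat_L(y)$ and $\hhat_{\overline{\pi}^\ast\psi_{1,\mathrm{ab}}^\ast N_j}(y)\le c_2n_\delta^2\delta\,\hhat_L(y)$.

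For the contribution of $z$, I apply Lemma~\ref{lemma::heightsclosehomomorphisms} with the zero homomorphism in place of $\varphi^\prime_{\mathrm{tor}}$, resp.\ $\varphi^\prime_{\mathrm{ab}}$, noting that the associated heights vanish identically as the pullback line bundles are trivial. Since $(\phi_{1,\mathrm{tor}},\phi_{1,\mathrm{ab}})\in\mathcal{K}_\delta$ is contained in a fixed relatively compact set, $|\psi_{1,\mathrm{tor}}|\le c\,n_\delta$ and $|\psi_{1,\mathrm{ab}}|\le c\,n_\delta$, so
\begin{equation*}
\hhat_{M_1}(z)\le c_3n_\delta\hhat_L(z),\qquad
\hhat_{\overline{\pi}^\ast\psi_{1,\mathrm{ab}}^\ast N_j}(z)\le c_4n_\delta^2\hhat_L(z).
\end{equation*}

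Finally, combining the decomposition from Lemma~\ref{lemma::heightsgrouplaw} with the bounds above gives $\hhat_{L_1}(x)\le c_5n_\delta^2\delta\,\hhat_L(y)+c_5n_\delta^2\hhat_L(z)$. Plugging in $\hhat_L(y)\le 4\hhat_L(x)+1$ and $\hhat_L(z)\le\varepsilon(4\hhat_L(x)+2)$ from~(\ref{equation::coneinequalities}) yields~(\ref{equation::heightbound}) for a suitable $c_{10}$. The main technical obstacle is the bookkeeping in the second paragraph: the quasi-homomorphisms $\phi_0$ and $\phi_1$ have unrelated denominators $n$ and $n_\delta$, so one must rescale to $nn_\delta$ to apply Lemma~\ref{lemma::heightsclosehomomorphisms}, and then verify that the $n$-dependent factors cancel cleanly using the functorial relations between the line bundles $M_{\overline{\Gamma(n\psi_{\bullet,\mathrm{tor}})}}$ supplied by Construction~\ref{construction3}.
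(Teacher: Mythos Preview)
Your proposal is correct and follows essentially the same route as the paper: split $x=y+z$ via Lemma~\ref{lemma::heightsgrouplaw}, kill the $\phi_0$-heights on $y$ using $\psi_0(y)\in\Tors(G_0)$, rescale both pairs to the common level $nn_\delta$ so that Lemma~\ref{lemma::heightsclosehomomorphisms} applies and the $n$-factors cancel via the homogeneities of Construction~\ref{construction3}, and handle $z$ by comparing $(\psi_{1,\mathrm{tor}},\psi_{1,\mathrm{ab}})$ with $(0,0)$ together with the bound $|\psi_{1,\bullet}|\le c\,n_\delta$ coming from $\mathcal{K}_\delta$. The bookkeeping you flag in the last paragraph is exactly the one the paper carries out.
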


\begin{proof} From Lemma \ref{lemma::heightsgrouplaw}, we know the estimate
\begin{equation} \label{equation::heightbound3}
\hhat_{L_1}(x)\leq 2\widehat{h}_{L_1}(y)+2\widehat{h}_{L_1}(z).
\end{equation}
We may hence bound $\widehat{h}_{L_1}(y)$ and $\widehat{h}_{L_1}(z)$ separately. Recall that
\begin{equation*}
\widehat{h}_{L}(y)=
\widehat{h}_{M_{\overline{G}}}(y)+\widehat{h}_{\overline{\pi}^\ast N}(y)
\end{equation*}
and note that
\begin{equation*}
\widehat{h}_{L_1}(y)=
n_\delta \widehat{h}_{M_1}(y)+\widehat{h}_{\overline{\pi}_1^\ast \psi_{1,\mathrm{ab}}^\ast N_j}(y)
\end{equation*}
by Lemma \ref{lemma::canonicalheight} (b,\ c). Let $c_1$ and $c_2$ be the constants of Lemma \ref{lemma::heightsclosehomomorphisms} if applied to $G=G$, $N_0=N$, $t=t^\prime$, $A_1=A_j^\prime$, $N_1=N_j$ and our fixed linear norms $|\cdot|$ on $\Hom_\IR(\Gm^{t},\Gm^{t^\prime})$ and $\Hom_\IR(A, A_j^\prime)$. Comparing Constructions \ref{construction3} and \ref{construction2}, we infer $\psi_0^\ast M_{\overline{G}_0}\approx M_{\overline{\Gamma(\psi_{0,\mathrm{tor}})}}$. From Construction \ref{construction3}, we also know the homogeneities 
\begin{equation*}
M_1^{\otimes n} = M^{\otimes n}_{\overline{\Gamma(\psi_{1,\mathrm{tor}})}} \approx
\vartheta_{\psi_{1,\mathrm{tor}},n}^\ast
 M_{\overline{\Gamma(n\cdot \psi_{1,\mathrm{tor}})}}
\text{ and }
\psi_{0}^\ast M_{\overline{G}_0}^{\otimes n_\delta}
\approx
M_{\overline{\Gamma(\psi_{0,\mathrm{tor}})}}^{\otimes n_\delta} 
\approx 
\vartheta_{\psi_{0,\mathrm{tor}},n_\delta}^\ast
M_{\overline{\Gamma(n_\delta\cdot \psi_{0,\mathrm{tor}})}},
\end{equation*}
implying 
\begin{equation*}
n \hhat_{M_1}(y) = \hhat_{M_{\overline{\Gamma(n\cdot \psi_{1,\mathrm{tor}})}}}(y)
\text{ and }
n_\delta \hhat_{\psi_0^\ast M_{\overline{G}_0}}(y) = \hhat_{M_{\overline{\Gamma(n_\delta\cdot \psi_{0,\mathrm{tor}})}}}(y).
\end{equation*}
Invoking Lemma \ref{lemma::heightsclosehomomorphisms} for $n \cdot (\psi_{1,\mathrm{tor}},\psi_{1,\mathrm{ab}})$ and $n_\delta \cdot (\psi_{0,\mathrm{tor}},\psi_{0,\mathrm{ab}})$ yields
\begin{equation} \label{equation::difference1}
|n \hhat_{M_1}(y)-n_\delta \hhat_{\psi_0^\ast M_{\overline{G}_0}}(y)|< c_1 |n \cdot \psi_{1,\mathrm{tor}}-n_\delta\cdot \psi_{0,\mathrm{tor}}| \hhat_{M_{\overline{G}}}(y)
\end{equation}
and
\begin{equation} \label{equation::difference2}
|n^2\hhat_{\overline{\pi}^\ast \psi_{1,\mathrm{ab}}^\ast N_j}(y)-n_\delta^2\hhat_{\psi_0^\ast(\overline{\pi}_0)^\ast N_j}(y)|<c_2 | n \cdot \psi_{1,\mathrm{ab}}- n_\delta \cdot \psi_{0,\mathrm{ab}} |^{2} \hhat_{\overline{\pi}^\ast N}(y).
\end{equation}
As $y \in \ker(\psi_0)+\mathrm{Tor}(G)$, we have $\psi_0(y) \in \mathrm{Tor}(G_0)$ and consequently
\begin{equation} \label{equation::zero}
\hhat_{L_0}(\psi_0(y))= \hhat_{M_{\overline{G}_0}}(\psi_0(y)) + \hhat_{(\overline{\pi}_0)^\ast N_j}(\psi_0(y)) = 0.
\end{equation}
With Lemma \ref{lemma::nonnegativeheight}, we obtain $\hhat_{\psi_{0}^\ast M_{\overline{G}_0}}(y) = \hhat_{M_{\overline{G}_0}}(\psi_0(y)) = 0$ and $\hhat_{\psi_0^\ast(\overline{\pi}_0)^\ast N_j}(y) =\hhat_{(\overline{\pi}_0)^\ast N_j}(\psi_0(y)) = 0$ from (\ref{equation::zero}). Since 
\begin{equation}
n\cdot (\psi_{1,\mathrm{tor}},\psi_{1,\mathrm{ab}}) - n_\delta \cdot (\psi_{0,\mathrm{tor}},\psi_{0,\mathrm{ab}})=n n_\delta \cdot \left((\phi_{1,\mathrm{tor}},\phi_{1,\mathrm{ab}})-(\phi_{0,\mathrm{tor}},\phi_{0,\mathrm{ab}})\right),
\end{equation}
we may cancel $n$ (resp.\ $n^2$) in (\ref{equation::difference1}) (resp.\ (\ref{equation::difference2})) and obtain
\begin{equation} \label{equation::heightbound1_1}
\widehat{h}_{L_1}(y)< c_{12} \delta n_\delta^2 \widehat{h}_{L}(y) \leq c_{12} \delta n_\delta^2 (4\hhat_L(x)+1)
\end{equation}
for some constant $c_{12}>0$.
Applying Lemma \ref{lemma::heightsclosehomomorphisms} to $(\psi_{1,\mathrm{tor}},\psi_{1,\mathrm{ab}})$ and $(0,0) \in V$, we obtain similarly
\begin{equation*}
\hhat_{L_1}(z) < c_{13} (n_\delta |\psi_{1,\mathrm{tor}}|+|\psi_{1,\mathrm{ab}}|^2) \widehat{h}_{L}(z)
\end{equation*}
for some constant $c_{13}>0$. Using \eqref{equation::ndelta}, we deduce from this the estimate
\begin{equation} \label{equation::heightbound2_2}
\hhat_{L_1}(z) < 2c_{13} c_{10}^2 n_\delta^2 \widehat{h}_{L}(z)\leq 4 \varepsilon c_{13} c_{10}^2 n_\delta^2(2\hhat_L(x)+1).
\end{equation}
Finally, (\ref{equation::heightbound}) follows from combining (\ref{equation::heightbound3}), (\ref{equation::heightbound1_1}) and (\ref{equation::heightbound2_2}).
\end{proof}

Our second height bound is a consequence of Siu's numerical bigness criterion (\cite[Corollary 1.2]{Siu1993}). Recall from (\ref{equation::graphdiagram2}) the maps $\iota=\iota_{\overline{\Gamma}_{1}}: G \rightarrow G_{\overline{\Gamma}_1}$ and $q = q_{\overline{\Gamma}_{1}}: G_{\overline{\Gamma}_{1}} \rightarrow \overline{G}$. The idea is to compare the line bundles $L_1$ and $q^\ast L$ on the Zariski closure $\overline{X}$ of $\iota(X) \subset G_{\overline{\Gamma}_{1}}$. Set $r=\dim(X) \geq 1$ and
\begin{equation} \label{equation::alpha}
\alpha =\frac{\deg(c_1(L_1)^r \cap \left[ \overline{X} \right])}{\max \{ 1, 2r\deg(c_1(L_1)^{r-1} c_1(q^\ast L) \cap \left[ \overline{X} \right])\}}.
\end{equation}
We note that both $L_1$ and $q^\ast L$ are nef.

\begin{lemma} \label{lemma::siu} There exists a non-empty Zariski open subset $U_\delta \subseteq X$ and a constant $c_{14}(\delta)$, both depending on $\delta$, such that
\begin{equation} \label{equation::siuheights}
\alpha \widehat{h}_{L}(x) - c_{14}(\delta) \leq \widehat{h}_{L_1}(x)
\end{equation}
if $x\in U_\delta(\IQbar)$.
\end{lemma}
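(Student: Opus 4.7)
My plan is to apply Siu's numerical bigness criterion (\cite[Corollary 1.2]{Siu1993}) to the nef line bundles $L_1$ and $q^\ast L$ on the $r$-dimensional projective variety $\overline{X}$, and then translate bigness into a lower height bound via the standard correspondence between effective sections and heights.

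If $\deg(c_1(L_1)^r\cap[\overline{X}])=0$, then $\alpha=0$ by (\ref{equation::alpha}) and the claim reduces to $\hhat_{L_1}\geq 0$ on $G(\IQbar)$, which follows from Lemma \ref{lemma::nonnegativeheight} together with the functoriality of the N\'eron--Tate height on $A_j^\prime$ along $\psi_{1,\mathrm{ab}}\circ\pi$. So assume $L_1^r:=\deg(c_1(L_1)^r\cap[\overline{X}])>0$, whence $\alpha>0$. A direct case analysis on whether the $\max$ in the denominator of (\ref{equation::alpha}) equals $1$ or $2r\,L_1^{r-1}q^\ast L$ shows that the factor $2$ is engineered precisely so that
\[ L_1^r\,>\,r\alpha\cdot L_1^{r-1}q^\ast L \]
in both cases. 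Writing $\alpha=a/b$ with $a,b\in\INplus$, this is Siu's numerical hypothesis applied to the pair of nef integer line bundles $(bL_1,\,aq^\ast L)$ on $\overline{X}$, so the criterion yields that $bL_1-aq^\ast L$ is big. Consequently, there exist a positive integer $N$ and a non-zero global section
\[ s\in\Gamma\!\left(\overline{X},\,L_1^{\otimes Nb}\otimes(q^\ast L)^{\otimes -Na}\right). \]

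By Construction \ref{construction3} the morphism $\iota_{\overline{\Gamma}_k^{(t^\prime,j)}}$ is an open immersion, hence $X$ is Zariski open and dense in $\overline{X}$. The zero locus $Z$ of $s$ is a proper Zariski closed subset of $\overline{X}$, so it cannot contain the dense open $X$; set $U_\delta=X\setminus(X\cap Z)$, a non-empty Zariski open subset of $X$. Weil's height machinery applied to $s$ produces a constant $c>0$ with
\[ Nb\,\hhat_{L_1}(x)\,-\,Na\,\hhat_{q^\ast L}(x)\,\geq\,-c\qquad\text{for all }x\in U_\delta(\IQbar). \]
Since $q\circ\iota_{\overline{\Gamma}_k^{(t^\prime,j)}}$ is the canonical inclusion $G\hookrightarrow\overline{G}$ (cf.\ the diagram (\ref{equation::graphdiagram2})), pullback functoriality (\ref{equation::functoriality}) yields $\hhat_{q^\ast L}=\hhat_L$ on $X(\IQbar)$ up to a bounded error. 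Dividing by $Nb$ and absorbing this error into a larger constant gives (\ref{equation::siuheights}) with $c_{14}(\delta):=c/(Nb)$, enlarged as needed.

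The only mildly delicate point is the verification of Siu's inequality in both branches of the $\max$, along with the degenerate case $L_1^r=0$; everything else is routine bookkeeping. Notice that the deeper quantitative question of whether $\alpha$ is \emph{large enough} for this lemma to combine usefully with Lemma \ref{lemma::height_approximation} is not an issue at the level of the present statement, and is instead the content of the subsequent Lemmas \ref{lemma::intersection1} and \ref{lemma::intersection2}.
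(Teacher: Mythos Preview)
Your argument is correct and matches the paper's proof almost verbatim: handle the degenerate case $L_1^r=0$ via non-negativity of $\hhat_{L_1}$, clear denominators in $\alpha$, verify Siu's strict inequality (the factor $2$ in (\ref{equation::alpha}) is engineered precisely so that $L_1^r>r\alpha\,L_1^{r-1}q^\ast L$ in both branches of the $\max$), and convert bigness into a height lower bound off the zero locus of a section. The one point you leave implicit, and which the paper spells out, is uniformity in $(t^\prime,j,k)$: for fixed $\delta$ there are only finitely many line bundles $L_1$ and compactifications $G_{\overline{\Gamma}_k^{(t^\prime,j)}}$ in play, so intersecting the finitely many resulting open sets and taking the maximum of the finitely many constants yields a single $U_\delta$ and $c_{14}(\delta)$ working for all of them, as the statement requires.
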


\begin{proof} If $\deg(c_1(L_1)^r \cap [\overline{X}]) = 0$, then $\alpha = 0$ and there is nothing left to prove because $\widehat{h}_{L_1}(x)$ is non-negative by Lemma \ref{lemma::nonnegativeheight}. Hence, we may and do assume $\deg(c_1(L_1)^r \cap [\overline{X}]) \neq 0$. As $L_1$ is nef, this actually means $\deg(c_1(L_1)^r \cap [\overline{X}]) >0$. Set 
\begin{equation*}
u = \deg(c_1(L_1)^r \cap [\overline{X}]) \text{ and } v = \max \{ 1, 2r\deg(c_1(L_1)^{r-1} c_1(q^\ast L) \cap [\overline{X}]) \}.
\end{equation*}
This is arranged so that 
\begin{align*}
\deg(c_1(L_1^{\otimes v})^r \cap [\overline{X}]) = v^r \deg(c_1(L_1)^r \cap [\overline{X}]) &\geq 2rv^{r-1}u \deg(c_1(L_1)^{r-1} c_1(q^\ast L) \cap [\overline{X}]) \\ &= 2r \deg(c_1(L_1^{\otimes v})^{r-1} c_1(q^\ast L^{\otimes u}) \cap [\overline{X}]).
\end{align*}
Thus, Siu's criterion as stated in \cite[Theorem 2.2.15]{Lazarsfeld2004} implies that $L_2=(L_1^{\otimes v} \otimes q^\ast L^{\otimes (-u)})|_{\overline{X}}$ is big. In particular, some power of $L_2$ is effective. By \cite[Theorem B.3.6]{Hindry2000}, there exists a non-empty Zariski-open set $U_\delta \subseteq X$ and a constant $c_{15}(L_2)>0$ such that 
\begin{equation*}
- c_{15}(L_2) \leq h_{L_1^{\otimes v} \otimes q^\ast L^{\otimes (-u)}}(x)
\end{equation*}
for all $x \in U_\delta(\IQbar)$. For a fixed $\delta>0$, we wind up here with finitely many choices for $\overline{X} \subset G_{\overline{\Gamma}_{1}}$ and the line bundles $L_1$ and $q^\ast L$ on $G_{\overline{\Gamma}_{1}}$. As $L_2$ can be determined from this data, we can hence replace $ c_{15}(L_2^{\otimes w})$ by some constant depending only on $\delta$. Combining this fact with Lemma \ref{lemma::canonicalheight} (a), we conclude the existence of some constant $c_{16}(\delta)>0$ such that
\begin{equation*}
- c_{16}(\delta) \leq \widehat{h}_{L_1^{\otimes v} \otimes q^\ast L^{\otimes (-u)}}(x) 
\end{equation*}
whenever $x\in U_\delta(\IQbar)$. Inequality (\ref{equation::siuheights}) follows immediately by using Lemma \ref{lemma::canonicalheight} (c), Lemma \ref{lemma::functoriality}, and $\alpha = u/v$.
\end{proof}

It remains to bound the quantity $\alpha$ from below.

\begin{lemma} \label{lemma::alpha} There exists a constant $c_{17}>0$ such that $\alpha \geq c_{17} n_\delta^2$.
\end{lemma}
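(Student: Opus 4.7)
The plan is to derive Lemma \ref{lemma::alpha} by bounding the numerator of (\ref{equation::alpha}) from below by a constant multiple of $n_\delta^{2r}$ and the mixed intersection number in the denominator from above by a constant multiple of $n_\delta^{2(r-1)}$; these are precisely the two estimates to be supplied by Lemmas \ref{lemma::intersection1} and \ref{lemma::intersection2}. The underlying scaling becomes transparent upon decomposing $c_1(L_1) = n_\delta c_1(M_{\overline{\Gamma(\psi_{1,\mathrm{tor}})}}) + c_1(\pi_1^\ast \psi_{1,\mathrm{ab}}^\ast N_j)$ and recording, via Section \ref{section::chernforms}, that its restriction to $G(\IC)$ equals $n_\delta \omega(n_\delta \phi_{1,\mathrm{tor}}) + n_\delta^2 \pi_1^\ast \omega(N_j; \phi_{1,\mathrm{ab}})$, where we have used $\psi_{1,\mathrm{tor}} = n_\delta \phi_{1,\mathrm{tor}}$, $\psi_{1,\mathrm{ab}} = n_\delta \phi_{1,\mathrm{ab}}$, and the homogeneity (\ref{equation::abelianhomogenity}). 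Combining the binomial expansion of $c_1(L_1)^r$ with Lemma \ref{lemma::chernforms_quasihomomorphisms} (applied to each pure ``toric-abelian'' monomial) yields
\begin{equation*}
\deg(c_1(L_1)^r \cap [\overline{X}]) = n_\delta^{2r} \int_{X(\IC)} \bigl(\omega(\phi_{1,\mathrm{tor}}) + \pi_1^\ast \omega(N_j; \phi_{1,\mathrm{ab}})\bigr)^{\wedge r},
\end{equation*}
after identifying the algebraic intersection number with the integral of the Chern form on the smooth compactification $G_{\overline{\Gamma}_{k}^{(t^\prime,j)}}$.

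The normalized integrand $(\omega(\phi_{1,\mathrm{tor}}) + \pi_1^\ast \omega(N_j; \phi_{1,\mathrm{ab}}))^{\wedge r}$ is semipositive by Lemma \ref{lemma::positivity} and \emph{non-vanishing} by Lemma \ref{lemma::ax}; indeed, by our condition (\ref{equation::delta2}) on $\delta$, the pair $(\phi_{1,\mathrm{tor}}, \phi_{1,\mathrm{ab}})$ lies in $\mathcal{K}_\delta \subset V_\IR^\circ$ with $t^\prime + \dim(A_j^\prime) \geq s$, and the hypothesis $X \neq X^{(s)}$ triggers the conclusion of Lemma \ref{lemma::ax}. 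Consequently the integral is strictly positive for each admissible parameter, and the continuity of $(\phi_{\mathrm{tor}},\phi_{\mathrm{ab}}) \mapsto \omega(\phi_{\mathrm{tor}}) + \pi_1^\ast \omega(N_j;\phi_{\mathrm{ab}})$ in combination with the compactness of $\mathcal{K}_\delta$ will furnish, as Lemma \ref{lemma::intersection1}, a uniform positive lower bound $c>0$ depending only on $G$, $X$ and the fixed auxiliary data $(A_j^\prime, N_j, \mathcal{K}^{(t^\prime,j)})$. Expanding the mixed intersection number $\deg(c_1(L_1)^{r-1} c_1(q^\ast L) \cap [\overline{X}])$ bi-degree by bi-degree and noting that $c_1(q^\ast L)$ carries no factors of $n_\delta$, the same scaling argument reveals an overall growth of order $n_\delta^{2(r-1)}$; algebraic intersection-theoretic estimates inside the finitely many ambient projective varieties $G_{\overline{\Gamma}_k^{(t^\prime,j)}}$ will provide (as Lemma \ref{lemma::intersection2}) a uniform upper bound $C>0$ on the corresponding normalized intersection numbers.

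Granting these two estimates, the lemma is immediate:
\begin{equation*}
\alpha \;\geq\; \frac{c \cdot n_\delta^{2r}}{\max\{1,\, 2rC n_\delta^{2(r-1)}\}} \;\geq\; c_{17}\, n_\delta^{2},
\end{equation*}
after adjusting the constant to absorb finitely many small values of $n_\delta$. The substantive difficulty therefore lies entirely in Lemmas \ref{lemma::intersection1} and \ref{lemma::intersection2}; the main obstacle — and the \emph{raison d'\^etre} of the hermitian differential geometry developed in Sections \ref{section::hermitiandifferentialgeometry}--\ref{section::distribution} — is upgrading the pointwise non-vanishing furnished by Ax's Theorem to a \emph{uniform} positive lower bound as $(\phi_{1,\mathrm{tor}},\phi_{1,\mathrm{ab}})$ ranges over $\mathcal{K}_\delta$, while simultaneously ensuring that the integrand does not degenerate near the boundary $V_\IR \setminus V_\IR^\circ$ (the buffer enforced by (\ref{equation::delta2})).
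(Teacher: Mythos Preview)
Your reduction is correct and follows essentially the same route as the paper: expand $c_1(L_1)$ binomially into its toric and abelian pieces, use Lemma~\ref{lemma::chernforms_quasihomomorphisms} and (\ref{equation::abelianhomogenity}) to extract the scaling $n_\delta^{2r}$ (resp.\ $n_\delta^{2(r-1)}$) from the numerator (resp.\ the denominator), and then defer the uniform lower and upper bounds on the normalized quantities to Lemmas~\ref{lemma::intersection1} and~\ref{lemma::intersection2}. The only difference is packaging: the paper introduces auxiliary functions $\beta_i(\varphi_{\mathrm{tor}},\varphi_{\mathrm{ab}})$ and $\gamma_i(\varphi_{\mathrm{tor}},\varphi_{\mathrm{ab}})$ carrying an extra weight $(|\varphi_{\mathrm{tor}}|+|\varphi_{\mathrm{ab}}|)^i$ precisely so that they become $\IQ$-homogeneous of degree $2r$ (resp.\ $2r-2$) on $V_\IQ$, and then uses (\ref{equation::ndelta}) to control the residual factors $n_\delta^i/(|\psi_{1,\mathrm{tor}}|+|\psi_{1,\mathrm{ab}}|)^i$; correspondingly the $(1,1)$-form appearing there is $(|\phi_{\mathrm{tor}}|+|\phi_{\mathrm{ab}}|)\omega(\phi_{\mathrm{tor}})+\pi^\ast\omega(N_j;\phi_{\mathrm{ab}})$ rather than your $\omega(\phi_{\mathrm{tor}})+\pi^\ast\omega(N_j;\phi_{\mathrm{ab}})$. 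Since Lemma~\ref{lemma::ax} applies for any $c>0$ and the norm factor is bounded above and below on $\mathcal{K}_\delta$, this is immaterial. Two small corrections: the compactification $G_{\overline{\Gamma}_k^{(t',j)}}$ need not be smooth (the Chern-class/Chern-form compatibility still holds, cf.\ the remark preceding the proof of Lemma~\ref{lemma::intersection1}); and $\mathcal{K}_\delta$ is only \emph{relatively} compact, so your continuity-plus-compactness step for the lower bound must, as in the paper, restrict the integral to a relatively compact open $K\subset X(\IC)$ before invoking uniformity.
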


\begin{proof} We first define auxiliary functions $\beta_i$, $0 \leq i \leq r$, and $\gamma_i$, $0 \leq i \leq r-1$, on $V_\IQ$. Once again, we use that Construction \ref{construction3} gives for each $\varphi_{\mathrm{tor}} \in \Hom(\Gm^{t},\Gm^{t^\prime})$ a compactification $G_{\overline{\Gamma(\varphi_{\mathrm{tor}})}}$ of $G$ with abelian quotient $\pi_{\overline{\Gamma(\varphi_{\mathrm{tor}})}}: G_{\overline{\Gamma(\varphi_{\mathrm{tor}})}} \rightarrow A$ and a line bundle $M_{\overline{\Gamma(\varphi_{\mathrm{tor}})}}$ on $G_{\overline{\Gamma(\varphi_{\mathrm{tor}})}}$. We use the notations introduced in (\ref{equation::graphdiagram2}). In addition, we let $X_{\overline{\Gamma(\varphi_{\mathrm{tor}})}}$ be the Zariski closure of $\iota_{\overline{\Gamma(\varphi_{\mathrm{tor}})}}(X)$ in $G_{\overline{\Gamma(\varphi_{\mathrm{tor}})}}$. For any $(\varphi_{\mathrm{tor}},\varphi_{\mathrm{ab}}) \in V$, we can now define
\begin{equation*}
\beta_i(\varphi_{\mathrm{tor}},\varphi_{\mathrm{ab}})= (\vert \varphi_{\mathrm{tor}} \vert + \vert \varphi_{\mathrm{ab}} \vert)^i \deg \! \left( c_1(M_{\overline{\Gamma(\varphi_{\mathrm{tor}})}})^{i} c_1((\varphi_{\mathrm{ab}} \circ \pi_{\overline{\Gamma(\varphi_{\mathrm{tor}})}})^\ast N_j)^{r-i} \cap [X_{\overline{\Gamma(\varphi_{\mathrm{tor}})}}] \right)
\end{equation*}
and
\begin{equation*}
\gamma_i(\varphi_{\mathrm{tor}},\varphi_{\mathrm{ab}})=(\vert \varphi_{\mathrm{tor}} \vert + \vert \varphi_{\mathrm{ab}} \vert)^i \deg \! \left( c_1(M_{\overline{\Gamma(\varphi_{\mathrm{tor}})}})^{i}  c_1((\varphi_{\mathrm{ab}} \circ \pi_{\overline{\Gamma(\varphi_{\mathrm{tor}})}})^\ast N_j)^{r-1-i}  c_1(q^\ast_{\overline{\Gamma(\varphi_{\mathrm{tor}})}} L) \cap [X_{\overline{\Gamma(\varphi_{\mathrm{tor}})}}] \right).
\end{equation*}
This defines a $\IZ$-homogeneous function $\beta_i$ (resp.\ $\gamma_i$) of degree $2r$ (resp. $2r-2$) on $V$. To prove this, we recall that Construction \ref{construction3} provides a finite birational morphism $\vartheta_{n,\varphi_{\mathrm{tor}}}: G_{\overline{\Gamma(\varphi_{\mathrm{tor}})}} \rightarrow G_{\overline{\Gamma(n \cdot \varphi_{\mathrm{tor}})}}$ such that $\vartheta_{\varphi_{\mathrm{tor}},n}^\ast M_{\overline{\Gamma(n\cdot \varphi_{\mathrm{tor}})}} 
\approx M_{\overline{\Gamma(\varphi_{\mathrm{tor}})}}^{\otimes n}$. The homogeneity relation follows from the projection formula 
(cf.\ \cite[Proposition 2.5 (c)]{Fulton1998}) by using the straightforward relations $(\vartheta_{\varphi_{\mathrm{tor}},n})_\ast [X_{\overline{\Gamma(\varphi_{\mathrm{tor}})}}]=[X_{\overline{\Gamma(n\cdot \varphi_{\mathrm{tor}})}}]$, $\vartheta_{\varphi_{\mathrm{tor}},n}^\ast ((n \cdot \varphi_{\mathrm{ab}}) \circ \pi_{\overline{\Gamma(n \cdot \varphi_{\mathrm{tor}})}})^\ast N_j = (\varphi_{\mathrm{ab}} \circ \pi_{\overline{\Gamma(\varphi_{\mathrm{tor}})}})^\ast N_j^{\otimes n^2}$ and $\vartheta_{\varphi_{\mathrm{tor}},n}^\ast (q^\ast_{\overline{\Gamma(n \cdot \varphi_{\mathrm{tor}})}} L) = q^\ast_{\overline{\Gamma(\varphi_{\mathrm{tor}})}} L$. Therefore, we may and do extend both $\beta_i$ and $\gamma_i$ to unique $\IQ$-homogeneous functions on $V_\IQ$. We denote these extensions also by $\beta_i$ and $\gamma_i$. By \cite[Theorem III.2.1]{Kleiman1966}, the nefness of $M_{\overline{\Gamma(\varphi_{\mathrm{tor}})}}$, $N_j$ and $L$ implies that all $\beta_i$ and $\gamma_i$ are non-negative.

Recall that $\overline{X}$ is the Zariski closure of $\iota(X)$ in $G_{\overline{\Gamma}_{1}}$.
The reason for introducing the functions $\beta_i$ and $\gamma_i$ are the relations
\begin{equation*}
\deg (c_1(L_1)^r \cap [ \overline{X} ]) = \sum_{i=0}^{r} \binom{r}{i} \frac{n_\delta^i}{(\vert \psi_{1,\mathrm{tor}} \vert + \vert \psi_{1,\mathrm{ab}} \vert)^i} \beta_i(\psi_{1,\mathrm{tor}},\psi_{1,\mathrm{ab}})
\end{equation*}
and
\begin{equation*}
\deg (c_1(L_1)^{r-1} c_1(q^\ast L) \cap [ \overline{X} ]) = \sum_{i=0}^{r-1} \binom {r-1} {i} \frac{n_\delta^i}{(\vert \psi_{1,\mathrm{tor}} \vert + \vert \psi_{1,\mathrm{ab}} \vert)^i} \gamma_i(\psi_{1,\mathrm{tor}},\psi_{1,\mathrm{ab}}).
\end{equation*}
Each $n_\delta^i/(|\psi_{1,\mathrm{tor}}| + |\psi_{1,\mathrm{ab}}|)^i$, $0\leq i \leq r$, is bounded both from above and below by virtue of (\ref{equation::ndelta}). Therefore, the assertion of the lemma follows by homogeneity from the existence of constants $c_{18}, c_{19} > 0$ such that
\begin{equation*}
\max_{0 \leq i \leq r} \{ \beta_i(\phi_{\mathrm{tor}},\phi_{\mathrm{ab}}) \} \geq c_{18}
\end{equation*}
and
\begin{equation*}
\max_{0 \leq i \leq r-1} \{ \gamma_i(\phi_{\mathrm{tor}},\phi_{\mathrm{ab}}) \}  \leq c_{19}
\end{equation*}
for every $(\phi_{\mathrm{tor}},\phi_{\mathrm{ab}}) \in \mathcal{K}_\delta \cap V_\IQ$. The former bound is stated as Lemma \ref{lemma::intersection1} and the latter as Lemma \ref{lemma::intersection2} below.
\end{proof}

Lemma \ref{lemma::alpha} allows us to make (\ref{equation::siuheights}) precise: There exists a non-empty Zariski open $U_{\delta} \subset X$ such that
\begin{equation} \label{equation::secondheightinequality}
c_{17} n_\delta^2 \widehat{h}_{L}(x) - c_{14}(\delta) \leq \widehat{h}_{L_1}(x)
\end{equation}
whenever $x \in U_{\delta}(\IQbar)$. Combining this with (\ref{equation::heightbound}), we obtain \begin{equation*}
c_{17} n_\delta^2 \widehat{h}_{L}(x) - c_{14}(\delta) \leq c_{11}n_\delta^2(\delta+\varepsilon)(\hhat_L(x)+1).
\end{equation*}
Canceling $n_\delta^2$, this can be rewritten as
\begin{equation*}
(c_{17}-c_{11}(\delta+\varepsilon))\hhat_L(x)\leq n_{\delta}^{-2}c_{14}(\delta) + c_{11}(\delta+\varepsilon).
\end{equation*}
This inequality gives the desired upper bound on $\hhat_L(x)$ if
\begin{equation} \label{equation::delta}
\max \{ \delta, \varepsilon \} < \frac{1}{2}c_{11}^{-1}c_{17}.
\end{equation}
Consequently, Proposition \ref{proposition::induction_step} is proven up to Lemmas \ref{lemma::intersection1} and \ref{lemma::intersection2}, whose proofs are provided next in Section \ref{section::intersections}.

\subsection{Bounds on intersection numbers}
\label{section::intersections}

The reader may profitably compare our derivation of Lemma \ref{lemma::intersection1} with the lengthy one of \cite[Proposition 4]{Habegger2009} to appreciate the technical advantage provided by using Chern forms. In fact, our argument is particularly simple if $G$ is an abelian variety because most of Section \ref{section::chernforms} is not needed in this case and only the functions $\beta_0$ and $\gamma_0$ are non-zero.

\begin{lemma} \label{lemma::intersection1} 
Assume $X^{(s)} \neq X$. There exists a constant $c_{18}>0$ such that
\begin{equation*}
\max_{0 \leq i \leq r} \{ \beta_i(\phi_{\mathrm{tor}},\phi_{\mathrm{ab}}) \} \geq c_{18}
\end{equation*}
for all $(\phi_{\mathrm{tor}},\phi_{\mathrm{ab}}) \in \mathcal{K}_\delta \cap V_\IQ$.
\end{lemma}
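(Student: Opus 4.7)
The plan is to identify each $\beta_i(\phi)$ with an integral of semipositive forms on $X(\IC)$, invoke Lemma \ref{lemma::ax} to obtain strict pointwise positivity, and then conclude by compactness of $\mathcal{K}_\delta$. My first step is to establish the integral formula
\begin{equation} \label{equation::betaintegral}
\beta_i(\phi_{\mathrm{tor}}, \phi_{\mathrm{ab}}) = (\vert \phi_{\mathrm{tor}} \vert + \vert \phi_{\mathrm{ab}} \vert)^i \int_{X(\IC)} \omega(\phi_{\mathrm{tor}})^{\wedge i} \wedge (\pi^\ast \omega(N_j;\phi_{\mathrm{ab}}))^{\wedge (r-i)}
\end{equation}
on $V_\IQ$. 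For an integral pair $\varphi \in V$ I would endow $M_{\overline{\Gamma(\varphi_{\mathrm{tor}})}}$ and $N_j$ with the hermitian metrics from Subsections \ref{subsection::toric} and \ref{subsection::abelian}. The resulting Chern forms are smooth and closed on $G_{\overline{\Gamma(\varphi_{\mathrm{tor}})}}$ and they restrict on $G(\IC)$ to $\omega(\varphi_{\mathrm{tor}})$ respectively $\pi^\ast \omega(N_j; \varphi_{\mathrm{ab}})$ by (\ref{equation::chernformpullback}) and (\ref{equation::chernformpullback2}). So the intersection number equals the integral of the product of Chern forms over the smooth locus of $X_{\overline{\Gamma(\varphi_{\mathrm{tor}})}}(\IC)$, and since the boundary $(X_{\overline{\Gamma(\varphi_{\mathrm{tor}})}} \setminus X)(\IC)$ has Lebesgue measure zero, this yields (\ref{equation::betaintegral}) for $\varphi \in V$. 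The extension to $\phi \in V_\IQ$ follows from $\IQ$-homogeneity: both sides scale as $n^{2r}$ under $\phi \mapsto n\phi$, the right-hand side by the pointwise identity $\omega(N_j; n\phi_{\mathrm{ab}}) = n^2 \omega(N_j; \phi_{\mathrm{ab}})$ in combination with the integral scaling of Lemma \ref{lemma::chernforms_quasihomomorphisms}.

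Next I would combine (\ref{equation::betaintegral}) with Lemma \ref{lemma::ax} to establish pointwise positivity on $\mathcal{K}_\delta$. For any $(\phi_{\mathrm{tor}},\phi_{\mathrm{ab}}) \in \mathcal{K}_\delta \subset V_\IR^\circ$, the parameters $(t^\prime, j)$ satisfy $t^\prime + \dim(A_j^\prime) \geq s$ by Lemma \ref{lemma::homomorphisms}, so Lemma \ref{lemma::ax} applies to $\omega_\phi = \omega(\phi_{\mathrm{tor}}) + \pi^\ast \omega(N_j;\phi_{\mathrm{ab}})$ and yields $(\omega_\phi|_X)^{\wedge r} \neq 0$. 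Both summands of $\omega_\phi$ are semipositive (Lemma \ref{lemma::positivity} and its abelian counterpart in Subsection \ref{subsection::abelian}), so the top form $(\omega_\phi|_X)^{\wedge r}$ is non-negative on $X^{\mathrm{sm}}(\IC)$ and strictly positive in a euclidean neighborhood of some point, giving $\int_{X(\IC)} \omega_\phi^{\wedge r} > 0$. Expanding binomially produces a sum of non-negative terms (each a wedge of semipositive forms), forcing at least one of the integrals appearing in (\ref{equation::betaintegral}) to be strictly positive, so $\max_i \beta_i(\phi) > 0$ for every $\phi \in \mathcal{K}_\delta \cap V_\IQ$.

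Finally I would upgrade this pointwise positivity to a uniform lower bound by compactness. I extend each $\beta_i$ to a function $\widetilde{\beta}_i$ on $\mathcal{K}_\delta$ via the right-hand side of (\ref{equation::betaintegral}); the previous step gives $\max_i \widetilde{\beta}_i(\phi) > 0$ for all $\phi \in \mathcal{K}_\delta$. Continuity of $\widetilde{\beta}_i$ on $\mathcal{K}_\delta$ would then make $\max_i \widetilde{\beta}_i$ a strictly positive continuous function on a compact set, and hence bounded below by a positive constant $c_{18}$. This continuity step is the main obstacle: the integrand depends continuously on $\phi$ pointwise on $G(\IC)$, but the integration is over the non-compact $X(\IC)$ and requires a dominated convergence argument with bounds uniform in $\phi \in \mathcal{K}_\delta$. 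The key input would be that $\mathcal{K}_\delta$ stays a positive distance away from the non-surjective locus in $V_\IR$, so the explicit formula (\ref{equation::cherntoric}) for $\omega(\phi_{\mathrm{tor}})$ yields a uniform integrable majorant on $X(\IC)$ after unwinding the local behavior of the Weil functions $\lambda_u$ near the torus boundary.
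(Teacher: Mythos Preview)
Your approach tracks the paper's closely through the integral formula and the invocation of Lemma~\ref{lemma::ax}, and you correctly identify the continuity of $\phi \mapsto \int_{X(\IC)} \omega_\phi^{\wedge r}$ as the crux. But the dominated-convergence route you sketch is both delicate and unnecessary, and you do not actually carry it out. The paper sidesteps the non-compactness of $X(\IC)$ entirely by a localisation trick you are missing.

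Rather than proving that the full integral is continuous in $\phi$, the paper argues locally near each fixed $\phi' \in \overline{\mathcal{K}_\delta} \subset V_\IR^\circ$. Lemma~\ref{lemma::ax} gives a smooth point of $X(\IC)$ where $(\omega_{\phi'}|_X)^{\wedge r}$ is a strictly positive volume form, hence a \emph{relatively compact} open $K \subset X^{\mathrm{sm}}(\IC)$ on which it stays positive. On the compact $\overline{K}$, pointwise continuity of $\phi \mapsto \omega_\phi$ upgrades automatically to uniform continuity, so there is a neighbourhood $U \ni \phi'$ on which $\omega_\phi^{\wedge r} \geq \tfrac{1}{2}\omega_{\phi'}^{\wedge r}$ throughout $K$. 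Now semipositivity of $\omega_\phi$ does the work:
\[
\int_{X(\IC)} \omega_\phi^{\wedge r} \;\geq\; \int_K \omega_\phi^{\wedge r} \;\geq\; \tfrac{1}{2}\int_K \omega_{\phi'}^{\wedge r} \;>\;0
\]
for every $\phi \in U \cap V_\IQ$, and compactness of $\overline{\mathcal{K}_\delta}$ finishes. The key point is that semipositivity lets you discard the non-compact tail $X(\IC)\setminus K$ for free, so no uniform integrable majorant is ever needed. Your proposed bound via the explicit formula (\ref{equation::cherntoric}) near the boundary may well be obtainable, but it is the hard way around an obstacle that dissolves once you restrict to $K$.
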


Before starting the proof, let us recall a compatibility between algebraic Chern classes and analytic Chern forms on proper complex algebraic varieties. Let $Z$ be a proper complex algebraic variety and let $L_1,\dots,L_n$ be line bundles on $Z$. If $\Vert \cdot \Vert_i$ ($1 \leq i \leq n$) are smooth Hermitian metrics on $L_i$, then
\begin{equation*}
c_1(L_1) c_1(L_2) \dots c_1(L_n) \cap [Z] = \int_{Z(\IC)} c_1(L_1, \Vert \cdot \Vert_1)c_1(L_2, \Vert \cdot \Vert_2)\dots c_1(L_n, \Vert \cdot \Vert_n).
\end{equation*}
In case $Z$ is smooth, this follows from the fact that the topological Chern class of a line bundle is given by its Chern form (see e.g.\ \cite[Proposition on p.\ 141]{Griffiths1994}) and the compatibility between algebraic Chern classes and their topological counterparts acting on singular homology \cite[Proposition 19.1.2]{Fulton1998}. For general $Z$, one can reduce to this case via Hironaka's desingularization theorem \cite{Hironaka1964} (see also \cite{Kollar2007}).

\begin{proof} Since $\mathcal{K}_\delta$ is a relatively compact subset of $V_\IR^\circ$, it suffices to prove the following claim: For each $(\phi_{\mathrm{tor}}^\prime,\phi_{\mathrm{ab}}^\prime) \in V^\circ_\IR$, there exists a euclidean neighborhood $U \subset V_\IR^\circ$ of $(\phi_{\mathrm{tor}}^\prime,\phi_{\mathrm{ab}}^\prime)$ and a constant $c_{20}(\phi_{\mathrm{tor}}^\prime,\phi_{\mathrm{ab}}^\prime)>0$ such that
\begin{equation}
\max_{0 \leq i \leq r} \{ \beta_i(\phi_{\mathrm{tor}},\phi_{\mathrm{ab}}) \} \geq c_{20}(\phi_{\mathrm{tor}}^\prime,\phi_{\mathrm{ab}}^\prime)
\end{equation}
for all $(\phi_{\mathrm{tor}},\phi_{\mathrm{ab}}) \in U \cap V_\IQ$.

In order to prove this claim, let $(\phi_{\mathrm{tor}},\phi_{\mathrm{ab}}) \in V_\IQ$ and let $n$ denote a denominator for $(\phi_{\mathrm{tor}},\phi_{\mathrm{ab}})$. In Section \ref{section::chernforms}, the line bundle $M_{\overline{\Gamma(n\cdot \phi_{\mathrm{tor}})}}$ is endowed with a hermitian metric such that $c_1(\overline{M}_{\overline{\Gamma(n\cdot \phi_{\mathrm{tor}})}})= \omega(n\cdot \phi_{\mathrm{tor}})$. Similarly, the line bundle $N_j$ is endowed with a hermitian metric such that $c_1(\overline{N}_j)=\omega(N_j; n\cdot \phi_{\mathrm{ab}})$. These hermitian line bundles can be used to express $\beta_l(\phi_{\mathrm{tor}},\phi_{\mathrm{ab}})$ analytically; to wit, $\beta_l(\phi_{\mathrm{tor}},\phi_{\mathrm{ab}})= n^{-2r}\beta_l(n\cdot \phi_{\mathrm{tor}},n\cdot \phi_{\mathrm{ab}})$ and 
\begin{align*}
\beta_l(n\cdot \phi_{\mathrm{tor}},n\cdot \phi_{\mathrm{ab}})&= (|n\cdot \phi_{\mathrm{tor}}| + |n \cdot \phi_{\mathrm{ab}}|)^l \int_{X_{\overline{\Gamma(n\cdot \phi_{\mathrm{tor}})}}(\IC)}{\omega(n\cdot \phi_{\mathrm{tor}})^{\wedge l} \wedge (\overline{\pi}_{\Gamma(n \cdot \phi_{\mathrm{tor}})})^\ast \omega(N_j; n \cdot \phi_{\mathrm{ab}})^{\wedge r-l}}.
\end{align*}
Since each $\beta_l$ is a non-negative function, it suffices to prove that there exists a positive constant $c_{21}(\phi_{\mathrm{tor}}^\prime,\phi_{\mathrm{ab}}^\prime)$ and a neighborhood $U$ of $(\phi_{\mathrm{tor}}^\prime,\phi_{\mathrm{ab}}^\prime)$ such that
\begin{multline} \label{equation::integral}
n^{2r} \sum_{l=0}^{r} \binom{r}{l}\beta_l(\phi_{\mathrm{tor}}, \phi_{\mathrm{ab}}) \\
= \int_{X_{\overline{\Gamma(n\cdot \phi_{\mathrm{tor}})}}(\IC)}{\left((|n\cdot \phi_{\mathrm{tor}}| + |n \cdot \phi_{\mathrm{ab}}|) \omega(n\cdot \phi_{\mathrm{tor}}) + (\overline{\pi}_{\Gamma(n \cdot \phi_{\mathrm{tor}})})^\ast \omega(M_j; n \cdot \phi_{\mathrm{ab}})\right)^{\wedge r}}
\end{multline}
exceeds $n^{2r}c_{21}(\phi_{\mathrm{tor}}^\prime,\phi_{\mathrm{ab}}^\prime)$ for any $(\phi_{\mathrm{tor}},\phi_{\mathrm{ab}}) \in U \cap V_\IQ$ with denominator $n$. As the boundary $X_{\overline{\Gamma(n\cdot \phi_{\mathrm{tor}})}}(\IC) \setminus \iota_{\overline{\Gamma(n\cdot \phi_{\mathrm{tor}})}}(X)(\IC)$ has measure zero, the integral in (\ref{equation::integral}) equals
\begin{equation*}
\int_{X(\IC)}{\left((|n\cdot \phi_{\mathrm{tor}}| + |n \cdot \phi_{\mathrm{ab}}|) \omega(n\cdot \phi_{\mathrm{tor}}) + (\overline{\pi}_{\Gamma(n \cdot \phi_{\mathrm{tor}})})^\ast \omega(N_j; n \cdot \phi_{\mathrm{ab}})\right)^{\wedge r}},
\end{equation*}
which by Lemma \ref{lemma::chernforms_quasihomomorphisms} and (\ref{equation::abelianhomogenity}) simplifies to
\begin{equation*}
n^{2r} \int_{X(\IC)}{\left((|\phi_{\mathrm{tor}}| + | \phi_{\mathrm{ab}}|) \omega(\phi_{\mathrm{tor}}) + (\overline{\pi}_{\Gamma(\phi_{\mathrm{tor}})})^\ast \omega(N_j; \phi_{\mathrm{ab}})\right)^{\wedge r}}.
\end{equation*}
It remains to show that the integral
\begin{equation} \label{equation::integraltobound}
\int_{X(\IC)}{((|\phi_{\mathrm{tor}}|+|\phi_{\mathrm{ab}}|)
\omega(\phi_{\mathrm{tor}})+\pi^\ast\omega(N_j;\phi_{\mathrm{ab}}))^{\wedge r}}
\end{equation}
is bounded from below by a positive constant $c_{21}(\phi_{\mathrm{tor}}^\prime,\phi_{\mathrm{ab}}^\prime)$ for all $(\phi_{\mathrm{tor}},\phi_{\mathrm{ab}}) \in V_{\IQ}$ in a neighborhood $U$ of $(\phi_{\mathrm{tor}}^\prime,\phi_{\mathrm{ab}}^\prime)$. In the sequel, we write
\begin{equation}
\label{equation::definition_omega}
\omega(\phi_{\mathrm{tor}},\phi_{\mathrm{ab}}) = (|\phi_{\mathrm{tor}}|+|\phi_{\mathrm{ab}}|)
\omega(\phi_{\mathrm{tor}})+\pi^\ast\omega(N_j;\phi_{\mathrm{ab}})
\end{equation}
for any $(\phi_{\mathrm{tor}},\phi_{\mathrm{ab}})\in V_{\IR}$. From Section \ref{section::chernforms}, we know that each $
\omega(\phi_{\mathrm{tor}},\phi_{\mathrm{ab}})$ is a semipositive $(1,1)$-form of real type. Furthermore, our assumption $X\neq X^{(s)}$ implies $(\omega(\phi_{\mathrm{tor}}^\prime,\phi_{\mathrm{ab}}^\prime)|_X)^{\wedge \dim(X)} \neq 0$ by Lemma \ref{lemma::ax} (with $c=|\phi_{\mathrm{tor}}^\prime|+|\phi_{\mathrm{ab}}^\prime|$). We infer from this the existence of a non-empty relatively compact open subset $K$ such that $(\omega(\phi_{\mathrm{tor}}^\prime,\phi_{\mathrm{ab}}^\prime)|_{X,y})^{\wedge \dim(X)}$ is a positive volume form for each $y \in K$. By continuity of $\omega(\phi_{\mathrm{tor}},\phi_{\mathrm{ab}})$ with respect to $(\phi_{\mathrm{tor}},\phi_{\mathrm{ab}})$ and compactness, there exists an open neighborhood $U \subset V_\IR$ such that
\begin{equation*}
\omega(\phi_{\mathrm{tor}},\phi_{\mathrm{ab}}) ^{\wedge \dim(X)}-\frac{1}{2}\omega(\phi_{\mathrm{tor}}^\prime,\phi_{\mathrm{ab}}^\prime) ^{\wedge \dim(X)}
\end{equation*}
restricts to a positive volume form on each $T_{\IR,y} X^{\mathrm{sm}}(\IC)$, $y \in K$. Using the semipositivity of $\omega(\phi_{\mathrm{tor}},\phi_{\mathrm{ab}})$, we obtain that (\ref{equation::integraltobound}) is bounded from below by
\begin{equation*}
\int_{K}{\omega(\phi_{\mathrm{tor}},\phi_{\mathrm{ab}})^{\wedge \dim(X)}} \geq \frac{1}{2}\int_{K} \omega(\phi_{\mathrm{tor}}^\prime,\phi_{\mathrm{ab}}^\prime) ^{\wedge \dim(X)} = c_{21}(\phi_{\mathrm{tor}}^\prime,\phi_{\mathrm{ab}}^\prime)> 0.
\end{equation*}
This proves our claim.
\end{proof}

\begin{lemma} \label{lemma::intersection2} There exists a constant $c_{22}>0$ such that
\begin{equation*}
\max_{0 \leq i \leq r-1} \{ \gamma_i(\phi_{\mathrm{tor}},\phi_{\mathrm{ab}}) \} \leq c_{22}
\end{equation*}
for all $(\phi_{\mathrm{tor}},\phi_{\mathrm{ab}}) \in \mathcal{K}_\delta \cap V_\IQ$.
\end{lemma}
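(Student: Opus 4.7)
My strategy is to adapt the analytic approach used in Lemma \ref{lemma::intersection1}, where the crucial new input is the factor $c_1(q^\ast L)$ appearing in the definition of $\gamma_i$. Since $L$ is a fixed ample line bundle on the fixed compactification $\overline{G}$ and $q_{\overline{\Gamma(\phi_{\mathrm{tor}})}}$ satisfies $q \circ \vartheta_{\phi_{\mathrm{tor}}, n} = q$ for the finite maps of Construction \ref{construction3}, this factor is compatible with the covering trick underlying Lemma \ref{lemma::chernforms_quasihomomorphisms} and provides the uniform ``dampening'' that yields an upper bound, rather than the lower bound established in Lemma \ref{lemma::intersection1}.

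First, I would extend Lemma \ref{lemma::chernforms_quasihomomorphisms} to permit an additional wedge factor of the form $q^\ast c_1(\overline{L})$, where $\overline{L}$ denotes $L$ equipped with a smooth hermitian metric. The proof goes through verbatim because $q^\ast c_1(\overline{L})$ is a smooth closed form on each $G_{\overline{\Gamma(\phi_{\mathrm{tor}})}}(\IC)$ that is compatible with pullback along the $\vartheta_{\phi_{\mathrm{tor}}, n}$. Combined with the identification between algebraic Chern classes and analytic Chern forms exactly as in Lemma \ref{lemma::intersection1}, this yields, for every $(\phi_{\mathrm{tor}}, \phi_{\mathrm{ab}}) \in V_\IQ \cap \mathcal{K}_\delta$, the analytic expression
\begin{equation*}
\gamma_i(\phi_{\mathrm{tor}}, \phi_{\mathrm{ab}}) = (|\phi_{\mathrm{tor}}| + |\phi_{\mathrm{ab}}|)^i \int_{X(\IC)} \omega(\phi_{\mathrm{tor}})^{\wedge i} \wedge (\pi^\ast \omega(N_j; \phi_{\mathrm{ab}}))^{\wedge r-1-i} \wedge c_1(\overline{L})|_{X(\IC)}.
\end{equation*}

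Next, I would fix the metric on $L$ so that $c_1(\overline{L}) = \omega(\id_{\Gm^t}) + \pi^\ast\omega(N;\id_A)$ is a smooth semipositive form on $\overline{G}(\IC)$, using the hermitian metrics of Section \ref{section::chernforms}. With this choice every factor of the integrand is semipositive, so $\gamma_i(\phi)\geq 0$; setting $\omega_\phi := (|\phi_{\mathrm{tor}}|+|\phi_{\mathrm{ab}}|)\omega(\phi_{\mathrm{tor}}) + \pi^\ast\omega(N_j;\phi_{\mathrm{ab}})$ and binomially expanding $\omega_\phi^{\wedge(r-1)}$ exactly as in the proof of Lemma \ref{lemma::intersection1} gives
\begin{equation*}
\gamma_i(\phi) \leq \sum_{k=0}^{r-1}\binom{r-1}{k}\gamma_k(\phi) = \int_{X(\IC)} \omega_\phi^{\wedge(r-1)} \wedge c_1(\overline{L})|_{X(\IC)}.
\end{equation*}
I would then deduce the desired uniform bound from the continuity of this majorant as a function of $(\phi_{\mathrm{tor}}, \phi_{\mathrm{ab}}) \in V_\IR^\circ$ together with the relative compactness of $\mathcal{K}_\delta$ in $V_\IR^\circ$ established before (\ref{equation::pairs}).

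The main obstacle I expect is verifying that this integral depends continuously on $\phi$, and in particular converges, for $\phi$ with irrational toric part, since $\omega(\phi_{\mathrm{tor}})$ then has no smooth extension to any fixed compactification of $G$. I would handle this by producing a $\phi$-uniform pointwise majorant of the integrand integrable on $X(\IC)$: the presence of the factor $c_1(\overline{L})|_{\overline{X}}$, which is the restriction of a smooth form on the \emph{compact} variety $\overline{X}(\IC)$, contributes just enough decay near the boundary $(\overline{X}\setminus X)(\IC)$ to dominate the non-extendable parts of $\omega(\phi_{\mathrm{tor}})^{\wedge i}$, after which dominated convergence closes the argument.
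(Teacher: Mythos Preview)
Your proposal follows precisely the route the paper explicitly considers and rejects. In the paragraph preceding the proof of Lemma~\ref{lemma::intersection2}, the author writes that one can indeed reduce the statement to bounds on integrals over $X(\IC)$ varying continuously in $(\phi_{\mathrm{tor}},\phi_{\mathrm{ab}})$, but that ``non-compactness of $X(\IC)$ precludes such a direct argument in the general case.'' Your attempted fix rests on the claim that the factor $c_1(\overline{L})|_{\overline{X}}$ ``contributes just enough decay near the boundary'' to dominate the non-extendable part of $\omega(\phi_{\mathrm{tor}})^{\wedge i}$. This claim is not correct: $c_1(\overline{L})$ is a smooth, hence merely bounded, $(1,1)$-form on the compact manifold $\overline{G}(\IC)$; it does not decay near $\overline{X}\setminus X$. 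In local coordinates $(w_1,\dots,w_r)$ on $\overline{X}$ with $w_1=0$ the boundary, one has $c_1(\overline{L})|_X=\sum h_{ij}\,dw_i\wedge d\overline{w}_j$ with $h_{ij}$ bounded, while the coefficients of $\omega(\phi_{\mathrm{tor}})|_X$ can blow up like $|w_1|^{2a-2}$ for arbitrarily small $a>0$ as $\phi_{\mathrm{tor}}$ varies in $\mathcal{K}_\delta$. Wedging in one bounded factor does not produce a $\phi$-uniform integrable majorant; if one exists at all, its construction would require a delicate analysis of how the coefficient and the exponent balance (e.g.\ $\sup_{a>0} a^2|w_1|^{2a-2}\sim |w_1|^{-2}(\log|w_1|^{-1})^{-2}$, borderline integrable), carried out uniformly over all boundary strata and all cross-terms --- none of which your sketch provides.

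The paper avoids this entirely by switching to algebraic intersection theory. It embeds $G_{\overline{\Gamma(n\cdot\phi_{\mathrm{tor}})}}$ into $\overline{G}\times\overline{G}'$ and then, via very ample powers of $L$ and $L_0'$, into $\IP^{r_1}\times\IP^{r_2}$. A positivity argument using global generation of the tangent bundle of $\IP^{r_1}\times\IP^{r_2}$ (\cite[Corollary 12.2]{Fulton1998}) reduces the bound on $\gamma_i$ to a B\'ezout-type estimate on $[\kappa(G_{\overline{\Gamma(n\cdot\phi_{\mathrm{tor}})}})]$ in the Chow ring. The latter is computed explicitly by separating the toric and abelian contributions: the toric piece is bounded via the explicit Chow ring of $(\IP^1)^t\times(\IP^1)^{t'}$, and the abelian piece via the quadraticity of $\varphi_{\mathrm{ab}}\mapsto\varphi_{\mathrm{ab}}^\ast N_j$ from the Theorem of the Cube. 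This algebraic route sidesteps the convergence and uniformity issues altogether.
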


It is tempting to provide a proof resembling the one of Lemma \ref{lemma::intersection1}. In fact, we can reduce the statement of the lemma to bounds on certain integrals of volume forms on $X(\IC)$ that vary continuously with $(\phi_{\mathrm{tor}},\phi_{\mathrm{ab}})$. If $X(\IC)$ were compact (e.g.\ because $G=A$ is an abelian variety), the above lemma could be immediately inferred from this continuity. However, non-compactness of $X(\IC)$ precludes such a direct argument in the general case. We circumvent these problems by using algebraic intersection theory \cite{Fulton1998} instead. This resembles the proof of \cite[Lemma 3.3]{Habegger2009} by a multiprojective version of B\'ezout's Theorem. We use the standard notation from \cite{Fulton1998} freely.

\begin{proof} Consider a fixed $(\phi_{\mathrm{tor}},\phi_{\mathrm{ab}})\in \mathcal{K}_\delta \cap V_\IQ$ with denominator $n$. By compactness, $(|\phi_{\mathrm{tor}}| + |\phi_{\mathrm{ab}}|)$ is bounded on $\mathcal{K}_\delta$. It suffices to bound
\begin{equation*}
\deg(c_1(M_{\overline{\Gamma(n\cdot \phi_{\mathrm{tor}})}})^{i} c_1(((n\cdot \phi_{\mathrm{ab}}) \circ \overline{\pi}_{\overline{\Gamma(n\cdot \phi_{\mathrm{tor}})}})^\ast N_j)^{r-1-i} c_1(q^\ast_{\overline{\Gamma(n \cdot \phi_{\mathrm{tor}})}} L) \cap [X_{\overline{\Gamma(n \cdot \phi_{\mathrm{tor}})}}])
\end{equation*}
by $n^{2r-2}c_{22}$ because $\gamma_i(\phi_{\mathrm{tor}},\phi_{\mathrm{ab}})$ is homogeneous of degree $2r-2$. As in the proof of Lemma \ref{lemma::intersection1}, it is enough to demonstrate that
\begin{equation} \label{equation::intersectionnumber3}
\deg(c_1(M_{\overline{\Gamma(n\cdot \phi_{\mathrm{tor}})}} \otimes ((n\cdot \phi_{\mathrm{ab}}) \circ \overline{\pi}_{\overline{\Gamma(n\cdot \phi_{\mathrm{tor}})}})^\ast N_j)^{r-1}  c_1(q^\ast_{\overline{\Gamma(n \cdot \phi_{\mathrm{tor}})}} L) \cap [X_{\overline{\Gamma(n \cdot \phi_{\mathrm{tor}})}}])
\end{equation}
is bounded by $n^{2r-2}c_{23}$. 

Let $G^\prime$ be the semiabelian variety described by $\eta_{G^\prime} = (n \cdot \phi_{\mathrm{tor}})_\ast \eta_{G} \in \Ext^1_{\IQbar}(A,\Gm^{t^\prime})$. From Construction \ref{construction1}, we recall the compactification $\overline{G}$ (resp.\ $\overline{G}^\prime$) of $G$ (resp.\ $G^\prime$) with its abelian quotient $\overline{\pi}: \overline{G} \rightarrow A$ (resp.\ $\overline{\pi}^\prime: \overline{G}^\prime \rightarrow A$) and the line bundle $M_{\overline{G}}$ (resp.\ $M_{\overline{G}^\prime}$) on $\overline{G}$ (resp.\ $\overline{G}^\prime$). The Zariski closure of $X$ in $\overline{G}$ is denoted $\overline{X}$. Then, $L = M_{\overline{G}} \otimes \overline{\pi}^\ast N$ and we also set
\begin{equation*}
L^\prime = M_{\overline{G}^\prime} \otimes ((n\cdot \phi_{\mathrm{ab}}) \circ \overline{\pi}^\prime)^\ast N_j.
\end{equation*}
The homomorphism $(\id_{\Gm^{t}},n\cdot \phi_{\mathrm{tor}}): \Gm^{t} \rightarrow \Gm^{t} \times \Gm^{t^\prime}$ extends to a $(\id_{\Gm^{t}},n\cdot \phi_{\mathrm{tor}})$-equivariant map $\overline{\Gamma(n\cdot \phi_{\mathrm{tor}})} \rightarrow (\IP^1)^{t}\times (\IP^1)^{t^\prime}$, yielding a closed immersion $\iota: G_{\overline{\Gamma(n\cdot \phi_{\mathrm{tor}})}} \rightarrow \overline{G} \times \overline{G}^\prime$ by means of the constructions in Section \ref{section::compactification}. Furthermore, the line bundle $q^\ast_{\overline{\Gamma(n \cdot \phi_{\mathrm{tor}})}} L$ (resp.\ $M_{\overline{\Gamma(n\cdot \phi_{\mathrm{tor}})}}$) on $G_{\overline{\Gamma(n \cdot \phi_{\mathrm{tor}})}}$ coincides with the pullback $\iota^\ast \pr_1^\ast L$ (resp.\ $\iota^\ast \pr_2^\ast M_{\overline{G}^\prime}$). Using the projection formula (\cite[Proposition 2.5.c]{Fulton1998}), we infer that (\ref{equation::intersectionnumber3}) equals the degree of
\begin{equation} \label{equation::intersectionnumber6}
c_1(\pr_1^\ast L)c_1(\pr_2^\ast L^\prime)^{r-1} \cap [\iota(X_{\overline{\Gamma(n \cdot \phi_{\mathrm{tor}})}})] \in A_0(\overline{G} \times \overline{G}^\prime).
\end{equation}
To estimate this degree, we use suitable projective embeddings $\overline{G} \hookrightarrow \IP^{r_1}$ and $\overline{G}^\prime \hookrightarrow \IP^{r_2}$. By Lemma \ref{lemma::ampleness}, the line bundles $L$ and $L^\prime_0= L^\prime \otimes (\overline{\pi}^\prime)^\ast N$ are ample. Consequently, there exists an integer $l_1$ such that $L^{\otimes l_1}$ is very ample. Since $L$ is independent of $(\phi_{\mathrm{tor}},\phi_{\mathrm{ab}})$, we can choose $l_1$ less than some constant $c_{24}$ that only depends on $G$ and $X$. The line bundle $(L^\prime_0)^{\otimes l_2}$ is very ample if $l_2$ is sufficiently large; in contrast to $l_1$, there is an implicit dependence on $(\phi_{\mathrm{tor}},\phi_{\mathrm{ab}})$ here. These very ample line bundles determine projective embeddings $\iota_1: \overline{G} \hookrightarrow \IP^{r_1}$ and $\iota_2: \overline{G}^\prime \hookrightarrow \IP^{r_2}$ such that $\iota_1^\ast \mathcal{O}_{\IP^{r_1}}(1) \approx L^{\otimes l_1}$ and $\iota_2^\ast \mathcal{O}_{\IP^{r_2}}(1) \approx (L^\prime_{0})^{\otimes l_2}$. Setting $\kappa=(\iota_1 \times \iota_2) \circ \iota$, we continue by estimating the degree of
\begin{equation} \label{equation::intersectionnumber1}
c_1(\pr_1^\ast \mathcal{O}_{\IP^{r_1}}(1))c_1(\pr_2^\ast \mathcal{O}_{\IP^{r_2}}(1))^{r-1}\cap [\kappa(X_{\overline{\Gamma(n\cdot \phi_{\mathrm{tor}})}})]\in A_0(\IP^{r_1} \times \IP^{r_2}).
\end{equation}
If it is shown that the degree of  (\ref{equation::intersectionnumber1}) is less than $l_1l_2^{r-1} n^{2r-2} c_{23}$, the desired degree bound on (\ref{equation::intersectionnumber6}) follows immediately. In fact, the degree of (\ref{equation::intersectionnumber1}) equals the degree of
\begin{equation*}
l_1l_2^{r-1}c_1(\pr_1^\ast L)(c_1(\pr_2^\ast L^\prime)+c_1(\pr_2^\ast(\overline{\pi}^\prime)^\ast N))^{r-1} \cap [\iota(X_{\overline{\Gamma(n\cdot \phi_{\mathrm{tor}})}})] \in A_0(\overline{G} \times \overline{G}^\prime)
\end{equation*}
by the projection formula. By Lemma \ref{lemma::ampleness}, the line bundles $\pr_1^\ast L$, $\pr_2^\ast L^\prime$ and $\pr_2^\ast (\overline{\pi}^\prime)^\ast N$ are nef so that this can be expanded into a sum of $r$ zero-cycle classes with non-negative degrees (see \cite[Theorem III.2.1]{Kleiman1966}). Since one of the summands is a $(l_1l_2^{r-1})$-multiple of (\ref{equation::intersectionnumber6}), the reduction is clear. (Note that both $l_1$ and $l_2$ cancel out in this way, and hence the dependence of $l_2$ on $(\varphi_{\mathrm{tor}},\varphi_{\mathrm{ab}})$ is not an issue. Of course, we have to make sure that $c_{23}$ depends only on $G$ and $X$, as it should be by our convention.)

The variety $\kappa(X_{\overline{\Gamma(n \cdot \phi_{\mathrm{tor}})}})$ is an irreducible component of $\kappa(G_{\overline{\Gamma(n\cdot \phi_{\mathrm{tor}})}}) \cap (\iota_1(\overline{X}) \times \IP^{r_2}) \subset \IP^{r_1} \times \IP^{r_2}$. In fact, both are subvarieties of $\iota_1(\overline{G}) \times \iota_2(\overline{G}^\prime)$ whose restrictions to the open dense subset $\iota_1(G) \times \iota_2(G^\prime)$ coincide with $\kappa(X)$. Choose hypersurfaces $S_1, S_2, \dots, S_{k} \subset \IP^{r_1}$ such that $\iota_1(\overline{X})=S_1 \cap S_2 \cap \cdots \cap S_k$ as varieties (i.e., set-theoretically). As $X$ is irreducible, we can select a subset $\{ S_{k_1},\dots,S_{k_{\dim(G)-r}} \}$ of these hypersurfaces such that $\kappa(X_{\overline{\Gamma(n\cdot \phi_{\mathrm{tor}})}})$ is an irreducible component of 
\begin{equation*}
\kappa(G_{\overline{\Gamma(n \cdot \phi_{\mathrm{tor}})}}) \cap (S_{k_1} \times \IP^{r_2}) \cap \cdots \cap (S_{k_{\dim(G)-r}}\times \IP^{r_2}).
\end{equation*}
For reasons of dimension (cf.\ \cite[Lemma 7.1 (a)]{Fulton1998} and \cite[Example 8.2.1]{Fulton1998}), we have
\begin{equation}
\label{equation::intersectionproductpositivity}
\iota(\kappa(X_{\overline{\Gamma(n\cdot \phi_{\mathrm{tor}})}}), \kappa(G_{\overline{\Gamma(n\cdot \phi_{\mathrm{tor}})}}) \cdot (S_{k_1} \times \IP^{r_2}) \cdots (S_{k_{\dim(G)-r}}\times \IP^{r_2}); \IP^{r_1} \times \IP^{r_2}) \geq 1.
\end{equation}
It is well-known (compare \cite[Section 12.3]{Fulton1998}) that the tangent vector bundle $T (\IP^{r_1} \times \IP^{r_2}) = \pr_1^\ast (T \IP^{r_1}) \oplus \pr_2^\ast (T \IP^{r_2})$ is ample and hence globally generated. By \cite[Corollary 12.2 (a)]{Fulton1998}, every distinguished subvariety contributes a non-negative cycle to the intersection product in (\ref{equation::intersectionproductpositivity}). The degree of the $0$-cycle class (\ref{equation::intersectionnumber1}) is hence majorized by the degree of the $0$-cycle class
\begin{equation} \label{equation::intersectionnumber4}
c_1(\pr_1^\ast \mathcal{O}_{\IP^{r_1}}(1))c_1(\pr_2^\ast \mathcal{O}_{\IP^{r_2}}(1))^{r-1}
\\
\cap \kappa(G_{\overline{\Gamma(n\cdot \phi_{\mathrm{tor}})}}) \cdot (S_{k_1} \times \IP^{r_2}) \cdots (S_{k_{\dim(G)-r}}\times \IP^{r_2})
\end{equation}
on $\IP^{r_1} \times \IP^{r_2}$. The Chow ring $A_{\ast}(\IP^{r_1} \times \IP^{r_2})$ is of the form
\begin{equation*}
\IZ[H_1]/([H_1]^{r_1+1}) \otimes \IZ[H_2]/([H_2]^{r_2+1})
\end{equation*}
for any two hyperplanes $H_1 \subset \IP^{r_1}$ and $H_2 \subset \IP^{r_2}$ (see \cite[Example 8.3.7]{Fulton1998}). Thus, we may write $[S_i \times \IP^{r_2}] = d_i [H_1 \times \IP^{r_2}]$ and
\begin{equation*}
[\kappa(G_{\overline{\Gamma(n\cdot \phi_{\mathrm{tor}})}})]=\sum_{i_1+i_2 = r_1+r_2-\dim(G)} e_{i_1,i_2}[H_1^{i_1} \times H_2^{i_2}].
\end{equation*}
Furthermore, the definition of the first Chern class immediately implies that 
\begin{equation*}
c_1(\pr_i^\ast \mathcal{O}_{\IP^{r_i}}(1)) \cap [H_1^{i_1} \times H_2^{i_2}] =
\begin{cases}
 [H_1^{i_1+1} \times H_2^{i_2}]  & \text{if $i=1$} \\
 [H_1^{i_1} \times H_2^{i_2+1}]  & \text{if $i=2$}.
\end{cases}
\end{equation*}
With these notations, the degree of (\ref{equation::intersectionnumber4}) is
\begin{equation*}
d_{k_1}\cdots d_{k_{\dim(G)-r}}e_{r_1+r-\dim(G)-1, r_2-r+1}\leq \max_{\substack{K\subset \{ 1,\dots, k\} \\ |K|={\dim(G)-r}}} \left\lbrace \prod_{k \in K} d_k \right\rbrace \cdot e_{r_1+r-\dim(G)-1, r_2-r+1}.
\end{equation*}
Additionally, we have
\begin{equation*}
e_{r_1+r-\dim(G)-1, r_2-r+1} = \deg (c_1(\pr_1^\ast \mathcal{O}_{\IP^{r_1}}(1))^{\dim(G)+1-r}c_1(\pr_2^\ast \mathcal{O}_{\IP^{r_2}}(1))^{r-1} \cap [\kappa(G_{\overline{\Gamma(n\cdot \phi_{\mathrm{tor}})}})]),
\end{equation*}
which is less than
\begin{equation} \label{equation::intersectionnumber5}
c_{24}^{\dim(G)+1-r}l_2^{r-1} \deg ( c_1(\pr_1^\ast L)^{\dim(G)+1-r}c_1(\pr_2^\ast L_{0}^\prime)^{r-1} \cap [\iota(G_{\overline{\Gamma(n\cdot \phi_{\mathrm{tor}})}})])
\end{equation}
by the projection formula. To ease our exposition notationally, we write $\alpha_1=c_1(\pr_1^\ast M_{\overline{G}})$, $\alpha_2=c_1(\pr_2^\ast M_{\overline{G}^\prime})$, $\beta_1=c_1(\pr_1^\ast \overline{\pi}^\ast N)$, $\beta_2=c_1(\pr_2^\ast ((n\cdot \phi_{\mathrm{ab}}) \circ \overline{\pi}^\prime)^\ast N_j)$, $\beta_3=c_1(\pr_2^\ast (\overline{\pi}^\prime)^\ast N)$ and $r^\prime=\dim(G)+1-r$ in the following computations. Then,
\begin{align*}
c_1(\pr_1^\ast L)^{r^\prime}c_1(\pr_2^\ast L_{0}^\prime)^{r-1}
&= (\alpha_1 + \beta_1)^{r^\prime} (\alpha_2 + \beta_2 + \beta_3)^{r-1} \\
&= 
\left( \sum_{s_1=0}^{r^\prime} \binom{r^\prime}{s_1} \alpha_1^{s_1
} \beta_1^{r^\prime-s_1} \right)
\left( \sum_{s_2=0}^{r-1} \binom{r-1}{s_2} \alpha_2^{s_2}(\beta_2 + \beta_3)^{r-1-s_2} \right).
\end{align*}
For each positive integer $n$, the isogeny $[n]_G: G \rightarrow G$ of degree $n^{t+2\dim(A)}$ extends to a proper map $[n]_{G_{\overline{\Gamma(n\cdot \phi_{\mathrm{tor}})}}}: G_{\overline{\Gamma(n\cdot \phi_{\mathrm{tor}})}} \rightarrow G_{\overline{\Gamma(n\cdot \phi_{\mathrm{tor}})}}$ such that $([n]_{G_{\overline{\Gamma(n\cdot \phi_{\mathrm{tor}})}}})_\ast [G_{\overline{\Gamma(n\cdot \phi_{\mathrm{tor}})}}] = n^{t+2\dim(A)} [G_{\overline{\Gamma(n\cdot \phi_{\mathrm{tor}})}}]$. Furthermore, pulling back the line bundles $\pr_1^\ast M_{\overline{G}}$ and $\pr_2^\ast M_{\overline{G}^\prime}$ (resp.\ $\pr_1^\ast \overline{\pi}^\ast N$, $\pr_2^\ast((n\cdot \phi_{\mathrm{ab}}) \circ \overline{\pi}^\prime)^\ast N_j$ and $\pr_2^\ast(\overline{\pi}^\prime)^\ast N$) along $[n]_{G_{\overline{\Gamma(n\cdot \phi_{\mathrm{tor}})}}}$ amounts to rising them to the $n$-th (resp.\ $n^2$-th) power. Therefore, the projection formula (applied to $[n]_{G_{\overline{\Gamma(n\cdot \phi_{\mathrm{tor}})}}}$) yields that
\begin{equation} \label{equation::intersectionnumber2}
n^{2\dim(G)-s_1 - s_2} \deg(\iota^\ast (\alpha_1^{s_1}\alpha_2^{s_2}\beta_1^{r^\prime - s_1}(\beta_2+\beta_3)^{r-1-s_2}) \cap [G_{\overline{\Gamma(n\cdot \phi_{\mathrm{tor}})}}])
\end{equation}
is the same as
\begin{multline*}
\deg(\iota^\ast (\alpha_1^{s_1}\alpha_2^{s_2}\beta_1^{r^\prime - s_1}(\beta_2+\beta_3)^{r-1-s_2}) \cap ([n]_{G_{\overline{\Gamma(n\cdot \phi_{\mathrm{tor}})}}})_\ast [G_{\overline{\Gamma(n\cdot \phi_{\mathrm{tor}})}}]) \\
= n^{t+2\dim(A)} \deg(\iota^\ast (\alpha_1^{s_1}\alpha_2^{s_2}\beta_1^{r^\prime - s_1}(\beta_2+\beta_3)^{r-1-s_2}) \cap ([G_{\overline{\Gamma(n\cdot \phi_{\mathrm{tor}})}}])
\end{multline*}
It follows that (\ref{equation::intersectionnumber2}) is zero whenever $s_1+s_2 \neq t$. Hence, the quantity (\ref{equation::intersectionnumber5}) can be rewritten as
\begin{equation*}
c_{24}^{\dim(G)+1-r}l_2^{r-1}\sum_{s=\max\{0,t-r+1\}}^{\min \{ r^\prime, t\}} \binom{r^\prime}{s} \binom{r-1}{t-s} \deg( \alpha_1^{s} \alpha_2^{t-s} \beta_1^{r^\prime-s}(\beta_2+\beta_3)^{\dim(A)-(r^\prime-s)} \cap [G_{\overline{\Gamma(n\cdot \phi_{\mathrm{tor}})}}]).
\end{equation*}  
(Note that $(r^\prime-s) + (r-1-t+s)=\dim(G)-t=\dim(A)$.) Taking into account our previous reductions, it is sufficient to show that each
\begin{equation}
\label{equation::intersectionnumber8}
\deg( \alpha_1^{s} \alpha_2^{t-s} \beta_1^{r^\prime-s}(\beta_2+\beta_3)^{\dim(A)-(r^\prime-s)} \cap [G_{\overline{\Gamma(n\cdot \phi_{\mathrm{tor}})}}])
\end{equation}
is bounded from above by $c_{25}n^{2r-2}$ for some constant $c_{25}$. 

As $\pi_{\overline{\Gamma(\phi_{\mathrm{tor}})}}: G_{\overline{\Gamma(n\cdot \phi_{\mathrm{tor}})}} \rightarrow A$ exhibits $G_{\overline{\Gamma(n\cdot \phi_{\mathrm{tor}})}}$ as a $\overline{\Gamma(n \cdot \phi_{\mathrm{tor}})}$-bundle over $A$, it is flat of relative dimension $t$. We can therefore pull back cycle classes on $A$ to cycle classes on $G_{\overline{\Gamma(n\cdot \phi_{\mathrm{tor}})}}$. In particular, we have $\pi_{\overline{\Gamma(n\cdot \phi_{\mathrm{tor}})}}^\ast ([A]) =[G_{\overline{\Gamma(n\cdot \phi_{\mathrm{tor}})}}]$ and $\pi_{\overline{\Gamma(n\cdot \phi_{\mathrm{tor}})}}^\ast ([p])=[\pi_{\overline{\Gamma(n\cdot \phi_{\mathrm{tor}})}}^{-1}(p)]$ for any point $p \in A$. Setting
\begin{equation*}
\sigma_s= c_1(N)^{r^\prime-s}(c_1((n\cdot \phi_{\mathrm{ab}})^\ast N_j)+c_1(N))^{\dim(A)-(r^\prime-s)} \cap [A] \in A_0(A),
\end{equation*}
we know from \cite[Proposition 2.5 (d)]{Fulton1998} that there exist points $p_1,\dots,p_{\deg(\sigma_s)+m},q_1,\dots,q_{m} \in A$ such that
\begin{equation*}
\beta_1^{r^\prime-s}(\beta_2+\beta_3)^{\dim(A)-(r^\prime-s)} \cap [G_{\overline{\Gamma(n\cdot \phi_{\mathrm{tor}})}}] = \sum_{l=1}^{\deg(\sigma_s)+m} [\pi_{\overline{\Gamma(n\cdot \phi_{\mathrm{tor}})}}^{-1}(p_l)]-\sum_{l=1}^{m} [\pi_{\overline{\Gamma(n\cdot \phi_{\mathrm{tor}})}}^{-1}(q_l)].
\end{equation*}
By construction, there exists a non-canonical isomorphism between each fiber $\pi_{\overline{\Gamma(n\cdot \phi_{\mathrm{tor}})}}^{-1}(x)$, $x \in A$, and $\overline{\Gamma(n\cdot \phi_{\mathrm{tor}})} \subset (\IP^{1})^t \times (\IP^1)^{t^\prime}$ such that the restrictions of $\pr_1^\ast M_{\overline{G}}$ and $\pr_2^\ast M_{\overline{G}^\prime}$ to $\iota(\pi_{\overline{\Gamma(n\cdot \phi_{\mathrm{tor}})}}^{-1}(x))$ correspond to the line bundles $\pr_1^\ast M_{t}|_{\overline{\Gamma(n\cdot \phi_{\mathrm{tor}})}}$ and $\pr_2^\ast M_{t^\prime}|_{\overline{\Gamma(n\cdot \phi_{\mathrm{tor}})}}$. Once again, we apply the projection formula to obtain
\begin{equation} \label{equation::intersectionnumber7}
\deg(\alpha_1^{s} \alpha_2^{t-s} \cap [\pi_{\overline{\Gamma(n\cdot \phi_{\mathrm{tor}})}}^{-1}(p_l)]) = \deg(c_1(\pr_1^\ast M_{t})^{s} c_1(\pr_2^\ast M_{t^\prime})^{t-s} \cap [\overline{\Gamma(n \cdot\phi_{\mathrm{tor}})}]).
\end{equation}
In particular, (\ref{equation::intersectionnumber8}) is bounded by
\begin{equation*}
\deg(\sigma_s) \deg(\alpha_1^{s} \alpha_2^{t-s} \cap [\pi_{\overline{\Gamma(n\cdot \phi_{\mathrm{tor}})}}^{-1}(e_A)]).
\end{equation*}
We first show that $\alpha_1^{s} \alpha_2^{t-s} \cap [\pi_{\overline{\Gamma(n\cdot \phi_{\mathrm{tor}})}}^{-1}(e_A)]$ has degree less than $c_{26}n^{t-s}$ for some constant $c_{26}$. Using standard coordinates $X_1,\dots, X_{t}$ (resp.\ $Y_1,\dots, Y_{t^\prime}$) on $\Gm^{t}$ (resp.\ $\Gm^{t^\prime}$), let us write 
\begin{equation*}
(n\cdot \phi_{\mathrm{tor}})^\ast(Y_v)= X_1^{a_{1v}}\cdots X_{t}^{a_{tv}}
\end{equation*}
with integers $a_{uv}$ ($1 \leq u \leq t, 1 \leq v \leq t^\prime$). By dimension, we have again
\begin{equation*}
\iota(\overline{\Gamma(n \cdot \phi_{\mathrm{tor}})}, (Y_1 = X_1^{a_{11}} \cdots X_{t}^{a_{1t}}) \cdots (Y_{t^\prime} = X_{1}^{a_{1t^\prime}} \cdots X_{t}^{a_{tt^\prime}}); (\IP^1)^{t} \times (\IP^1)^{t^\prime})\geq 1.
\end{equation*}
We determine next the intersection product
\begin{equation} \label{equation::intersectionproduct}
(Y_1 = X_1^{a_{11}} \cdots X_{t}^{a_{1t}}) \cdots (Y_{t^\prime} = X_{1}^{a_{1t^\prime}} \cdots X_{t}^{a_{tt^\prime}}) \in A_{t}((\IP^1)^{t} \times (\IP^1)^{t^\prime}). 
\end{equation}
From \cite[Example 8.3.7]{Fulton1998}, we deduce an identification
\begin{equation*}
A_{\ast}((\IP^{1})^{t} \times (\IP^{1})^{t^\prime})=\IZ[\varepsilon_1,\dots, \varepsilon_{t}, \varepsilon_{1}^\prime, \dots, \varepsilon_{t^\prime}^\prime]/((\varepsilon_1)^2,\dots, (\varepsilon_{t})^2,(\varepsilon^\prime_1)^2,\dots, (\varepsilon^\prime_{t^\prime})^2)
\end{equation*}
such that $\varepsilon_i$ (resp.\ $\varepsilon_i^\prime$) corresponds to the flat pullback of the cycle class associated with an arbitrary point in the $i$-th factor of $(\IP^{1})^{t}$ (resp.\ $(\IP^1)^{t^\prime}$). Considering appropriate intersections, it is easy to verify
\begin{equation*}
[Y_v=X_1^{a_{1v}}\cdots X_{t}^{a_{tv}}]= |a_{1v}|\varepsilon_1 + |a_{2v}|\varepsilon_2 + \cdots + |a_{tv}|\varepsilon_{t} +  \varepsilon^\prime_v.
\end{equation*}
Thus, (\ref{equation::intersectionproduct}) is simply the product
\begin{equation*}
\prod_{1\leq v \leq t^\prime} ( |a_{1v}|\varepsilon_1 + |a_{2v}|\varepsilon_2 + \cdots + |a_{tv}|\varepsilon_{t} +  \varepsilon^\prime_v).
\end{equation*}
Inspecting the definition of $M_{t}$ (resp.\ $M_{t^\prime}$) in Construction \ref{construction0}, we note that intersecting a cycle class on $(\IP^{1})^{t} \times (\IP^{1})^{t^\prime}$ with $c_1(\pr_1^\ast M_{t})$ (resp.\ $c_1(\pr_2^\ast M_{t^\prime})$) amounts to multiplication with $2(\varepsilon_1+\cdots+\varepsilon_{t})$ (resp.\ $2(\varepsilon_1^\prime+\cdots+\varepsilon_{t^\prime}^\prime)$) in the Chow ring. We infer that the degree of (\ref{equation::intersectionnumber7}) is majorized by the degree of
\begin{equation*}
2^{t}(\varepsilon_1+\cdots+\varepsilon_{t})^{s}(\varepsilon_1^\prime+\cdots+\varepsilon_{t^\prime}^\prime)^{t-s}
\prod_{1\leq v \leq t^\prime} ( |a_{1v}|\varepsilon_1+\dots + |a_{tv}|\varepsilon_{t} + \varepsilon_v^\prime).
\end{equation*}
Exploiting cancellations, this can be simplified to 
\begin{equation*}
2^{t} s! (t-s)! \cdot \sum_{\substack{ 1 \leq u_1,\dots,u_{t-s} \leq t \text{ distinct} \\ 1 \leq v_1 < \dots < v_{t-s} \leq t^\prime }}{|a_{u_1v_1}a_{u_2v_2}\cdots a_{u_{t-s}v_{t-s}}|} (\varepsilon_1\cdots \varepsilon_{t}\varepsilon^{\prime}_1\cdots \varepsilon^{\prime}_{t^\prime})
\end{equation*}
Since $(\phi_{\mathrm{tor}},\phi_{\mathrm{ab}}) \in \mathcal{K}_\delta$, (\ref{equation::intersectionnumber7}) can be consequently bounded from above by $c_{26}n^{t-s}$ as claimed.

We finally demonstrate that $\deg(\sigma_s)$ is bounded from above by $c_{27}n^{2(\dim(A)-(r^\prime-s))}$ for some constant $c_{27}$. For this, it suffices to note that $\Hom(A,A_j^\prime)$ is a finitely generated $\IZ$-module and that 
\begin{equation*}
\Hom(A,A_j^\prime) \longrightarrow \Pic(A), \varphi_{\mathrm{ab}} \longmapsto \varphi_{\mathrm{ab}}^\ast N_j,
\end{equation*}
is quadratic by the Theorem of the Cube (\cite[Corollary II.6.2]{Mumford1970}) because $N_j$ is symmetric (see \cite[p.\ 417]{Habegger2009a} for details). Combining this with the previous estimate, we immediately obtain the bound
\begin{equation*}
c_{26} c_{27} n^{t-s+2(\dim(A)-(r^\prime-s))}\leq c_{26} c_{27} n^{2(t-s+\dim(A)-r^\prime+s)} = c_{26} c_{27} n^{2r-2}
\end{equation*}
on (\ref{equation::intersectionnumber8}). Taking our previous reductions into account, this completes the proof of the lemma.
\end{proof}

\section{Quotients of Semiabelian Varieties}
\label{section::structuralproperties}

In this section, we elucidate the set of quotients belonging to a fixed semiabelian variety. Let $G$ be a semiabelian variety over $\IQbar$ with split toric part $\Gm^{t}$ and abelian quotient $\pi: G \rightarrow A$. For a fixed torus $\Gm^{t^\prime}$ and a fixed abelian variety $A^\prime$, we ask which elements $(\phi_{\mathrm{tor}}, \phi_{\mathrm{ab}})$ of
\begin{equation*}
V_\IQ = \Hom_\IQ(\Gm^{t},\Gm^{t^\prime}) \times \Hom_\IQ(A,A^\prime)
\end{equation*}
are such that there exists a quasi-homomorphism $\phi: G \rightarrow G^\prime$ represented by $(\phi_{\mathrm{tor}}, \phi_{\mathrm{ab}})$ in the sense of Section \ref{section::homomorphisms}. Let $Z(\IQ) \subset V_\IQ$ denote the subset consisting of these elements. For a fixed semiabelian variety $G^\prime$ with toric part $\Gm^{t^\prime}$ and abelian quotient $A^\prime$, we know from Lemma \ref{lemma::semiabelian1} that the surjective quasi-homomorphisms $\phi: G \rightarrow G^\prime$ are parameterized by a linear subspace of $V_\IQ$. The set $Z(\IQ)$ is the union of all these linear subspaces for varying $G^\prime$. It is, however, not a union of finitely many linear subspaces in general. Nevertheless, we can interpret $V_\IQ$ as the $\IQ$-points of an additive algebraic group, which we abusively denote also by $V_\IQ$, and ask whether there is an algebraic subvariety $Z\subset V_\IQ$ with $Z(\IQ)$ as its set of $\IQ$-points. This would also motivate our notation $Z(\IQ)$ retroactively. In the next theorem, a cone $Z \subset V_{\IQ}$ is a (not necessarily closed) algebraic subvariety of $V_{\IQ}$ such that $[n]_{V_\IQ} (Z) \subseteq Z$ for any non-zero integer $n$.

\begin{theorem} \label{theorem::realizable} There exists a cone $Z \subset V_\IQ$ such that its $\IQ$-points are precisely the pairs $(\phi_{\mathrm{tor}},\phi_{\mathrm{ab}}) \in V_\IQ$ representing quasi-homomorphisms.
\end{theorem}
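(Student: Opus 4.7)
The strategy is to reformulate realizability via Weil--Barsotti as an incidence condition on $A^\vee$, then to exhibit $Z$ by parameterizing the image subvariety $\im(\phi_{\mathrm{ab}}^\vee) \subseteq A^\vee$ through Grassmannian-type moduli.

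First, I would invoke Lemma \ref{lemma::semiabelian1} combined with the divisibility of $\Ext^1_k(A, \Gm^{t^\prime})$ modulo torsion: after clearing a common denominator $N$ so that $N(\phi_{\mathrm{tor}}, \phi_{\mathrm{ab}}) \in V$, the lemma identifies realizability with the equation $(N\phi_{\mathrm{tor}})_\ast \eta_G = (N\phi_{\mathrm{ab}})^\ast \eta_{G^\prime}$ in $\Ext^1_k(A, \Gm^{t^\prime})$, and tensoring with $\IQ$ yields the clean characterization
\[
(\phi_{\mathrm{tor}})_{\ast,\IQ} \eta_G \in \im\bigl((\phi_{\mathrm{ab}})^{\ast,\IQ}\colon \Ext^1_k(A^\prime, \Gm^{t^\prime})_\IQ \to \Ext^1_k(A, \Gm^{t^\prime})_\IQ\bigr).
\]
Via Weil--Barsotti, this image equals $B(\phi_{\mathrm{ab}})(\IQbar)_\IQ^{t^\prime}$, where $B(\phi_{\mathrm{ab}}) := \im(\phi_{\mathrm{ab}}^\vee) \subseteq A^\vee$ is the abelian subvariety dual to the quotient $A/\ker(\phi_{\mathrm{ab}})^0$. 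Writing $\phi_{\mathrm{tor}} = (a_{uv})$ and $\eta_G = (\eta_1, \ldots, \eta_t)$ in suitable coordinates, the condition becomes: for each $v$, the element $\sum_u a_{uv} \eta_u$ lies in $B(\phi_{\mathrm{ab}})(\IQbar)_\IQ$.

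Next, I would algebraicize this condition. Poincar\'e's complete reducibility decomposes $A \sim \bigoplus_i A_i^{n_i}$ and $A^\prime \sim \bigoplus_j (A_j^\prime)^{n_j^\prime}$ into isotypic pieces, whence $\Hom(A, A^\prime)_\IQ$ becomes a direct sum of matrix blocks over the division algebras $\End(A_i)_\IQ$ (indexed by the isogenous pairs $A_i \sim A_j^\prime$). The rank profile of $\phi_{\mathrm{ab}}$ across these blocks cuts $\Hom(A, A^\prime)_\IQ$ into finitely many locally closed $\IQ$-subvarieties, and on each such stratum the subvariety $B(\phi_{\mathrm{ab}})$ fits into a flat algebraic family of abelian subvarieties of $A^\vee$ parameterized by a product of Grassmannians $\mathcal{G}$ over the opposite algebras, with universal abelian subscheme $\mathcal{B} \hookrightarrow \mathcal{G} \times A^\vee$. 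The incidence locus
\[
\mathcal{Z} = \bigl\{((a_{uv}), [B]) \in \IA_\IQ^{t t^\prime} \times \mathcal{G} : \textstyle\sum_u a_{uv} \eta_u \in B(\IQbar)_\IQ \text{ for each } v \bigr\}
\]
is, for fixed $[B]$, cut out by $\IQ$-linear equations in $(a_{uv})$, hence an algebraic subvariety over $\mathcal{G}$. Projecting along the (on each stratum algebraic) map $\phi_{\mathrm{ab}} \mapsto [B(\phi_{\mathrm{ab}})]$ and assembling over the finitely many strata produces a constructible $\IQ$-subvariety $Z \subseteq V_\IQ$ with $Z(\IQ)$ equal to the set of realizable pairs. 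The cone property is automatic, since $B(n\phi_{\mathrm{ab}}) = B(\phi_{\mathrm{ab}})$ for $n \neq 0$, and scaling $(a_{uv})$ by $n$ preserves membership in the $\IQ$-subspace $B(\IQbar)_\IQ$.

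The principal obstacle is the non-trivial variation of $B(\phi_{\mathrm{ab}})$ with $\phi_{\mathrm{ab}}$: as the footnote on p.~\pageref{footnote} and Example \ref{example::nonrational} indicate, this variation is already non-trivial whenever $A$ has an isotypic piece of multiplicity $\geq 2$ (for example $A = E^2$ with $E$ a non-CM elliptic curve, where the elliptic subcurves of $A$ form a $\IP^1$-family), so a genuinely moduli-theoretic Grassmannian-based parameterization of the image subvariety is required. Once this algebraic family is in hand, the rest is essentially routine because the membership condition is $\IQ$-linear in $(a_{uv})$ for each fixed $[B]$. This same phenomenon also explains why $Z$ is in general not a finite union of linear subspaces, in contrast to the purely toric or purely abelian settings.
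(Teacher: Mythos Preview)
Your reformulation of realizability via Weil--Barsotti is correct and matches the paper's starting point. The gap is in the step where you declare the incidence locus
\[
\mathcal{Z} = \bigl\{((a_{uv}), [B]) : \textstyle\sum_u a_{uv}\,\eta_u \in B(\IQbar)_\IQ\bigr\}
\]
to be an algebraic subvariety of $\IA^{tt'}_\IQ \times \mathcal{G}$ on the grounds that each fibre over $[B]$ is a $\IQ$-linear subspace. That implication fails: the linear equations cutting out the fibre are the $\IQ$-linear relations among the images $\bar{\eta}_u$ of the fixed points $\eta_u$ in $(A^\vee/B)(\IQbar)_\IQ$, and the number and shape of these relations depend on the \emph{arithmetic} of the $\eta_u$ relative to $B$, not on the algebraic point $[B]$ alone. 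Concretely, the assignment $(a_{uv}) \mapsto \sum_u a_{uv}\,\eta_u$ is not a morphism of $\IQ$-varieties from $\IA^{t}_\IQ$ to $A^\vee$ (it is only defined on integer points and lands in the infinite-dimensional $\IQ$-vector space $A^\vee(\IQbar)\otimes\IQ$), so pulling back the universal subscheme $\mathcal{B}$ along it does not make sense as written.

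The paper repairs exactly this point by passing to the finitely generated left $\End(B^\vee)$-module $\Gamma$ spanned by the entries $\eta_{ij}$ of $\eta_G$, observing that any solution $\mu$ of $(\phi_{\mathrm{tor}})_{\ast,\IQ}\eta_G=(\phi_{\mathrm{ab}})^{\ast,\IQ}\mu$ may be projected into $\Gamma_\IQ$, and then choosing a $D$-basis $\gamma_1,\dots,\gamma_l$ of $\Gamma_\IQ$ (after reducing to $t'=1$ and a single simple factor $B$). This replaces the ill-posed incidence condition in $A^\vee(\IQbar)_\IQ$ by the honest linear-algebra question over the division algebra $D=\End(B^\vee)_\IQ$: for which pairs $(\underline{a},M)\in \IQ^t\times D^{r\times r'}$ does $a_1\underline{y}_1+\cdots+a_t\underline{y}_t=M\underline{x}$ admit a solution $\underline{x}\in D^{r'}$, with the $\underline{y}_j\in D^r$ fixed? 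This is shown to be a cone by an elementary wedge-product argument (Lemma \ref{lemma::linearequation}). Your Grassmannian picture survives at this stage, but only \emph{after} the reduction to $\Gamma_\IQ$; without it, the incidence locus is not visibly algebraic.
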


In the following, pairs $(\phi_{\mathrm{tor}},\phi_{\mathrm{ab}}) \in V_\IQ$ representing quasi-homomorphisms are called realizable.

\begin{proof} Write $\eta_{G}=(\eta_{G}^{(1)},\dots,\eta_{G}^{(t)})\in A^\vee(\IQbar)^{t}$. By Lemma \ref{lemma::semiabelian1}, a pair $(\phi_{\mathrm{tor}},\phi_{\mathrm{ab}}) \in V_\IQ$ is realizable if and only if for one of its multiples $n \cdot (\phi_{\mathrm{tor}}, \phi_{\mathrm{ab}}) \in V$ there exists some $\mu= (\mu_1,\dots,\mu_{t^\prime}) \in (A^\prime)^\vee(\IQbar)^{t^\prime}$ such that
\begin{equation} \label{equation::eta_equation}
(n \cdot \phi_{\mathrm{tor}})_\ast(\eta_{G}^{(1)},\dots,\eta_{G}^{(t)}) = (n \cdot \phi_{\mathrm{tor}})_\ast \eta_{G} = (n \cdot\phi_{\mathrm{ab}})^\ast \mu = ((n \cdot\phi_{\mathrm{ab}})^\vee \mu_1, \dots, (n \cdot\phi_{\mathrm{ab}})^\vee \mu_{t^\prime})
\end{equation}
in $A^\vee(\IQbar)^{t^\prime}$. Write
\begin{equation*}
\begin{pmatrix}
 a_{11}   & \cdots  & a_{1t^\prime}   \\
 \vdots   &   & \vdots \\
 a_{t1} & \cdots  & a_{tt^\prime}  
\end{pmatrix}\!, \, a_{uv} \in n^{-1}\IZ,
\end{equation*}
for the matrix representing $\phi_{\mathrm{tor}} \in \Hom_\IQ(\Gm^t, \Gm^{t^\prime})$ and let $\phi_{v,\mathrm{tor}}: \Gm^{t} \rightarrow_\IQ \Gm$, $1 \leq v \leq t^\prime$, be the quasi-homomorphism described by the column vector $(a_{1v},\dots, a_{tv})^t$.
Then, (\ref{equation::eta_equation}) is equivalent to the equations
\begin{equation*}
(n \cdot \phi_{v,\mathrm{tor}})_\ast \eta_{G}= n a_{1v}\cdot \eta_{G}^{(1)}+\dots+n a_{tv}\cdot \eta_{G}^{(t)}= (n \cdot \phi_{\mathrm{ab}})^\vee \mu_v, \, 1 \leq v \leq t^\prime,
\end{equation*}
having solutions $\mu_v \in A^\vee(\IQbar)$. Hence, the pair $(\phi_{\mathrm{tor}},\phi_{\mathrm{ab}})$ is realizable if and only if each $(\phi_{v,\mathrm{tor}},\phi_{\mathrm{ab}})$, $1 \leq v \leq t^\prime$, represents a quasi-homomorphism $G \rightarrow G_v^{\prime}$. Assume that there are cones $Z_v \subset \Hom(\Gm^{t},\Gm) \times \Hom(A,A^\prime) = V_{v,\IQ}$, $1 \leq v \leq t^\prime$, with $Z_v(\IQ)$ consisting of the pairs in $V_{v,\IQ}$ representing quasi-homomorphisms. Denoting by $p_v: V_\IQ \rightarrow V_{v,\IQ}$ the standard projection, the cone $Z = \bigcap_{v=1}^{t^\prime} p_v^{-1}(Z_v) \subset V_\IQ$ is as wanted. In conclusion, it suffices to show the assertion for $t^\prime = 1$.

Choose pairwise non-isogeneous simple abelian varieties $B_1,\dots,B_{k}$ such that there exist isogenies
\begin{equation*}
\chi: A \longrightarrow B_1^{r_1} \times \cdots \times B_k^{r_k} \text{ and } \chi^\prime: A^\prime \longrightarrow B_1^{r_1^\prime} \times \cdots B_k^{r_k^\prime}
\end{equation*}
and set 
\begin{equation*}
W = \Hom(\Gm^{t},\Gm) \times \Hom(B_1^{r_1} \times \cdots \times B_k^{r_k},B_1^{r_1^\prime} \times \cdots B_k^{r_k^\prime}).
\end{equation*}
Let further $\psi$ (resp.\ $\psi^\prime$) be isogenies such that $\psi \circ \chi = \chi \circ \psi = [m]_{A}$ (resp. $\psi^\prime \circ \chi^\prime = \chi^\prime \circ \psi^\prime = [m]_{A^\prime}$) for some integer $m \geq 1$. We have $\IQ$-linear maps 
\begin{equation*}
f: V_\IQ \longrightarrow W_\IQ, \, (\phi_{\mathrm{tor}}, \phi_{\mathrm{ab}}) \longmapsto (\phi_{\mathrm{tor}}, \chi^\prime \circ \phi_{\mathrm{ab}} \circ \psi)
\end{equation*}
and
\begin{equation*}
g: W_\IQ \longrightarrow V_\IQ, \, (\phi_{\mathrm{tor}}, \phi_{\mathrm{ab}}) \longmapsto (\phi_{\mathrm{tor}}, \psi^\prime \circ \phi_{\mathrm{ab}} \circ \chi)
\end{equation*}
such that both $g \circ f: V_\IQ \rightarrow V_\IQ$ and $f \circ g: W_\IQ \rightarrow W_\IQ$ send $(\phi_{\mathrm{tor}},\phi_{\mathrm{ab}})$ to $(\phi_{\mathrm{tor}}, m^2 \cdot \phi_{\mathrm{ab}})$. Hence, $f$ and $g$ are bijections between $V_\IQ$ and $W_\IQ$. Using Lemma \ref{lemma::semiabelian3}, we additionally deduce that both $f$ and $g$ preserve realizable pairs. Consequently, we may assume that $A=B_1^{r_1} \times \cdots \times B_k^{r_k}$ and $A^\prime=B_1^{r_1^\prime} \times \cdots \times B_k^{r_k^\prime}$ in proving the theorem. In this case, we can also identify
\begin{equation*}
V_\IQ = \Hom_\IQ(\Gm^{t},\Gm) \times \prod_{i=1}^{k} \Hom_\IQ(B_i^{r_i}, B_i^{r_i^\prime}).
\end{equation*}
By Lemma \ref{lemma::semiabelian1}, an element $(\phi_{\mathrm{tor}},\phi_{\mathrm{ab}}^{(1)},\dots, \phi_{\mathrm{ab}}^{(k)}) \in V_\IQ$, $\phi_{\mathrm{ab}}^{(i)} \in \Hom_\IQ(B_i^{r_i},B_i^{r_i^\prime})$, is realized by a quasi-homomorphism if and only if for one of its multiples $n \cdot (\phi_{\mathrm{tor}}, \phi_{\mathrm{ab}}^{(1)},\dots, \phi_{\mathrm{ab}}^{(k)}) \in V$ there exists some tuple $(\eta^{(1)},\dots,\eta^{(k)}) \in B_1^\vee(\IQbar)^{r_1^\prime} \times \dots \times B_k^\vee(\IQbar)^{r_k^\prime}$ such that
\begin{equation*}
(n \cdot \phi_{\mathrm{tor}})_\ast \eta_{G} = (n \cdot \phi_{\mathrm{ab}}^{(1)},\dots, n \cdot \phi_{\mathrm{ab}}^{(k)})^\ast (\eta^{(1)},\dots, \eta^{(k)}) = ((n \cdot \phi_{\mathrm{ab}}^{(1)})^\vee (\eta^{(1)}),\dots, (n \cdot \phi_{\mathrm{ab}}^{(k)})^\vee(\eta^{(k)})).
\end{equation*}
Arguing as above, we deduce that it suffices to prove the theorem under the additional assumption that $k=1$ (i.e., $A=B^r$ and $A^\prime=B^{r^\prime}$ with a simple abelian variety $B$).

Let us write $\eta_G = (\underline{\eta}_1,\dots,\underline{\eta}_{t})\in (B^r)^\vee(\IQbar)^{t}= \Ext_{\IQbar}^1(B^r,\Gm^{t})$ and $\underline{\eta}_j=(\eta_{1j},\dots, \eta_{rj})^t \in (B^\vee)(\IQbar)^r=(B^r)^\vee(\IQbar)$. Again, $(\phi_{\mathrm{tor}},\phi_{\mathrm{ab}}) \in V_\IQ$ is realizable if and only if there exists some multiple $n \cdot (\phi_{\mathrm{tor}}, \phi_{\mathrm{ab}}) \in V$ such that 
\begin{equation}
\label{equation::etacondition}
(n\cdot \phi_{\mathrm{tor}})_\ast \eta_{G} = (n \cdot \phi_{\mathrm{ab}})^\ast \mu
\end{equation}
has a solution $\mu = (\mu_1, \dots, \mu_{r^\prime}) \in B^\vee(\IQbar)^{r^\prime} = \Ext^1_{\IQbar}(A^\prime, \Gm)$. This condition can be translated into linear algebra over the $\IQ$-division algebra $D=\End(B^\vee)_\IQ$ (cf.\ \cite[Corollary 2 on p.\ 174]{Mumford1970}). For this, we denote by $\Gamma$ the left $\End(B^\vee)$-submodule of $B^\vee(\IQbar)$ generated by 
\begin{equation*}
\eta_{ij}, \, 1\leq i \leq r, \, 1 \leq j \leq t.
\end{equation*}
The tensor product $\Gamma_\IQ = \Gamma \otimes_\IZ \IQ$ is a left $D$-submodule of $B^\vee(\IQbar) \otimes_\IZ \IQ$. For any $\gamma \in B^\vee(\IQbar)$, we let $[\gamma]$ denote $\gamma \otimes 1 \in B^\vee(\IQbar) \otimes_\IZ \IQ$. As $D$ is a division ring, $\Gamma_\IQ$ is a free left $D$-module so that we may choose $\End(B^\vee)$-linearly independent elements $\gamma_1,\dots,\gamma_l \in B^\vee(\IQbar)$ satisfying
\begin{equation}
\label{equation::Gammadecomposition}
\Gamma = \End(B^\vee) \cdot \gamma_1 \oplus \cdots \oplus \End(B^\vee) \cdot \gamma_l \oplus \Tors(\Gamma);
\end{equation}
here $\Tors(\Gamma)$ denotes the $\IZ$-torsion elements of $\Gamma$. If (\ref{equation::etacondition}) has a solution $\mu = (\mu_1,\dots,\mu_{r^\prime}) \in B^\vee(\IQbar)^{r^\prime}$ for some $n$, then it also has a solution $\mu \in \Gamma^{r^\prime} \subset B^\vee(\IQbar)^{r^\prime}$ for a possibly larger $n$. In fact, one may take any image under a $D$-linear projection from $\Gamma_\IQ + D \cdot [\mu_1] + \dots + D \cdot [\mu_{r^\prime}]$ to $\Gamma_\IQ$. Since we can always arrange for $n$ to annihilate the finite group $\Tors(\Gamma)$, we infer that $(\phi_{\mathrm{tor}},\phi_{\mathrm{ab}})$ is realizable if and only if, in the notation from Section \ref{section::homomorphisms},
\begin{equation*}
(\phi_{\mathrm{tor}})_{\ast,\IQ} (\eta_G) = (\phi_{\mathrm{ab}})^{\ast,\IQ} (\mu)
\end{equation*}
has a solution $\mu \in \Gamma_\IQ^{r^\prime} \subset (B^\vee(\IQbar) \otimes_\IZ \IQ)^{r^\prime}$. Both $\phi_{\mathrm{tor}} \in \Hom_\IQ(\Gm^t,\Gm)$ and $\phi_{\mathrm{ab}}^\vee \in \Hom_\IQ((A^\prime)^\vee, A^\vee)$ can be represented by matrices
\begin{equation}
\label{equation::acoefficients}
(a_1, a_2, \cdots, a_{t})^t, \, a_i \in \IQ,
\end{equation}
and 
\begin{equation}
\label{equation::bcoefficients}
\begin{pmatrix}
b_{11} & \cdots & b_{1r^\prime} \\
\vdots & & \vdots \\
b_{r1} & \cdots & b_{rr^\prime}
\end{pmatrix}
, \, b_{ij} \in D.
\end{equation}
Using this notation, we are searching for (\ref{equation::acoefficients}) and (\ref{equation::bcoefficients}) such that
\begin{equation}
\label{equation::etaconditionQ}
a_1 \cdot
\begin{pmatrix}
[\eta_{11}] \\
\vdots \\
[\eta_{r1}]
\end{pmatrix}
+ \dots +
a_t \cdot
\begin{pmatrix}
[\eta_{1t}] \\
\vdots \\
[\eta_{rt}]
\end{pmatrix}
=
\begin{pmatrix}
b_{11} & \cdots & b_{1r^\prime} \\
\vdots & & \vdots \\
b_{r1} & \cdots & b_{rr^\prime}
\end{pmatrix}
\begin{pmatrix}
[\mu_1] \\
\vdots \\
[\mu_{r^\prime}]
\end{pmatrix}
\end{equation}
has a solution $([\mu_1],\dots, [\mu_{r^\prime}])\in \Gamma_\IQ$. Using the decomposition (\ref{equation::Gammadecomposition}), we expand
\begin{equation*}
[\eta_{ij}]= c_{ij}^{(1)} [\gamma_1] + \dots + c_{(ij)}^{(l)} [\gamma_l], \, c_{ij}^{(\cdot)} \in D.
\end{equation*}
Then, (\ref{equation::etaconditionQ}) has a solution $([\mu_1], \dots, [\mu_{r^\prime}])$ if and only if each of the $l$ linear equations
\begin{equation}
\label{equation::etaconditionQ2}
a_1 \cdot
\begin{pmatrix}
c_{11}^{(\cdot)} \\
\vdots \\
c_{r1}^{(\cdot)}
\end{pmatrix}
+
\cdots
+
a_t \cdot
\begin{pmatrix}
c_{1t}^{(\cdot)} \\
\vdots \\
c_{rt}^{(\cdot)}
\end{pmatrix}
=
\begin{pmatrix}
b_{11} & \cdots & b_{1r^\prime} \\
\vdots & & \vdots \\
b_{r1} & \cdots & b_{rr^\prime}
\end{pmatrix}
\begin{pmatrix}
\delta_1^{(\cdot)} \\
\vdots \\
\delta_{r^\prime}^{(\cdot)}
\end{pmatrix}
\end{equation}
has a solution $(\delta_1^{(\cdot)},\dots,\delta_{r^\prime}^{(\cdot)})\in D^r$. By Lemma \ref{lemma::linearequation} below, the corresponding condition on (83) and (84) is described by a subcone of $\IQ^t \times D^{r \times r^\prime}$. The intersection $Z^\vee$ of these $l$ cones is almost what we are searching for. In fact, a pair $(\phi_t, \phi_a) \in V_\IQ$ is realizable if and only if $(\phi_t,\phi_a^\vee) \in \IQ^t \times D^{r \times r^\prime}$ is in $Z^\vee(\IQ)$. The theorem follows now from the $\IQ$-linearity (cf.\ \cite[(ii) on p.\ 75]{Mumford1970}) of 
\begin{equation*}
\Hom_\IQ(A,A^\prime) \longrightarrow \Hom_\IQ((A^\vee)^\prime,A^\vee), f \longmapsto f^\vee.
\end{equation*}
\end{proof}

The following lemma is certainly standard (for $t=1$ and $a_1=1$ at least) but I have found no trace of it in the literature so that a complete proof is given.

\begin{lemma} 
\label{lemma::linearequation}
Let $D$ be a finite-dimensional $\IQ$-algebra and $\underline{y}_1, \dots, \underline{y}_t \in D^{r}$ column vectors. Then, the pairs $(\underline{a}, M) \in \IQ^t \times D^{r \times r^\prime}$, $\underline{a}=(a_1,\dots,a_t)$, such that 
\begin{equation}
\label{equation::lemmalinearequation}
a_1\underline{y}_1 + \dots + a_t\underline{y}_t = M \cdot \underline{x}
\end{equation}
has a solution $\underline{x} \in D^{r^\prime}$, are the $\IQ$-points of a cone $Z \subset \IQ^t \times D^{r \times r^\prime}$.
\end{lemma}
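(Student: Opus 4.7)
My strategy is to reduce the condition (\ref{equation::lemmalinearequation}) to ordinary linear algebra over $\IQ$ via a choice of basis for $D$, and then to describe $Z$ through rank conditions that translate into vanishing and non-vanishing of minors.

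First, I would fix a $\IQ$-basis $e_1,\ldots,e_d$ of $D$, where $d=\dim_\IQ D$, identifying $D^r$ with $\IQ^{dr}$ and $D^{r^\prime}$ with $\IQ^{dr^\prime}$ through coordinate expansion. The left-multiplication map $D^{r^\prime} \to D^r$, $\underline{x} \mapsto M\underline{x}$, is $\IQ$-linear and hence corresponds to a matrix $\widetilde{M} \in \IQ^{dr \times dr^\prime}$. Via the structure constants of $D$, the entries of $\widetilde{M}$ are $\IQ$-linear functions of the entries of $M$. Similarly, the coordinate vector $\widetilde{y}(\underline{a}) \in \IQ^{dr}$ of $\sum_i a_i \underline{y}_i$ depends $\IQ$-linearly on $\underline{a}$.

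Next, equation (\ref{equation::lemmalinearequation}) admits a solution $\underline{x} \in D^{r^\prime}$ if and only if $\widetilde{y}(\underline{a}) \in \im(\widetilde{M})$, which by the Rouch\'e--Capelli theorem is equivalent to $\rank(\widetilde{M}) = \rank[\widetilde{M}\mid \widetilde{y}(\underline{a})]$. Stratifying by this common rank $k$, I set
\begin{equation*}
Z_k = \{ (\underline{a},M) \in \IQ^t \times D^{r \times r^\prime} : \rank(\widetilde{M})=k \text{ and } \rank[\widetilde{M}\mid \widetilde{y}(\underline{a})]=k \}.
\end{equation*}
The condition $\rank[\widetilde{M}\mid \widetilde{y}(\underline{a})] \leq k$ is the simultaneous vanishing of all $(k+1)\times (k+1)$-minors of the augmented matrix, and, combined with this, $\rank(\widetilde{M})=k$ amounts to the non-vanishing of at least one $k\times k$-minor of $\widetilde{M}$. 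These minors are polynomials in $(\underline{a},M)$ with rational coefficients, so each $Z_k$ is a locally closed $\IQ$-algebraic subvariety of $\IQ^t \times D^{r \times r^\prime}$. Consequently, $Z = \bigsqcup_{k=0}^{\min(dr,dr^\prime)} Z_k$ is a finite disjoint union of locally closed subvarieties, giving the desired (not necessarily closed) $\IQ$-algebraic subvariety whose $\IQ$-points are exactly the solvable pairs.

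Finally, the cone property is immediate: if $\sum_i a_i \underline{y}_i = M\underline{x}$ for some $\underline{x}\in D^{r^\prime}$, then $\sum_i (na_i)\underline{y}_i = (nM)\underline{x}$ for the same $\underline{x}$ and any nonzero integer $n$, so $(n\underline{a},nM)$ again lies in $Z$. The only mildly delicate point, and what I would call the main (minor) obstacle, is that $D$ is typically non-commutative, so one must take care that the passage $(\underline{a},M) \mapsto (\widetilde{y}(\underline{a}),\widetilde{M})$ remains $\IQ$-linear in the data; this is however automatic once a basis of $D$ is fixed, and no further difficulty arises.
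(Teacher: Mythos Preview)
Your argument is correct and follows essentially the same route as the paper: both proofs choose a $\IQ$-basis of $D$ to rewrite the $D$-linear problem as a $\IQ$-linear one, and then express solvability determinantally. The only cosmetic difference is that the paper phrases the rank condition via wedge products of column vectors (for each subset $I$ of columns, the open condition $\bigwedge_{i\in I}\underline{m}_i\neq 0$ together with the closed condition $\bigwedge_{i\in I}\underline{m}_i\wedge\underline{y}=0$), whereas you invoke the Rouch\'e--Capelli criterion and stratify by the rank of $\widetilde{M}$; these are equivalent descriptions of the same constructible locus.
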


Here, $\IQ^t \times D^{r \times r^\prime}$ is given its canonical structure as an affine linear space over $\IQ$. We also remark that $Z$ is generally not a closed subvariety.

\begin{proof} Choosing a $\IQ$-linear isomorphism $\varphi: D \rightarrow \IQ^n$, we obtain a map $l: D \hookrightarrow \IQ^{n\times n}$ such that $l(d_1) \varphi(d_2) = \varphi(d_1 d_2)$. This realizes $D$ as a $n$-dimensional subspace $l(D)$ of $\IQ^{n\times n}$. With these identifications, the equation (\ref{equation::lemmalinearequation}) can be written as 
\begin{equation*}
a_1\underline{y}_1^\prime + \dots + a_t\underline{y}_t^\prime =
M^\prime \cdot \underline{x}^\prime
\end{equation*}
with $M^\prime \in \IQ^{nr \times nr^\prime}$, $\underline{x}^\prime\in \IQ^{nr^\prime}$ and $\underline{y}^\prime_1,\dots,\underline{y}^\prime_t \in \IQ^{nr}$. We then search for $M^\prime \in \IQ^{nr \times nr^\prime}$ such that a solution $\underline{x}^\prime$ exists under the additional restraint that $M^\prime$ comes from a matrix $M \in D^{r \times r^\prime}$ by applying $l$ to each entry. Since this restraint can be evidently expressed as $M^\prime$ being contained in a $\IQ$-subcone of $\IQ^{nr \times nr^\prime}$, we can restrict to the case $D=\IQ$.

To deal with this special case, we make the following elementary observation: Write $M=(\underline{m}_1 \dots \underline{m}_{r^\prime})$ with column vectors $\underline{m}_i \in \IQ^r$. For any $\underline{y} \neq 0$, we have $\underline{y} \in \im (M)$ if and only if there exists a subset $I \subset \{ 1, \dots, r^\prime \}$ such that $\bigwedge_{i \in I} \underline{m}_i \neq 0 \in \bigwedge^{|I|} \IQ^r$ and $\bigwedge_{i \in I} \underline{m}_i \wedge \underline{y} = 0 \in \bigwedge^{|I|+1} \IQ^r$. From this, we straightforwardly obtain equations for the sought-after $\IQ$-cone $Z \subset \IQ^t \times \IQ^{r \times r^\prime}$.
\end{proof}

The proof of Theorem \ref{theorem::realizable} gives evidently a procedure to determine $Z$ via linear algebra so that one may hope that its rational points $Z(\IQ)$ are equally easy to describe. However, $Z(\IQ)$ can be rather complicated if $G$ is neither an abelian variety nor a torus. For example, $Z$ is not even rational in general, although it is in these two special cases. With respect to the proof of Theorem \ref{theorem::main}, this means in particular that (Dirichlet) approximation arguments as in \cite[Section 4]{Habegger2009a} and \cite[Section 4]{Habegger2009} break down if one insists on the use of surjective quasi-homomorphisms $G \rightarrow G^\prime$. This makes it necessary to work with explicit line bundles on $G$ as we do in this article.
 
\begin{example} \label{example::nonrational}
Let $E$ be an elliptic curve without complex multiplication (i.e., $\End(E)=\IZ$). Furthermore, let $\gamma_1, \gamma_2, \gamma_3 \in E^\vee(\IQbar)$ be such that $\Gamma = \sum_{i=1}^3 \IZ \cdot \gamma_i$ is a free $\IZ$-module of rank $3$. For an arbitrary tuple $(n_1,n_2,n_3) \in \IZ^3$, we define
\begin{equation*}
\underline{\eta}_1 = \begin{pmatrix} n_1 \cdot \gamma_1 \\ \gamma_3 \\ \gamma_2 \end{pmatrix}, \underline{\eta}_2 = \begin{pmatrix} n_2 \cdot \gamma_2 \\ \gamma_1 \\ \gamma_3 \end{pmatrix}, \underline{\eta}_3 = \begin{pmatrix} n_3 \cdot \gamma_3 \\ \gamma_2 \\ \gamma_1 \end{pmatrix},
\end{equation*}
considering these column vectors as elements of $(E^3)^\vee(\IQbar)$. Let $G$ be the semiabelian variety determined by 
\begin{equation*}
(\underline{\eta}_1,\underline{\eta}_2,\underline{\eta}_3) \in ((E^3)^\vee)^3 = \Ext^1(E^3,\Gm)^3= \Ext^1(E^3,\Gm^3).
\end{equation*}
From Theorem \ref{theorem::realizable}, we know that the realizable pairs in
\begin{equation*}
V_\IQ = \Hom(\Gm^{3},\Gm)_\IQ \times \Hom(E^3,E^2)_\IQ
\end{equation*}
are the $\IQ$-rational points of an algebraic subvariety $Z \subset V_\IQ$. Consider the projection $\pi: V_\IQ \rightarrow \Hom(\Gm^3,\Gm)_\IQ$. An inspection of the three linear equations given by (\ref{equation::etaconditionQ2}) tells us that the image $\pi(Z)$ is described by
\begin{equation*}
\det 
\begin{pmatrix}
n_1 a_{1} & n_2 a_{2} & n_3 a_{3} \\
a_{2} & a_{3} & a_{1} \\
a_{3} & a_{1} & a_{2}
\end{pmatrix}
= (n_1+n_2+n_3)a_1a_2a_3 - n_1a_1^3 - n_2a_2^3 - n_3a_3^3.
\end{equation*}
It is easy to check (cf.\ \cite[Chapter 10]{Mordell1969} or \cite[Section 3.1]{Dolgachev2012}) that
\begin{equation*}
n_1X^3 + n_2Y^3 + n_3Z^3 - (n_1+n_2+n_3) XYZ = 0
\end{equation*}
is the projective equation of an elliptic curve $E_{n_1,n_2,n_3}^\prime$ for generic tuples $(n_1,n_2,n_3) \in \IZ^3$. In these cases, $\pi(Z)$ is birationally equivalent to $\IP^1 \times E_{n_1,n_2,n_3}^\prime$. The existence of a global non-zero one-form (i.e., the pull-back of the invariant differential form on $E_{n_1,n_2,n_3}^\prime$) precludes unirationality of $\IP^1 \times E_{n_1,n_2,n_3}^\prime$ (cf.\ \cite[Theorem 1.52]{Kollar2004}). Therefore, $Z$ itself cannot be a rational variety. In addition, the set $Z(\IQ)$ surjects onto the Mordell-Weil group of the $\IQ$-elliptic curve $E_{n_1,n_2,n_3}^\prime$. Given that no known algorithm
produces the Mordell-Weil rank, this should demonstrate that the ``mixed structure'' of a semiabelian variety can lead to an intricate set of quotients and subgroups.
\end{example}

\textbf{Acknowledgements:} The author deeply thanks Philipp Habegger for suggesting this problem quite some years ago and for providing a constant motivation to continue working on it. The present article also profited much from discussions with and advice of Daniel Bertrand, Pietro Corvaja, Marc Hindry, Rafael von Känel, Friedrich Knop, Harry Schmidt, and Gisbert W\"ustholz. He also thanks the organizers of the workshop ``Diophantische Approximationen" (ID: 1615) at the MFO for allowing him to present the results of this article there for the first time in a preliminary fashion. In addition, the author acknowledges the supportive hospitality of both the Max Planck Institute for Mathematics and the Fields Institute, where this article has been written. Finally, he thanks the referee for their ample suggestions, which helped to improve the expository quality of this article.

\nocite{Chambert-Loir1999}

\bibliographystyle{plain}
\bibliography{../Bibliography/references}

\end{document}